\theoremstyle{plain}
\newtheorem{theorem}{Theorem}
\newtheorem{lemma}[theorem]{Lemma}
\newtheorem{proposition}[theorem]{Proposition}
\newtheorem*{theorem*}{Theorem}
\newtheorem*{claim*}{Claim}
\theoremstyle{definition}
\theoremstyle{remark}
\newtheorem{remark}[theorem]{Remark}
\newtheorem*{remark*}{Remark}
\numberwithin{equation}{section}
\numberwithin{theorem}{section}
\def\N{\ensuremath{\mathbb{N}}}
\def\R{\ensuremath{\mathbb{R}}}
\def\ep{\varepsilon}
\def\eps{\varepsilon}
\def\Var{\textup{Var}}
\def\E{\ensuremath{\mathbf{E}}}
\def\P{\ensuremath{\mathbf{P}}}
\def\F{\ensuremath{\mathcal{F}}}
\def\Ind{\ensuremath{\mathbbm{1}}}
\def\to{\rightarrow}
\def\tif{\ensuremath{\text{ if }}}
\renewcommand{\L}{\mathcal{L}}
\begin{document}

\title{Yaglom-type limit theorems for branching Brownian motion with absorption}
\author{Pascal Maillard\thanks{Institut de Mathématiques de Toulouse, CNRS UMR5219. \emph{Postal address:} Institut de Mathématiques de Toulouse, Université de Toulouse 3 Paul Sabatier, 118 route de Narbonne, 31062 Toulouse cedex 9, France} \\ Université de Toulouse \and Jason Schweinsberg\thanks{\emph{Postal address:} Department of Mathematics 0112; 9500 Gilman Drive; La Jolla, CA 92093-0112} \\ University of California San Diego}

\maketitle

\begin{abstract}
We consider one-dimensional branching Brownian motion in which particles are absorbed at the origin.  We assume that when a particle branches, the offspring distribution is supercritical, but the particles are given a critical drift towards the origin so that the process eventually goes extinct with probability one.  We establish precise asymptotics for the probability that the process survives for a large time $t$, building on previous results by Kesten (1978) and Berestycki, Berestycki, and Schweinsberg (2014).  We also prove a Yaglom-type limit theorem for the behavior of the process conditioned to survive for an unusually long time, providing an essentially complete answer to a question first addressed by Kesten (1978).  An important tool in the proofs of these results is the convergence of a certain observable to a continuous state branching process.  Our proofs incorporate new ideas which might be of use in other branching models.
\end{abstract}

\noindent {\it AMS 2020 subject classifications}.  Primary 60J80; Secondary 60J65, 60J25

\noindent {\it Key words and phrases}.  Branching Brownian motion, Yaglom limit theorem, continuous-state branching process

\section{Introduction and main results}

We will consider one-dimensional branching Brownian motion with absorption, which evolves according to the following rules.  At time zero, all particles are in $(0, \infty)$.  Each particle independently moves according to one-dimensional Brownian motion with a drift of $-\mu$.  Particles are absorbed when they reach zero.  Each particle independently branches at rate $\beta$.  When a branching event occurs, the particle dies and is replaced by a random number of offspring.  We denote by $p_k$ the probability that an individual has $k$ offspring and assume that the numbers of offspring produced at different branching events are independent.  We define $m$ so that $m + 1 = \sum_{k=1}^{\infty} kp_k$ is the mean of the offspring distribution, and we assume $m > 0$.  We also assume that $\sum_{k=1}^{\infty} k^2 p_k < \infty$, so the offspring distribution has finite variance.  Finally, we assume that $\beta = 1/2m$, which by scaling arguments entails no real loss of generality because speeding up time by a factor of $r$ while scaling space by a factor of $1/\sqrt{r}$ multiplies the branching rate by $r$ and the drift by $\sqrt{r}$.

Kesten \cite{kesten} showed that if $\mu < 1$, the process survives forever with positive probability, while if $\mu \geq 1$, the process eventually goes extinct almost surely.  In this paper, we will assume that $\mu = 1$, so we are considering the case of critical drift.  Our aim is to establish some new results for this process, focusing on the question of how the process behaves when conditioned to survive for an unusually long time.  These results sharpen some of the results in the seminal paper of Kesten \cite{kesten} and build on more recent work of Berestycki, Berestycki, and Schweinsberg \cite{bbs, bbs3}.

There are several reasons to study branching Brownian motion with absorption:
\begin{enumerate}
\item To study branching Brownian motion without absorption, for example its extremal particles, it is often useful to kill the particles at certain space-time barriers, as pioneered by Bramson~\cite{Bramson1978}. It is therefore natural to study these processes for their own sake.

\item Branching Brownian motion with absorption, or more complicated models that build on it, can be interpreted as a model of a biological population under the influence of evolutionary selection.  In this setting, particles represent individuals in a population, the position of a particle represents the fitness of the individual, and the absorption at zero models the deaths of individuals with low fitness.  See, for example, the work of Brunet, Derrida, Mueller, and Munier \cite{bdmm1, bdmm2}.

\item There are close connections between branching Brownian motion and partial differential equations, going back to the early work of McKean \cite{mckean}.  Branching Brownian motion with absorption was used in \cite{hhk06} to study the equation $\frac{1}{2} f'' - \rho f' + \beta (f^2 - f)$, which describes traveling wave solutions to the FKPP equation, under the boundary conditions $\lim_{x \downarrow 0} f(x) = 1$ and $\lim_{x \rightarrow \infty} f(x) = 0$.

\item The branching random walk with absorption, a discrete-time analogue of branching Brownian motion with absorption, appears directly or indirectly in other mathematical models such as infinite urn models 
\cite{mr2016} or in the study of algorithms for finding vertices of large labels in a labelled tree generated by a branching random walk \cite{Aldous1992}. Also, branching Brownian motion with absorption is a toy model for certain noisy travelling wave equations (see again \cite{bdmm1,bdmm2}).

\item Branching Brownian motion with absorption can be regarded as a non-conservative Markov process living in an infinite-dimensional and unbounded state space (the space of finite collections of points on $\R_+$). As such, it is an interesting testbed for quasi-stationary distributions and Yaglom limits, which have seen a great deal of attention in the last decade \cite{cclmms,mv12,cv16}, particularly regarding approximating particle systems \cite{afgj,delmoral}
\end{enumerate}

\subsection{Some notation}

We introduce here some notation that we will use throughout the paper.  When the branching Brownian motion starts with a single particle at the position $x$, we denote probabilities and expectations by $\P_x$ and $\E_x$ respectively.  More generally, we may start from a fixed or random initial configuration of particles in $(0, \infty)$, which we represent by the measure $\nu$ consisting of a unit mass at the position of each particle.  We then denote probabilities and expectations by $\P_{\nu}$ and $\E_{\nu}$.  To avoid trivialities, we will always assume that the initial configuration of particles is nonempty.  When the initial configuration $\nu$ is random, $\P_{\nu}$ and $\E_{\nu}$ refer to unconditional probabilities and expectations, not conditional probabilities and expectations given the random measure $\nu$.  In particular, if $A$ is an event, then $\P_{\nu}(A)$ is a number between 0 and 1, not a random variable whose value depends on the realization of $\nu$.  That is, using the language of random walks in random environments, ${\bf P}_{\nu}$ represents the “annealed” law rather than the “quenched” law.  We will denote by $({\cal F}_t, t \geq 0)$ the natural filtration of the process.

We will denote by $N_s$ the set of particles that are alive at time $s$, meaning they have not yet been absorbed at the origin.  If $u \in N_s$, we denote by $X_u(s)$ the position at time $s$ of the particle $u$.  We also define the critical curve
\begin{equation}\label{Lcdef}
L_t(s) = c(t - s)^{1/3}, \hspace{.5in} c = \bigg( \frac{3 \pi^2}{2} \bigg)^{1/3}.
\end{equation}
This critical curve appeared in the original paper of Kesten \cite{kesten}.  As will become apparent later, it can be interpreted, very roughly, as the position where a particle must be at time $s$ in order for it to be likely to have a descendant alive in the population at time $t$.  We will also define
\begin{equation}\label{Zdef}
Z_t(s) = \sum_{u \in N_s} z_t(X_u(s), s), \hspace{.5in}z_t(x,s) = L_t(s) \sin \bigg( \frac{\pi x}{L_t(s)} \bigg) e^{x - L_t(s)} \Ind_{x \in [0, L_t(s)]}.
\end{equation}
The process $(Z_t(s), 0 \leq s \leq t)$ will be extremely important in what follows.  Lemma \ref{lem:Zexp} below shows that, in some sense, this process is very close to being a martingale.  Let $M(s)$ be the number of particles at time $s$, and let $R(s)$ denote the position of the right-most particle at time $s$.  In symbols, we define
\begin{equation}\label{MRdef}
M(s) = \#N_s, \hspace{.4in} R(s) = \sup\{X_u(s): u \in N_s\}.
\end{equation}
Finally, let $$\zeta = \inf\{s: M(s) = 0\}$$ be the extinction time for the process.

We will often be working to prove asymptotic results as $t \rightarrow \infty$ where, for each $t$, we are working under a different probability measure such as ${\bf P}_{\nu_t}$ or the conditional probability $\P_{\nu_t}(\:\cdot \:|\zeta > t)$.  We use $\Rightarrow$ to denote convergence in distribution and $\rightarrow_p$ to denote convergence in probability.  If $X_t$ is a random variable for each $t$, then by $X_t \rightarrow_p c$ under $\P_{\nu_t}$ we mean that $\P_{\nu_t}(|X_t - c| > \eps) \rightarrow 0$ as $t \rightarrow \infty$ for all $\eps > 0$, while $X_t \rightarrow_p \infty$ under $\P_{\nu_t}$ means $\P_{\nu_t}(X_t > K) \rightarrow 1$ as $t \rightarrow \infty$ for all $K \in \R_+$.  We also write $f(t) \sim g(t)$ if $\lim_{t \rightarrow \infty} f(t)/g(t) = 1$ and $f(t) \ll g(t)$ if $\lim_{t \rightarrow \infty} f(t)/g(t) = 0$.

Throughout the paper, $C$ denotes a positive constant whose value may change from line to line.  Numbered constants $C_k$ keep the same value from one occurrence to the next.

\subsection{The probability of survival until time \texorpdfstring{$t$}{t}}

For branching Brownian motion started with a single particle at $x > 0$, we are interested in calculating the probability that the process survives at least until time $t$.  Kesten \cite{kesten} showed that there exists a positive constant $K_1$ such that for every fixed $x > 0$, we have for sufficiently large $t$,
$$x e^{x - L_t(0) - K_1(\log t)^2} \leq \P_x(\zeta > t) \leq (1 + x)e^{x - L_t(0) + K_1(\log t)^2}.$$
Berestycki, Berestycki, and Schweinsberg \cite{bbs} tightened these bounds by showing that there are positive constants $C_1$ and $C_2$ such that for all $x > 0$ and $t > 0$ such that $x \leq L_t(0) - 1$, we have
\begin{equation}\label{oldbound}
C_1 z_t(x,0) \leq \P_x(\zeta > t) \leq C_2 z_t(x,0).
\end{equation}
Note that the results in \cite{bbs} are stated in the case when $p_2 = 1$, which means two offspring are produced at each branching event.  However, the proof of (\ref{oldbound}) can be extended, essentially without change, to the case of the more general supercritical offspring distributions considered here.  Also, a slightly different scaling, with $\beta = 1$ and $\mu = \sqrt{2}$, was used in \cite{bbs}.  Theorem \ref{survival} below is our main result regarding survival probabilities.

\begin{theorem}\label{survival}
Suppose that for each $t > 0$, we have a possibly random initial configuration of particles $\nu_t$.  Then there is a constant $\alpha > 0$ such that the following hold:
\begin{enumerate}
\item Suppose that, under $\P_{\nu_t}$, we have $Z_t(0) \Rightarrow Z$ and $L_t(0) - R(0) \rightarrow_p \infty$ as $t \rightarrow \infty$.  Then $$\lim_{t \rightarrow \infty} \P_{\nu_t}(\zeta > t) = 1 - \E[e^{-\alpha Z}].$$

\item Suppose that each $\nu_t$ is deterministic, and that, under $\P_{\nu_t}$, we have $Z_t(0) \rightarrow 0$ and $L_t(0) - R(0) \rightarrow \infty$ as $t \rightarrow \infty$.  Then as $t \rightarrow \infty$, we have $$\P_{\nu_t}(\zeta > t) \sim \alpha Z_t(0).$$  In particular, if $x > 0$ is fixed, then
\begin{equation}\label{scsurvival}
\P_x(\zeta > t) \sim \alpha \pi x e^{x-L_t(0)}.
\end{equation}
\end{enumerate}
%We have $\alpha = \pi^{-1}e^{-a_{\ref{eq:abelian}}-3/4}$, where $a_{\ref{eq:abelian}}$ is the constant from Lemma~\ref{neveuW} below.
\end{theorem}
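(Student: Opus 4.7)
My plan is to reduce both parts of the theorem to a sharp single-particle asymptotic $q_t(x) := \P_x(\zeta > t) \sim \alpha\, z_t(x,0)$, valid uniformly for $x$ with $L_t(0)-x \to \infty$, and then to obtain that asymptotic via a continuous-state branching process (CSBP) limit for the near-martingale $(Z_t(s), 0 \leq s \leq t)$ that the abstract flags as the core new input. For the reduction, conditional on the initial configuration $\nu_t$ the subtrees emanating from distinct initial particles evolve independently, so
\[
\P_{\nu_t}(\zeta > t \mid \nu_t) \;=\; 1 - \prod_{u \in N_0}\bigl(1 - q_t(X_u(0))\bigr).
\]
Granted the uniform single-particle asymptotic, the hypothesis $L_t(0)-R(0) \to_p \infty$ forces $\max_u z_t(X_u(0),0) \to_p 0$; together with the tightness of $Z_t(0)$ this yields $\sum_u z_t(X_u(0),0)^2 \to_p 0$. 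A Taylor expansion of $\log(1-p)$ then gives $\sum_u \log(1 - q_t(X_u(0))) = -\alpha Z_t(0) + o_p(1)$, and since the product lies in $[0,1]$, bounded convergence produces $\P_{\nu_t}(\zeta > t) \to 1 - \E[e^{-\alpha Z}]$, proving part (1). For part (2), with $Z_t(0) \to 0$ deterministically, the same expansion kept to first order gives $\P_{\nu_t}(\zeta > t) = \alpha Z_t(0)(1+o(1))$; specializing to $\nu_t = \delta_x$ and using $z_t(x,0) \sim \pi x e^{x-L_t(0)}$ (from $\sin(\pi x/L_t(0)) \sim \pi x/L_t(0)$ for fixed $x$) yields (\ref{scsurvival}).

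To pin down $\alpha$, I would establish that under $\P_{\nu_t}$ with $Z_t(0) \Rightarrow Z$ non-degenerate, an appropriately time-changed version of $(Z_t(s))$ converges in Skorokhod space to a CSBP $(Y_r)_{r \geq 0}$ with explicit branching mechanism $\psi$, started from $Z$. The key ingredients are Lemma~\ref{lem:Zexp} (controlling the drift so the limit is genuinely a CSBP), a second-moment / quadratic-variation computation identifying $\psi$ from the offspring variance together with the behavior of $z_t(\cdot,s)$ near the critical curve, and moment bounds yielding tightness. The event $\{\zeta > t\}$ translates in the limit into non-extinction of $(Y_r)$, which for the supercritical $\psi$ has probability $1 - \E[e^{-\alpha Z}]$, with $\alpha$ the largest positive root of $\psi$. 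To recover the uniform single-particle asymptotic required above, one bootstraps via the branching property: for each fixed $x$, starting a cluster of initial particles whose combined $Z_t(0)$ has a non-trivial limit produces, through independence of subtrees and the CSBP formula, the equivalent $q_t(x) \sim \alpha z_t(x,0)$.

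The main obstacle is the CSBP convergence itself. The test function $z_t(\cdot,s)$ depends on both $s$ and $t$, and the particle system interacts through the moving cutoff $L_t$, so standard CSBP-convergence schemes must be adapted. Identifying the branching mechanism $\psi$ requires tracking how much of $Z_t$ comes from particles close to the critical curve---where branching events produce disproportionately large jumps in $Z_t$---versus those far from it, and pinning down the exact constant $\alpha$ rather than only the multiplicative bounds of \cite{bbs} requires sharp second-moment control, likely obtained by a combination of careful truncations, stopping-time arguments, and many-to-one estimates near $L_t$.
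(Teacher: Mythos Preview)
Your overall architecture is sound and close to the paper's, but the order of the argument is reversed and your description of the CSBP limit contains a real misconception.

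The paper does not first establish a uniform single-particle asymptotic and then assemble part~(1) via the product over initial particles. Instead, it proves part~(1) directly for deterministic $\nu_t$ with $Z_t(0)\to z>0$: using the CSBP convergence (Theorem~\ref{CSBPthm}) together with the rough two-sided bounds of Lemma~\ref{survivalgen}, it shows that for $u$ large the events $\{\zeta>t\}$ and $\{Z_t((1-e^{-u})t)>y\}$ (and their complements) nearly coincide, so $\P_{\nu_t}(\zeta>t)\to P_z(\Xi(u)\to\infty)=1-e^{-\alpha z}$. Part~(2) then follows by your multiplication trick (stack $\lfloor 1/Z_t(0)\rfloor$ copies), and the random-$\nu_t$ case by Skorokhod representation. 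Your route---prove the single-particle asymptotic first, then Taylor-expand the product---would work, but the ``uniform'' single-particle statement you need is essentially part~(2) itself, so you end up proving the same thing in a different order. The paper's ordering is slightly cleaner because it avoids having to formulate and prove a uniformity statement for $q_t(x)$.

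The more important issue is your sketch of the CSBP convergence. The limiting branching mechanism is \emph{not} determined by ``a second-moment / quadratic-variation computation from the offspring variance''; the limit is Neveu's CSBP with $\Psi(q)=aq+\tfrac{2}{3}q\log q$, which has infinite variance. The $q\log q$ term arises from heavy-tailed jumps: rare particles that reach near the critical curve $L_t(s)$ and whose descendants then contribute a random amount to $Z_t$ with Cauchy-like tails (governed by the derivative-martingale limit $W$ of Lemma~\ref{neveuW}). The paper's proof (Sections~\ref{CSBPsec}--\ref{sec:csbp_proof}) decomposes particles according to whether they cross an auxiliary curve $L_{t,A}$, uses the Laplace-transform asymptotic~\eqref{eq:abelian} for $W$ to handle the jump contributions, and iterates over small time steps. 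Second-moment methods control the ``bulk'' part $Z_{t,A}$ but cannot identify $\Psi$ or the constant $\alpha=e^{-3a/2}$. Also, only finite-dimensional convergence is established (and needed); Skorokhod-space convergence is not proved.
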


\begin{remark}
The constant $\alpha$ in the statement of Theorem~\ref{survival} has the expression $\alpha = \pi^{-1}e^{-a_{\ref{eq:abelian}}-3/4}$, where $a_{\ref{eq:abelian}}$ is a constant related to the tail of the \emph{derivative martingale} of the branching Brownian motion and defined in Lemma~\ref{neveuW} below. 
\end{remark}

Note that (\ref{scsurvival}) improves upon (\ref{oldbound}) when the initial configuration has only a single particle.  Derrida and Simon \cite{ds07} had previously obtained (\ref{scsurvival}) by nonrigorous methods.

Theorem \ref{survival} applies when there is no particle at time zero that is close to $L_t(0)$.  This condition is important, here and throughout much of the paper, because it ensures that no individual particle at time zero has a high probability of having descendants alive at time $t$.  Theorem~\ref{survivex} below applies when the process starts with one particle near $L_t(0)$.  Here and throughout the rest of the paper, we denote by $q$ the extinction probability for a Galton-Watson process with offspring distribution $(p_k)_{k=0}^{\infty}$.  Note that Theorem \ref{survivex} implies that when $q = 0$ and the process starts from one particle near $L_t(0)$, the fluctuations in the extinction time are of the order $t^{2/3}$, which can also be seen from Theorem 2 in \cite{bbs}.

\begin{theorem}\label{survivex}
There is a function $\phi: \R \rightarrow (0,1)$ such that for all $x \in \R$, 
\begin{equation}\label{convphi}
\lim_{t \rightarrow \infty} \P_{L_t(0) + x}(\zeta \leq t) = \phi(x)
\end{equation}
and, more generally, for all $x \in \R$ and $v \in \R$, 
\begin{equation}\label{convphi2}
\lim_{t \rightarrow \infty} \P_{L_t(0)+x}(\zeta \leq t + vt^{2/3}) = \phi\Big(x - \frac{cv}{3}\Big).
\end{equation}
We have $\lim_{x \rightarrow -\infty} \phi(x) = 1$ and $\lim_{x \rightarrow \infty} \phi(x) = q$.  The function $\phi$ also satisfies $$\frac{1}{2} \phi'' - \phi' = \beta(\phi - f \circ \phi),$$ where $f(s) = \sum_{k=0}^{\infty} p_k s^k$ is the generating function for the offspring distribution $(p_k)_{k=1}^{\infty}$.  In fact, $\phi(x) = \psi(x+\log(\alpha\pi))$, where $\psi$ is the function from Lemma~\ref{neveuW} below and $\alpha$ is the constant from Theorem \ref{survival}.

\end{theorem}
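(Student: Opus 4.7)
The plan is to apply the Markov property at a slowly growing intermediate time $s_t \to \infty$ with $s_t = o(t^{1/3})$, and then invoke Theorem~\ref{survival} (Case~1) for the subsequent evolution. The key point is that on the time window $[0, s_t]$, because the starting position $L_t(0) + x$ is much larger than the typical BBM fluctuation scale, the process with absorption coincides with probability tending to $1$ with an unabsorbed BBM $(\tilde X_u(r))_{u \in \tilde N_r,\, r \leq s_t}$ of drift $-1$ started from $0$, via $X_u(r) = L_t(0) + x + \tilde X_u(r)$. This unabsorbed BBM is then controlled by its (critical) additive and derivative martingales, which is what will produce $\phi$ in the limit.

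The core calculation is the asymptotic $Z_t(s_t) \to_p \pi e^x D_\infty$. Writing $y_u := X_u(s_t) - L_t(s_t) = x + \tilde X_u(s_t) + o(1)$ (using $L_t(0) - L_t(s_t) = O(s_t t^{-2/3}) = o(1)$) and Taylor-expanding $\sin(\pi + h) = -h + O(h^3)$ at $h = \pi y_u / L_t(s_t)$, I would get
\[
z_t(X_u(s_t), s_t) = -\pi y_u e^{y_u} \big(1 + O(y_u^2 / L_t(s_t)^2)\big) \Ind_{y_u \in [-L_t(s_t), 0]}.
\]
On the event of Galton--Watson non-extinction, the rightmost particle of the unabsorbed BBM satisfies $\tilde R(s_t) \sim -\tfrac{3}{2} \log s_t \to -\infty$, so eventually all $y_u \leq 0$, and summing yields
\[
Z_t(s_t) = \pi e^x (D_{s_t} - x W^1_{s_t}) + o_p(1) \to_p \pi e^x D_\infty,
\]
where $W^1_s := \sum_u e^{\tilde X_u(s)} \to 0$ a.s.\ is the critical additive martingale and $D_s := -\sum_u \tilde X_u(s) e^{\tilde X_u(s)} \to D_\infty$ a.s.\ is the derivative martingale (Lemma~\ref{neveuW}). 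I then apply Theorem~\ref{survival} Case~1 to the process on $[s_t, t]$ with initial configuration $\nu_{s_t}$ and time horizon $t - s_t$: the hypotheses hold since on non-extinction $L_t(s_t) - R(s_t) = -x + \tfrac{3}{2} \log s_t + O_p(1) \to_p \infty$, and on extinction $\nu_{s_t}$ is empty so $\zeta \leq s_t$. Using $D_\infty = 0$ on extinction,
\[
\P_{L_t(0) + x}(\zeta \leq t) \to \E[\exp(-\alpha \pi e^x D_\infty)] =: \phi(x).
\]

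The remaining claims then follow easily: $\phi(-\infty) = \E[e^0] = 1$, $\phi(+\infty) = \P(D_\infty = 0) = q$, and $\phi(x) = \psi(x + \log(\alpha\pi))$ where $\psi(y) := \E[\exp(-e^y D_\infty)]$ is the function from Lemma~\ref{neveuW}. The ODE for $\phi$ follows from the FKPP PDE $u_t = \tfrac{1}{2} u_{xx} - u_x + \beta(f(u) - u)$ satisfied by $u(t, x) := \P_x(\zeta \leq t)$ (derived from the branching property by first-step analysis) by passing to the limit in the moving frame $x = L_t(0) + y$; the additional drift $L_t'(0) = \tfrac{c}{3} t^{-2/3} \to 0$ disappears. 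The shifted statement (\ref{convphi2}) then reduces to (\ref{convphi}) via continuity of $\phi$ and the expansion $L_{t + v t^{2/3}}(0) = L_t(0) + \tfrac{cv}{3} + o(1)$.

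The hardest part will be controlling the double limit $t \to \infty$, $s_t \to \infty$ in the computation of $Z_t(s_t)$: the Taylor expansion of $\sin$ degrades for particles near the critical curve, and the convergence $W^1_s \to 0$ is only almost sure, not $L^1$. I would address both by truncating the unabsorbed BBM via a barrier at a suitable slowly growing level and using second-moment (many-to-two) estimates to show that particles above the barrier contribute negligibly to $Z_t(s_t)$, combined with Neveu-type martingale estimates for the behavior of the derivative martingale along the truncated tree.
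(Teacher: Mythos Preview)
Your approach is correct and closely parallels the paper's, with one substantive difference: the paper stops particles \emph{spatially}, at the level $L_t(0)+x-y(t)$ with $y(t)\to\infty$ slowly, whereas you stop \emph{temporally} at time $s_t=o(t^{1/3})$. The spatial stopping is tailored to Lemma~\ref{neveuW}, which is stated precisely as $y e^{-y}K(y)\to W$ for the count $K(y)$ of particles reaching $-y$; this makes the computation of $Z_t(0)$ for the stopped configuration $\nu_t$ (namely $K_t$ particles at $L_t(0)+x-y(t)$) a one-line calculation, and a squeeze argument bracketing $\P_{L_t(0)+x}(\zeta\le t)$ between $\P_{\nu_t}(\zeta\le t-g(y(t)))$ and $\P_{\nu_t}(\zeta\le t)$ absorbs the spread of hitting times. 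Your temporal route instead identifies the limit via the additive and derivative martingales of the free BBM, which is equally valid but slightly more computation.

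Two remarks on your version. First, your concern about the Taylor error is overblown: under $s_t=o(t^{1/3})$ the leftmost particle of the free BBM is at $-2s_t+O(\log s_t)$, so every $|y_u|=O(s_t)=o(L_t(s_t))$ uniformly, and the relative Taylor error is $O(s_t^2/L_t(s_t)^2)$ times a quantity converging a.s.\ to $D_\infty$; no truncation or second-moment barrier argument is needed. Second, the final identification $\phi(x)=\psi(x+\log(\alpha\pi))$ with the $\psi$ of Lemma~\ref{neveuW} requires $D_\infty=W$, which the paper states in the remark following that lemma but explicitly neither uses nor proves; you would need to cite this separately (e.g.\ \cite{nev87}). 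The paper's route avoids this and also inherits the ODE for $\phi$ directly from the ODE for $\psi$ in Lemma~\ref{neveuW}, sidestepping your PDE-limit argument.
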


\subsection{The process conditioned on survival}

Our main goal in this paper is to understand the behavior of branching Brownian motion conditioned to survive for an unusually long time.  The results in this section can be viewed as the analogs of the theorem of Yaglom \cite{yaglom} for critical Galton-Watson processes conditioned to survive for a long time.

Proposition \ref{extinctiondist}, which is a straightforward consequence of Theorem \ref{survival}, gives the asymptotic distribution of the survival time for the process, conditional on $\zeta > t$.  We see that the amount of additional time for which the process survives is of the order $t^{2/3}$, and has approximately an exponential distribution.

\begin{proposition}\label{extinctiondist}
Suppose that for each $t > 0$, we have a deterministic initial configuration of particles $\nu_t$.  Suppose that, under $\P_{\nu_t}$, we have
\begin{equation}\label{maininitial}
\lim_{t \rightarrow \infty} Z_t(0) = 0, \hspace{.5in}\lim_{t \rightarrow \infty} \big(L_t(0) - R(0)\big) = \infty.
\end{equation}
Let $V$ have an exponential distribution with mean 1.  Then, under the conditional probability measures $\P_{\nu_t}( \: \cdot \:| \zeta > t)$, we have $t^{-2/3}(\zeta - t) \Rightarrow \frac{3}{c} V$ as $t \rightarrow \infty$.
\end{proposition}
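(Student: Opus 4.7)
The plan is to compute the conditional tail probability $\P_{\nu_t}(\zeta > t + v t^{2/3} \mid \zeta > t)$ for $v > 0$ and show it converges to $e^{-cv/3} = \P(3V/c > v)$; the cases $v \le 0$ are automatic since, conditioned on $\{\zeta > t\}$, the variable $t^{-2/3}(\zeta - t)$ is positive, matching the support of $3V/c$. Fix $v > 0$ and set $t' = t + v t^{2/3}$. By Taylor expansion, $L_{t'}(0) = L_t(0) + cv/3 + o(1)$, so in particular $L_{t'}(0) - R(0) \to \infty$. The idea is to apply Theorem~\ref{survival}(2) at both $t$ and $t'$ to obtain
\[
\frac{\P_{\nu_t}(\zeta > t')}{\P_{\nu_t}(\zeta > t)} \sim \frac{Z_{t'}(0)}{Z_t(0)},
\]
and then to prove that the right-hand side converges to $e^{-cv/3}$.

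The key step is the pointwise asymptotic
\[
\frac{z_{t'}(x,0)}{z_t(x,0)} = \frac{L_{t'}(0)}{L_t(0)} \cdot \frac{\sin(\pi x/L_{t'}(0))}{\sin(\pi x/L_t(0))} \cdot e^{L_t(0) - L_{t'}(0)} \longrightarrow e^{-cv/3},
\]
which must hold uniformly for $x \in [0, L_t(0) - r_t]$ with $r_t := L_t(0) - R(0) \to \infty$. The outer two factors produce $e^{-cv/3}$ directly; the task reduces to showing the sine ratio tends to $1$ uniformly. Writing $y = \pi x/L_t(0)$ and $\eta = y(L_{t'}(0) - L_t(0))/L_{t'}(0)$, a first-order expansion gives $\sin(y - \eta)/\sin(y) = 1 - \eta \cot(y) + O(\eta^2)$, so it suffices to verify $\eta \cot(y) \to 0$ uniformly in $y \in [0, \pi - \pi r_t/L_t(0)]$. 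This splits into the regimes $y \to 0$ (where $\cot(y) \sim 1/y$ but $\eta \sim y \cdot \textup{const}/L_{t'}(0)$ so the product is $O(1/L_{t'}(0))$), $y$ bounded away from $\{0,\pi\}$ (where $\cot(y)$ is bounded and $\eta = o(1)$), and $y \to \pi$ (where, writing $\xi = \pi - y$, one has $\cot(y) \sim -1/\xi$ but $\xi \geq \pi r_t/L_t(0)$, so $\eta \cot(y) = O(1/(L_{t'}(0) \cdot \xi)) = O(1/r_t) \to 0$). Once this uniform estimate is in hand, summing $Z_{t'}(0) = \sum_u z_t(X_u(0),0) \cdot (z_{t'}/z_t)(X_u(0))$ over the particles of $\nu_t$ yields $Z_{t'}(0)/Z_t(0) \to e^{-cv/3}$. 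In particular $Z_{t'}(0) \to 0$, so the hypotheses of Theorem~\ref{survival}(2) hold at time $t'$ as well, justifying the application above.

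The main obstacle is the uniform convergence of the sine ratio in the critical regime $x \to L_t(0)$, where both numerator and denominator vanish. The hypothesis $L_t(0) - R(0) \to \infty$, rather than the weaker $R(0) < L_t(0)$, is what keeps the correction $\eta \cot(y)$ under control; this is the same type of ``distance from the critical curve'' condition that pervades the paper, and it is what ensures particles are in the regime where $z_t$ behaves smoothly under small perturbations of $t$.
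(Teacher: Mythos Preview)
Your proof is correct and follows the same approach as the paper: apply Theorem~\ref{survival}(2) at both $t$ and $t' = t + vt^{2/3}$ and show $Z_{t'}(0)/Z_t(0) \to e^{-cv/3}$. The paper's three-line proof is terser---it asserts this ratio limit directly from the exponential factor via (\ref{tpt}) without writing out the sine-ratio analysis---whereas you supply the uniform estimate on $\sin(\pi x/L_{t'}(0))/\sin(\pi x/L_t(0))$ explicitly, which is the only non-obvious part and is indeed where the hypothesis $L_t(0)-R(0)\to\infty$ enters.
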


We are also able to get rather precise information regarding what the configuration of particles looks like at or near time $t$, conditional on the process surviving until time $t$.  Recall the definitions of $M(s)$ and $R(s)$ from (\ref{MRdef}).  Kesten proved (see (1.12) of \cite{kesten}) that for fixed $x > 0$, there is a positive constant $K_2$ such that
\begin{equation}\label{KestenM}
\lim_{t \rightarrow \infty} \P_x(M(t) > e^{K_2 t^{2/9} (\log t)^{2/3}} \, | \, \zeta > t) = 0.
\end{equation}
Kesten also showed (see (1.11) of \cite{kesten}) that there is a positive constant $K_3$ such that
\begin{equation}\label{KestenR}
\lim_{t \rightarrow \infty} \P_x(R(t) > K_3 t^{2/9} (\log t)^{2/3} \,| \, \zeta > t) = 0.
\end{equation}

Theorem \ref{larges} below provides sharper results regarding the behavior of the number of particles in the system and the position of the right-most particle near time $t$, when the process is conditioned to survive until time $t$.  Note that the time $s$ depends on $t$.
%, and $\Rightarrow$ denotes convergence in distribution as $t \rightarrow \infty$.  Also, 
%$f(t) \sim g(t)$ means $\lim_{t \rightarrow \infty} f(t)/g(t) = 1$, and 
%$f(t) \ll g(t)$ means $\lim_{t \rightarrow \infty} f(t)/g(t) = 0$.

\begin{theorem}\label{larges}
Suppose that for each $t > 0$, we have a deterministic initial configuration of particles $\nu_t$ such that (\ref{maininitial}) holds under $\P_{\nu_t}$.
Suppose $s \in [0, t]$.  Let $V$ have an exponential distribution with mean 1.  Under the conditional probability measures $\P_{\nu_t}(\: \cdot\:|\,\zeta > t)$, the following hold:
\begin{enumerate}
\item If $t^{-2/3}(t - s) \to \sigma \geq 0$, then
%\begin{equation}\label{M1as}
%t^{-2/9} \log M(s) \Rightarrow c \Big(a + \frac{3V}{c}\Big)^{1/3}
%\end{equation}
%and
%\begin{equation}\label{R1as}
%t^{-2/9} R(s) \Rightarrow c \Big(a + \frac{3V}{c}\Big)^{1/3}.
%\end{equation}
%Moreover, if $M^*(s)$ and $R^*(s)$ denote the left-hand sides of (\ref{M1as}) and (\ref{R1as}) respectively, then we have the joint convergence
\begin{equation}\label{J1as}
\big(t^{-2/3}(\zeta - t), t^{-2/9} \log M(s), t^{-2/9} R(s) \big) \Rightarrow \bigg( \frac{3V}{c}, \: c \Big(\sigma + \frac{3V}{c}\Big)^{1/3}, \: c \Big(\sigma + \frac{3V}{c}\Big)^{1/3} \bigg).
\end{equation}

\item If $t^{2/3} \ll t - s \ll t$, then letting $a(s,t) = ((t-s)/t)^{2/3}$ and $b(s,t) = c(t-s)^{1/3} - \log(t-s)$, we have
%\begin{equation}\label{M2as}
%\bigg( \frac{t-s}{t} \bigg)^{2/3} \big(\log M(s) - c (t - s)^{1/3} + \log (t - s) \big) \Rightarrow V
%\end{equation}
%and
%\begin{equation}\label{R2as}
%\bigg( \frac{t-s}{t} \bigg)^{2/3} \big(R(s) - c (t - s)^{1/3} + \log (t - s) \big) \Rightarrow V.
%\end{equation}
%Moreover, if $M^*(s)$ and $R^*(s)$ denote the left-hand sides of (\ref{M2as}) and (\ref{R2as}) respectively, then we have the joint convergence
\begin{equation}\label{J2as}
\big(t^{-2/3}(\zeta - t), a(s,t)(\log M(s) - b(s,t)), a(s,t)(R(s) - b(s,t)) \big) \Rightarrow \bigg( \frac{3V}{c}, V, V \bigg).
\end{equation}
\end{enumerate}
\end{theorem}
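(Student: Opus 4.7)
The marginal limit $t^{-2/3}(\zeta - t) \Rightarrow 3V/c$ is already provided by Proposition~\ref{extinctiondist}. The core of the theorem is to show that, under the conditioning $\zeta > t$, both $R(s)$ and $\log M(s)$ concentrate around the common deterministic quantity
\[
L_\zeta(s) - \log(\zeta - s) = c(\zeta - s)^{1/3} - \log(\zeta - s),
\]
with $\zeta$ itself random and satisfying $\zeta - t \sim (3V/c)t^{2/3}$. A Taylor expansion then recovers both regimes: in part~1, where $\zeta - s = (\sigma + 3V/c + o(1))t^{2/3}$, the first term is of order $t^{2/9}$ and dominates the $O(\log t)$ log correction after scaling by $t^{-2/9}$; in part~2, where $t - s \gg t^{2/3}$, one has $c(\zeta-s)^{1/3} = c(t-s)^{1/3} + V/a(s,t) + o(1/a(s,t))$ and $\log(\zeta-s) = \log(t-s) + o(1)$, giving exactly $b(s,t) + V/a(s,t)$ in the notation of~(\ref{J2as}).

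The approach rests on two pillars. The first is a Markov property argument at time $s$: by the independence of descendant lines,
\[
\P_{\nu_t}\big(\zeta > t + vt^{2/3} \,\big|\, \F_s\big) = 1 - \prod_{u \in N_s} \big(1 - \P_{X_u(s)}(\zeta > (t-s) + vt^{2/3})\big),
\]
where each factor on the right is controlled by Theorem~\ref{survival} (for particles well below the critical curve at horizon $t + vt^{2/3}$) or Theorem~\ref{survivex} (for particles near it). Varying $v$ and integrating this against any $\F_s$-measurable functional gives the joint law of $\zeta$ and the configuration at time $s$.

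The second pillar is the concentration of $(R(s), \log M(s))$ around $L_\zeta(s) - \log(\zeta - s)$. An upper bound on $R(s)$ comes from BBM extremal estimates: a rightmost particle too far above this level would be incompatible with $\zeta$ having its typical value. A lower bound on $R(s)$ is forced by the survival requirement $\zeta > t$, which rules out too-small rightmost particles. The asymptotic identity $\log M(s) \approx R(s)$ reflects the approximately exponential particle profile behind the front, consistent with the $e^{x - L_t(s)}$ weight implicit in the definition of $Z_t$: heuristically $M(s) \asymp \int_0^{R(s)} e^{x}\,dx \asymp e^{R(s)}$ up to sub-exponential corrections. The $-\log(\zeta - s)$ shift is the absorbed-BBM analogue of the Bramson log correction for the BBM maximum.

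The main obstacle is the precision required in part~2, where the $-\log(t-s)$ correction is not absorbed by a growing scale. Capturing it rigorously seems to demand $O(1)$-accurate control of the survival probability $\P_x(\zeta > \tau)$ for $x$ close to the critical curve (as encoded in the function $\phi$ of Theorem~\ref{survivex}), together with matching fine density estimates for particles below the front that go beyond the sine-profile second-moment bounds from~\cite{bbs}. I expect that this refined density analysis, and the identification of the precise constant in the absorbed-BBM log shift, is the principal new technical input of the paper toward proving Theorem~\ref{larges}.
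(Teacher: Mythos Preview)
Your heuristic target is right: under the conditioning, both $R(s)$ and $\log M(s)$ should concentrate near $c(\zeta-s)^{1/3}-\log(\zeta-s)$, and the Taylor expansions you sketch give exactly the limits in (\ref{J1as}) and (\ref{J2as}). But the mechanism you propose for establishing this concentration has a genuine gap, and your guess about where the technical work lies is off.

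Applying the Markov property at time $s$ expresses $\P_{\nu_t}(\zeta>t\mid\F_s)$ as a functional of the configuration at $s$; it does not, by itself, tell you what that configuration looks like under $\P_{\nu_t}(\,\cdot\mid\zeta>t)$. Your ``upper bound via extremal estimates, lower bound via survival requirement'' is too coarse to pin $R(s)$ to within the $O(1)$ precision you correctly identify as necessary in part~2: survival is governed by $Z_t(s)$, not $R(s)$, and a large $Z_t(s)$ is compatible with many values of $R(s)$. Likewise, the $\log M(s)\approx R(s)$ step already presupposes a sine/exponential density profile up to height $\approx R(s)$---i.e.\ exactly the kind of configuration result (Proposition~\ref{configpropnew}) you want to invoke---but that result requires a time horizon $t_n$ for which $Z_{t_n}(0)$ is tight, and under the conditioning no deterministic horizon works.

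The paper's route bypasses this circularity with an extra device. One introduces an intermediate time $r<s$ (with $r=s-t^{2/3}$ in regime~1 and $r=2s-t$ in regime~2) and the $\F_r$-measurable random time $T(r)$ from (\ref{Tdef}), which is shown (Lemmas~\ref{tightextinct} and \ref{mainLRlemma}, together with Lemma~\ref{gensequence}) to satisfy $t^{-2/3}(T(r)-(\zeta-r))\to_p 0$ under the conditional law. Since $Z_{r+T(r)}(r)=1/2$ by construction, the hypotheses of Proposition~\ref{configpropnew} hold with the configuration at time $r$ as initial data and $T(r)$ as the (random) horizon; Lemma~\ref{MGsurvival} shows $\P_{\nu_t}(\zeta>t\mid\F_r)\to_p 1$, so the conditioning can be dropped when applying it. Proposition~\ref{configpropnew} then delivers, directly and with $O(1)$ constants, $\log M(s)=L_{T(r)}(s-r)-3\log L_{T(r)}(s-r)+O_p(1)$ and $R(s)=L_{T(r)}(s-r)-\log T(r)+O_p(1)$. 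Substituting $T(r)=(t-r)+t^{2/3}V_t$ with $V_t\Rightarrow 3V/c$ and Taylor-expanding finishes both parts. So the ``principal new technical input'' is not a refinement of the density estimates---Proposition~\ref{configpropnew} is essentially imported from \cite{bbs3}---but rather the predictor $T(\cdot)$ and the lemmas controlling it, which let one feed a \emph{random}, configuration-dependent horizon into the existing configuration machinery.
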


Part 1 of Theorem \ref{larges} with $\sigma = 0$ implies that if $t - s \ll t^{2/3}$, and in particular if $s = t$, then conditional on $\zeta > t$, we have $t^{-2/9} \log M(s) \Rightarrow (3c^2)^{1/3} V^{1/3}$ and $t^{-2/9} R(s) \Rightarrow (3c^2)^{1/3} V^{1/3}$.  When we start with one particle at $x$, these results improve upon (\ref{KestenM}) and (\ref{KestenR}).  Note also that when $t^{2/3} \ll t - s \ll t/(\log t)^{3/2}$, the $\log(t - s)$ term can be dropped from $b(s,t)$.

\begin{remark}
The assumption in Proposition~\ref{extinctiondist} and Theorem~\ref{larges} that the initial configurations are deterministic is important.  Suppose we allow the $\nu_t$ to be random and assume, similar to part 1 of Theorem \ref{survival}, that under $\P_{\nu_t}$, we have $Z_t(0) \rightarrow_p 0$ and $L_t(0) - R(0) \rightarrow_p 0$.  To see that the conclusions of Proposition~\ref{extinctiondist} and Theorem~\ref{larges} can fail, consider the example in which $\nu_t$ consists of a single particle at 1 with probability $1 - 1/t$ and a single particle at $2L_t$ with probability $1/t$.  Conditional on the initial particle being at 1, the probability that the process survives until time $t$ is approximately $\alpha \pi e^{1 - ct^{1/3}}$ by (\ref{scsurvival}), while conditional on the initial particle being at $2L_t(0)$, the probability of survival until time $t$ is approximately $1 - q$.  Therefore, conditional on survival, with overwhelming probability the initial particle was at $2L_t(0)$, and on this event, the configuration of particles at time $t$ will be quite different from what is predicted by Theorem~\ref{larges}.
\end{remark}

\begin{remark}
It is possible to define the process conditioned to survive \emph{for all time}, through a certain \emph{spine decomposition}, which is classical for branching processes. One can easily convince oneself that in this process the number of particles at time $t$ is of the order $\exp(O(t^{1/2}))$, which is of a very different magnitude from the $\exp(O(t^{2/9}))$ obtained in the above theorems. This is in stark contrast to the classical case of (critical) Galton-Watson processes conditioned to survive until time $t$ or forever, where the number of particles at time $t$ is of the order of $t$ in both cases (see e.g.~\cite{lpp}); in fact, both are related through a certain change of measure.
%It might be interesting to continue this study with other branching Markov processes or more generally, other Markov processes in infinite-dimensional, unbounded space.
\end{remark}

\section{Tools, heuristics, and further results}

In this section, we describe some of the tools required to prove the main results stated in the introduction, along with the heuristics that allow us to see why these results are true.  We also state some further results (Theorems \ref{CSBPcond}, \ref{meds}, and \ref{condconfigprop}) which provide information about the behavior of the branching Brownian motion during the time interval $[\delta t, (1 - \delta)t]$, where $\delta > 0$ is small, conditioned on survival of the process until time $t$.

Theorems \ref{survival} and \ref{survivex} and Proposition \ref{extinctiondist} depend heavily on a connection between branching Brownian motion with absorption and continuous-state branching processes.  This connection is explained in Section~\ref{CSBPintro}, where Theorems~\ref{CSBPthm} and \ref{CSBPcond} are stated. To prepare for the proof of Theorem~\ref{larges}, we present in Section~\ref{sec:particle_configurations} a slight generalization of a result on particle configurations from \cite{bbs3}, which is Proposition~\ref{configpropnew}. We also state in that section two more results complementing Theorem~\ref{larges}, namely Theorems \ref{meds} and \ref{condconfigprop}. To be able to apply Proposition~\ref{configpropnew} for proving Theorem~\ref{larges}, we develop a method which allows us to predict the extinction time starting from an arbitrary initial configuration. This method is outlined in Section~\ref{sec:predicting}. Section~\ref{sec:descendants} recalls a result on the number of descendants of a single particle in branching Brownian motion with absorption, which is used in the proof of Theorem~\ref{CSBPthm}. Finally, Section~\ref{sec:organization} explains the organization of the rest of the paper.

\subsection{Connections with continuous-state branching processes}\label{CSBPintro}

The primary tool that allows us to improve upon previous results is a connection between branching Brownian motion with absorption and continuous-state branching processes.  This connection is a variation of a result of Berestycki, Berestycki, and Schweinsberg \cite{bbs2}, who considered branching Brownian motion with absorption in which the drift $\mu$ was slightly supercritical and was chosen so that the number of particles in the system remained approximately stable over the longest possible time.  They showed that under a suitable scaling, the number of particles in branching Brownian motion with absorption converges to a continuous-state branching process with jumps.  The intuition behind why we get a jump process in the limit is that, on rare occasions, a particle will move unusually far to the right.  Many descendants of this particle will then be able to survive, because they will avoid being absorbed at zero.  Such events can lead to a large rapid increase in the number of particles, and such events become instantaneous jumps in the limit as $t \rightarrow \infty$. To prove the main results of the present paper, we will need to establish a version of this result when $\mu = 1$, so that the drift is critical. 

\paragraph{Continuous-state branching processes.}
A continuous-state branching process is a $[0, \infty]$-valued Markov process $(\Xi(t), t \geq 0)$ whose transition functions satisfy the branching property $p_t(x + y, \: \cdot) = p_t(x, \: \cdot) * p_t(y, \: \cdot),$ which means that the sum of two independent copies of the process started from $x$ and $y$ has the same finite-dimensional distributions as the process started from $x + y$.  It is well-known that continuous-state branching processes can be characterized by their branching mechanism, which is a function $\Psi: [0, \infty) \rightarrow \R$.  If we exclude processes that can make an instantaneous jump to $\infty$, the function $\Psi$ is of the form $$\Psi(q) = \gamma q + \beta q^2 + \int_0^{\infty} (e^{-qx} - 1 + qx \Ind_{x \leq 1}) \: \nu(dx),$$ where $\gamma \in \R$, $\beta \geq 0$, and $\nu$ is a measure on $(0, \infty)$ satisfying $\int_0^{\infty} (1 \wedge x^2) \: \nu(dx) < \infty$.  If $(\Xi(t), t \geq 0)$ is a continuous-state branching process with branching mechanism $\Psi$, then for all $\lambda \geq 0$,
\begin{equation}\label{csbpLaplace}
E[e^{-\lambda \Xi(t)} \,| \, \Xi_0 = x] = e^{-x u_t(\lambda)},
\end{equation}
where $u_t(\lambda)$ can be obtained as the solution to the differential equation
\begin{equation}\label{diffeq}
\frac{\partial}{\partial t} u_t(\lambda) = -\Psi(u_t(\lambda)), \hspace{.5in} u_0(\lambda) = \lambda.
\end{equation}
We will be interested here in the case
\begin{equation}\label{Psidef}
\Psi(q) = \Psi_{a,b}(q) = aq + bq \log q
\end{equation}
for $a \in \R$ and $b > 0$. 
It is not difficult to solve (\ref{diffeq}) to obtain
\begin{equation}\label{utlambda}
u_t(\lambda) = \lambda^{e^{-bt}} e^{a(e^{-bt} - 1)/b}.
\end{equation} 
This process was first studied by Neveu \cite{nev92} when $a = 0$ and $b = 1$. It is therefore also called \emph{Neveu's continuous state branching process.}

\paragraph{Relation with branching Brownian motion.}

The following result is the starting point in the study of branching Brownian motion with absorption at critical drift. Note that in contrast to the case of weakly supercritical drift considered in \cite{bbs2}, a nonlinear time change appears here.

\begin{theorem}\label{CSBPthm}
Suppose that for each $t > 0$, we have a possibly random initial configuration of particles $\nu_t$.  Suppose that, under $\P_{\nu_t}$, we have $Z_t(0) \Rightarrow Z$ and $L_t(0) - R(0) \rightarrow_p \infty$ as $t \rightarrow \infty$.
%Set $a=\frac{2}{3}(a_{\ref{eq:abelian}}+\log \pi)+\frac{1}{2}$, where $a_{\ref{eq:abelian}}$ is the constant from Lemma~\ref{neveuW} below. Then
Then there exists $a \in \R$ such that
the finite-dimensional distributions of the processes $$(Z_t((1 - e^{-u}) t), u \geq 0),$$ under $\P_{\nu_t}$, converge as $t \rightarrow \infty$ to the finite-dimensional distributions of a continuous-state branching process $(\Xi(u), u \geq 0)$ with branching mechanism $\Psi_{a,2/3}(q) = aq + \frac{2}{3} q \log q$, whose distribution at time zero is the distribution of $Z$.
\end{theorem}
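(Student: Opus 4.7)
The plan is to establish convergence of the finite-dimensional Laplace functionals of $(Z_t((1-e^{-u})t))_{u \geq 0}$ to those of Neveu's CSBP with branching mechanism $\Psi_{a,2/3}$. The starting observation is that the nonlinear time change $s = (1-e^{-u})t$ is compatible with the Markov structure of BBM: if we restart at $s_1 = (1-e^{-u_1})t$ with remaining horizon $t' = te^{-u_1}$, then the corresponding change of time for the restarted process, $r = (1-e^{-v})t'$, satisfies $s_1 + r = (1 - e^{-(u_1 + v)})t$, and $L_t(s_1 + r) = L_{t'}(r)$ since $L_t$ depends only on $t-s$. Hence, by the Markov property of BBM and the semigroup property of the CSBP, finite-dimensional convergence will follow from one-dimensional convergence plus an induction using the tower property: assuming
\begin{equation*}
\E_{\nu_t}\bigl[\exp(-\lambda Z_t((1-e^{-u})t))\bigr] \to \E\bigl[\exp(-Z\,u_u(\lambda))\bigr], \qquad \lambda \geq 0,
\end{equation*}
with $u_u(\lambda) = \lambda^{e^{-2u/3}} \exp\bigl(\tfrac{3a}{2}(e^{-2u/3}-1)\bigr)$, the joint Laplace transform at times $u_1 < \cdots < u_k$ unfolds from the innermost step outward by conditioning on $\F_{s_j}$ and applying the single-time statement to the restarted process.

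The first step is to reduce the one-dimensional convergence to a single initial particle. The branching property of BBM gives $Z_t(s) = \sum_i Z_t^{(i)}(s)$, where the $Z_t^{(i)}(s)$ are independent contributions from each initial particle at $x_i$, so
\begin{equation*}
\E_{\nu_t}[\exp(-\lambda Z_t(s))] = \E\Bigl[\textstyle\prod_i \E_{x_i}\bigl[\exp(-\lambda Z_t(s))\bigr]\Bigr].
\end{equation*}
If one establishes the single-particle asymptotic
\begin{equation*}
-\log \E_x\bigl[\exp(-\lambda Z_t((1-e^{-u})t))\bigr] = z_t(x,0)\, u_u(\lambda)\, (1+o(1))
\end{equation*}
uniformly in $x$ with $L_t(0) - x \to \infty$, then the product becomes $\exp(-Z_t(0)\, u_u(\lambda))(1+o(1))$, and the hypothesis $Z_t(0) \Rightarrow Z$ combined with dominated convergence yields the desired limit.

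The core technical work is the single-particle Laplace transform, which I would attack by adapting the strategy of \cite{bbs2}, where a similar CSBP limit was obtained in the weakly supercritical regime. The main ingredients should be: (i) Lemma~\ref{lem:Zexp} (near-martingale property of $Z_t$), which captures the infinitesimal drift of the Laplace exponent and accounts for the linear $aq$ term; (ii) a decomposition of $Z_t$ into contributions from "typical" particles, whose fluctuations average out diffusively, and rare "successful" particles whose descendants reach a neighborhood of $L_t(s)$ and produce atypically large jumps in $Z_t$, yielding the $\tfrac{2}{3} q \log q$ term; (iii) the scaling $L_t(s) = L_t(0) e^{-u/3}$ under the time change, which combined with the exponential dependence of $z_t$ on the distance to $L_t$ is what makes the $b = 2/3$ coefficient emerge. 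The additive constant $a$ is identified by matching with the tail asymptotics of the derivative martingale recorded in Lemma~\ref{neveuW}.

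The hard part is step (ii): controlling the Lévy jump measure $\nu(dx) \propto x^{-2}\,dx$ of Neveu's CSBP at the level of the BBM. One needs sharp estimates on the probability that a single particle has a descendant near $L_t(s)$, uniformly in the starting position and in $t$, since these rare events are exactly what drive the logarithmic branching mechanism. Once these estimates are in hand, the infinitesimal evolution of the Laplace exponent of $Z_t$ matches the ODE (\ref{diffeq}) with $\Psi = \Psi_{a,2/3}$, and the remainder of the argument reduces to standard Laplace-functional manipulations together with the time-change compatibility noted at the outset.
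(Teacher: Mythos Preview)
Your strategy is essentially the paper's: reduce to one-dimensional convergence via the Markov property, decompose particles according to whether they hit a barrier just below $L_t(\cdot)$, identify the jump contribution through Lemma~\ref{neveuW}, and show the Laplace exponent evolves along the ODE~\eqref{diffeq}. Two pieces of machinery you have glossed over are, however, where most of the work lies.

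First, the reduction to finite-dimensional marginals is not free: to restart the one-dimensional argument at time $s_1=(1-e^{-u_1})t$ you must know that the configuration at $s_1$ again satisfies the hypotheses, in particular $L_t(s_1)-R(s_1)\to_p\infty$. This does not follow from one-dimensional convergence alone; the paper obtains it from part~2 of Proposition~\ref{configpropnew}. Second, your proposed single-particle asymptotic $-\log\E_x[e^{-\lambda Z_t((1-e^{-u})t)}]=z_t(x,0)\,u_u(\lambda)(1+o(1))$ over the \emph{full} interval, with a purely multiplicative error, is not what one can prove directly. What is actually accessible (Proposition~\ref{prop:csbp_small_step}) is an estimate over a \emph{short} step $\theta$ in the $u$-variable, with an additive error $O(A\,y_t(x,r))$ coming from a barrier placed at $L_{t,A}=L_t-A$; one must then let $A\to\infty$ slowly and take $\theta=\theta(A)\to 0$ with $\theta A^2\to 0$. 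Passing from small steps to the full interval is done by an Euler-scheme sandwich (Lemma~\ref{lem:csbp_lambda_delta}), and the additive $Y_t$ errors are absorbed via good events $G_k$ ensuring $R(t_k)\le L_{t,A}(t_k)$ and $Y_t(t_k)\le Z_t(t_k)/a_t$ at each discrete time, whose probability is again controlled through Proposition~\ref{configpropnew}. Your sentence ``the infinitesimal evolution of the Laplace exponent matches the ODE'' is the right intuition, but turning it into a proof requires precisely this double limit and discretisation, which you have not spelled out.
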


The strategy for proving Theorem~\ref{CSBPthm} will be similar to the one followed in \cite{bbs2}, but the proof is more involved due to the time inhomogeneity emerging in the analysis as a result of the non-linear time change. Yet, thanks to the introduction of several new ideas, we were able to significantly reduce the length of the proof. 

\begin{remark}
The constant $a$ in the statement of Theorem~\ref{CSBPthm} has the expression $a=\frac{2}{3}(a_{\ref{eq:abelian}}+\log \pi)+\frac{1}{2}$, with $a_{\ref{eq:abelian}}$ the constant defined in Lemma~\ref{neveuW} below. 
\end{remark}

\begin{remark} To understand the time change, let $s$ denote the original time scale on which the branching Brownian motion is defined, and let $u$ denote the new time parameter under which the process will converge to a continuous-state branching process.  From \cite{bbs2}, we know that the jumps in the process described above will happen at a rate proportional to $L_t(s)^{-3}$ or, equivalently, proportional to $(t - s)^{-1}$.  This corresponds to the time scaling by $(\log N)^3$ in \cite{bbs2}.  Therefore, to get a time-homogeneous limit, we need to set $du = (t - s)^{-1} \: ds.$  Integrating this equation gives
$$u = \int_0^u \: dv = \int_0^s (t - r)^{-1} \: dr = \log \bigg( \frac{t}{t-s} \bigg).$$  Rearranging, we get $s = (1 - e^{-u}) t$, which is the time change that appears in Theorem \ref{CSBPthm}.
\end{remark}

\paragraph{The probability of survival.}
Let $(\Xi(u), u \geq 0)$ be the continuous-state branching process that appears in Theorem \ref{CSBPthm}.  It follows from well-known criteria due to Grey \cite{grey} that $(\Xi(u), u \geq 0)$ neither goes extinct nor explodes in finite time.  That is, if $\Xi(0) \in (0, \infty)$, then $P(\Xi(u) \in (0, \infty) \mbox{ for all }u \geq 0) = 1$.
Let \begin{equation}\label{alphadef}
\alpha = e^{-3a/2}.
\end{equation}
The process $((e^{-\alpha \Xi(u)}), u \geq 0)$ is a martingale taking values in $(0, 1)$, as can be seen either by observing that $u_t(\alpha) = \alpha$ for all $t \geq 0$ and making a direct calculation using (\ref{csbpLaplace}), or by observing that $\Psi(\alpha) = 0$ and following the discussion on p.~716 of \cite{bfm08}.  By the Martingale Convergence Theorem, this martingale converges to a limit, and it is not difficult to see that the only possible values for the limit are $0$ and $1$.  Therefore, using $P_x$ to denote probabilities when $\Xi(0) = x$, we have, as noted in \cite{bfm08},
\begin{equation}\label{CSBPsurvival}
P_x \Big( \lim_{u \rightarrow \infty} \Xi(u) = \infty \Big) = 1 - e^{-\alpha x}, \hspace{.3in} P_x \Big( \lim_{u \rightarrow \infty} \Xi(u) = 0 \Big) = e^{-\alpha x}.
\end{equation}
As can be guessed from Theorem \ref{CSBPthm}, the event that $\lim_{u \rightarrow \infty} \Xi_u = \infty$ corresponds to the event that the branching Brownian motion survives until time $t$, and this correspondence leads to Theorem \ref{survival}. Note that the constant $\alpha$ in Theorem \ref{survival} and the constant $a$ in the definition of the continuous-state branching process in Theorem \ref{CSBPthm} are related by the formula (\ref{alphadef}).

\paragraph{Conditioning on survival.}
To make a connection between continuous-state branching processes and branching Brownian motion conditioned on survival until time $t$, we need to consider the continuous-state branching process conditioned to go to infinity.  Let $(\Xi(u), u \geq 0)$ be a continuous-state branching process with branching mechanism $\Psi(q) = aq + \frac{2}{3} q \log q$, started from $\Xi(0) = x$.  Bertoin, Fontbona, and Martinez \cite{bfm08} interpreted this process as describing a population in which a random number (possibly zero) of so-called prolific individuals have the property that their number of descendants in the population at time $u$ tends to infinity as $u \rightarrow \infty$.  The number $N$ of such prolific individuals at time zero has a Poisson distribution with mean $\alpha x$, which is consistent with Theorem \ref{survival}.  As noted in Section~3 of \cite{bfm08}, the branching property entails that $(\Xi(u), u \geq 0)$ can be decomposed as the sum of $N$ independent copies of a process $(\Phi(u), u \geq 0)$, which describes the number of descendants of a prolific individual, plus a copy of the original process conditioned to go to zero as $u \rightarrow \infty$, which accounts for the descendants of the non-prolific individuals.  Conditioning on the event $\lim_{u \rightarrow \infty} \Xi(u) = \infty$ is the same as conditioning on $N \geq 1$.  Furthermore, as $x \rightarrow 0$, the conditional probability that $N = 1$ given $N \geq 1$ tends to one.  Consequently, if we condition on $\lim_{u \rightarrow \infty} \Xi(u) = \infty$ and then let $x \rightarrow 0$, we obtain in the limit the process $(\Phi(u), u \geq 0)$.  Therefore, the process $(\Phi(u), u \geq 0)$ can be interpreted as the continuous-state branching process started from zero but conditioned to go to infinity as $u \rightarrow \infty$.  See \cite{bkm11, fm19} for further developments in this direction.  The following result, which we will deduce from Theorem~\ref{CSBPthm}, describes the finite-dimensional distributions of the branching Brownian motion with absorption, conditioned to survive for an unusually long time.

\begin{theorem}\label{CSBPcond}
Suppose that for each $t > 0$, we have a deterministic initial configuration of particles $\nu_t$ such that (\ref{maininitial}) holds under $\P_{\nu_t}$. Then the finite-dimensional distributions of $(Z_t((1 - e^{-u})t), u \geq 0),$ under the conditional probability measures $\P_{\nu_t}(\:\cdot\:|\,\zeta > t)$, converge as $t \rightarrow \infty$ to the finite-dimensional distributions of $(\Phi(u), u \geq 0)$.
\end{theorem}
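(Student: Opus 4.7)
The plan is to combine Theorem~\ref{CSBPthm}, Theorem~\ref{survival}(2), and the Markov property applied at the largest time $s_k = (1-e^{-u_k})t$ to verify convergence of joint Laplace transforms, which is enough for finite-dimensional convergence. Fix $0 < u_1 < \cdots < u_k$ and $\lambda_1, \ldots, \lambda_k \geq 0$, and set $s_i := (1-e^{-u_i})t$.

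The first main ingredient is a \emph{survival approximation at $s_k$}. By the branching property,
\begin{equation*}
\P_{\nu_t}(\zeta > t \mid \F_{s_k}) = 1 - \prod_{u \in N_{s_k}}\bigl(1 - \P_{X_u(s_k)}(\zeta > t - s_k)\bigr).
\end{equation*}
Since $L_t(s_k) = L_{t-s_k}(0)$ gives $z_t(\cdot, s_k) = z_{t-s_k}(\cdot, 0)$, Theorem~\ref{survival}(2) applied to each single-particle configuration yields $\P_{X_u(s_k)}(\zeta > t - s_k) \sim \alpha z_t(X_u(s_k), s_k)$ as $z_t(X_u(s_k), s_k) \to 0$. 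By Theorem~\ref{CSBPthm} applied to $\nu_t$ (which has $Z \equiv 0$), one has $Z_t(s_k) \to_p 0$, and since each $z_t$-summand is bounded by $Z_t(s_k)$, also $\max_u z_t(X_u(s_k), s_k) \to_p 0$. Combining with the Taylor expansion $\log(1 - p) = -p + O(p^2)$ and the uniform upper bound (\ref{oldbound}) for the quadratic remainder, I would establish
\begin{equation}\label{eq:planA}
\E_{\nu_t}\bigl[e^{-\sum_i \lambda_i Z_t(s_i)} \Ind_{\zeta > t}\bigr] = \E_{\nu_t}\bigl[e^{-\sum_i \lambda_i Z_t(s_i)} (1 - e^{-\alpha Z_t(s_k)})\bigr] + o(Z_t(0)).
\end{equation}

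The second main ingredient is a \emph{first-order Laplace expansion}. Iterating (\ref{csbpLaplace}) through the Markov property shows that for the CSBP $\Xi$ with branching mechanism $\Psi_{a,2/3}$,
\begin{equation*}
\E\bigl[e^{-\sum_i \lambda_i \Xi(u_i)} \bigm| \Xi(0) = x\bigr] = e^{-x H(\vec\lambda)}, \qquad H(\vec\lambda) := u_{u_1}\bigl(\lambda_1 + u_{u_2 - u_1}(\lambda_2 + \cdots + u_{u_k - u_{k-1}}(\lambda_k))\bigr).
\end{equation*}
Fix $z_0 > 0$, let $n_t := \lfloor z_0 / Z_t(0) \rfloor$ (well-defined for large $t$ because $L_t(0) - R(0) \to \infty$ forces $Z_t(0) > 0$ eventually), and let $\nu_t^{(n_t)}$ denote the concatenation of $n_t$ disjoint independent copies of $\nu_t$; then $Z_t^{\nu_t^{(n_t)}}(0) \to z_0$ while $R(0)$ is unchanged, so Theorem~\ref{CSBPthm} applies, and by the branching property $(\E_{\nu_t}[e^{-\sum_i \lambda_i Z_t(s_i)}])^{n_t} = \E_{\nu_t^{(n_t)}}[e^{-\sum_i \lambda_i Z_t(s_i)}] \to e^{-z_0 H(\vec\lambda)}$. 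Using $\E_{\nu_t}[e^{-\sum_i \lambda_i Z_t(s_i)}] \to 1$ (from Theorem~\ref{CSBPthm} applied to $\nu_t$) and ruling out $n_t(1 - \E) \to \infty$ (which would force the product to $0$), one deduces
\begin{equation}\label{eq:planB}
\frac{1 - \E_{\nu_t}[e^{-\sum_i \lambda_i Z_t(s_i)}]}{Z_t(0)} \to H(\vec\lambda).
\end{equation}

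To conclude, rewrite the right-hand side of (\ref{eq:planA}) as $\E_{\nu_t}[e^{-\sum_i \lambda_i Z_t(s_i)}] - \E_{\nu_t}[e^{-\sum_{i < k}\lambda_i Z_t(s_i) - (\lambda_k + \alpha) Z_t(s_k)}]$, apply (\ref{eq:planB}) to each term (the second with $\vec\lambda^{(+\alpha)} := (\lambda_1, \ldots, \lambda_{k-1}, \lambda_k + \alpha)$), and divide by $\P_{\nu_t}(\zeta > t) \sim \alpha Z_t(0)$ from Theorem~\ref{survival}(2) to obtain $\E_{\nu_t}[e^{-\sum_i \lambda_i Z_t(s_i)} \mid \zeta > t] \to \alpha^{-1}(H(\vec\lambda^{(+\alpha)}) - H(\vec\lambda))$. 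The identical computation for $\Xi$ conditioned on $\lim_u \Xi(u) = \infty$ from initial value $x$, using $\P(\lim_u \Xi(u) = \infty \mid \F_{u_k}) = 1 - e^{-\alpha \Xi(u_k)}$ (from (\ref{CSBPsurvival}) and the Markov property of $\Xi$), and then letting $x \to 0$ per the spine description of $\Phi$ recalled before the theorem, identifies this limit with $\E[e^{-\sum_i \lambda_i \Phi(u_i)}]$. The main obstacle is promoting the pointwise estimates of the first step to the $L^1$-type bound (\ref{eq:planA}): since Neveu's CSBP has infinite mean, the rescaled variables $Z_t(s_k)/Z_t(0)$ are not uniformly integrable, so atypical events where a single particle produces a large $z_t$-value cannot be dismissed by a naive Markov bound. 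Handling them will require a uniform-in-$x$ version of the asymptotic $\P_x(\zeta > t)/z_t(x, 0) \to \alpha$, combined with the upper bound (\ref{oldbound}) and a careful decomposition of the event $\{\max_u z_t(X_u(s_k), s_k) \geq \eta\}$.
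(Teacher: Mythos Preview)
Your step \eqref{eq:planB} and the subsequent identification of the limit $\alpha^{-1}(H(\vec\lambda^{(+\alpha)})-H(\vec\lambda))$ with $\E[e^{-\sum_i\lambda_i\Phi(u_i)}]$ are both correct and elegant; the observation that $u_{\cdot}(\alpha)=\alpha$ makes $H(\vec\lambda,\alpha)=H(\vec\lambda^{(+\alpha)})$ independent of the extra time is exactly the right algebraic fact. However, \eqref{eq:planA} has a genuine gap that your proposed resolution does not close. The problematic event is not $\{\max_u z_t(X_u(s_k),s_k)\ge\eta\}$ but rather $\{R(s_k)\ge L_t(s_k)\}$: a particle \emph{above} $L_t(s_k)$ contributes nothing to $Z_t(s_k)$ (because of the indicator $\Ind_{x\in[0,L_t(s)]}$ in the definition of $z_t$), yet contributes of order $1-q$ to $\P(\zeta>t\mid\F_{s_k})$ by Theorem~\ref{survivex}. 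Under $\P_{\nu_t}$ this event has probability of order $Z_t(0)$, not $o(Z_t(0))$ --- indeed $\P_{\nu_t}(R_{t,0}(0,s_k)\ge1)\le \E_{\nu_t}[R_{t,0}(0,s_k)]\asymp Z_t(0)$ by Lemma~\ref{lem:Rexp} --- so the contribution to the error in \eqref{eq:planA} is itself of order $Z_t(0)$. Neither a uniform version of Theorem~\ref{survival}(2) (which necessarily requires $L_t(0)-x\to\infty$) nor the bound \eqref{oldbound} helps here, and because $Z_t(s_k)/Z_t(0)$ is not uniformly integrable you cannot control the good event tightly enough to absorb this.

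The paper sidesteps exactly this difficulty by never attempting \eqref{eq:planA} for $\nu_t$. Instead it works throughout with the starred configuration $\nu_t^*$ (your $\nu_t^{(n_t)}$), for which $Z_t^*(0)\to z>0$ and hence Proposition~\ref{configpropnew} applies to control $R^*(s)$; Lemma~\ref{trilem} then replaces $\Ind_{\{\zeta>t\}}$ by $h(Z_t^*(\phi_t(u)))$ at a \emph{late} auxiliary time $u$, yielding \eqref{condlim}. Only after \eqref{condlim} is established does the paper let $z\to0$, via a decomposition into independent copies conditioned to survive or die. Your framework can be salvaged along the same lines: since $\Ind_{\{\zeta\le t\}}$ factors across copies, establish $\E_{\nu_t^*}[e^{-\vec\lambda\cdot\vec Z}\,\Ind_{\{\zeta\le t\}}]\to e^{-zH(\vec\lambda^{(+\alpha)})}$ via Lemma~\ref{trilem}, and then your $n_t$-copy argument gives $(1-\E_{\nu_t}[e^{-\vec\lambda\cdot\vec Z}\Ind_{\{\zeta\le t\}}])/Z_t(0)\to H(\vec\lambda^{(+\alpha)})$ directly, which combined with \eqref{eq:planB} finishes the proof without the prolific-individual decomposition. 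But you cannot avoid passing through the starred process and something like Lemma~\ref{trilem}.
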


\begin{remark}
Theorem \ref{CSBPcond} provides another way of understanding Proposition \ref{extinctiondist}.  It is known that
\begin{equation}\label{csbpasymp}
\lim_{u \rightarrow \infty} e^{-2u/3} \log \Xi(u) = - \log W \hspace{.2in} \mbox{a.s.},
\end{equation}
where $W$ has an exponential distribution with rate parameter $\alpha x$.  This result was stated for the case when the branching mechanism is $\Psi(q) = q \log q$ in \cite{nev92} by Neveu, who attributed the result as being essentially due to Grey \cite{grey77}.  A complete proof is given in Appendix A of \cite{fs04}, and by using (\ref{utlambda}), this proof can be adapted to give (\ref{csbpasymp}) when $\Psi(q) = aq + \frac{2}{3} q \log q$.  By conditioning on the event $\lim_{u \rightarrow \infty} \Xi(u) = \infty$, which is equivalent to conditioning on $-\log W > 0$, and then letting $x \rightarrow 0$, we obtain
\begin{equation}\label{CSBPV}
\lim_{u \rightarrow \infty} e^{-2u/3} \log \Phi(u) = V \hspace{.2in} \mbox{ a.s.},
\end{equation}
where $V$ has the exponential distribution with mean $1$.  This exponential limit law was derived also in Proposition 7 of \cite{fm19}. It turns out that the random variable $V$ in (\ref{CSBPV}) is the same random variable that appears in Proposition \ref{extinctiondist} and Theorem \ref{larges} above.  To see this, note that (\ref{CSBPV}) combined with Theorem \ref{CSBPcond} implies that when $u$ is large, we can write $Z_t((1 - e^{-u})t) \approx \exp(e^{2u/3} V)$.  Using the Taylor approximation $c(t + s)^{1/3} - ct^{1/3} \approx \frac{c}{3} s t^{-2/3}$ when $s \ll t$, we have
\begin{align}\label{CSBPheuristic}
Z_{t + vt^{2/3}}((1 - e^{-u})t) &\approx \exp \Big( e^{2u/3} V - L_{t + vt^{2/3}}((1 - e^{-u})t) + L_t((1 - e^{-u})t) \Big) \nonumber \\
&\approx \exp \Big( e^{2u/3} V - \frac{c}{3}(v t^{2/3}) (e^{-u} t)^{-2/3} \Big) \nonumber \\ 
&= \exp\Big( e^{2u/3} V - \frac{vc}{3} e^{2u/3} \Big).
\end{align}
The process should survive until approximately time $t + vt^{2/3}$, where $v$ is chosen so that $Z_{t + vt^{2/3}}((1 - e^{-u})t)$ is neither too close to zero nor too large.  This will happen when the expression inside the exponential in (\ref{CSBPheuristic}) is close to zero, which occurs when $v = \frac{3}{c}V$. That is, conditional on survival until at least time $t$, the process should survive for approximately time $t + \frac{3}{c} V t^{2/3}$, consistent with Proposition \ref{extinctiondist}.
\end{remark}

\subsection{Particle configurations}
\label{sec:particle_configurations}

After branching Brownian motion with absorption has been run for a sufficiently long time, the particles will settle into a fairly stable configuration.  Specifically, as long as $Z_t(s)$ is neither too small nor too large, the “density” of particles near $y$ at time $s$ is likely to be roughly proportional to
\begin{equation}\label{roughdensity}
\sin \bigg( \frac{\pi y}{L_t(s)} \bigg) e^{-y}.
\end{equation}
Berestycki, Berestycki, and Schweinsberg \cite{bbs3} obtained some results that made this idea precise, in the case of binary branching when the branching Brownian motion starts from a single particle that is far from the origin.  The proposition below extends the results in \cite{bbs3} to more general initial configurations and more general offspring distributions.

\begin{proposition}\label{configpropnew}
Consider a possibly random sequence of initial configurations $(\nu_n)_{n=1}^{\infty}$, along with possibly random times $(t_n)_{n=1}^{\infty}$, where $t_n$ may depend only on $\nu_n$ and $t_n \rightarrow_p \infty$ as $n \rightarrow \infty$.  Suppose that, under $\P_{\nu_n}$, the sequences $(Z_{t_n}(0))_{n=1}^{\infty}$ and $(Z_{t_n}(0)^{-1})_{n=1}^{\infty}$ are tight, and $L_{t_n}(0) - R(0) \rightarrow_p \infty$ as $n \rightarrow \infty$.  
Let $0 < \delta < 1/2$.  Then the following hold:
\begin{enumerate}
\item For all $\eps > 0$, there exist positive constants $C_3$ and $C_4$, depending on $\delta$ and $\eps$, such that if $\delta t_n \leq s \leq (1 - \delta)t_n$ and $n$ is sufficiently large, then
\begin{equation}\label{configconc1}
\P_{\nu_n} \bigg( \frac{C_3}{L_{t_n}(s)^3} e^{L_{t_n}(s)} \leq M(s) \leq \frac{C_4}{L_{t_n}(s)^3} e^{L_{t_n}(s)} \bigg) > 1 - \eps.
\end{equation}

\item For all $\eps > 0$, there exist positive constants $C_5$ and $C_6$, depending on $\delta$ and $\eps$, such that if $\delta t_n \leq s \leq (1 - \delta)t_n$ and $n$ is sufficiently large, then
\begin{equation}\label{configconc2}
\P_{\nu_n} \big( L_{t_n}(s) - \log t_n - C_5 \leq R(s) \leq L_{t_n}(s) - \log t_n + C_6 \big) > 1 - \eps.
\end{equation}

\item Let $N_{s,n}$ denote the set of particles alive at time $s$ for branching Brownian motion started from the initial configuration $\nu_n$.  Let $(s_n)_{n=1}^{\infty}$ be a sequence of times such that $\delta t_n \leq s_n \leq (1 - \delta) t_n$ for all $n$.  Define the probability measures $$\chi_n = \frac{1}{M(s_n)} \sum_{u \in N_{s_n,n}} \delta_{X_u(s_n)}, \hspace{.1in} \eta_n = \bigg( \sum_{u \in N_{s_n,n}} e^{X_u(s_n)} \bigg)^{-1} \sum_{u \in N_{s_n,n}} e^{X_u(s_n)} \delta_{X_u(s_n)/L_{t_n}(s_n)}.$$  Let $\mu$ be the probability measure on $(0, \infty)$ with density $g(y) = ye^{-y}$, and let $\xi$ be the probability measure on $(0,1)$ with density $h(y) = \frac{\pi}{2} \sin(\pi y)$.  Then $\chi_n \Rightarrow \mu$ and $\eta_n \Rightarrow \xi$ as $n \rightarrow \infty$, where $\Rightarrow$ denotes convergence in distribution for random elements in the Polish space of probability measures on $(0, \infty)$, endowed with the weak topology.
\end{enumerate}
\end{proposition}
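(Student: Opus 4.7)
The plan is to reduce to the setting of \cite{bbs3}, which proves an analogous statement for binary branching started from a single particle far from the origin. The reduction has two largely independent ingredients. First, the extension from binary branching to a general supercritical offspring distribution with finite variance should be essentially mechanical: the key inputs used in \cite{bbs3} are (i) first- and second-moment estimates for additive functionals $\sum_{u \in N_s} f(X_u(s))$, which depend on the offspring distribution only through its mean $m+1$ and variance via the many-to-one and many-to-two formulas, and (ii) spectral estimates for the operator $-\tfrac{1}{2}\partial_{xx} + \partial_x$ with Dirichlet boundary conditions on $[0, L_{t_n}(s)]$, which are purely analytic and do not depend on the offspring law. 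I would go through the arguments of \cite{bbs3} and record the modifications needed in the places where $p_2 = 1$ was used explicitly, most notably in the second-moment bounds where a factor depending on $\sum k(k-1)p_k$ appears.

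Second, to upgrade from a single initial particle far from $L_{t_n}(0)$ to the general initial configurations covered by the hypothesis, I would introduce a short ``burn-in'' step. Pick $0 < \eps < \delta/2$. By Theorem~\ref{CSBPthm} (applied along subsequences if necessary) the law of $Z_{t_n}(\eps t_n)$ under $\P_{\nu_n}$ converges to the value at time $u_\eps = \log(1/(1-\eps))$ of Neveu's CSBP started from a tight random variable that is tight from below by hypothesis. Since this CSBP neither explodes nor hits zero in finite time, $Z_{t_n}(\eps t_n)$ and $Z_{t_n}(\eps t_n)^{-1}$ are tight. In parallel, using the near-martingale property of $Z_t(\cdot)$ (Lemma~\ref{lem:Zexp}) together with a standard first-moment plus Markov inequality for particles in the top window $[L_{t_n}(\eps t_n) - \log t_n, L_{t_n}(\eps t_n)]$, one shows that at time $\eps t_n$ the rightmost particle satisfies $R(\eps t_n) \leq L_{t_n}(\eps t_n) - \log t_n + O_p(1)$. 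Conditioning on $\mathcal{F}_{\eps t_n}$ and replacing the initial data by this random configuration allows us to invoke the generalized single-particle version of \cite{bbs3} on the time interval $[\eps t_n, (1-\delta/2) t_n]$, which already contains $[\delta t_n, (1-\delta) t_n]$.

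For part~3, I would prove convergence of $\chi_n$ and $\eta_n$ by testing against a convergence-determining class of bounded continuous functions. The first-moment computation via the many-to-one lemma and the principal-eigenfunction asymptotics $z_{t_n}(x, s_n) \propto \sin(\pi x/L_{t_n}(s_n))\, e^{x - L_{t_n}(s_n)}$ identifies the limiting densities $g(y) = y e^{-y}$ and $h(y) = \tfrac{\pi}{2}\sin(\pi y)$ appearing in $\mu$ and $\xi$; a many-to-two/second-moment bound together with Paley--Zygmund or Chebyshev gives concentration of $\sum_{u \in N_{s_n}} f(X_u(s_n))$ around its mean. Parts~1 and~2, applied at time $s_n$, ensure that the normalizations $M(s_n)$ and $\sum_u e^{X_u(s_n)}$ are of the correct order (with the latter concentrated around a constant multiple of $Z_{t_n}(s_n) e^{L_{t_n}(s_n)}/L_{t_n}(s_n)$), so the ratios converge in probability as required.

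The main obstacle, I expect, is the burn-in step: getting simultaneous control of $Z_{t_n}(\eps t_n)$ from above \emph{and} from below and of $R(\eps t_n)$ from above, starting only from the mild hypotheses $Z_{t_n}(0), Z_{t_n}(0)^{-1}$ tight and $L_{t_n}(0) - R(0) \to_p \infty$. Tightness from below in particular is delicate, because the near-martingale $Z_{t_n}(\cdot)$ could in principle fluctuate to very small values; here one needs to combine the CSBP comparison with a truncation argument discarding particles too close to the origin. Once this uniform control at time $\eps t_n$ is in place, the remainder of the proof is a faithful adaptation of \cite{bbs3} to the slightly time-inhomogeneous setting (since $L_{t_n}(s)$ now depends on $s$) and to general offspring distributions.
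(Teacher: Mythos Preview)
Your overall reduction to \cite{bbs3} and the handling of general offspring distributions are on the right track, and the paper does both of these. But your burn-in step has a serious flaw: you invoke Theorem~\ref{CSBPthm} to control $Z_{t_n}(\eps t_n)$, and in the paper's logical structure this is circular. The proof of Theorem~\ref{CSBPthm} (see Lemma~\ref{lem:Gk} and the reduction to one-dimensional marginals in Section~\ref{sec:csbp_proof}) explicitly uses parts~2 and~3 of Proposition~\ref{configpropnew} to control $R(t_k)$ and $Y_t(t_k)/Z_t(t_k)$ along the discretization. So you cannot appeal to the CSBP convergence here.

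The fix, which is exactly what the paper does, is to burn in only to time $\kappa t_n^{2/3}$ rather than $\eps t_n$. This is the time scale already built into \cite{bbs3}: as the paper observes, the proofs in \cite{bbs3} start from a single particle but in fact only use properties of the random configuration at time $\kappa t^{2/3}$ (this is packaged as Lemma~\ref{configlemma}). On that short time scale one does not need any CSBP input at all: Lemma~\ref{lem:Zexp} gives $\E[Z_{t_n,0}(\kappa t_n^{2/3})\mid\mathcal F_0]=(1+o(1))Z_{t_n}(0)$, Lemma~\ref{lem:Zvar} gives conditional variance $O(Z_{t_n}(0)/L_{t_n}(0)+Y_{t_n}(0))\to_p 0$, and Lemma~\ref{lem:Rexp} shows no particle hits the curve $L_{t_n}$ before $\kappa t_n^{2/3}$ with high probability. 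Chebyshev then yields $Z_{t_n}(\kappa t_n^{2/3})=Z_{t_n}(0)+o_p(1)$, so tightness of $Z_{t_n}(\kappa t_n^{2/3})$ and of its inverse are inherited \emph{directly} from the hypotheses. This completely dissolves the difficulty you flagged about tightness from below. The control of $\tilde Y_{t_n}(\kappa t_n^{2/3})$ and $R(\kappa t_n^{2/3})$ follows from Lemma~\ref{lem:Yexp} and Markov's inequality.

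One further point: for part~2 when $q>0$, the paper does need an extra argument beyond ``go through \cite{bbs3} with modified constants'', because the Bramson input used there assumes $q=0$. The paper handles this by coloring particles red if they have an infinite line of descent and applying the $q=0$ result to the red process; you should anticipate this wrinkle rather than treating the extension as purely mechanical.
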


\begin{remark}
Parts 1 and 2 of Proposition \ref{configpropnew} give estimates on the number of particles at time $s$ and the position of the right-most particle at time $s$.  Part 3 of Proposition \ref{configpropnew} states two limit theorems which together make precise the idea described in (\ref{roughdensity}).  If we choose a particle at random from the particles alive at time $s$, then most likely we will choose a particle near the origin.  Using the $\sin(x) \approx x$ approximation for small $x$, we get that the density of the position of this randomly chosen particle is approximately $g$.  If instead we choose a particle at random such that a particle at $y$ is chosen with probability proportional to $e^y$, and then we scale the location of the chosen particle such that the right-most particle is located near $1$, then the density of the chosen particle is approximately $h$. 
\end{remark}

\begin{remark}
Proposition \ref{configpropnew} also allows us to see why Theorem \ref{larges} should be true.  For simplicity, we focus on the case when $s = t$.  Consider a branching Brownian motion that has already survived for time $t$ and will ultimately survive until time $t + v$.  We expect $Z_{t+v}(t)$ to be neither too close to zero (in which case the process would most likely die out before time $t + v$) nor too large (in which case the process would most likely survive beyond time $t + v$).  Furthermore, because the process has evolved for a long time, we expect the density of particles at time $t$ to follow approximately (\ref{roughdensity}).  It follows that the position of the right-most particle at time $t$ should be close to $L_{t+v}(t) = cv^{1/3}$, while the number of particles at time $t$ should be within a constant multiple of $v^{-1} e^{cv^{1/3}}$.  The key to proving Theorem \ref{larges} is to argue that as long as $t - s \ll t$, the extinction time can be predicted fairly accurately from the configuration of particles at time $s$, so that we can apply Proposition \ref{configpropnew} with the predicted extinction time of the process in place of $t_n$.  Proposition \ref{extinctiondist} tells us that conditional on survival until time $t$, the amount of additional time for which the process survives can be approximated by $\frac{3}{c}V t^{2/3}$, where $V$ has an exponential distribution with mean one.  Therefore, using $\frac{3}{c}Vt^{2/3}$ in place of $v$, we expect $\log M(t) \approx R(t) \approx c(\frac{3}{c} V t^{2/3})^{1/3} = (3c^2 V)^{1/3} t^{2/9}$, consistent with Theorem \ref{larges}. 
\end{remark}

\paragraph{More results conditioned on survival.} The following two results complement Theorem~\ref{larges} and will be proved using the same methods, explained in Section~\ref{sec:predicting}. As in Theorem~\ref{larges}, the time $s$ depends on $t$. 

\begin{theorem}\label{meds}
Suppose that for each $t > 0$, we have a deterministic initial configuration of particles $\nu_t$ such that (\ref{maininitial}) holds under $\P_{\nu_t}$.  Let $0 < \delta < 1/2$, and suppose $s \in [\delta t, (1 - \delta) t]$.  For all $\eps > 0$, there exist positive constants $C_3$, $C_4$, $C_5$, and $C_6$ such that if $t$ is sufficiently large, then
$$\P_{\nu_t} \bigg( \frac{C_3}{L_t(s)^3} e^{L_t(s)} \leq M(s) \leq \frac{C_4}{L_t(s)^3} e^{L_t(s)} \,\Big|\, \zeta > t \bigg) > 1 - \eps$$ and $$\P_{\nu_t}\big(L_t(s) - \log t - C_5 \leq R(s) \leq L_t(s) - \log t + C_6 \,\big|\, \zeta > t\big) > 1 - \eps.$$ 
\end{theorem}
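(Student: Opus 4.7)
The plan is to reduce Theorem~\ref{meds} to Proposition~\ref{configpropnew} by applying the Markov property at an early time $s_0 = \delta' t$ with $\delta' \in (0, \delta/2)$, treating the random configuration at time $s_0$ as the new initial configuration and $t - s_0$ as the new terminal parameter. The event $\{\zeta > t\}$ is measurable with respect to the evolution from time $s_0$ onwards, so conditioning on $\{\zeta > t\}$ is compatible with this restart. Using the identities $L_{t-s_0}(s - s_0) = L_t(s)$ and $|\log(t-s_0) - \log t| = O(1)$, and noting that $s - s_0 \in [\delta''(t-s_0), (1-\delta'')(t-s_0)]$ for $\delta'' = (\delta - \delta')/(1 - \delta') > 0$ whenever $s \in [\delta t, (1-\delta)t]$, the conclusions (\ref{configconc1}) and (\ref{configconc2}) of Proposition~\ref{configpropnew} translate directly into the two bounds claimed in Theorem~\ref{meds}, after absorbing the constant discrepancies into $C_3, \dots, C_6$.

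The first two hypotheses of Proposition~\ref{configpropnew}---tightness of $Z_{t-s_0}(0) = Z_t(s_0)$ and of $Z_t(s_0)^{-1}$ under $\P_{\nu_t}(\:\cdot\:|\,\zeta > t)$---follow directly from Theorem~\ref{CSBPcond}. Indeed, with $u_0 = \log(1/(1-\delta')) > 0$, one has $Z_t(s_0) = Z_t((1-e^{-u_0})t) \Rightarrow \Phi(u_0)$ under the conditional measure, and $\Phi(u_0)$ is strictly positive and finite almost surely, being the value at positive time $u_0$ of the continuous-state branching process with branching mechanism $\Psi_{a,2/3}$, started from $0$ and conditioned to go to infinity.

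The main obstacle is verifying the remaining hypothesis of Proposition~\ref{configpropnew}, namely $L_t(s_0) - R(s_0) \rightarrow_p \infty$ under the conditional measure, since conditioning on the rare event $\{\zeta > t\}$ could bias the rightmost particle upward toward the critical curve $L_t(s_0) \asymp t^{1/3}$. A naive first-moment bound exploiting the near-martingale property $\E_{\nu_t}[Z_t(s_0)] \asymp Z_t(0)$ together with $z_t(y, s_0) \asymp (L_t(s_0) - y)$ near the curve yields $\P_{\nu_t}(R(s_0) \geq L_t(s_0) - K) \leq C_K Z_t(0)$ for each fixed $K$, and dividing by $\P_{\nu_t}(\zeta > t) \sim \alpha Z_t(0)$ (Theorem~\ref{survival}) only produces a uniform constant, not vanishing as needed. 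To beat this barrier, I would invoke the predicting-extinction-time machinery developed in Section~\ref{sec:predicting}: one extracts from the configuration at time $s_0$ a predicted extinction time $\tilde{\zeta}$ satisfying $\tilde{\zeta} - t = O_p(t^{2/3})$, uses the fact that a particle within $O(1)$ of $L_{\tilde{\zeta}}(s_0)$ would, with positive probability, cause the actual extinction to occur far outside the $O(t^{2/3})$ window around $t$ predicted by Proposition~\ref{extinctiondist}, and exploits this probabilistic contradiction to force $L_t(s_0) - R(s_0) \rightarrow_p \infty$ under the conditional measure.
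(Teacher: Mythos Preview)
Your overall strategy---restart at an intermediate time $s_0<\delta t$, verify the hypotheses of Proposition~\ref{configpropnew} for the configuration at time $s_0$ under $\P_{\nu_t}(\,\cdot\,|\,\zeta>t)$, and read off (\ref{configconc1})--(\ref{configconc2})---is exactly the paper's proof (with $s_0=\delta t/2$). You also correctly cite Theorem~\ref{CSBPcond} for tightness of $Z_t(s_0)$ and $Z_t(s_0)^{-1}$, and your plan for $L_t(s_0)-R(s_0)\to_p\infty$ via the predicted-extinction-time quantities is precisely what the paper packages as Lemma~\ref{Ttcompare} (whose proof chains Lemmas~\ref{gensequence}, \ref{mainLRlemma}, and \ref{tightextinct}); just cite that lemma rather than re-deriving it.

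There is, however, a genuine gap in the sentence ``conditioning on $\{\zeta>t\}$ is compatible with this restart \ldots\ the conclusions of Proposition~\ref{configpropnew} translate directly.'' Proposition~\ref{configpropnew} yields bounds under $\P_{\nu_n}$, the law of a \emph{fresh} branching Brownian motion run from the (random) configuration $\nu_n$. If $\nu_n$ is the time-$s_0$ configuration sampled from the conditional law, then $\P_{\nu_n}(\text{good event})=\E_{\nu_t}\!\big[H\,\big|\,\zeta>t\big]$ with $H=\P_{\nu_t}(\text{good event}\,|\,\mathcal F_{s_0})$, whereas you need $\P_{\nu_t}(\text{good event}\,|\,\zeta>t)$. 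Conditioning on $\{\zeta>t\}$ biases the post-$s_0$ evolution, not merely the time-$s_0$ configuration, so these do not coincide. The paper closes this gap with Lemma~\ref{MGsurvival}: one finds $\eta>0$ with $\P_{\nu_t}(W(s_0)\le\eta\,|\,\zeta>t)<\eps/2$, where $W(s_0)=\P_{\nu_t}(\zeta>t\,|\,\mathcal F_{s_0})$, and chooses the constants in Proposition~\ref{configpropnew} so that $\P_{\nu_t}(H>1-\eta\eps/2\,|\,\zeta>t)\to1$. Since on $\{W(s_0)>\eta\}\cap\{H>1-\eta\eps/2\}$ one has
\[
\P_{\nu_t}(\text{bad event}\,|\,\mathcal F_{s_0},\,\zeta>t)\ \le\ \frac{\P_{\nu_t}(\text{bad event}\,|\,\mathcal F_{s_0})}{W(s_0)}\ <\ \frac{\eta\eps/2}{\eta}\ =\ \frac{\eps}{2},
\]
the desired conditional bound follows (compare the derivation of (\ref{Tt1})). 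Without this martingale step your argument is incomplete.
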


\begin{theorem}\label{condconfigprop}
Suppose that for each $t > 0$, we have a deterministic initial configuration of particles $\nu_t$ such that (\ref{maininitial}) holds under $\P_{\nu_t}$.
Suppose $s \in [0, t]$, and suppose $$\liminf_{t \rightarrow \infty} \frac{s}{t} > 0.$$  Define the probability measures $$\chi_s = \frac{1}{M(s)} \sum_{u \in N_s} \delta_{X_u(s)}, \hspace{.2in} \eta_s = \bigg( \sum_{u \in N_s} e^{X_u(s)} \bigg)^{-1} \sum_{u \in N_s} e^{X_u(s)} \delta_{X_u(s)/R(s)}.$$  Then, under the conditional probability measures $\P_{\nu_t}(\:\cdot\:|\,\zeta > t)$, we have $\chi_s \Rightarrow \mu$ and $\eta_s \Rightarrow \xi$ as $t \rightarrow \infty$, where $\mu$ and $\xi$ are defined as in Proposition \ref{configpropnew}.  If $\limsup_{t \rightarrow \infty} s/t < 1$, then we may replace $R(s)$ by $L_t(s)$ in the formula for $\eta_s$.
\end{theorem}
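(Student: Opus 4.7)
The plan is to adapt the strategy used for Theorem~\ref{larges}: restart the process at an intermediate time at which Theorem~\ref{CSBPcond} provides a nondegenerate limit for $Z_t$, then invoke Proposition~\ref{configpropnew} in the shifted frame with the true horizon $t$ replaced by a prediction $\hat\zeta$ of the extinction time, computed from the current configuration via the method of Section~\ref{sec:predicting}.

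Fix a small $u_0 > 0$ and set $s_0 = (1 - e^{-u_0})t$. Let $\hat\zeta$ denote the prediction of $\zeta$ computed from the random configuration $\nu_{s_0}$; this prediction is $\sigma(\nu_{s_0})$-measurable and, on the event $\{\zeta > t\}$, satisfies $\hat\zeta - \zeta = o_p(t^{2/3})$. Theorem~\ref{CSBPcond} applied at $u = u_0$ yields $Z_t(s_0) \Rightarrow \Phi(u_0) \in (0, \infty)$ almost surely under $\P_{\nu_t}(\,\cdot\,|\,\zeta > t)$, and a Taylor expansion together with $\hat\zeta - t = O_p(t^{2/3})$ shows $L_{\hat\zeta}(s_0) - L_t(s_0) = O_p(1)$, so both $Z_{\hat\zeta}(s_0)$ and $Z_{\hat\zeta}(s_0)^{-1}$ are tight. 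The condition $L_{\hat\zeta}(s_0) - R(s_0) \to_p \infty$ is a standard rightmost-particle estimate used elsewhere in the paper. All hypotheses of Proposition~\ref{configpropnew} are therefore satisfied for the process shifted by $s_0$, with horizon $\hat\zeta - s_0$.

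A short computation gives $(s - s_0)/(\hat\zeta - s_0) \to_p 1 - (1 - s/t)\,e^{u_0}$. In the case $\limsup s/t < 1$, one can choose $u_0$ small enough that this limit lies in a compact subinterval of $(0,1)$; part~3 of Proposition~\ref{configpropnew} then yields $\chi_s \Rightarrow \mu$ together with convergence to $\xi$ of the exponentially-weighted measure normalized by $L_{\hat\zeta - s_0}(s - s_0) = L_{\hat\zeta}(s)$. The identifications $L_t(s)/L_{\hat\zeta}(s) \to_p 1$ (since $(\hat\zeta - t)/(t - s) \to_p 0$ in this regime) and $R(s)/L_{\hat\zeta}(s) \to_p 1$ (from part~2 of Proposition~\ref{configpropnew}, as $L_{\hat\zeta}(s)$ grows like $t^{1/3}$ while the correction is only $O(\log t)$) allow us to replace $L_{\hat\zeta}(s)$ by $L_t(s)$ or $R(s)$.

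The main obstacle is the case $\limsup s/t = 1$, in which no fixed $u_0$ places $(s - s_0)/(\hat\zeta - s_0)$ in the interior of $(0, 1)$; this is also exactly the regime in which $L_t(s)$ must be replaced by $R(s)$ in the statement. I expect this case to require a substantially more refined argument, working inside the final $O(t^{2/3})$ time window in which the natural spatial scale of the configuration is $t^{2/9}$. The difficulty is that over such a window the Brownian displacement $t^{1/3}$ of an unconstrained single particle dwarfs the spatial scale $t^{2/9}$, so the stability of the profiles $\mu$ and $\xi$ is a collective phenomenon enforced by the absorbing barrier and by the branching that continuously repopulates the typical region near the origin. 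My plan is to carry out a rescaled version of the proof of Proposition~\ref{configpropnew} within this final window, ideally reducing to the previous case by a coupling and a continuity argument as $s$ approaches $t$.
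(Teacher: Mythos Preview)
Your treatment of the case $\limsup_{t\to\infty} s/t < 1$ is essentially correct, though more elaborate than needed: the paper simply restarts at time $\delta t/2$ and uses $t$ itself as the horizon in Proposition~\ref{configpropnew}, with tightness of $Z_t(\delta t/2)$ and $Z_t(\delta t/2)^{-1}$ coming from Theorem~\ref{CSBPcond} and the rightmost-particle condition from Lemma~\ref{Ttcompare}. There is no need to introduce a predicted extinction time $\hat\zeta$ in this regime.

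The genuine gap is in the case $s/t \to 1$. You diagnose this as requiring a ``substantially more refined argument'' at the $t^{2/9}$ spatial scale, and propose to redo the proof of Proposition~\ref{configpropnew} in a rescaled window. This is not the right approach, and no such new argument is needed. The paper handles this case exactly as in the proof of Theorem~\ref{larges}: instead of restarting at a fixed fraction $s_0$ of $t$, you restart at a time $r$ close to $s$, namely $r = s - t^{2/3}$ when $t-s \le t^{2/3}$ and $r = 2s - t$ when $t^{2/3} \le t-s \ll t$. The point of choosing $r$ this late is twofold. First, Lemma~\ref{MGsurvival} gives $\P_{\nu_t}(\zeta > t\mid\mathcal F_r)\to_p 1$, so the conditioning on $\{\zeta>t\}$ becomes asymptotically vacuous and can be dropped when applying Proposition~\ref{configpropnew}. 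Second, Lemma~\ref{gensequence} supplies $T(r)\to_p\infty$ and $L_{T(r)}(0)-R(r)\to_p\infty$, so Proposition~\ref{configpropnew} applies with the random horizon $T(r)$. The ratio $(s-r)/T(r)$ then lands in a compact subinterval of $(0,1)$ with high probability by exactly the computations (\ref{strat1}) and (\ref{strat2}) already carried out for Theorem~\ref{larges}, and $R(s)/L_{T(r)}(s-r)\to_p 1$ by part~2 of Proposition~\ref{configpropnew}, which justifies the normalization by $R(s)$. The mistake in your proposal is committing to a restart time $s_0$ at a fixed macroscopic fraction of $t$; the whole point of the prediction machinery of Section~\ref{sec:predicting} is to allow the restart time to follow $s$ into the regime $t-s\ll t$.
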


\subsection{Predicting the extinction time}
\label{sec:predicting}
Our strategy for proving Theorem \ref{larges} will be to use Proposition \ref{configpropnew} to deduce results about the configuration of particles at time $s$, where $t - s \ll t$, by allowing the configuration of particles at some time $r \leq s$ to play the role of the initial configuration of particles.  To do this, we will need to show that the configuration of particles at time $r$ satisfies the hypotheses of Proposition \ref{configpropnew}.  However, because the number of particles near time $t$ is highly variable, there is no deterministic choice of $t_n$ that will allow the tightness criterion in Proposition \ref{configpropnew} to be satisfied.

Consequently, we will develop a method for associating with an arbitrary configuration of particles a random time, which represents approximately how long the branching Brownian motion is likely to survive, starting from that configuration.  This technique may be of independent interest.  For all $s \geq 0$, let
\begin{equation}\label{Tdef}
T(s) = \inf\{t: L_{s+t}(s) \geq R(s) + 2 \mbox{ and }Z_{s+t}(s) \leq 1/2\}.
\end{equation}
For any fixed $s \geq 0$, as have $\lim_{t \rightarrow \infty} L_{s+t}(s) = \infty$, and for any fixed $s \geq 0$ and $x > 0$, we have $\lim_{t \rightarrow \infty} z_{s+t}(x,s) = 0$.  Therefore, $T(s)$ is well-defined and finite.  
The following result allows us to interpret $T(s)$ as being approximately the amount of additional time we expect the process to survive, given what the configuration of particles looks like at time $s$, provided that no particle at time $s$ is too close to $L_{T(s)}(0)$.

\begin{lemma}\label{tightextinct}
Let $\eps > 0$.  There exist positive constants $k'$, $t'$, and $a'$ such that for all initial configurations $\nu$ such that $T(0) \geq t'$ and $L_{T(0)}(0) - R(0) \geq a'$, we have $$\P_{\nu}(|\zeta - T(0)| \leq k' T(0)^{2/3}) > 1 - \eps.$$  
\end{lemma}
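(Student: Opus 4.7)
The plan is to argue by contradiction using Theorem~\ref{survival}(1). Fix $\eps > 0$.

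I first observe that $s \mapsto Z_s(0)$ and $s \mapsto L_s(0)$ are continuous in $s$, so by the definition of $T(0)$ at least one of the two constraints $L_{T(0)}(0) \geq R(0) + 2$ and $Z_{T(0)}(0) \leq 1/2$ is attained with equality. Since I will take $a' > 2$, the hypothesis forces $L_{T(0)}(0) - R(0) \geq a' > 2$, so the binding constraint is necessarily the second: $Z_{T(0)}(0) = 1/2$ exactly.

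Next, for a configuration $\nu$ with $T := T(0)$ and $a := L_T(0) - R(0)$ both large, and any fixed $v \in \R$, I compare $Z_{T + vT^{2/3}}(0)$ to $Z_T(0) = 1/2$. The expansion $L_{T+vT^{2/3}}(0) = L_T(0) + cv/3 + O(T^{-1/3})$ leads to considering the pointwise ratio
\[
r^v_T(x) := \frac{z_{T+vT^{2/3}}(x,0)}{z_T(x,0)} = \frac{L_{T+vT^{2/3}}(0)}{L_T(0)} \cdot \frac{\sin(\pi x/L_{T+vT^{2/3}}(0))}{\sin(\pi x/L_T(0))} \cdot e^{L_T(0) - L_{T+vT^{2/3}}(0)}.
\]
I claim $r^v_T(x) \to e^{-cv/3}$ uniformly in $x \in [0, L_T(0) - a]$ as $T, a \to \infty$ (for $v$ fixed). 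The first and third factors converge uniformly. For the sine ratio, set $\theta := \pi x/L_T(0)$ and $\theta' := \pi x/L_{T+vT^{2/3}}(0)$, so that $|\theta' - \theta| = O(1/L_T(0))$ uniformly. Splitting into the regions $\theta \in [0, \pi/4]$, $[\pi/4, 3\pi/4]$, $[3\pi/4, \pi - \pi a/L_T(0)]$ and using the lower bounds $\sin \theta \gtrsim \theta$, $\sin \theta \gtrsim 1$, $\sin \theta \gtrsim \pi - \theta \geq \pi a/L_T(0)$ respectively yields $\sin \theta'/\sin \theta \to 1$ uniformly in each region. Multiplying by $z_T(X_u(0), 0)$ and summing over $u \in N_0$ then gives
\[
Z_{T+vT^{2/3}}(0) \lto e^{-cv/3}/2.
\]

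Finally, suppose the lemma is false for some $\eps > 0$. For any fixed $k' > 0$ (to be chosen below), one can then find a sequence $\nu_n$ with $T_n := T_n(0) \to \infty$ and $L_{T_n}(0) - R_n(0) \to \infty$, such that $\P_{\nu_n}(|\zeta - T_n| > k' T_n^{2/3}) \geq \eps$ for every $n$. Writing $t_n^{\pm} := T_n \pm k' T_n^{2/3}$, the previous step yields $Z_{t_n^\pm}(0) \to e^{\mp ck'/3}/2$, while $L_{t_n^\pm}(0) - R_n(0) \to \infty$ trivially. Applying Theorem~\ref{survival}(1) along each of the subsequences $t_n^\pm$ (after extending the family $\{\nu_t\}_{t \geq 0}$ so that the hypotheses hold globally), with $Z$ the degenerate law at $e^{\mp ck'/3}/2$, gives $\P_{\nu_n}(\zeta > t_n^\pm) \to 1 - e^{-\alpha e^{\mp ck'/3}/2}$. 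Hence
\[
\P_{\nu_n}(|\zeta - T_n| > k' T_n^{2/3}) = \P_{\nu_n}(\zeta > t_n^+) + \P_{\nu_n}(\zeta < t_n^-) \lto 1 - e^{-\alpha e^{-ck'/3}/2} + e^{-\alpha e^{ck'/3}/2},
\]
which tends to $0$ as $k' \to \infty$. Choosing $k'$ so large that this limit is less than $\eps/2$ produces the desired contradiction.

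The main obstacle is the uniform control of the sine ratio near $x \approx L_T(0)$, where numerator and denominator are simultaneously small and the ratio is sensitive to perturbations of $L$. This is precisely where the hypothesis $L_{T(0)}(0) - R(0) \geq a'$ with $a'$ large enters: it ensures a lower bound $\sin \theta \gtrsim a/L_T(0)$ on the denominator, which is enough to kill the $O(1/L_T(0))$ error coming from $|\theta' - \theta|$. The minor point of applying Theorem~\ref{survival}(1) along subsequences is standard once the family $\{\nu_t\}$ is extended appropriately.
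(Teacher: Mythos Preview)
Your proof is correct and follows essentially the same approach as the paper's. Both reduce to sequences with $T(0)\to\infty$ and $L_{T(0)}(0)-R(0)\to\infty$, establish $Z_{T(0)}(0)=1/2$, show $Z_{T(0)\pm k T(0)^{2/3}}(0)\to e^{\mp ck/3}/2$ via the pointwise ratio of $z$-functions, and then apply Theorem~\ref{survival}. The only cosmetic differences are that the paper invokes Lemma~\ref{monotone} for $Z_{T(0)}(0)=1/2$ (you use continuity directly), the paper states the ratio limit for arbitrary sequences $x_n$ with $L_{t_n}(0)-x_n\to\infty$ rather than writing out the three-region sine estimate, and the paper phrases the conclusion as a double limit in $k$ and $n$ rather than as a contradiction.
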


To apply Proposition \ref{configpropnew} to the configuration of particles at time $r$, we will need to know that with high probability, no particle at time $r$ is too close to $L_{T(r)}(0)$.  The key to this argument will be Lemma \ref{mainLRlemma}, which says that starting from any configuration of particles at time zero, there will typically be no particle close to this right boundary a short time later.

\begin{lemma}\label{mainLRlemma}
Let $\eps > 0$ and $A > 0$.  There exist positive real numbers $t_0 > 0$ and $d > 0$, depending on $\eps$ and $A$, such that if $\nu$ is any initial configuration of particles, then $$\P_{\nu}(\{R(d) \geq L_{T(d)}(0) - A\} \cap \{T(d) \geq t_0\}) < \eps.$$
\end{lemma}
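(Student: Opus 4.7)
The plan is to fix $d=d(A,\eps)$ and $t_0=t_0(A,\eps)$ large and show that the event $B=\{R(d)\geq L_{T(d)}(0)-A\}\cap\{T(d)\geq t_0\}$ has $\P_{\nu}(B)<\eps$ uniformly in $\nu$. Using the identity $L_{T(d)}(0)=cT(d)^{1/3}=L_{d+T(d)}(d)$, on $B$ we have $T(d)\in[((R(d)+2)/c)^3,((R(d)+A)/c)^3]$, hence $R(d)\geq ct_0^{1/3}-A$, and by the $Z$-condition in the definition of $T(d)$ there is such a $T(d)$ with $Z_{d+T(d)}(d)\leq 1/2$. So it suffices to show this conjunction is rare.

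I would split on a threshold $B_0=B_0(d)$ for $R(0)$. In the regime $R(0)\leq B_0$, classical BBM estimates on the bounded time window $[0,d]$ (a moment bound on $M(d)$ coming from the supercriticality of the offspring distribution, combined with Gaussian tails on the rightmost spine displacement) give $R(d)\leq B_0+C_d$ with probability at least $1-\eps/2$; choosing $t_0$ so large that $ct_0^{1/3}-A>B_0+C_d$ contradicts the lower bound on $R(d)$ derived from $B$, ruling out $B$ in this regime.

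The substantive case is $R(0)>B_0$, with $B_0$ large enough that absorption at the origin is negligible on the time scale $d$, so that the descendants of the rightmost initial particle form essentially a free (critical) BBM cluster. Writing $s_u:=R(d)-X_u(d)\geq 0$ and $\delta:=L_{d+t}(d)-R(d)$, the leading-order expansion
$$Z_{d+t}(d)\approx \pi e^{-\delta}\sum_u(\delta+s_u)e^{-s_u}$$
reduces the equation $Z_{d+t}(d)=1/2$ determining the gap to $\delta\approx \log(2\pi S)+O(1)$, where $S:=\sum_u e^{-s_u}$. The key input is that for critical BBM at time $d$ started from a single particle well away from the origin, $S=e^{-R(d)}W_1(d)$, where $W_1(d)=\sum_u e^{X_u(d)}$ is the critical additive martingale. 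The Bramson correction gives $e^{-R(d)}\sim d^{3/2}e^{-V-R}$, while Lalley--Sellke-type asymptotics give $W_1(d)\sim e^R D/\sqrt{d}$ for a nondegenerate limit $D$ related to the derivative martingale. Combining these, $S\sim d\cdot De^{-V}$ and in particular $\E_{\nu}[S]\gtrsim c_0 d$ uniformly in $\nu$ subject to $R(0)>B_0$. A second-moment / Paley--Zygmund argument then upgrades this to $S\geq c_1d$ with probability $\geq 1-\eps/2$, forcing the gap $\delta>\log(2\pi c_1 d)$ to exceed $A$ once $d>e^A/(2\pi c_1)$, which is incompatible with $B$.

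The main obstacle is obtaining this uniform-in-$\nu$ quantitative lower bound on $S$, together with a continuity/covering argument in $t$ to pass from the pointwise lower bound on $Z_{d+t}(d)$ at the specific $t$ giving $\delta=A$ to uniformity over the entire forbidden window. The second-moment computations use the near-martingale property of $Z_t(s)$ from Lemma~\ref{lem:Zexp} together with many-to-one formulas for BBM with absorption. Handling initial configurations $\nu$ in which a single particle sits very close to $L_{T(0)}(0)$ is the most delicate point, since then the initial value of $S$ is small and the entire buildup of mass near the new edge must come from the branching dynamics during $[0,d]$.
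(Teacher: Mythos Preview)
Your proposal has the right instinct --- that after running for time $d$, the configuration should have ``spread out'' enough below its leading edge to force a gap --- but the argument as written has two genuine gaps.

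\textbf{The case $R(0)\le B_0$ is not uniform in $\nu$.} Your claim that $R(d)\le B_0+C_d$ with probability $1-\eps/2$ fails when $\nu$ contains many particles: with $N$ particles at $B_0$, the maximum of $N$ independent copies of $R(d)$ grows like $B_0+\log N$. Since such a configuration can also have $T(d)$ arbitrarily large (take $N$ huge), this regime is not vacuous and your argument does not cover it.

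\textbf{The case $R(0)>B_0$ hinges on the rightmost particle being prolific.} Your buildup-of-$S$ argument uses Lalley--Sellke asymptotics for the descendants of the top particle, but if $q>0$ that particle's lineage may simply die, and even when $q=0$ it may have atypically few descendants by time $d$. You flag this yourself as the main obstacle, and it is exactly where the proof lives: without a mechanism to replace the rightmost particle by one that is guaranteed to contribute, the Paley--Zygmund step has no uniform lower bound to work from.

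The paper resolves both issues with a single device that you are missing. Rank the initial particles by height and let $G$ be the smallest index whose particle has descendants alive at time $d$ in the process \emph{without} absorption; set $\nu^*=\{u_i:i\ge G\}$ and let $T^*(0)$ be the predicted extinction time for $\nu^*$. The point is that $T^*(0)$ is a stable reference determined at time zero (up to the random $G$), and the lemma splits into two halves: (i) $R(d)<L_{T^*(0)}(0)-2A$ with high probability, proved by applying the derivative-martingale bound \eqref{derMG} to particles in $\nu^*$ far from the edge and Bramson's tail bound \eqref{Bramabs} to the at most $O(e^{\alpha})$ particles near the edge (the constraint $Z^*_{T^*(0)}(0)\le 1/2$ caps their number); and (ii) $L_{T(d)}(0)>L_{T^*(0)}(0)-A$ with high probability, proved by a survival-probability contradiction: if $T(d)$ were too small then by \eqref{strongupper} extinction would follow quickly, but the particle $u_G$ --- which by construction has descendants at time $d$ and sits near $L_{T^*(0)}(0)$ --- forces survival well beyond that, via either Theorem~\ref{survivex} or Lemma~\ref{tightextinct}. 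This second half is what replaces your attempted direct lower bound on $S$, and it completely sidesteps the question of how many descendants the top particle has.
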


\subsection{Descendants of a single particle}
\label{sec:descendants}

Recall that $(p_k)_{k=1}^{\infty}$ denotes the offspring distribution when a particle branches.  Let $L$ be a random variable such that $P(L = k) = p_k$. Recall that we suppose that $\E[L^2]<\infty$. Let $f(s) = \E[s^L]$ be the probability generating function of the offspring distribution, and let $q$ be the smallest root of $f(s) = s$, which is the extinction probability for a Galton-Watson process with offspring distribution $(p_k)_{k=1}^{\infty}$.  We record the following lemma, which is a consequence of results in Chapter 4 of \cite{thesis}.

\begin{lemma}\label{neveuW}
Suppose the branching Brownian motion is started with a single particle at zero, and there is no absorption at the origin.  For each $y \geq 0$, let $K(y)$ be the number of particles that reach $-y$ if particles are killed upon reaching $-y$.  Then there exists a random variable $W$ such that $$\lim_{y \rightarrow \infty} y e^{-y} K(y) = W \hspace{.2in}\textup{a.s.}$$  We have $\P(W > 0) = 1 - q$ and ${\bf E}[e^{-e^x W} ] = \psi(x)$, where $\psi$ is a solution to the equation $$\frac{1}{2} \psi'' - \psi' = \beta(\psi - f \circ \psi)$$ with $\lim_{x \rightarrow -\infty} \psi(x) = 1$, $\lim_{x \rightarrow \infty} \psi(x) = q$ and $1-\psi(-x)\sim xe^{-x}$ as $x\to\infty$. In fact, there exists $a_{\ref{eq:abelian}}\in\R$ such that as $\lambda \to 0$,
\begin{equation}
\label{eq:abelian}
 \E[e^{-\lambda W}] = \exp\left(\Psi_{a_{\ref{eq:abelian}},1}(\lambda) + o(\lambda)\right),
\end{equation}
where $\Psi_{a,b}(\lambda) = a\lambda + b\lambda \log \lambda$ is the function from \eqref{Psidef}.
\end{lemma}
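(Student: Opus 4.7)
The plan is to reduce the lemma to classical results for critical branching Brownian motion, namely the convergence of the derivative martingale and Bramson-type asymptotics of traveling-wave solutions of the associated FKPP equation, with a final computation to extract the Abelian expansion \eqref{eq:abelian}. First, by the branching property applied at first passage (together with translation invariance, which swaps ``start at $0$, kill at $-y$'' with ``start at $y$, kill at $0$''), the Laplace transform $g_\lambda(y) := \E[\exp(-\lambda K(y))]$, viewed as a function of $y \geq 0$ for fixed $\lambda \geq 0$, satisfies the ODE $\frac{1}{2} g_\lambda'' - g_\lambda' = \beta(g_\lambda - f \circ g_\lambda)$ on $(0, \infty)$ with boundary values $g_\lambda(0) = e^{-\lambda}$ and $g_\lambda(+\infty) = q$. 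This is the same ODE (with boundary values compatible with) the one satisfied by $\psi$.

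Next, the almost sure convergence $y e^{-y} K(y) \to W$ is the analogue, for first-passage counts in critical BBM, of the Seneta--Heyde normalization at the critical parameter. I would derive it as in Chapter~4 of \cite{thesis}, using the a.s.\ convergence of the standard (signed) derivative martingale $\sum_u X_u(t) e^{X_u(t)}$ to a positive limit (which holds under the standing assumption $\E[L^2] < \infty$), combined with first-moment estimates from the many-to-one lemma applied at the stopping line $\{\tau_{-y} < \infty\}$ to transfer this time-indexed convergence to the $y$-indexed counting process $K(y)$.

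Third, to identify the Laplace transform of $W$, fix $x \in \R$ and set $\lambda_y := e^x y e^{-y}$. On one hand, since $y e^{-y} K(y) \to W$ almost surely and $u \mapsto e^{-e^x u}$ is bounded continuous, bounded convergence gives $g_{\lambda_y}(y) = \E\!\left[e^{-e^x (y e^{-y} K(y))}\right] \to \E[e^{-e^x W}]$. On the other hand, by Bramson-type convergence of the ODE solutions to the minimal traveling wave, $g_{\lambda_y}(y) \to \psi(x)$, where $\psi$ is the unique (modulo translation) solution of the ODE with $\psi(-\infty) = 1$ and $\psi(+\infty) = q$; the specific translate is pinned down by matching the shift $\tau(\lambda_y)$ of $g_{\lambda_y}$ via the leading-order tail $1 - \psi(-z) \sim z e^{-z}$, which yields $y + \tau(\lambda_y) \to x$. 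Identifying the two limits gives $\psi(x) = \E[e^{-e^x W}]$. Sending $x \to +\infty$ yields $\P(W = 0) = \psi(+\infty) = q$, and thus $\P(W > 0) = 1 - q$. The leading-order asymptotic $1 - \psi(-x) \sim x e^{-x}$ follows from the linearization of the ODE near $\psi = 1$: the characteristic equation $r^2 - 2r + 1 = 0$ has the double root $r = 1$, producing the marginal $(A + Bx) e^x$ behavior and, after normalizing the translate, the $x e^{-x}$ critical tail.

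Finally, for the Abelian expansion \eqref{eq:abelian}, I would write $\E[e^{-\lambda W}] = \psi(\log \lambda)$ and expand as $\lambda \to 0^+$ using the \emph{second-order} asymptotic $1 - \psi(-x) = x e^{-x} - a_{\ref{eq:abelian}} e^{-x} + o(e^{-x})$ as $x \to +\infty$. Substituting $x = -\log \lambda$ gives $\psi(\log \lambda) = 1 + \lambda \log \lambda + a_{\ref{eq:abelian}} \lambda + o(\lambda)$, and applying $\log(1 + u) = u + O(u^2)$ yields $\log \E[e^{-\lambda W}] = \lambda \log \lambda + a_{\ref{eq:abelian}} \lambda + o(\lambda) = \Psi_{a_{\ref{eq:abelian}}, 1}(\lambda) + o(\lambda)$, since the $O(u^2)$ error is $O(\lambda^2 (\log \lambda)^2) = o(\lambda)$. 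The main obstacle is precisely this second-order Bramson asymptotic, which is what identifies the constant $a_{\ref{eq:abelian}}$: the leading-order tail is classical and comes from the linearized ODE, but the subleading $a_{\ref{eq:abelian}} e^{-x}$ correction requires a refined analysis of the center-manifold dynamics of the nonlinear ODE near the unstable fixed point $\psi = 1$. This is the delicate ingredient that I would invoke from \cite{thesis}, after checking that the normalizations of drift, spread, and the wave translate match those used here.
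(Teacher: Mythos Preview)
Your first three steps are essentially the paper's own sketch: the lemma is not proved from scratch in the paper but is attributed to Chapter~4 of \cite{thesis}, with the ODE for $\psi$, the identification $\psi(x)=\E[e^{-e^xW}]$, and the leading-order tail $1-\psi(-x)\sim xe^{-x}$ all coming from there (and from Neveu \cite{nev87} in the binary case). So up to that point you are in agreement with the paper.

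The genuine divergence is in how you obtain the Abelian expansion \eqref{eq:abelian}. You propose to read it off a \emph{second-order} asymptotic $1-\psi(-x)=xe^{-x}-a_{\ref{eq:abelian}}e^{-x}+o(e^{-x})$ for the traveling wave. But once $\psi(x)=\E[e^{-e^xW}]$ is known, that second-order expansion is literally equivalent to \eqref{eq:abelian}; your computation at the end is just this equivalence. So invoking it ``from \cite{thesis}'' is circular unless you point to an independent proof of that refined ODE asymptotic, and center-manifold arguments for the degenerate node $(r-1)^2=0$ do not by themselves pin down the constant $a_{\ref{eq:abelian}}$ --- they only give the $xe^{-x}$ leading term.

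The paper instead obtains \eqref{eq:abelian} by a genuinely different route: it cites Proposition~4.1 of \cite{thesis} for the \emph{probabilistic} tail estimates
\[
\P(W>x)\sim \frac{1}{x},\qquad \E[W\Ind_{\{W\le x\}}]-\log x \to C\quad\text{as }x\to\infty,
\]
and then applies de Haan's Abel--Tauber theorem (Theorem~2 of \cite{dh76}) to convert these into the Laplace-transform expansion $\log\E[e^{-\lambda W}]=\lambda\log\lambda+a_{\ref{eq:abelian}}\lambda+o(\lambda)$. This is what actually identifies $a_{\ref{eq:abelian}}$. If you want to keep your approach self-contained, the honest fix is to replace the ``second-order Bramson asymptotic'' by these two tail estimates plus the Tauberian step; your ODE linearization gives the first-order tail but not the constant.
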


In the case of binary branching, the existence of the random variable $W$ in Lemma~\ref{neveuW} goes back to the work of Neveu \cite{nev87}.  Proposition 4.1 in Chapter 2 of \cite{thesis} establishes that
\begin{equation}\label{Wasymp1}
\P(W > x) \sim \frac{1}{x} \hspace{.3in}\textup{as }x \rightarrow \infty
\end{equation}
and
\begin{equation}\label{Wasymp2}
\E[W \Ind_{\{W \leq x\}}] - \log x \rightarrow C \hspace{.3in}\textup{as }x \rightarrow \infty.
\end{equation}
The results (\ref{Wasymp1}) and (\ref{Wasymp2}) were proved earlier in \cite{bbs2} for binary branching.
As indicated in \cite{thesis}, the result (\ref{eq:abelian}) follows from (\ref{Wasymp1}) and (\ref{Wasymp2}) 
by de Haan's Tauberian Theorem (see Theorem 2 of \cite{dh76}).

\begin{remark}
Lemma \ref{neveuW} holds under weaker assumptions on the offspring distribution; see \cite{thesis}. Also, an analogous result for branching random walk has been proven recently in \cite{bim20}.
The random variable $W$ appearing in Lemma~\ref{neveuW} is equal to the limit of the so-called \emph{derivative martingale} \cite{nev87}, but we will not use this fact explicitly. 
\end{remark}

\subsection{Organization of the paper}
\label{sec:organization}

In Sections \ref{survivalsec} and \ref{18sec}, we prove the main results of the paper, assuming Theorem \ref{CSBPthm} and Proposition \ref{configpropnew}.  The most novel arguments in the paper are in these two sections.  In Section~\ref{survivalsec}, we prove Theorems \ref{survival}, \ref{survivex}, and \ref{extinctiondist}, all of which pertain to survival times for the process, as well as Theorem \ref{CSBPcond}, whose proof requires similar ideas.  In Section \ref{18sec}, we consider the process conditioned to survive until a large time $t$.  We prove Theorem \ref{larges} and Theorems \ref{meds} and \ref{condconfigprop}, along with Lemmas \ref{tightextinct} and \ref{mainLRlemma}.

The last four sections of the paper are devoted to the proofs of Theorem \ref{CSBPthm} and Proposition \ref{configpropnew}.  In Section \ref{momsec}, we establish some preliminary heat kernel and moment estimates that will be needed to prove those results.  In Section~\ref{configsec}, we show how to use results from \cite{bbs3} to deduce Proposition~\ref{configpropnew}.  Finally, Theorem \ref{CSBPthm} is proved in Sections \ref{CSBPsec} and \ref{sec:csbp_proof}.
%, using a strategy similar to the one followed in \cite{bbs2}.

\section{The probability of survival until time \texorpdfstring{$t$}{t}}\label{survivalsec}

Let $(\Xi(u), u \geq 0)$ denote a continuous-state branching process with branching mechanism $\Psi(q) = aq + \frac{2}{3} q \log q$, where $a$ is the constant from Theorem \ref{CSBPthm}.  Use $P_x$ and $E_x$ to denote probabilities and expectations for this process started from $\Xi(0) = x$.  Recall (\ref{CSBPsurvival}), and let ${\cal E}$ be the event that $\lim_{u \rightarrow \infty} \Xi(u) = 0$, so that
\begin{equation}\label{CSBPext}
P_x({\cal E}) = e^{-\alpha x},
\end{equation}
where $\alpha = \exp(-3a/2)$ as defined in (\ref{alphadef}).  Throughout this section, we also use the notation $$\phi_t(u) = (1 - e^{-u})t.$$

We begin with the following lemma, which can be deduced from (\ref{oldbound}) and gives an initial rough estimate of the survival probability.

\begin{lemma}\label{survivalgen}
There exist positive constants $C_2$ and $C_7$ such that for all $t > 0$ and all initial configurations $\nu$ such that $R(0) \leq L_t(0) - 1$, we have
\begin{equation}\label{roughsurvival}
1 - e^{-C_7 Z_t(0)} \leq \P_{\nu}(\zeta > t) \leq C_2 Z_t(0),
\end{equation}
and the lower bound holds even if the condition $R(0) \leq L_t(0) - 1$ is removed.
\end{lemma}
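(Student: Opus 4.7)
The plan is to reduce both inequalities to a single-particle estimate and then combine contributions from distinct initial particles using the independence of their descendant trees. Conditioning on a fixed configuration $\nu$, this independence gives $\P_\nu(\zeta \leq t) = \prod_{u \in N_0}(1 - \P_{X_u(0)}(\zeta > t))$, and the elementary inequalities $1 - \prod(1 - p_i) \leq \sum p_i$ and $1 - e^{-s} \leq s$ produce the sandwich
$$1 - \exp\biggl(-\sum_{u \in N_0} \P_{X_u(0)}(\zeta > t)\biggr) \leq \P_\nu(\zeta > t) \leq \sum_{u \in N_0} \P_{X_u(0)}(\zeta > t).$$
The upper bound of the lemma is then immediate: under the hypothesis $R(0) \leq L_t(0) - 1$ every $X_u(0)$ lies in the range where the upper half of (\ref{oldbound}) applies, so the right-hand inequality above yields $\P_\nu(\zeta > t) \leq C_2 \sum_u z_t(X_u(0), 0) = C_2 Z_t(0)$.

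For the lower bound it suffices, by the left-hand inequality above, to establish the single-particle estimate $\P_x(\zeta > t) \geq C_7 z_t(x, 0)$ for every $x > 0$ and every $t > 0$, with no restriction on $x$. When $x \leq L_t(0) - 1$ this is the lower half of (\ref{oldbound}) with $C_7 = C_1$; when $x \geq L_t(0)$ it is trivial since $z_t(x, 0) = 0$. The only remaining range is $x \in (L_t(0) - 1, L_t(0))$, where (\ref{oldbound}) is unavailable. Here I would use a monotonicity argument: coupling a BBM started from $L_t(0) - 1$ with one started from $x$ by shifting every trajectory upward by $x - L_t(0) + 1$ yields a pathwise domination under which the shifted particles reach $0$ no sooner, so $\P_x(\zeta > t) \geq \P_{L_t(0) - 1}(\zeta > t) \geq C_1 z_t(L_t(0) - 1, 0)$. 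Writing $x = L_t(0) - y$ with $y \in (0, 1)$ and using $\sin(\pi - \theta) = \sin \theta$ together with $\sin \theta \geq 2\theta/\pi$ on $[0, \pi/2]$, one checks that the ratio $z_t(x, 0)/z_t(L_t(0) - 1, 0)$ is bounded by a universal constant (whenever $L_t(0)$ is bounded below), so the inequality holds with a smaller $C_7$.

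The main (minor) obstacle is therefore precisely this boundary range $x \in (L_t(0) - 1, L_t(0))$, where the monotonicity coupling plus the geometry of $z_t$ near its right endpoint resolves matters cleanly. The short-time regime $L_t(0) \leq 1$ must be handled separately (the condition $R(0) \leq L_t(0) - 1$ in the upper bound is then vacuous), but it is elementary: the single-particle lower bound follows from the standard hitting-time asymptotic $\P_x(\tau_0 > t) \gtrsim \min(x/\sqrt{t},\, 1)$ for Brownian motion with drift $-1$, combined with the probability $e^{-\beta t}$ of no branching up to time $t$ and the trivial bound $z_t(x, 0) \leq \pi x$ valid throughout $[0, L_t(0)]$.
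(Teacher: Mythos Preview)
Your proof is correct and follows essentially the same approach as the paper: both reduce to the single-particle bounds from (\ref{oldbound}), use the product formula for independent lineages together with $1-x\le e^{-x}$ and the union bound, and extend the single-particle lower bound to $x>L_t(0)-1$ via monotonicity of $x\mapsto\P_x(\zeta>t)$ combined with the observation that $z_t(x,0)$ is uniformly bounded while $z_t(L_t(0)-1,0)$ is bounded below. Your treatment is slightly more explicit about the monotonicity coupling and about the short-time regime $L_t(0)\le 1$, which the paper leaves implicit.
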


\begin{proof}
Recall that (\ref{oldbound}) implies that if $0 \leq x \leq L_t(0) - 1$, then
\begin{equation}\label{zsurvive}
C_1 z_t(x,0) \leq \P_x(\zeta > t) \leq C_2 z_t(x,0).
\end{equation}
One easily checks that there exists $C>0$ such that $z_t(x,0) \le C$ and $z_t(L_t(0)-1,0) \ge C^{-1}$ for $t$ sufficiently large. Furthermore, $\P_x(\zeta > t)$ is an increasing function of $x$. Hence, the lower bound in (\ref{zsurvive}) holds even if $x > L_t(0) - 1$, with the constant $C_1$ replaced by a different constant $C_7$.  Now consider a general initial configuration of particles $\nu$.  It follows from Boole's Inequality and (\ref{zsurvive}) that $$\P_{\nu}(\zeta > t) \leq \sum_{u \in N_0} \P_{X_u(0)}(\zeta > t) \leq C_2 Z_t(0),$$ which is the upper bound in (\ref{roughsurvival}).  To see the lower bound, note that by the inequality $1-x\le e^{-x}$ for $x\in [0,1]$,
\begin{align*}
\P_{\nu}(\zeta > t) 
&= 1 - \prod_{u \in N_0} (1 - \P_{X_u(0)}(\zeta > t)) \\
&\geq 1 - \exp \bigg( - \sum_{u \in N_0} \P_{X_u(0)}(\zeta > t) \bigg) \geq 1 - e^{-C_7Z_t(0)},
\end{align*}
as claimed.
\end{proof}

\begin{remark}
Once we prove Theorem \ref{survivex}, we will know that the condition $R(0) \leq L_t(0) - 1$ keeps the probabilities $\P_{X_u(0)}(\zeta > t)$ bounded away from one.  This means there is a positive constant $C$ for which $1 - \P_{X_u(0)}(\zeta > t) \geq \exp\left(-C \P_{X_u(0)}(\zeta > t)\right)$ for all $u \in N_0$.  Therefore, letting $C_8 = C C_2$, it will follow as in the above proof that
\begin{equation}\label{strongupper}
\P_{\nu}(\zeta > t) \leq 1 - \exp \bigg(- C \sum_{u \in N_0} \P_{X_u(0)}(\zeta > t) \bigg) \leq 1 - e^{-C_8 Z_t(0)}.
\end{equation}
This stronger form of the upper bound will be used in the proof of Lemma \ref{mainLRlemma} below.
\end{remark}

\begin{lemma}\label{zrare}
Suppose that, for each $t > 0$, we have a deterministic configuration of particles $\nu_t$.  Suppose that, under $\P_{\nu_t}$, we have $L_t(0) - R(0) \rightarrow \infty$ and $Z_t(0) \rightarrow z \in (0, \infty)$ as $t \rightarrow \infty$.  Let $\delta > 0$ and $r \in (0, 1)$.  Then there exist $\eps > 0$ and $y > 0$, depending on $\delta$ but not on $r$, such that for sufficiently large $t$, we have
\begin{equation}\label{smallzeq}
\P_{\nu_t}(\{\zeta > t\} \cap \{Z_t(rt) \leq \eps\}) < \delta
\end{equation}
and 
\begin{equation}\label{bigzeq}
\P_{\nu_t}(\{\zeta \leq t\} \cap \{Z_t(rt) \geq y\}) < \delta.
\end{equation}
\end{lemma}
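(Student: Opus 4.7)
The plan is to apply the Markov property at time $rt$ and then invoke Lemma~\ref{survivalgen} to the configuration $N_{rt}$ viewed as fresh initial data run for the remaining horizon $(1-r)t$. The key bookkeeping is that $L_{(1-r)t}(0) = L_t(rt)$, so the quantity in Lemma~\ref{survivalgen} for the shifted problem is precisely $Z_t(rt)$. Conditioning on $\mathcal{F}_{rt}$ thus yields
\begin{equation*}
1 - e^{-C_7 Z_t(rt)} \leq \P_{\nu_t}(\zeta > t \mid \mathcal{F}_{rt}) \leq C_2 Z_t(rt),
\end{equation*}
where the lower bound is unconditional while the upper bound is guaranteed only on the event $\{R(rt) \leq L_t(rt) - 1\}$.

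For (\ref{bigzeq}), only the lower bound is needed: taking expectations of $\P_{\nu_t}(\zeta \leq t \mid \mathcal{F}_{rt}) \leq e^{-C_7 Z_t(rt)}$ on $\{Z_t(rt) \geq y\}$ gives a bound of $e^{-C_7 y}$, so $y = C_7^{-1}\log(1/\delta)$ works and is manifestly independent of $r$. For (\ref{smallzeq}), I would split on $\{R(rt) \leq L_t(rt) - 1\}$. On that event the Markov property and the upper bound above contribute at most $C_2 \E_{\nu_t}[Z_t(rt)\Ind_{\{Z_t(rt) \leq \eps\}}] \leq C_2 \eps$, so $\eps = \delta/(2C_2)$ controls it. The complementary event I would dispose of using part~2 of Proposition~\ref{configpropnew} with $t_n = t$ and $\delta' = \min(r, 1-r)/2$: its hypotheses are satisfied because $Z_t(0) \to z \in (0,\infty)$ makes $(Z_t(0))$ and $(Z_t(0)^{-1})$ tight, and $L_t(0) - R(0) \to \infty$ by assumption. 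Since $\log t \to \infty$, for $t$ sufficiently large (with threshold possibly depending on $r$) one has $\{R(rt) > L_t(rt) - 1\} \subset \{R(rt) > L_t(rt) - \log t + C_6\}$, and the latter event has probability at most $\delta/2$.

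The only step that is not completely routine is the use of Proposition~\ref{configpropnew} to remove the overshoot condition $R(rt) \leq L_t(rt) - 1$ from Lemma~\ref{survivalgen}'s upper bound; this is legitimate because the present section explicitly treats Proposition~\ref{configpropnew} as a black box. Note that the statement only requires $\eps$ and $y$ to be independent of $r$, not the "for $t$ sufficiently large" threshold, so the $r$-dependence arising from $\delta'$ is harmless.
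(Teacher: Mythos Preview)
Your proposal is correct and follows essentially the same route as the paper's proof: apply the Markov property at time $rt$, use both bounds of Lemma~\ref{survivalgen} with $Z_t(rt)$ playing the role of $Z_t(0)$ for the remaining horizon, and invoke part~2 of Proposition~\ref{configpropnew} to dispose of the event $\{R(rt) > L_t(rt) - 1\}$. The paper organizes the split slightly differently (it defines the good event $A_{s,t}$ and bounds $\P_{\nu_t}(A_{s,t}^c)$ rather than writing out the inclusion via $\log t$), but the constants and the logic are identical, and your observation that only $\eps,y$ need be $r$-independent while the largeness threshold for $t$ may depend on $r$ is exactly right.
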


\begin{proof}
Write $s = rt$, and let $A_{s,t}$ be the event that all particles at time $s$ are in the interval $[0, L_t(s) - 1]$.  By applying the Markov property at time $s$ along with the upper bound in Lemma \ref{survivalgen}, and noting that $L_t(s) = L_{t-s}(0)$, we get that on the event $A_{s,t}$, we have $\P_{\nu_t}(\zeta > t \,|\, {\cal F}_s) \leq C_2 Z_t(s)$.  Therefore,
$$\P_{\nu_t}(\{\zeta > t\} \cap \{Z_t(s) \leq \eps\} \cap A_{s,t}) \leq \P_{\nu_t}(\zeta > t \,|\, A_{s,t} \cap \{Z_t(s) \leq \eps\}) \leq C_2 \eps.$$  Also, it follows from the conclusion (\ref{configconc2}) of Proposition \ref{configpropnew} that $\P_{\nu_t}(A_{s,t}^c) < \delta/2$ for sufficiently large $t$.  The result (\ref{smallzeq}) follows by choosing $\eps < \delta/(2C_2)$.  Likewise, the lower bound in Lemma \ref{survivalgen}, in combination with the Markov property applied at time $s$, gives $\P_{\nu_t}(\zeta \leq t \,|\, {\cal F}_s) \leq e^{-C_7 Z_t(s)}$.  Therefore,
$$\P_{\nu_t}(\{\zeta \leq t\} \cap \{Z_t(s) \geq y\}) \leq \P_{\nu_t}(\zeta \leq t \,|\, Z_t(s) \geq y)\leq e^{-C_7 y},$$
and thus (\ref{bigzeq}) holds for sufficiently large $y$.
\end{proof}

\begin{lemma}\label{trilem}
Suppose that, for each $t > 0$, we have a deterministic configuration of particles $\nu_t$.  Suppose that, under $\P_{\nu_t}$, we have $L_t(0) - R(0) \rightarrow \infty$ and $Z_t(0) \rightarrow z \in (0, \infty)$ as $t \rightarrow \infty$.   Let $\delta > 0$.  There exist $\eps > 0$, $y > 0$, and $u_0 > 0$ such that for each fixed $u \geq u_0$, we have for sufficiently large $t$,
\begin{align*}
&\P_{\nu_t}(\{Z_t(\phi_t(u)) \leq \eps\} \: \triangle \: \{\zeta \leq t\}) < 3\delta \\
&\P_{\nu_t}(\{Z_t(\phi_t(u)) > y\} \: \triangle \: \{\zeta > t\}) < 3\delta \\
&P_z(\{\Xi(u) \leq \eps\} \: \triangle \: {\cal E}) < 3\delta \\
&P_z(\{\Xi(u) > y\} \: \triangle \: {\cal E}^c) < 3\delta
\end{align*}
where $\triangle$ denotes the symmetric difference between two events.
\end{lemma}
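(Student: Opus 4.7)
The plan is to combine Lemma~\ref{zrare} (which links small/large values of $Z_t(rt)$ with survival of the BBM) with Theorem~\ref{CSBPthm} (the CSBP limit) and the almost-sure dichotomy $\Xi(u) \to 0$ on $\mathcal{E}$, $\Xi(u) \to \infty$ on $\mathcal{E}^c$, which follows from \eqref{CSBPsurvival} together with Grey's criterion ruling out finite-time extinction or explosion. Each of the four stated inequalities expresses a rough equivalence: small $\Xi(u)$ corresponds to $\mathcal{E}$ for large $u$, and small $Z_t(\phi_t(u))$ corresponds to eventual extinction.

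Given $\delta > 0$, I would first apply Lemma~\ref{zrare} with $\delta/3$ in place of $\delta$ to obtain $\eps_0, y_0 > 0$ (depending only on $\delta$) such that, for all $r \in (0,1)$ and $t$ sufficiently large,
\begin{align*}
\P_{\nu_t}(\{\zeta > t\} \cap \{Z_t(rt) \leq \eps_0\}) &< \delta/3, \\
\P_{\nu_t}(\{\zeta \leq t\} \cap \{Z_t(rt) \geq y_0\}) &< \delta/3.
\end{align*}
By monotonicity these bounds persist with any $\eps \leq \eps_0$ and $y \geq y_0$; I would fix $\eps \leq \eps_0$ and $y \geq y_0$ with $\eps < y$. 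The almost-sure dichotomy implies $P_z(\Xi(u) > \eps, \mathcal{E}) \to 0$ and $P_z(\Xi(u) \leq y, \mathcal{E}^c) \to 0$ as $u \to \infty$; pick $u_0$ so that each is $< \delta/3$ for $u \geq u_0$. The two CSBP bounds are then immediate from the decomposition
$$\{\Xi(u) \leq \eps\} \triangle \mathcal{E} = \bigl(\{\Xi(u) \leq \eps\} \cap \mathcal{E}^c\bigr) \cup \bigl(\{\Xi(u) > \eps\} \cap \mathcal{E}\bigr),$$
using $\{\Xi(u) \leq \eps, \mathcal{E}^c\} \subseteq \{\Xi(u) \leq y, \mathcal{E}^c\}$ to bound the first piece; the bound on $\{\Xi(u) > y\} \triangle \mathcal{E}^c$ is symmetric.

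For the two BBM bounds, fix $u \geq u_0$; since $\nu_t$ is deterministic with $Z_t(0) \to z \in (0,\infty)$ and $L_t(0) - R(0) \to \infty$, the hypotheses of Theorem~\ref{CSBPthm} hold, so $Z_t(\phi_t(u)) \Rightarrow \Xi(u)$ under $\P_{\nu_t}$. Writing
\begin{align*}
\P_{\nu_t}(\{Z_t(\phi_t(u)) \leq \eps\} \triangle \{\zeta \leq t\})
&= \P_{\nu_t}(Z_t(\phi_t(u)) \leq \eps, \zeta > t) \\
&\quad + \P_{\nu_t}(\eps < Z_t(\phi_t(u)) \leq y, \zeta \leq t) \\
&\quad + \P_{\nu_t}(Z_t(\phi_t(u)) > y, \zeta \leq t),
\end{align*}
with $r = 1 - e^{-u} \in (0,1)$, the first and third summands are each $< \delta/3$ by Step~1. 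For the middle summand, the closed-set form of the Portmanteau theorem gives, for $t$ large,
$$\P_{\nu_t}(\eps < Z_t(\phi_t(u)) \leq y) \leq P_z(\Xi(u) \in [\eps, y]) + \delta/3 \leq P_z(\Xi(u) \geq \eps, \mathcal{E}) + P_z(\Xi(u) \leq y, \mathcal{E}^c) + \delta/3 < \delta,$$
so the total is $< 5\delta/3 < 3\delta$. The bound on $\{Z_t(\phi_t(u)) > y\} \triangle \{\zeta > t\}$ is obtained by a symmetric three-term decomposition. The only mild subtlety is using closed intervals in the Portmanteau step to sidestep any question of whether $\eps$ and $y$ are continuity points of the law of $\Xi(u)$; beyond that, the proof is pure bookkeeping.
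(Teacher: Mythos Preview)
Your proof is correct and follows essentially the same route as the paper: both split the symmetric difference via the three-way partition $\{Z_t(\phi_t(u)) \leq \eps\}$, $\{\eps < Z_t(\phi_t(u)) \leq y\}$, $\{Z_t(\phi_t(u)) > y\}$, handle the outer pieces with Lemma~\ref{zrare}, and control the middle piece through Theorem~\ref{CSBPthm} together with the fact that $P_z(\eps < \Xi(u) \leq y)$ is small for large $u$. The only cosmetic differences are that the paper bounds $P_z(\{\Xi(u)\le\eps\}\cap\mathcal{E}^c)$ and $P_z(\{\Xi(u)>y\}\cap\mathcal{E})$ via the Markov property and the explicit formula $P_x(\mathcal{E})=e^{-\alpha x}$ (choosing $\eps,y$ so that $P_\eps(\mathcal{E}^c),P_y(\mathcal{E})<\delta$) rather than via the almost-sure dichotomy, and your explicit use of the closed-set Portmanteau inequality is a point the paper leaves implicit.
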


\begin{proof}
Choose $\eps > 0$ small enough that $P_{\eps}({\cal E}) \geq 1 - \delta$ and (\ref{smallzeq}) holds.  Choose $y > 0$ large enough that $P_y({\cal E}) \leq \delta$ and (\ref{bigzeq}) holds.  Fix $u_0$ large enough that $P_z(\eps < \Xi(u) \leq y) < \delta$ for $u \geq u_0$, which is possible because the limit in (\ref{CSBPsurvival}) exists.  By Theorem \ref{CSBPthm}, for $u \geq u_0$,
$$\lim_{t \rightarrow \infty} \P_{\nu_t}(\eps < Z_t(\phi_t(u)) \leq y) = P_z(\eps < \Xi(u) \leq y) < \delta.$$  The first two statements of the lemma follow from this result and Lemma \ref{zrare}.  Likewise, it follows from the Markov property of $(\Xi(u), u \geq 0)$ that $P_z(\{\Xi(u) \leq \eps\} \cap {\cal E}^c) \leq P_{\eps}({\cal E}^c) < \delta$ and $P_z(\{\Xi(u) > y\} \cap {\cal E}) \leq P_y({\cal E}) \leq \delta$.  The third and fourth statements of the lemma follow.
\end{proof}

\begin{proof}[Proof of Theorem \ref{survival}]
The proof is similar to the proof of Proposition 6 in \cite{bbs4}.  Suppose the initial configuration $\nu_t$ is deterministic, and, under $\P_{\nu_t}$, we have $Z_t(0) \rightarrow z \in (0, \infty)$ and $L_t(0) - R(0) \rightarrow \infty$ as $t \rightarrow \infty$.  Let $\delta > 0$.  Choose $\eps > 0$, $y > 0$, and $u_0 > 0$ as in Lemma \ref{trilem}.  By Theorem~\ref{CSBPthm}, for each fixed $u \geq u_0$, we have $$\lim_{t \rightarrow \infty} \P_{\nu_t}(Z_t(\phi_t(u)) \leq \eps) = P_z(\Xi(u) \leq \eps).$$  Therefore, using the first and third statements in Lemma \ref{trilem}, we obtain for each fixed $u \geq u_0$,
$$\limsup_{t \rightarrow \infty} |\P_{\nu_t}(\zeta \leq t) - P_z({\cal E})| \leq 6 \delta + \limsup_{t \rightarrow \infty} |\P_{\nu_t}(Z_t(\phi_t(u)) \leq \eps) - P_z(\Xi(u) \leq \eps)| = 6 \delta.$$  Since $\delta > 0$ was arbitrary, it follows that
\begin{equation}\label{zpositive}
\lim_{t \rightarrow \infty} \P_{\nu_t}(\zeta \leq t) = P_z({\cal E}) = e^{-\alpha z},
\end{equation}
which gives part~1 of Theorem \ref{survival} when each $\nu_t$ is deterministic and $z > 0$.

Next, suppose $\nu_t$ is deterministic and, under $\P_{\nu_t}$, we have $Z_t(0) \rightarrow 0$ and $L_t(0) - R(0) \rightarrow \infty$ as $t \rightarrow \infty$.  We may consider $t$ large enough that $0 < Z_t(0) < 1$.  Let $\nu_t^*$ denote the initial configuration with $\lfloor 1/Z_t(0) \rfloor$ particles at the location of each particle in the configuration $\nu_t$.  Then, adding a star to the notation when referring to the process started from $\nu_t^*$, we have $Z_t^*(0) \rightarrow 1$ as $t \rightarrow \infty$.  Also, we have $L_t^*(0) - R^*(0) \rightarrow \infty$.  Thus, we can apply (\ref{zpositive}) to get $$\lim_{t \rightarrow \infty} \P_{\nu_t^*}(\zeta \leq t) = e^{-\alpha}.$$  Because the process started from $\nu_t^*$ goes extinct by time $t$ if and only if each of the $\lfloor 1/Z_t(0) \rfloor$ independent copies of the process started from $\nu_t$ goes extinct by time $t$, we have $$\P_{\nu_t^*}(\zeta \leq t) = (1 - \P_{\nu_t}(\zeta > t))^{\lfloor 1/Z_t(0) \rfloor}.$$  It follows that $\P_{\nu_t}(\zeta > t) \sim \alpha Z_t(0)$, which establishes part~2 of Theorem \ref{survival}.  It follows that $\lim_{t \rightarrow \infty} \P_{\nu_t}(\zeta \leq t) = 1,$ so (\ref{zpositive}) also holds when $z = 0$.

It remains only to establish part~1 of Theorem~\ref{survival} when the initial configuration of particles may be random.  Consider an arbitrary subsequence of times $(t_n)_{n=1}^{\infty}$ tending to infinity.  Because, under $\P_{\nu_{t_n}}$, we have $Z_{t_n}(0) \Rightarrow Z$ and $L_{t_n}(0) - R(\infty) \rightarrow_p \infty$, we can use Skorohod's Representation Theorem to construct the sequence of random initial configurations $(\nu_{t_n})_{n=1}^{\infty}$ on one probability space $(\Omega, {\cal F}, \P)$ so that $Z_{t_n}(0) \rightarrow Z$ and $L_{t_n}(0) - R(0) \rightarrow \infty$ almost surely.  Then, for $\P$-almost every $\omega \in \Omega$, we can apply the result (\ref{zpositive}) for deterministic initial configurations to get $$\lim_{n \rightarrow \infty} \P_{\nu_{t_n}(\omega)}(\zeta \leq t) = e^{-\alpha Z(\omega)}.$$  Taking expectations of both sides and applying the Dominated Convergence Theorem gives $\lim_{n \rightarrow \infty} \P_{\nu_{t_n}}(\zeta \leq t) = \E[e^{-\alpha Z}]$, which implies part~1 of Theorem~\ref{survival}.
\end{proof}

\begin{proof}[Proof of Theorem \ref{survivex}]
The proof is similar to the proof of Theorem 1 in \cite{bbs4}.  Recalling Lemma~\ref{neveuW}, we first start a branching Brownian motion with a single particle at zero and stop particles when they reach $-y$.  Let $T_y$ be the time at which the last particle is killed at $-y$.  Let $g: (0, \infty) \rightarrow (0, \infty)$ be an increasing function such that
\begin{equation}\label{Tygy}
\lim_{y \rightarrow \infty} \P(T_y > g(y)) = 0.
\end{equation}
Fix $x \in \R$, and let $t \mapsto y(t)$ be an increasing function which tends to infinity slowly enough that the following three conditions hold:
\begin{equation}\label{ycond}
\lim_{t \rightarrow \infty} y(t) = \infty, \hspace{.5in} \lim_{t \rightarrow \infty} \frac{y(t)}{L_t(0)} = 0, \hspace{.5in} \lim_{t \rightarrow \infty} t^{-2/3} g(y(t)) = 0.
\end{equation}

Now we begin a branching Brownian motion with a single particle at $L_t(0) + x$.  Let $K_t$ denote the number of particles that reach $L_t(0) + x - y(t)$ before time $t$, if particles are stopped upon reaching this level.  For the process to go extinct before time $t$, the descendants of each of these $K_t$ particles must go extinct before time $t$.  
Let $w_1, \dots, w_{K_t}$ denote the times when these particles reach the level $L_t(0) + x - y(t)$.  Then, $$\P_{L_t(0) + x}(\zeta \leq t) \leq \E \bigg[ \prod_{i=1}^{K_t} \P_{L_t(0) + x - y(t)}(\zeta \leq t - w_i) \bigg].$$  Let $\nu_t$ denote the random configuration with $K_t$ particles at the position $L_t(0) + x - y(t)$.  Recall that $\P_{\nu_t}$ is an unconditional probability measure, and does not refer to conditional probability given the value of $\nu_t$.  Then for $t$ large enough that $g(y(t)) < t$,
\begin{equation}\label{extsqueeze}
\P_{\nu_t}(\zeta \leq t - g(y(t))) - \P(T_{y(t)} > g(y(t))) \leq \P_{L_t(0) + x}(\zeta \leq t) \leq  \P_{\nu_t}(\zeta \leq t).
\end{equation}

For the initial configuration $\nu_t$, we have
$$Z_t(0) = K_t L_t(0) \sin \bigg( \frac{\pi (L_t(0) + x - y(t))}{L_t(0)} \bigg) e^{x - y(t)}.$$
In view of the first two conditions in (\ref{ycond}), we have $$\sin \bigg( \frac{\pi (L_t(0) + x - y(t))}{L_t(0)} \bigg) \sim \frac{\pi y(t)}{L_t(0)},$$
where $\sim$ means that the ratio of the two sides tends to one as $t \rightarrow \infty$.
Also, by Lemma~\ref{neveuW}, the processes for all $t$ can be constructed on one probability space in such a way that
$y(t) e^{-y(t)} K_t \rightarrow W$ a.s., where $W$ is the random variable introduced in Lemma \ref{neveuW}.  Therefore, as $t \rightarrow \infty$, we have $$Z_t(0) \rightarrow \pi e^x W \hspace{.1in}\textup{a.s.}$$  Also, $L_t(0) - R(0) = y(t) - x \rightarrow \infty$ as $t \rightarrow \infty$.
Thus, by Theorem \ref{survival},
\begin{equation}\label{sq1}
\lim_{t \rightarrow \infty} \P_{\nu_t}(\zeta \leq t) = \E[e^{-\alpha \pi e^x W}].
\end{equation}

For the lower bound, let $t' = t - g(y(t))$.  By the third condition in (\ref{ycond}), we have $L_t(0) - L_{t'}(0) = c t^{1/3} - c (t - g(y(t)))^{1/3} \rightarrow 0$ as $t \rightarrow \infty$.  Therefore, by repeating the arguments above, we see that as $t \rightarrow \infty$, we have $Z_{t'}(0) \rightarrow \pi e^x W$ and $L_{t'}(0) - R(0) \rightarrow \infty$ almost surely.  Therefore,
\begin{equation}\label{sq2}
\lim_{t \rightarrow \infty} \P_{\nu_t}(\zeta \leq t - g(y(t))) = \E[e^{-\alpha \pi e^x W}].
\end{equation}
It follows from (\ref{Tygy}), (\ref{extsqueeze}), (\ref{sq1}), and (\ref{sq2}) that $$\lim_{t \rightarrow \infty} \P_{L_t(0) + x}(\zeta \leq t) = \E[e^{-\alpha \pi e^x W}],$$ which gives (\ref{convphi}).  Finally, if we define $\psi$ as in Lemma \ref{neveuW} and $\phi(x) = \E[e^{-\alpha \pi e^x W}]$, then $\phi(x) = \psi(x + \log(\alpha \pi))$, so the properties of $\phi$ claimed in the statement of the theorem follow from Lemma \ref{neveuW}.

To prove (\ref{convphi2}), write $t'' = t + v t^{2/3}$.  By differentiating, we get
\begin{equation}\label{tpt}
\lim_{t \rightarrow \infty} \big( L_{t''}(0) - L_t(0) \big) = \lim_{t \rightarrow \infty} \big( c(t + v t^{2/3})^{1/3} - c t^{1/3} \big) = \frac{cv}{3}.
\end{equation}
Using (\ref{convphi}), it follows that
$$\lim_{t \rightarrow \infty} \P_{L_t(0) + x}(\zeta \leq t'') = \lim_{t \rightarrow \infty} \P_{L_{t''}(0) + x - cv/3}(\zeta \leq t'') = \phi(x - cv/3),$$ 
as claimed.
\end{proof}

\begin{proof}[Proof of Proposition \ref{extinctiondist}]
Let $v > 0$.  By Theorem \ref{survival}, as $t \rightarrow \infty$ we have $$\P_{\nu_t}(\zeta > t + v t^{2/3}\,|\,\zeta > t) = \frac{\P_{\nu_t}(\zeta > t + v t^{2/3})}{\P_{\nu_t}(\zeta > t)} \sim \frac{Z_{t + vt^{2/3}}(0)}{Z_t(0)}.$$ 
Note that here both $Z_{t+vt^{2/3}}(0)$ and $Z_t(0)$ are being evaluated under the same initial measure $\P_{\nu_t}$.  Therefore, by (\ref{tpt}),
$$\lim_{t \rightarrow \infty} \frac{Z_{t + vt^{2/3}}(0)}{Z_t(0)} = \lim_{t \rightarrow \infty} e^{L_t(0) - L_{t + vt^{2/3}}(0)} = e^{-cv/3},$$
which gives the result.
\end{proof}

\begin{proof}[Proof of Theorem \ref{CSBPcond}]
We begin by following a similar strategy to the proof of part 2) of Theorem~\ref{survival}.  Let $z > 0$.  Let $\nu_t^*$ denote the initial configuration with $\lfloor z/Z_t(0) \rfloor$ particles at the location of each particle in the configuration $\nu_t$.  Adding the star to the notation when considering the process started from $\nu_t^*$, we have $Z_t^*(0) \rightarrow z$ and $L_t(0) - R^*(0) \rightarrow \infty$ as $t \rightarrow \infty$.  Equation (\ref{CSBPext}) and Theorem \ref{survival} give
\begin{equation}\label{starprob}
\lim_{t \rightarrow \infty} \P_{\nu^*_t}(\zeta > t) = 1 - e^{-\alpha z} = P_z({\cal E}^c).
\end{equation}
Also, by Theorem \ref{CSBPthm}, the finite-dimensional distributions of $(Z_t^*((1 - e^{-u})t), u \geq 0)$ converge as $t \rightarrow \infty$ to the finite-dimensional distributions of $(\Xi(u), u \geq 0)$ started from $\Xi_0 = z$.  

Fix $k \in \N$ and times $0 \leq u_1 < \dots < u_k$.  Let $\delta > 0$.  Choose $\eps > 0$, $y > 0$, and $u_0 > 0$ as in Lemma \ref{trilem}, and then fix $u \geq u_0$.  Let $g: \R^k \rightarrow \R$ be bounded and uniformly continuous, and let $h: \R^+ \rightarrow [0,1]$ be a continuous nondecreasing function such that $h(x) = 0$ if $x \leq \eps$ and $h(x) = 1$ if $x \geq y$. 
By the convergence result stated at the end of the previous paragraph,
\begin{equation}\label{ghlim}
\lim_{t \rightarrow \infty} \E_{\nu_t^*}[g(Z_t^*(\phi_t(u_1)), \dots, Z_t^*(\phi_t(u_k)) )h(Z_t^*(\phi_t(u)))] = E_z[g(\Xi(u_1), \dots, \Xi(u_k)) h(\Xi(u))].
\end{equation}
Lemma \ref{trilem} implies that for sufficiently large $t$, we have
\begin{equation}\label{happ1}
\P_{\nu_t^*}(h(Z_t^*(\phi_t(u))) \neq \Ind_{\{\zeta > t\}}) < 6 \delta
\end{equation}
and
\begin{equation}\label{happ2}
P_z(h(\Xi(u)) \neq \Ind_{{\cal E}^c}) < 6 \delta.
\end{equation}
By combining (\ref{starprob}), (\ref{ghlim}), (\ref{happ1}), and (\ref{happ2}), we get
\begin{equation}\label{condlim}
\lim_{t \rightarrow \infty} \frac{\E_{\nu_t^*}[g(Z_t^*(\phi_t(u_1)), \dots, Z_t^*(\phi_t(u_k))) \Ind_{\{\zeta > t\}}]}{\P_{\nu_t^*}(\zeta > t)} = \frac{E_{z}[g(\Xi(u_1), \dots, \Xi(u_k)) \Ind_{{\cal E}^c}]}{P_{z}({\cal E}^c)},
\end{equation}
which means the finite-dimensional distributions of $(Z_t^*((1 - e^{-u})t), u \geq 0)$ conditional on $\zeta > t$ converge as $t \rightarrow \infty$ to the finite-dimensional distributions of $(\Xi(u), u \geq 0)$ started from $\Xi(0) = z$ and conditioned to go to infinity.

We now take a limit as $z \rightarrow 0$.   We can write the branching Brownian motion started from $\nu_t^*$ as the sum of $\lfloor z/Z_t(0) \rfloor$ independent branching Brownian motions started from $\nu_t$.  Let $N_{t,z}$ denote the number of these independent branching Brownian motions that have a descendant alive at time $t$.  Conditioning on survival of the process until time $t$ is the same as conditioning on $N_{t,z} \geq 1$.  Therefore, the process conditioned on survival until time $t$ can be constructed by summing three processes, in the following way.
\begin{enumerate}
\item The first process is branching Brownian motion started from $\nu_t$ conditioned on survival until time $t$.

\item Choose a random variable $M_{t,z}$ whose distribution is the conditional distribution of $N_{t,z}$ given $N_{t,z} \geq 1$.  The second process is the sum of $M_{t,z} - 1$ independent branching Brownian motions started from $\nu_t$ conditioned on survival until time $t$.

\item The third process is the sum of $\lfloor z/Z_t(0) \rfloor - M_{t,z}$ independent branching Brownian motions conditioned to go extinct before time $t$.
\end{enumerate}
We will denote the contributions from these three processes by $Z_t^{(1)}$, $Z_t^{(2)}$, and $Z_t^{(3)}$ and let $Z_t' = Z_t^{(1)} + Z_t^{(2)} + Z_t^{(3)}$.  This means that the law of $(Z_t'(s), 0 \leq s < t)$ is the same as the conditional law of $(Z_t^*(s), 0 \leq s < t)$ given $\zeta > t$.  Therefore, for all $t \geq 0$, we have
\begin{align}\label{ZZprime}
&\E[g(Z_t^{(1)}(\phi_t(u_1)), \dots, Z_t^{(1)}(\phi_t(u_k)))] \nonumber \\
&\hspace{.4in}= \frac{\E_{\nu_t^*}[g(Z_t^*(\phi_t(u_1)), \dots, Z_t^*(\phi_t(u_k))) \Ind_{\{\zeta > t\}}]}{\P_{\nu_t^*}(\zeta > t)} \nonumber \\
&\hspace{.8in}+ \E\big[g(Z_t^{(1)}(\phi_t(u_1)), \dots, Z_t^{(1)}(\phi_t(u_k))) - g(Z_t'(\phi_t(u_1)), \dots, Z_t'(\phi_t(u_k)))\big]. 
\end{align}
Define $\|g\| = \sup_x |g(x)|$ and
$$w_g(\delta) = \sup\big\{|g(x_1, \dots, x_k) - g(y_1, \dots, y_k)|: |x_i - y_i| < \delta \mbox{ for all }i \in \{1, \dots, k\} \big\}.$$  Let $$p(z,t) = \P(Z_t^{(2)}(s) > 0 \mbox{ for some }s \geq 0)$$ and $$q(z,t,\delta) = \P(Z_t^{(3)}(\phi_t(u_i)) > \delta \mbox{ for some }i \in \{1, \dots, k\}).$$  Then, the absolute value of the second term on the right-hand side of (\ref{ZZprime}) is bounded above by
$$2 \|g\| (p(z,t) + q(z,t,\delta)) + w_g(\delta).$$
It is easy to see, for example by splitting the initial population into two groups of approximately equal size and applying (\ref{starprob}) with $z/2$ in place of $z$, that there is a positive constant $C$ such that for each $z > 0$, we have $\P_{\nu_t^*}(N_{t,z} \geq 2) \leq C z^2$ for sufficiently large $t$.  Therefore,
\begin{equation}\label{pzlim}
\lim_{z \rightarrow 0} \lim_{t \rightarrow \infty} p(z,t) = \lim_{z \rightarrow 0} \lim_{t \rightarrow \infty} \P_{\nu_t^*}(N_{t,z} \geq 2 \,|\, N_{t,z} \geq 1) = 0.
\end{equation}
By Theorem \ref{CSBPthm}, the finite-dimensional distributions of $(Z_t^{(3)}((1 - e^{-u})t), u \geq 0)$, if the process were not being conditioned to go extinct, would converge as $t \rightarrow \infty$ to the finite-dimensional distributions of $(\Xi(u), u \geq 0)$ started from $\Xi(0) = z$.  As $z \rightarrow 0$, the limiting extinction probability for the branching Brownian motion as $t \rightarrow \infty$ tends to one, while the process $(\Xi(u), u \geq 0)$ started from $\Xi(0) = z$ converges to the zero process.  These observations imply that for all $\delta > 0$, we have
\begin{equation}\label{qzlim}
\lim_{z \rightarrow 0} \lim_{t \rightarrow \infty} q(z,t,\delta) = 0.
\end{equation}
From (\ref{pzlim}), (\ref{qzlim}), and the fact that $w_g(\delta) \rightarrow 0$ as $\delta \rightarrow 0$ by the uniform continuity of $g$, we obtain
$$\lim_{z \rightarrow 0} \lim_{t \rightarrow \infty} \E\big[g(Z_t^{(1)}(\phi_t(u_1)), \dots, Z_t^{(1)}(\phi_t(u_k))) - g(Z_t'(\phi_t(u_1)), \dots, Z_t'(\phi_t(u_k)))\big] = 0.$$
Finally, as noted in Section \ref{CSBPintro}, the finite-dimensional distributions of $(\Xi(u), u \geq 0)$ started from $\Xi(0) = z$ and conditioned on ${\cal E}^c$ converge as $z \rightarrow 0$ to the finite-dimensional distributions of $(\Phi(u), u \geq 0)$.  Thus, by taking limits in (\ref{ZZprime}), observing that the left-hand side of (\ref{ZZprime}) does not depend on $z$, and applying (\ref{condlim}), we obtain
\begin{align*}
\lim_{t \rightarrow \infty} \E[g(Z_t^{(1)}(\phi_t(u_1)), \dots, Z_t^{(1)}(\phi_t(u_k)))] &= \lim_{z \rightarrow 0} \lim_{t \rightarrow \infty} \frac{\E_{\nu_t^*}[g(Z_t^*(\phi_t(u_1)), \dots, Z_t^*(\phi_t(u_k))) \Ind_{\{\zeta > t\}}]}{\P_{\nu_t^*}(\zeta > t)} \\
&= \lim_{z \rightarrow 0} \frac{E_{z}[g(\Xi(u_1), \dots, \Xi(u_k)) \Ind_{{\cal E}^c}]}{P_{z}({\cal E}^c)}  \\
&= E[g(\Phi(u_1), \dots, \Phi(u_k))].
\end{align*}
The result follows.
\end{proof}

\section{Conditioning on Survival}\label{18sec}

In this section, we prove our main results concerning the behavior of branching Brownian motion conditioned to survive for an unusually large time $t$, namely Theorem \ref{larges} and Theorems \ref{meds} and \ref{condconfigprop}.

We will frequently need estimates on $z_t(x,0)$.  Because $2x/\pi \leq \sin(x) = \sin(\pi - x) \leq x$ for all $x \in [0, \pi/2]$, we have
\begin{equation}\label{detzbound}
2 \min\{x, L_t(0) - x\} e^{x - L_t(0)} \leq z_t(x,0) \leq \pi \min\{x, L_t(0) - x\} e^{x - L_t(0)}
\end{equation}
for all $t > 0$ and $x \in [0, L_t(0)]$.

Recall the definition of $T(s)$ from (\ref{Tdef}).  The following result shows that $Z_{T(0)}(0)$ will be exactly $1/2$ as long as $T(0)$ is sufficiently large, and will allow us to prove Lemma \ref{tightextinct}.

\begin{lemma}\label{monotone}
Given any initial configuration of particles, the function $t \mapsto Z_t(0)$ is monotone decreasing on $\{t\ge0:L_t(0) \ge R(0) + 2\}$.  Also, there is a positive number $t^*$ such that if $T(0) \geq t^*$, then $T(0)$ is the unique positive real number $t$ such that $L_t(0) \geq R(0) + 2$ and $Z_t(0) = 1/2$.
\end{lemma}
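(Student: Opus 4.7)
The plan is to reduce monotonicity of $Z_t(0)=\sum_{u\in N_0} z_t(X_u(0),0)$ to the monotonicity of $z_t(x,0)=L\sin(\pi x/L)e^{x-L}$ in the variable $L:=L_t(0)=ct^{1/3}$, which is itself a strictly increasing function of $t$. For each particle position $x>0$, a direct logarithmic differentiation gives
\begin{equation*}
\frac{1}{z_t(x,0)}\,\frac{\partial z_t(x,0)}{\partial L}\;=\;\frac{1}{L}-\frac{\theta\cot\theta}{L}-1\;=\;-\frac{1}{L}\bigl(L-1+\theta\cot\theta\bigr),
\end{equation*}
where $\theta:=\pi x/L\in(0,\pi)$. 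So it suffices to prove $L-1+\theta\cot\theta>0$ whenever $L\geq x+2$.

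When $\theta\in(0,\pi/2]$ both terms on the left are non-negative and $L\geq 2$ closes the matter. The real work is the range $\theta\in(\pi/2,\pi)$, where $\cot\theta<0$. Setting $\phi:=\pi-\theta=\pi(L-x)/L\in(0,\pi/2)$, the hypothesis $L\geq x+2$ becomes $\phi\geq 2\pi/L$, and the target inequality rearranges to $(L-1)\sin\phi>(\pi-\phi)\cos\phi$. Using the elementary bound $\tan\phi>\phi$ on $(0,\pi/2)$ it suffices to check $(L-1)\phi>\pi-\phi$, i.e.\ $\phi L>\pi$, which holds strictly since $\phi L\geq 2\pi$. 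Therefore $L\mapsto z_t(x,0)$ is strictly decreasing whenever $L\geq x+2$, and summing over particles shows that $t\mapsto Z_t(0)$ is strictly decreasing on $\{t\geq 0:L_t(0)\geq R(0)+2\}$.

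For the second assertion, set $t_0:=((R(0)+2)/c)^3$, so the admissible set is exactly $[t_0,\infty)$ and $L_{t_0}(0)=R(0)+2$. Because $t\mapsto Z_t(0)$ is continuous, strictly decreasing, and tends to $0$ as $t\to\infty$ (each summand $z_t(x,0)$ does), there is at most one $t\in[t_0,\infty)$ with $Z_t(0)=1/2$, and from the definition of $T(0)$ it must coincide with $T(0)$ unless we are in the boundary scenario $T(0)=t_0$ with $Z_{t_0}(0)<1/2$. To rule this out for large $T(0)$, note that the rightmost initial particle alone contributes
\begin{equation*}
z_{t_0}(R(0),0)=L_{t_0}(0)\sin\bigl(2\pi/L_{t_0}(0)\bigr)e^{-2},
\end{equation*}
and $L\mapsto L\sin(2\pi/L)$ is increasing on $[2,\infty)$ with limit $2\pi$. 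I choose $t^*$ so that $c(t^*)^{1/3}\sin\bigl(2\pi/(c(t^*)^{1/3})\bigr)e^{-2}>1/2$, which is possible because $2\pi e^{-2}>1/2$; then $T(0)\geq t^*$ in the boundary scenario would force $t_0\geq t^*$ and hence $Z_{t_0}(0)\geq z_{t_0}(R(0),0)>1/2$, contradicting $T(0)=t_0$. Thus $T(0)>t_0$, and by continuity $Z_{T(0)}(0)=1/2$, with uniqueness inherited from strict monotonicity.

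The one delicate step is the monotonicity in the regime $\theta\in(\pi/2,\pi)$, where the two terms in $L-1+\theta\cot\theta$ have opposite signs. The constant $2$ in the definition of $T$ is exactly what provides the slack $\phi L\geq 2\pi$ needed to close the $\tan\phi>\phi$ estimate; a smaller constant would not suffice.
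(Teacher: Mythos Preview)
Your proof is correct and follows essentially the same route as the paper's: both show $\partial_L\bigl(L\sin(\pi x/L)e^{x-L}\bigr)<0$ for $x\le L-2$ by splitting into the regimes $x\le L/2$ and $x>L/2$, and both handle the second assertion by observing that the rightmost particle alone contributes $L\sin(2\pi/L)e^{-2}\to 2\pi e^{-2}>1/2$ at the left endpoint of the admissible range. Your use of the logarithmic derivative and the substitution $\phi=\pi-\theta$ together with $\tan\phi>\phi$ is a clean repackaging of the paper's estimate $\sin(\pi x/L)\ge \sin(2\pi/L)\ge 4/L$, but the underlying idea is the same. One small inaccuracy in your closing remark: your chain actually only needs $\phi L>\pi$, i.e.\ $L-x>1$, so any constant strictly larger than $1$ (not only $2$) would already suffice for this particular estimate.
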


\begin{proof}
To prove the first claim, note that $$\frac{d}{dL} L e^{x-L} \sin \bigg( \frac{\pi x}{L} \bigg) = e^{x-L} \bigg[ (1 - L) \sin \bigg( \frac{\pi x}{L} \bigg) - \frac{\pi x}{L} \cos \bigg( \frac{\pi x}{L} \bigg) \bigg].$$  If $0 \leq x < L/2$, then both terms inside the brackets are negative when $L > 1$.  Suppose instead $L/2 \leq x \leq L - 2$.  Then $\sin(\pi x/L) \geq \sin(2 \pi/L) \geq 4/L$, so
$$\frac{d}{dL} L e^{x-L} \sin \bigg( \frac{\pi x}{L} \bigg) \leq e^{x-L} \bigg( \frac{4 (1 - L)}{L} + \frac{(L-2) \pi}{L} \bigg) < 0.$$
It follows that $t \mapsto Z_t(0)$ is a monotone decreasing function on $\{t\ge0:L_t(0) \ge R(0) + 2\}$.  Therefore, either $T(0) = R(0) + 2$, or $T(0)$ is the unique positive real number $t$ such that $L_t(0) \geq R(0) + 2$ and $Z_t(0) = 1/2$.  Because $\lim_{t \rightarrow \infty} z_t(L_t(0) - 2, 0) = 2 \pi/e^2 > 1/2$, the first possibility can be ruled out if $T(0)$ is sufficiently large, which completes the proof of the lemma.
\end{proof}

\begin{proof}[Proof of Lemma \ref{tightextinct}]
It suffices to show that for any deterministic sequence of initial configurations $(\nu_n)_{n=1}^{\infty}$ such that $T(0) \rightarrow \infty$ and $L_{T(0)}(0) - R(0) \rightarrow \infty$ as $n \rightarrow \infty$, we have
\begin{align}
\lim_{k \rightarrow \infty} \limsup_{n \rightarrow \infty} \P_{\nu_n}(\zeta \leq T(0) - kT(0)^{2/3}) = 0, \label{limk1} \\
\lim_{k \rightarrow \infty} \liminf_{n \rightarrow \infty} \P_{\nu_n}(\zeta \leq T(0) + k T(0)^{2/3}) = 1. \label{limk2}
\end{align}
For $k \geq 0$, let $t_n$, $t_n^-(k)$ and $t_n^+(k)$ denote the values of $T(0)$, $T(0) - kT(0)^{2/3}$ and $T(0) + kT(0)^{2/3}$ respectively under $\P_{\nu_n}$. 
Recall by (\ref{tpt}) that for every fixed $k$,
\begin{equation}
\label{tptadapted}
L_{t_n^-(k)}(0) = L_{t_n}(0) + O(1) = L_{t_n^+(k)}(0).
\end{equation}
Furthermore, by Lemma \ref{monotone}, we have $Z_{T(0)}(0) = 1/2$ under $\P_{\nu_n}$ for sufficiently large $n$.  If $(x_n)_{n=1}^{\infty}$ is a sequence of positive numbers for which $L_{t_n}(0) - x_n \rightarrow \infty$, then using \eqref{tptadapted},
$$\lim_{n \rightarrow \infty} \frac{z_{t_n^-(k)}(x_n, 0)}{z_{t_n}(x_n, 0)} = \lim_{n \rightarrow \infty} \frac{L_{t_n^-(k)}(0) \sin\big(\frac{\pi x_n}{L_{t_n^-(k)}(0)}\big) e^{x_n - L_{t_n^-(k)}(0)}}{L_{t_n}(0) \sin\big(\frac{\pi x_n}{L_{t_n}(0)}\big) e^{x_n - L_{t_n}(0)}} = \lim_{n \rightarrow \infty} e^{L_{t_n}(0) - L_{t_n^-(k)}(0)} = e^{ck/3}.$$
From this calculation, and a similar calculation with $t_n^+(k)$ in place of $t_n^-(k)$, it follows that
\begin{equation}\label{zchange}
\lim_{n \rightarrow \infty} Z_{t_n^-(k)}(0) = \frac{e^{ck/3}}{2}, \hspace{.5in}\lim_{n \rightarrow \infty} Z_{t_n^+(k)}(0) = \frac{e^{-ck/3}}{2}.
\end{equation}
Because $L_{t_n}(0) - R(0) \rightarrow \infty$ 
%and $L_{t_n^+(k)}(0) \rightarrow \infty$ as $n \rightarrow \infty$, 
it now follows from Theorem~\ref{survival} that
$$\lim_{n \rightarrow \infty} \P_{\nu_n}(\zeta \leq t_n^-) = e^{-(\alpha/2) e^{ck/3}}, \hspace{.5in} \lim_{n \rightarrow \infty} \P_{\nu_n}(\zeta \leq t_n^+) = e^{-(\alpha/2) e^{-ck/3}},$$
which imply (\ref{limk1}) and (\ref{limk2}).
\end{proof}

\begin{lemma}\label{smallTlem}
Let $\eps > 0$ and $K > 0$.  Then there exists $t > 0$, depending on $\eps$ and $K$, such that for all initial configurations $\nu$ for which $T(0) \leq K$ under $\P_\nu$, we have $\P_{\nu}(\zeta > t) < \eps$.
\end{lemma}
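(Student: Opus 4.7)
The plan is to use the upper-bound half of Lemma~\ref{survivalgen}, which gives $\P_\nu(\zeta > t) \leq C_2 Z_t(0)$ as soon as $R(0) \leq L_t(0) - 1$. It is therefore enough to prove that the hypothesis $T(0) \leq K$ forces $Z_t(0) \to 0$ at a rate depending only on $K$ (and not on the particular configuration~$\nu$).

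The first step is to unpack the definition~(\ref{Tdef}): $T(0) \leq K$ automatically gives both $R(0) \leq L_{T(0)}(0) - 2 \leq L_K(0) - 2$ and $Z_{T(0)}(0) \leq 1/2$. In particular, for every $t \geq K$ the hypothesis of Lemma~\ref{survivalgen} is met. The second step is to transfer the bound $Z_{T(0)}(0) \leq 1/2$ into quantitative decay of $Z_t(0)$ for $t \geq T(0)$. The trivial inequality $\sin y \leq y$ applied in the definition~(\ref{Zdef}) of $z_t$ gives $z_t(x,0) \leq \pi x\, e^{x - L_t(0)}$ for $x \in [0, L_t(0)]$. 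In the other direction, the bound $\sin y \geq (2/\pi) \min(y, \pi - y)$, split at $x = L_{T(0)}(0)/2$, produces a lower bound of the form $z_{T(0)}(x, 0) \geq c(K)\, x\, e^{x - L_{T(0)}(0)}$ for every $x \in [0, L_{T(0)}(0) - 2]$, where $c(K) > 0$ depends only on $L_K(0)$. Summing over $u \in N_0$ then yields, for all $t \geq K$ uniformly in~$\nu$,
\[
Z_t(0) \leq \frac{\pi}{c(K)}\, e^{L_{T(0)}(0) - L_t(0)}\, Z_{T(0)}(0) \leq \frac{\pi}{2 c(K)}\, e^{L_K(0) - L_t(0)}.
\]

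Combining this with Lemma~\ref{survivalgen} produces $\P_\nu(\zeta > t) \leq (C_2 \pi / 2c(K))\, e^{L_K(0) - ct^{1/3}}$, which can be made smaller than $\eps$ by choosing $t$ large enough depending only on $\eps$ and $K$. The only mildly delicate point is the two-case split in the lower bound on $z_{T(0)}(x, 0)$: for $x$ near the right boundary $L_{T(0)}(0)$, the factor $L_{T(0)}(0)\sin(\pi x/L_{T(0)}(0))$ scales like $L_{T(0)}(0) - x$ rather than like $x$, which is what forces the constant $c(K)$ to depend on $L_K(0)$. Since $K$ is fixed this has no bearing on the conclusion, and everything else is routine manipulation of the explicit formula~(\ref{Zdef}).
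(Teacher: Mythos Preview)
Your proof is correct and follows essentially the same approach as the paper's: both use the upper bound in Lemma~\ref{survivalgen} together with the elementary sine estimates recorded in~(\ref{detzbound}) to show that $Z_t(0)$ is uniformly small once $T(0)\le K$. The only cosmetic difference is that the paper bounds the ratio $z_t(x,0)/z_{T(0)}(x,0)$ directly (obtaining $\le \tfrac{\pi}{4}L_t(0)\,e^{L_{T(0)}(0)-L_t(0)}$), whereas you bound $z_t(x,0)$ from above and $z_{T(0)}(x,0)$ from below separately; your resulting estimate even lacks the harmless extra $L_t(0)$ factor, but this has no bearing on the argument.
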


\begin{proof}
Let $u \leq K \leq t$.
It follows from (\ref{detzbound}) that if $0 \leq x \leq \min\{L_u(0), L_t(0)\}$, then
$$\frac{z_t(x,0)}{z_u(x,0)} = \frac{L_t(0) \sin(\frac{\pi x}{L_t(0)}) e^{-L_t(0)}}{L_u(0) \sin(\frac{\pi x}{L_u(0)}) e^{-L_u(0)}} \leq \frac{\pi}{2} \cdot \frac{\min\{x, L_t(0) - x\}}{\min\{x, L_u(0) - x\}} \cdot e^{L_u(0) - L_t(0)}.$$  Consequently, if $x \leq L_u(0) - 2$, then
\begin{equation}\label{KZratio}
\frac{z_t(x,0)}{z_u(x,0)} \leq \frac{\pi}{2} \cdot \frac{L_t(0)}{2} \cdot e^{L_u(0) - L_t(0)} \leq \frac{\pi e^K}{4} \cdot L_t(0) e^{-L_t(0)}.
\end{equation}
By the definition of $T(0)$, we have $Z_{T(0)}(0) \leq 1/2$ and $R(0) \leq T(0) - 2$.  Therefore, we can choose $t$ sufficiently large that for all initial configurations $\nu$ for which $T(0) \leq K$ under $\P_\nu$, we have
$$Z_t(0) \leq Z_{T(0)}(0) \cdot \frac{\pi e^K}{4} \cdot L_t(0) e^{-L_t(0)} \leq \frac{\pi e^K}{8} \cdot L_t(0) e^{-L_t(0)} < \frac{\eps}{C_2},$$ with $C_2$ the constant from  Lemma \ref{survivalgen}. It follows from that lemma that the probability of survival until time $t$ is bounded above by $\eps$, as claimed.
\end{proof}

We now work towards the proof of Lemma \ref{mainLRlemma}.  To prepare for this proof, we record some bounds on the position of the right-most particle $R(t)$ in branching Brownian motion with absorption.  For branching Brownian motion without absorption, Bramson \cite{bram83} considered this problem when $q = 0$.  He showed that if $m_x(t)$ denotes the median of the distribution of $R(t)$ when we start with a single particle at $x$, then there is a positive constant $C$ such that for all $t \geq 1$, we have
\begin{equation}\label{mainBram}
\bigg|m_x(t) - \bigg(x - \frac{3}{2} \log t \bigg) \bigg| \leq C.
\end{equation}
Bramson also showed (see equation (8.17) of \cite{bram83}) that there is another positive constant $C'$ such that for all $x \in \R$, $t \geq 1$, and $y \geq 1$, we have $\P_x(R(t) > m_x(t) + y) \leq C' y e^{-y}$.
Combining this result with (\ref{mainBram}) and noting that absorption at zero can only reduce the likelihood that there is a particle above a certain level at time $t$, we get that for branching Brownian motion with absorption, there is a positive constant $C''$ such that for all $x > 0$, $t \geq 1$, and $y \geq 1$, we have
\begin{equation}\label{Bramabs}
\P_x\bigg(R(t) > x - \frac{3}{2} \log t + y \bigg) \leq C'' y e^{-y}.
\end{equation}

We now claim that (\ref{Bramabs}) holds even when $q > 0$.  To see this, we construct the branching Brownian motion process in the following way.  First, we define a branching Brownian motion process with no killing at the origin.  If we ignore the spatial positions of the particles, this process is simply a continuous-time Galton-Watson process.  Next, we color particles red if they have an infinite line of descent, and blue if all of their descendants eventually die out.  It follows from results in \cite{gr} that the red particles form a continuous-time Galton-Watson process in which the offspring distribution still has finite variance but particles can never die. Furthermore, this process has the same growth rate as the original process. After coloring the particles red and blue, we again consider the spatial motion, which is independent of the branching structure, and add the killing at the origin by truncating paths once they hit the origin.  Now the red particles form a branching Brownian motion whose offspring distribution satisfies $q = 0$, and so (\ref{Bramabs}) holds.  Because, conditional on the configuration of particles at time $t$, each particle is red with probability $1-q$ and blue with probability $q$, the result (\ref{Bramabs}) must also hold for the original process that includes particles of both colors, after dividing the constant by $1 - q$.

We will also need an alternative bound when $x$ is small that allows us to take the absorption into account.  For this, let $$V(s) = \sum_{u \in N_s} X_u(s) e^{X_u(s)}.$$  It is well-known (see, for example, Lemma 2 in \cite{hh07}) that $(V(s), s \geq 0)$ is a nonnegative martingale, and its value is at least $ye^y$ when there is a particle above $y$.  It follows from Markov's Inequality that
\begin{equation}\label{derMG}
\P_x(R(t) > y) \leq \P_x(V(s) \ge ye^y) \leq \frac{x}{y} e^{x - y}.
\end{equation}

\begin{proof}[Proof of Lemma \ref{mainLRlemma}]
Consider the set $N_0$ of particles at time zero.  Rank the particles $u_1, u_2, \dots$ in decreasing order by position, so that $X_{u_1}(0) \geq X_{u_2}(0) \geq \dots$.  Now construct an extension of the process in which the absorption is suppressed, so that the trajectories of particles continue past the origin.  Let $G$ be the smallest integer $g$ such that the particle $u_g$ has descendants alive at time $d$ in this extended process.  
Note that if $q_d$ denotes the probability that a Galton-Watson process with offspring distribution $(p_k)_{k=0}^{\infty}$ dies before time $d$, then $\P(G = k|\#N_0 \geq k) = q_d^{k-1}(1 - q_d)$.
Let $\nu^*$ denote the initial configuration consisting of the particles $u_i$ with $i \geq G$.  Let ${\cal F}_0^*$ denote the $\sigma$-field generated by $N_0$ and $G$.  Note that, conditional on ${\cal F}_0^*$, the descendants of the particles $u_i$ for $i \geq G+1$ behave as they would in the original branching Brownian motion process, while the descendants of the particle $u_G$ are conditioned to survive until time $d$ in the extended process.

Let $T^*(0)$ be defined as in (\ref{Tdef}) for the configuration $\nu^*$. 
We will show that given $0 < \eps < 1$ and $A > 0$, we can choose $d$ sufficiently large and then $t_0$ sufficiently large that
\begin{equation}\label{Rd}
\P_{\nu}(R(d) \geq L_{T^*(0)}(0) - 2A) < \frac{\eps}{2}
\end{equation}
and
\begin{equation}\label{needL1}
\P_{\nu}\Big(\{L_{T(d)}(0) \leq L_{T^*(0)}(0) - A\} \cap \{T(d) \geq t_0\} \Big) < \frac{\eps}{2}.
\end{equation}
These two results immediately imply the statement of the lemma.

We first show that (\ref{Rd}) holds if $d$ is sufficiently large.  Let $N^*_0 = N_0 \setminus \{u_1, \dots, u_{G-1}\},$ and let $N^*_s$ denote the set of descendants of these particles alive at time $s$.  Let $$\alpha = \frac{e^{2A}}{\eps (1-q)}.$$  Let $S_1 = \{u \in N^*_0: L_{T^*(0)}(0) - X_u(0) \geq \alpha\}$ and $S_2 = N^*_0 \setminus S_1$.  Let $Z_t^*(s)$ be defined as in (\ref{Zdef}), but summing only over particles in $N^*_s$.  To bound the probability that some particle in $N_0^*$ has a descendant above $L_{T^*(0)} - 2A$ at time $d$, we apply (\ref{derMG}) to particles in $S_1$ and (\ref{Bramabs}) to particles in $S_2$.  The behavior of the descendants of the particle $u_G$ is affected by conditioning.  However, because the probability that a continuous-time Galton-Watson process with branching rate $\beta$ and offspring distribution $(p_k)_{k=1}^{\infty}$ survives until time $d$ is greater than $1 - q$, we can apply the results (\ref{Bramabs}) and (\ref{derMG}) to all particles in our process if we divide the upper bounds there by $1 - q$.  

Consider first the particles in $S_2$.  Assume for now that $L_{T^*(0)}(0) \geq 2 \alpha$, so that all particles in $S_2$ are above $\frac{1}{2} L_{T^*(0)}(0)$.  Using that $X_{u_G}(0) \leq L_{T^*(0)}(0) - 2$ by (\ref{Tdef}) as well as the lower bound in (\ref{detzbound}), we get $z_{T^*(0)}(X_u(0), 0) \geq 4 e^{-\alpha}$ for all $u \in S_2$.  Because $Z^*_{T^*(0)}(0) \leq 1/2$, it follows that there can be at most $e^{\alpha}/8$ particles in $S_2$.  In view of (\ref{Bramabs}), the probability that one of these particles has a descendant above $L_{T^*(0)}(0) - 2A$ at time $d$ tends to zero as $d \rightarrow \infty$.  Therefore, given $\eps$ and $A$, we can choose $d$ large enough to keep this probability below $\eps/4$.  Using also (\ref{derMG}) to handle the particles in $S_1$, we get that on $\{L_{T^*(0)}(0) \geq 2 \alpha\}$,
$$\P_{\nu}(R(d) \geq L_{T^*(0)}(0) - 2A\,|\,{\cal F}^*_0) < \frac{\eps}{4} + \frac{1}{1-q} \sum_{u \in S_1} \frac{X_u(0)  e^{X_u(0) - L_{T^*(0)}(0) + 2A}}{L_{T^*(0)}(0) - 2A}.$$
The lower bound in (\ref{detzbound}), applied separately when $x \leq \frac{1}{2} L_{T^*(0)}(0)$ and $x > \frac{1}{2}L_{T^*(0)}(0)$, yields
$$\sum_{u \in S_1} \frac{X_u(0)  e^{X_u(0) - L_{T^*(0)}(0) + 2A}}{L_{T^*(0)}(0) - 2A} \leq \frac{e^{2A}}{2} \sum_{u \in S_1} \frac{z_{T^*(0)}(X_u(0), 0)}{L_{T^*(0)}(0) - 2A} \max \bigg\{1, \frac{X_u(0)}{L_{T^*(0)}(0) - X_u(0)} \bigg\}.$$
Recall that $L_{T^*(0)}(0) - X_u(0) \geq \alpha$ for all $u \in S_1$, and therefore using that $\alpha \geq 2A$, we also have $X_u(0) \leq L_{T^*(0)}(0) - 2A$ for all $u \in S_1$ and $L_{T^*(0)}(0) - 2A \geq \alpha$ on the event $\{L_{T^*(0)}(0) \geq 2 \alpha\}$.  It follows that for all $u \in S_1$, we have
$$\frac{1}{L_{T^*(0)}(0) - 2A} \max \bigg\{1, \frac{X_u(0)}{L_{T^*(0)}(0) - X_u(0)} \bigg\} \leq \frac{1}{\alpha}.$$
Therefore,
$$\frac{1}{1-q} \sum_{u \in S_1} \frac{X_u(0)  e^{X_u(0) - L_{T^*(0)}(0) + 2A}}{L_{T^*(0)}(0) - 2A} \leq \frac{\eps Z^*_{T^*(0)}(0)}{2} \leq \frac{\eps}{4},$$ 
and thus $$\P_{\nu}(R(d) \geq L_{T^*(0)}(0) - 2A\,|\, {\cal F}^*_0) < \frac{\eps}{2}$$ on the event $\{L_{T^*(0)}(0) \geq 2 \alpha\}$.  Lemma \ref{smallTlem} implies that we can choose $d$ large enough that $\P_{\nu}(R(d) \geq L_{T^*(0)}(0) - 2A\,|\, {\cal F}^*_0) \leq \P_{\nu}(\zeta > d\,|\,{\cal F}_0^*) < \eps/2$ on the event $\{L_{T^*(0)}(0) < 2 \alpha\}$.  It follows that (\ref{Rd}) holds, when $d$ is chosen to be sufficiently large.

It remains to establish (\ref{needL1}).  Choose $\delta > 0$ small enough that
\begin{equation}\label{deltadef}
\frac{2 \delta e^{C_8/2}}{1 - q + \delta} < \frac{\eps}{2},
\end{equation}
where $C_8$ is the constant from (\ref{strongupper}).  Let $k'$, $t'$, and $a'$ be the constants from Lemma \ref{tightextinct} with $\delta$ in place of $\eps$.  Choose $a_0$ large enough that $a_0 \geq a'$, $a_0 > ck'/6$, and $\phi(a_0) \leq q + \delta/2$, where $\phi$ is the function from Theorem \ref{survivex}.  We will assume that $A \geq 2a_0$, which can be done because the statement of the lemma is weaker when $A < 2a_0$.  Next, choose $d$ large enough that (\ref{Rd}) holds, and large enough that the probability that a continuous-time Galton-Watson process with branching rate $\beta$ and offspring distribution $(p_k)_{k=1}^{\infty}$ survives until time $d$ is at most $1 - q + \delta$.  Finally, choose $t_0 > 0$ large enough that the following hold:
\begin{enumerate}
\item We have $t_0 \geq t'$.

\item We have $t - k't^{2/3} \geq d$ for all $t \geq t_0$.

\item If $x \geq a_0$ and $t \geq t_0$, then $\P_{L_t(0) + x}(\zeta \geq t + d) \geq 1 - q - \delta$.  Note that Theorem \ref{survivex} and our assumption that $\phi(a_0) \leq q + \delta/2$ imply that $t_0$ can be chosen this way.

\item If $t \geq t_0$, then $ct^{1/3} - 2a_0 \leq c(t - k' t^{2/3} - d)^{1/3}$.  Note that this is possible because $ct^{1/3} - c(t - k't^{2/3} - d)^{1/3} \sim ck'/3$ as $t \rightarrow \infty$, and $a_0 > ck'/6$.
\end{enumerate}

Let $T'$ be the time such that $L_{T'}(0) = L_{T^*(0)}(0) - A$.  Our strategy will be to show that with high probability, the process will survive until time $T' + d$, which will preclude $T(d)$ from being too small.  In particular, we claim that on $\{T' \geq t_0\}$, we have 
\begin{equation}\label{Bsurvival}
\P_{\nu}(\zeta > T' + d\,|\,{\cal F}_0^*) \geq \frac{1 - q - \delta}{1 - q + \delta}.
\end{equation}
Assume for now that (\ref{Bsurvival}) holds.  It follows that
\begin{equation}\label{intzeta1}
\P_{\nu}(\{\zeta \leq T' + d\} \cap \{T' \geq t_0\}) \leq \frac{2 \delta}{1 - q + \delta}.
\end{equation}
Because $Z_{d + T(d)}(d) \leq 1/2$ and $R(d) \leq L_{T(d)}(0) - 2$ by definition, it follows from (\ref{strongupper}) that $$\P_{\nu}(\zeta \leq T' + d\,|\,t_0 \leq T(d) \leq T') \geq \P_{\nu}(\zeta \leq T(d) + d\,|\,t_0 \leq T(d) \leq T') \geq e^{-C_8/2},$$ and therefore
\begin{align}\label{intzeta2}
\P_{\nu}(\{\zeta \leq T' + d\} \cap \{T' \geq t_0\}) &\geq \P_{\nu}(\{\zeta \leq T' + d\} \cap \{t_0 \leq T(d) \leq T'\}) \nonumber \\
&= \P_{\nu}(t_0 \leq T(d) \leq T') \P_{\nu}(\zeta \leq T' + d \,|\, t_0 \leq T(d) \leq T') \nonumber \\
&\geq e^{-C_8/2} \P_{\nu}(t_0 \leq T(d) \leq T').
\end{align}
From (\ref{intzeta1}), (\ref{intzeta2}), and (\ref{deltadef}), we get
$$\P_{\nu}(t_0 \leq T(d) \leq T') \leq \frac{2 \delta e^{C_8/2}}{1 - q + \delta} < \frac{\eps}{2},$$
which by the definition of $T'$ is precisely (\ref{needL1}).

It remains to prove (\ref{Bsurvival}).
Let $B = \{X_{u_G}(0) \geq L_{T^*(0)}(0) - A/2\} \in {\cal F}_0^*$.  On the event $B$, the particle $u_G$ begins above $L_{T'}(0) + A/2$.
Our choices of $a_0$ and $t_0$ ensure that as long as $A \geq 2 a_0$ and $T' \geq t_0$, the probability that a particle started at the position $X_{u_G(0)}$ has descendants alive at time $T' + d$ is at least $1 - q - \delta$.  Also, our choice of $d$ ensures that the probability that, without absorption at zero, such a particle would have descendants alive until time $d$ is at most $1 - q + \delta$.  Because our definition of $G$ entails conditioning on the latter event, and because the presence of other particles in the initial configuration can only increase the probability that the process survives beyond time $T' + d$, the inequality (\ref{Bsurvival}) holds on the event $B \cap \{T' \geq t_0\}$.
On the event $B^c$, the configuration $\nu^*$ has no particles above $L_{T^*(0)}(0) - A/2$.  Then we can apply Lemma~\ref{tightextinct}, which implies that on the event $B^c \cap \{T^*(0) \geq t_0\}$ we have
\begin{equation}\label{Bcpre}
\P_{\nu}(\zeta \geq T^*(0) - k' T^*(0)^{2/3} \,|\, {\cal F}_0^*) > 1 - \delta.
\end{equation}
Note that this result holds even though, as noted at the beginning of the proof, conditioning on ${\cal F}_0^*$ means the descendants of the particle at $u_G$ are conditioned to survive until time $d$ in the extended process.  This conditioning can only increase the chance that descendants of the particle at $u_G$ survive beyond time $T^*(0) - k' T^*(0)^{2/3}$ because, by our choice of $t_0$, particles can not survive this long if they die out before time $d$ even in the extended process.  The fourth condition above on our choices of $a_0$ and $t_0$ guarantees that on the event $\{T^*(0) \geq t_0\}$, we have $T' + d < T^*(0) - k' T^*(0)^{2/3}$.  Also, $(1 - q - \delta)/(1 - q + \delta) \leq 1 - \delta$, so (\ref{Bcpre}) implies that (\ref{Bsurvival}) holds also on $B^c \cap \{T^*(0) \geq t_0\}$, and therefore on $\{T' \geq t_0\}$.
\end{proof}

\begin{lemma}\label{MGsurvival}
Let $(\nu_n)_{n=1}^{\infty}$ be a sequence of deterministic initial configurations.  Let $(s_n)_{n=1}^{\infty}$ and $(t_n)_{n=1}^{\infty}$ be sequences of times such that:
\begin{equation}\label{sntn}
\mbox{1) } 0 \leq s_n \leq t_n \mbox{ for all } n, \hspace{.3in}\mbox{2)} \lim_{n \rightarrow \infty} (t_n - s_n) = \infty, \hspace{.3in}\mbox{3)}\lim_{n \rightarrow \infty} s_n/t_n = 1.
\end{equation}
Suppose that, under $\P_{\nu_n}$, we have $Z_{t_n}(0) \rightarrow 0$ and $L_{t_n}(0) - R(0) \rightarrow \infty$ as $n \rightarrow \infty$.  For $0 \leq u \leq t_n$, define
\begin{equation}\label{Wdef}
W_n(u) = \P_{\nu_n}(\zeta > t_n \,|\, {\cal F}_u).
\end{equation}
Under the conditional probability measure $\P_{\nu_n}( \: \cdot \:|\,\zeta > t_n)$, we have $W_n(s_n) \rightarrow_p 1$ as $n \rightarrow \infty$.  Moreover, for all $\eps > 0$ and $a \in (0, 1)$, there exists $\delta > 0$ such that for sufficiently large $n$,
\begin{equation}\label{infW}
\P_{\nu_n}\Big( \inf_{at_n \leq u \leq t_n} W_n(u) \leq \delta \,\big|\, \zeta > t_n \Big) < \eps.
\end{equation}
\end{lemma}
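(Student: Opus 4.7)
The plan is to reduce both assertions to statements about $Z_{t_n}$ using the Markov property together with the unconditional lower bound of Lemma~\ref{survivalgen}, and then to apply Theorem~\ref{CSBPcond} together with properties of the limit CSBP $\Phi$. Writing $\sigma_u$ for the configuration of particles at time $u$, the Markov property gives $W_n(u) = \P_{\sigma_u}(\zeta > t_n - u)$, and the lower bound in Lemma~\ref{survivalgen} (which holds with no restriction on the initial configuration) yields
$$W_n(u) \;\ge\; 1 - \exp\bigl(-C_7\,Z_{t_n}(u)\bigr).$$
Hence, writing $\mathbb{Q}_n := \P_{\nu_n}(\,\cdot\,|\,\zeta > t_n)$, to prove $W_n(s_n) \rightarrow_p 1$ under $\mathbb{Q}_n$ it suffices to show $Z_{t_n}(s_n) \rightarrow_p \infty$ under $\mathbb{Q}_n$; and to prove (\ref{infW}) it suffices to show, for every $\eps > 0$ and $a \in (0,1)$, the existence of $\eta > 0$ with $\P_{\mathbb{Q}_n}\bigl(\inf_{at_n \le u \le t_n} Z_{t_n}(u) \le \eta\bigr) < \eps$ for $n$ large.

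For the first assertion, set $u_n^\star = \log\bigl(t_n/(t_n - s_n)\bigr)$, so that $s_n = \phi_{t_n}(u_n^\star)$; the hypotheses on $s_n$ force $u_n^\star \to \infty$. Fix $K, \eps > 0$. By (\ref{CSBPV}) the limit CSBP $\Phi$ satisfies $\Phi(u) \to \infty$ almost surely, so we can pick $u_0$ with $\P\bigl(\Phi(u_0) > 2K\bigr) > 1 - \eps/3$. Theorem~\ref{CSBPcond} then gives $\P_{\mathbb{Q}_n}\bigl(Z_{t_n}(\phi_{t_n}(u_0)) > 2K\bigr) > 1 - \eps/2$ for $n$ large, and $s_n > \phi_{t_n}(u_0)$. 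To propagate the bound from time $\phi_{t_n}(u_0)$ to time $s_n$ I would restart the process: conditional on $\mathcal{F}_{\phi_{t_n}(u_0)}$ and on $\{\zeta > t_n\}$, the shifted BBM starts from $\sigma_{\phi_{t_n}(u_0)}$ with horizon $\tilde t_n = e^{-u_0}t_n$ and is conditioned to survive past $\tilde t_n$. Applying a variant of Theorem~\ref{CSBPcond} to this shifted process (whose initial value $\tilde Z_{\tilde t_n}(0) = Z_{t_n}(\phi_{t_n}(u_0))$ converges in law to $\Phi(u_0)$, which is bounded away from zero in probability) then yields $Z_{t_n}(s_n) \rightarrow_p \infty$.

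For the second assertion we need process-level control of $Z_{t_n}$ on $[at_n, t_n]$. Setting $u_a = -\log(1-a)$, so that $\phi_{t_n}(u_a) = at_n$, we note that $\Phi(u) > 0$ for every $u > 0$ almost surely (since the CSBP with branching mechanism $\Psi_{a,2/3}$ does not go extinct in finite time) and $\Phi(u) \to \infty$; hence $\inf_{u \ge u_a}\Phi(u)$ is a.s.\ positive, and we can pick $\eta_0 > 0$ with $\P\bigl(\inf_{u \ge u_a}\Phi(u) \le 2\eta_0\bigr) < \eps/3$. Combining Theorem~\ref{CSBPcond} on a finite grid $u_a = v_0 < v_1 < \cdots < v_J$ (which for $J$ large enough yields $\P_{\mathbb{Q}_n}\bigl(\min_{0 \le j \le J} Z_{t_n}(\phi_{t_n}(v_j)) > 2\eta_0\bigr) > 1 - 2\eps/3$ for $n$ large) with a near-martingale control of the oscillations of $Z_{t_n}$ between consecutive grid points via Lemma~\ref{lem:Zexp}, together with a further application of the first assertion on the final segment $[\phi_{t_n}(v_J), t_n]$ (with $v_J = v_J(n) \to \infty$ chosen sufficiently slowly), should yield $\inf_{at_n \le u \le t_n} Z_{t_n}(u) > \eta_0$ with probability at least $1 - \eps$. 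The main obstacle is this path-level control of $Z_{t_n}$ between grid points: Theorem~\ref{CSBPcond} provides only finite-dimensional convergence, so passing to an infimum requires either tightness of the rescaled processes $(Z_{t_n}(\phi_{t_n}(\cdot)))$ under $\mathbb{Q}_n$ in a suitable path space, or direct quantitative bounds on the oscillations of $Z_{t_n}$ via its near-martingale property.
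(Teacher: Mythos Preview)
Your reduction to $Z_{t_n}$ via Lemma~\ref{survivalgen} is correct at each fixed time, but then you try to prove pathwise statements about $Z_{t_n}$ (namely $Z_{t_n}(s_n)\to_p\infty$ with $s_n$ moving, and $\inf_{at_n\le u\le t_n}Z_{t_n}(u)>\eta$), and you correctly identify that this requires process-level control which Theorem~\ref{CSBPcond} does not provide. That gap is real, and the grid-plus-oscillation program you sketch via Lemma~\ref{lem:Zexp} would be substantial extra work (Lemma~\ref{lem:Zexp} is a first-moment identity, not a maximal inequality, and conditioning on $\{\zeta>t_n\}$ further complicates moment estimates).

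The paper avoids all of this by exploiting that $W_n$ itself is a $[0,1]$-valued martingale, so maximal/optional-sampling arguments apply directly to $W_n$ rather than to $Z_{t_n}$. Concretely: for the first assertion, one only needs $Z_{t_n}(vt_n)$ to be large at a \emph{single fixed} $v\in(0,1)$ (this follows from Theorem~\ref{CSBPcond} at one time, since $\Phi(u)\to\infty$). On that event $W_n(vt_n)>1-\eps^2$, and then Doob/optional sampling for the bounded martingale $W_n$ gives $\inf_{vt_n\le u\le t_n}W_n(u)>1-\eps$ with probability $\ge 1-\eps$; conditioning further on $\{\zeta>t_n\}=\{W_n(t_n)=1\}$ can only help. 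Since $s_n\ge vt_n$ eventually, this handles $W_n(s_n)$ without ever looking at $Z_{t_n}(s_n)$. For (\ref{infW}) the same idea works: Theorem~\ref{CSBPcond} at the single time $at_n$ gives $W_n(at_n)>d$ with high conditional probability, and then an optional-sampling/Bayes computation bounds $\P(\inf_{at_n\le u\le t_n}W_n(u)\le\delta\mid W_n(at_n)>d,\,\zeta>t_n)\le\delta/d$. So the missing idea in your approach is simply: work with the martingale $W_n$ directly instead of pushing everything back to $Z_{t_n}$; the lower bound $W_n(u)\ge 1-e^{-C_7 Z_{t_n}(u)}$ is needed only at a single time to get the martingale started.
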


\begin{proof}
Suppose conditions 1), 2), and 3) hold.
Let $\eps > 0$.  Choose $m$ sufficiently large that $e^{-C_7 m} < \eps^2$, where $C_7$ is the constant from Lemma \ref{survivalgen}.  By Theorem \ref{CSBPcond}, conditional on $\zeta > t_n$, the finite-dimensional distributions of the processes $(Z_{t_n}((1 - e^{-u}) t_n), u \geq 0)$ converge as $n \rightarrow \infty$ to the finite-dimensional distributions of $(\Phi(u), u \geq 0)$, which is a continuous-state branching process started at zero and conditioned to go to infinity as $u \rightarrow \infty$.  Therefore, we can choose $v \in (0, 1)$ sufficiently close to $1$ that 
\begin{equation}\label{vsurvival}
\P_{\nu_n}(Z_{t_n}(vt_n) > m \,|\, \zeta > t_n) > 1 - \eps
\end{equation}
for sufficiently large $n$.  Lemma \ref{survivalgen} implies that $\P_{\nu_n}(\zeta > t_n\,|\,{\cal F}_{vt_n}) \geq 1 - e^{-C_7 m} > 1 - \eps^2$ on the event $\{Z_{t_n}(vt_n) > m\}$ for sufficiently large $n$.  That is, we have $W_n(vt_n) > 1 - \eps^2$ on $\{Z_{t_n}(vt_n) > m\}$ for sufficiently large $n$.  Therefore, (\ref{vsurvival}) implies that for sufficiently large $n$, we have
\begin{equation}\label{Weq1}
\P_{\nu_n}(W_n(vt_n) > 1 - \eps^2 \,|\, \zeta > t_n) > 1 - \eps.
\end{equation}
Since $(W_n(u), 0 \leq u \leq t_n)$ is a $[0,1]$-valued martingale, it follows from the Optional Sampling Theorem that
\[
\P_{\nu_n}\Big( \inf_{vt_n \leq u \leq t_n} W_n(u) > 1 - \eps \,\big| \, W_n(vt_n) > 1 - \eps^2\Big) \geq 1 - \eps.
\]
We claim that we also have,
\begin{equation}\label{Weq2}
\P_{\nu_n}\Big( \inf_{vt_n \leq u \leq t_n} W_n(u) > 1 - \eps \,\big| \, \{W_n(vt_n) > 1 - \eps^2\} \cap \{\zeta > t_n\}\Big) \geq 1 - \eps.
\end{equation}
To see this, note that the further conditioning on the event $\{\zeta > t_n\} = \{W_n(t_n) = 1\}$ can only increase the probability that the martingale stays above $1 - \eps$ because the martingale can not stay above $1 - \eps$ between times $vt_n$ and $t_n$ on the event $\{\zeta > t_n\}^c = \{W_n(t_n) = 0\}$.  From (\ref{sntn}), (\ref{Weq1}), and (\ref{Weq2}), we get that for sufficiently large $n$, $$\P_{\nu_n}(W_n(s_n) > 1 - \eps \,|\, \zeta > t_n) \geq \P_{\nu_n}\Big(\inf_{vt_n \leq u \leq t_n} W_n(u) > 1 - \eps \,\big|\, \zeta > t_n \Big) > (1 - \eps)^2,$$ which immediately gives the first conclusion of the lemma when conditions 1), 2), and 3) hold.

It remains to prove (\ref{infW}).  There exists $b > 0$ such that $P(\Phi(-\log(1-a)) > b) > 1 - \eps/2$.  Then Theorem \ref{CSBPcond} implies that
$$\P_{\nu_n}(Z_{t_n}(a t_n) > b \,|\, \zeta > t_n) > 1 - \frac{\eps}{2}$$ for sufficiently large $n$.  It follows from Lemma \ref{survivalgen} that, for sufficiently large $n$, we have $W_n(at_n) > 1 - e^{-C_7 b}$ on the event $\{Z_{t_n}(at_n) > b\}$, and therefore, writing $d = 1 - e^{-C_7 b} > 0$, we have
\begin{equation}\label{infW1}
\P_{\nu_n}(W_n(at_n) > d \,|\, \zeta > t_n) > 1 - \frac{\eps}{2}.
\end{equation}
Let $\delta = d \eps/2$, and let $D$ be the event that $\inf_{at_n \leq u \leq t_n} W_n(u) \leq \delta$.  Using Bayes' Rule followed by the Optional Sampling Theorem, along with the trivial bound $\P_{\nu_n}( D \,|\, W_n(at_n) > d ) \leq 1$, we get
\begin{align}\label{infW2}
\P_{\nu_n} ( D \,|\, \{W_n(at_n) > d\} \cap \{\zeta > t_n\}) &= \frac{\P_{\nu_n}( D \,|\, W_n(at_n) > d ) \P_{\nu_n}(\zeta > t_n\,|\, D \cap \{W_n(at_n) > d\})}{\P_{\nu_n}(\zeta > t_n\,|\,W_n(at_n) > d)} \nonumber \\
&\leq \frac{\delta}{d}.
\end{align}
It follows from (\ref{infW1}) and (\ref{infW2}) that for sufficiently large $n$,
$$\P_{\nu_n}(D^c\,|\,\zeta > t_n) \geq \P_{\nu_n}(W_n(at_n) > d\,|\,\zeta > t_n) \P_{\nu_n}(D^c\,|\,\{W_n(at_n) > d\} \cap \{\zeta > t_n\}) \geq \bigg(1 - \frac{\eps}{2} \bigg)^2,$$
which implies (\ref{infW}).
\end{proof}

\begin{lemma}\label{gensequence}
Let $(\nu_n)_{n=1}^{\infty}$ be a sequence of deterministic initial configurations.  Let $(s_n)_{n=1}^{\infty}$ and $(t_n)_{n=1}^{\infty}$ be sequences of times such that
\begin{equation}\label{sntn2}
\mbox{1) } 0 \leq s_n \leq t_n \mbox{ for all } n, \hspace{.3in}\mbox{2)} \lim_{n \rightarrow \infty} (t_n - s_n) = \infty, \hspace{.3in}\mbox{3)}\liminf_{n \rightarrow \infty} s_n/t_n > 0.
\end{equation}
Suppose, under $\P_{\nu_n}$, we have $Z_{t_n}(0) \rightarrow 0$ and $L_{t_n}(0) - R(0) \rightarrow \infty$ as $n \rightarrow \infty$.  Then, under the conditional probability measure $\P_{\nu_n}( \: \cdot \:|\,\zeta > t_n)$, we have $T(s_n) \rightarrow_p \infty$ and $L_{T(s_n)}(0) - R(s_n) \rightarrow_p \infty$ as $n \rightarrow \infty$.
\end{lemma}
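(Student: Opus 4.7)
The plan is to establish the two convergences $T(s_n)\to_p\infty$ and $L_{T(s_n)}(0)-R(s_n)\to_p\infty$ separately under $\P_{\nu_n}(\cdot\mid\zeta>t_n)$, using in both cases the same device: the $[0,1]$-valued martingale $W_n(u)=\P_{\nu_n}(\zeta>t_n\mid\F_u)$ from Lemma~\ref{MGsurvival}. Because $\liminf s_n/t_n>0$ by assumption (and hence $t_n\to\infty$), I can fix $a\in(0,\liminf s_n/t_n)$; then for any fixed $d\ge 0$ and all sufficiently large $n$, both $s_n$ and $s_n-d$ lie in $[at_n,t_n]$. Estimate (\ref{infW}) therefore yields, for every $\eta>0$, a constant $\delta>0$ such that $\P_{\nu_n}(W_n(s_n)\leq\delta\mid\zeta>t_n)<\eta$ and $\P_{\nu_n}(W_n(s_n-d)\leq\delta\mid\zeta>t_n)<\eta$ for large $n$. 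The idea is then to show deterministically that in each of the two cases the undesired event forces $W_n$ below $\delta$, making it atypical under the conditional measure.

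For the first claim, fix $K>0$ and $\eta>0$ and pick $\delta$ as above. Lemma~\ref{smallTlem} applied with $\eps=\delta$ and the chosen $K$ yields a time $t'$ such that any initial configuration with $T(0)\leq K$ produces a process with survival probability past $t'$ less than $\delta$. Since $t_n-s_n\to\infty$, we eventually have $t_n\geq s_n+t'$, and the Markov property at time $s_n$ then gives, on $\{T(s_n)\leq K\}$, $W_n(s_n)\leq\P_{\nu_n}(\zeta>s_n+t'\mid\F_{s_n})<\delta$. Hence $\{T(s_n)\leq K\}\subseteq\{W_n(s_n)<\delta\}$ for large $n$, and (\ref{infW}) gives $\P_{\nu_n}(T(s_n)\leq K\mid\zeta>t_n)<\eta$.

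For the second claim, fix $A>0$ and $\eta_0>0$, choose $\delta$ from (\ref{infW}) with some $\eta<\eta_0/2$, and then apply Lemma~\ref{mainLRlemma} with parameter $\eps=\eta\delta$ and the given $A$, obtaining $t_0,d>0$ such that for every initial configuration $\nu$, $\P_\nu(\{R(d)\geq L_{T(d)}(0)-A\}\cap\{T(d)\geq t_0\})<\eps$. Let $B_n=\{R(s_n)\geq L_{T(s_n)}(0)-A\}\cap\{T(s_n)\geq t_0\}$. Since $L_{s+t}(s)=ct^{1/3}$ and $z_{s+t}(x,s)$ depend only on the gap $t$, the random variables $R(s_n)$, $T(s_n)$, $L_{T(s_n)}(0)$ match the quantities $R(d)$, $T(d)$, $L_{T(d)}(0)$ for the time-shifted process starting at $s_n-d$; applying the Markov property at $s_n-d$ therefore gives $\P_{\nu_n}(B_n\mid\F_{s_n-d})<\eps$ almost surely. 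Since $B_n\in\F_{s_n}$, on $\{W_n(s_n-d)>\delta\}$ I obtain
\[
\P_{\nu_n}(B_n\mid\F_{s_n-d},\,\zeta>t_n)=\frac{\P_{\nu_n}(B_n\cap\{\zeta>t_n\}\mid\F_{s_n-d})}{W_n(s_n-d)}\leq\frac{\eps}{\delta}=\eta.
\]
Integrating under $\P_{\nu_n}(\cdot\mid\zeta>t_n)$ and controlling the set $\{W_n(s_n-d)\leq\delta\}$ via (\ref{infW}) yields $\P_{\nu_n}(B_n\mid\zeta>t_n)<2\eta<\eta_0$ for large $n$. The remaining piece $\P_{\nu_n}(\{R(s_n)\geq L_{T(s_n)}(0)-A\}\cap\{T(s_n)<t_0\}\mid\zeta>t_n)$ is bounded by $\P_{\nu_n}(T(s_n)<t_0\mid\zeta>t_n)$, which tends to zero by the first claim. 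Since $A$ and $\eta_0$ are arbitrary, the second conclusion follows.

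The main obstacle is that Lemma~\ref{mainLRlemma} delivers an \emph{unconditional} probability bound, whereas the target assertion is under the conditional measure $\P_{\nu_n}(\cdot\mid\zeta>t_n)$, and $\P_{\nu_n}(\zeta>t_n)$ may decay arbitrarily fast so that one cannot simply divide. The bridge is the uniform lower bound (\ref{infW}) on the martingale $W_n$ on $[at_n,t_n]$: this lets a single Bayes step convert an $\F_{s_n-d}$-conditional unconditional bound of size $\eps$ into a conditional bound of size $\eps/\delta$ with $\delta>0$ fixed, and then $\eps$ can be taken small. The same mechanism drives the first claim, with Lemma~\ref{smallTlem} in place of Lemma~\ref{mainLRlemma}.
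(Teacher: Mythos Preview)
Your proof is correct and follows essentially the same approach as the paper's. Both arguments handle $T(s_n)\to_p\infty$ via Lemma~\ref{smallTlem} combined with the lower bound on $W_n(s_n)$ from (\ref{infW}), and both convert the unconditional bound from Lemma~\ref{mainLRlemma} into a conditional one by leveraging $W_n(s_n-d)>\delta$. The only cosmetic difference is that where the paper invokes the explicit elementary inequality $P(B|E)\leq P(B\cap C\cap E|D)\cdot P(D|E)/P(E|D)+P(C^c|E)+P(D^c|E)$, you phrase the same step as the tower property for $\P_{\nu_n}(\,\cdot\mid\F_{s_n-d},\zeta>t_n)$ followed by integration under the conditional measure; the underlying computation is identical.
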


\begin{proof}
Let $\eps > 0$ and $A > 0$.  Define the martingale $(W_n(u), 0 \leq u \leq t_n)$ as in Lemma \ref{MGsurvival}.  Choose $a > 0$ such that $\liminf_{n \rightarrow \infty} s_n/t_n > 2a$, and choose $\delta > 0$ such that (\ref{infW}) holds for sufficiently large $n$.  It follows from (\ref{infW}) that
\begin{equation}\label{Wsn}
\P_{\nu_n} (W_n(s_n) \leq \,\delta \,|\, \zeta > t_n) < \eps.
\end{equation}
By Lemma \ref{smallTlem} and the fact that $t_n - s_n \rightarrow \infty$, for any fixed $K > 0$, we have $W_n(s_n) < \delta$ on the event $\{T(s_n) \leq K\}$ for sufficiently large $n$.  Therefore, for sufficiently large $n$, we have $\P_{\nu_n}(T(s_n) \leq K\,|\,\zeta > t_n) < \eps$.  It follows that $T(s_n) \rightarrow_p \infty$ as $n \rightarrow \infty$ under $\P_{\nu_n}( \: \cdot \:|\,\zeta > t_n)$.

Choose $d$ and $t_0$ as in Lemma \ref{mainLRlemma}, with $\delta \eps$ playing the role of $\eps$.  Because $s_n - d > at_n$ for sufficiently large $n$, the reasoning that led to (\ref{Wsn}) also gives
\begin{equation}\label{Wsnd}
\P_{\nu_n} (W_n(s_n - d) \leq \,\delta \,|\, \zeta > t_n) < \eps.
\end{equation}
By applying Lemma~\ref{mainLRlemma} with the configuration of particles at time $s_n - d$ playing the role of $\nu$, we get
$$\P_{\nu_n}(\{R(s_n) \geq L_{T(s_n)}(0) - A\} \cap \{T(s_n) \geq t_0\}\,|\,{\cal F}_{s_n - d}) < \delta \eps.$$
In particular, because $\{W_n(s_n - d) > \delta\} \in {\cal F}_{s_n - d}$, we have
\begin{equation}\label{condW}
\P_{\nu_n}(\{R(s_n) \geq L_{T(s_n)}(0) - A\} \cap \{T(s_n) \geq t_0\}\,|\,W_n(s_n - d) > \delta) < \delta \eps.
\end{equation}
Elementary probability results imply that if $B$, $C$, $D$, and $E$ are events, then
\begin{align*}
P(B|E) &\leq P(B \cap C \cap D|E) + P(C^c|E) + P(D^c|E) \\
&= P(B \cap C \cap E|D) \cdot \frac{P(D|E)}{P(E|D)} + P(C^c|E) + P(D^c|E).
\end{align*}
Now write $B = \{R(s_n) \geq L_{T(s_n)}(0) - A\}$, $C = \{T(s_n) \geq t_0\}$, $D = \{W_n(s_n - d) > \delta\}$, and $E = \{\zeta > t_n\}$.  Note that $P(E|D) > \delta$ by definition, and $P(D|E) > 1 - \eps$ by (\ref{Wsnd}).  Also, $P(B \cap C \cap E|D) \leq \delta \eps$ by (\ref{condW}), and $P(C^c|E) < \eps$ for sufficiently large $n$ because we already know that $T(s_n) \rightarrow_p \infty$ as $n \rightarrow \infty$ under $\P_{\nu_n}( \: \cdot \: |\, \zeta > t_n)$.  Thus, for sufficiently large $n$,
$$\P_{\nu_n}(R(s_n) \geq L_{T(s_n)}(0) - A\,|\, \zeta > t_n) < \delta \eps \cdot \frac{1}{\delta} + 2 \eps = 3 \eps.$$  Because $\eps > 0$ and $A > 0$ were arbitrary, it follows that $L_{T(s_n)}(0) - R(s_n) \rightarrow_p \infty$ under the conditional probability measure $\P_{\nu_n}( \: \cdot \:|\,\zeta > t_n)$ as $n \rightarrow \infty$.
\end{proof}

\begin{lemma}\label{Ttcompare}
Let $(\nu_n)_{n=1}^{\infty}$ be a sequence of deterministic initial configurations.  Let $(t_n)_{n=1}^{\infty}$ be a sequence of times tending to infinity.  Let $\delta > 0$, and let $(s_n)_{n=1}^{\infty}$ be a sequence of times such that $\delta t_n \leq s_n \leq (1 - \delta) t_n$ for all $n$.  Suppose, under $\P_{\nu_n}$, we have $Z_{t_n}(0) \rightarrow 0$ and $L_{t_n}(0) - R(0) \rightarrow \infty$ as $n \rightarrow \infty$.  Then, under the conditional probability measure $\P_{\nu_n}( \: \cdot \:|\,\zeta > t_n)$, we have $L_{t_n}(s_n) - R(s_n) \rightarrow_p \infty$.
\end{lemma}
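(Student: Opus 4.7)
The plan is to reduce this to Lemma \ref{gensequence} by comparing the critical curves $L_{t_n}(s_n) = c(t_n - s_n)^{1/3}$ and $L_{T(s_n)}(0) = c\,T(s_n)^{1/3}$. Since $s_n \in [\delta t_n, (1-\delta) t_n]$, we have $\liminf_n s_n/t_n \geq \delta > 0$ and $t_n - s_n \geq \delta t_n \to \infty$, so the hypotheses of Lemma \ref{gensequence} are satisfied for the sequence $(s_n)$. That lemma then gives, under $\P_{\nu_n}(\:\cdot\:|\,\zeta > t_n)$, both $T(s_n) \to_p \infty$ and $L_{T(s_n)}(0) - R(s_n) \to_p \infty$. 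Writing
\[
L_{t_n}(s_n) - R(s_n) \geq \big(L_{T(s_n)}(0) - R(s_n)\big) - \big(L_{T(s_n)}(0) - L_{t_n}(s_n)\big)^+,
\]
it therefore suffices to show that $\big(L_{T(s_n)}(0) - L_{t_n}(s_n)\big)^+ = O_p(1)$ under the conditional measure. By the concavity bound $(a+b)^{1/3} - a^{1/3} \leq b/(3a^{2/3})$ for $a,b > 0$ and the inequality $t_n - s_n \geq \delta t_n$, this reduces in turn to proving $\big(T(s_n) - (t_n - s_n)\big)^+ \leq O_p(t_n^{2/3})$.

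To establish this upper bound on $T(s_n)$, I would combine two inputs. Proposition \ref{extinctiondist} (whose hypothesis \eqref{maininitial} matches our assumption) gives $\zeta - t_n = O_p(t_n^{2/3})$ under the conditional measure. Then, applying Lemma \ref{tightextinct} via the Markov property at time $s_n$ yields that on the $\F_{s_n}$-measurable event $B \cap C := \{T(s_n) \geq t'\} \cap \{L_{T(s_n)}(0) - R(s_n) \geq a'\}$ we have $\P(|\zeta - s_n - T(s_n)| > k' T(s_n)^{2/3} \,|\, \F_{s_n}) < \eps'$, which in particular gives $T(s_n) \leq (\zeta - s_n) + k' T(s_n)^{2/3}$. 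Combining these bounds, with high probability,
\[
T(s_n) - (t_n - s_n) \leq (\zeta - t_n) + k'\,T(s_n)^{2/3} \leq O_p(t_n^{2/3}) + k'\,T(s_n)^{2/3}.
\]
A brief bootstrap rules out $T(s_n) \gg t_n$: once $T(s_n)$ is large enough, $k' T(s_n)^{2/3} \leq T(s_n)/4$, so $T(s_n) > 2t_n$ would force $T(s_n)/4 \leq O_p(t_n^{2/3})$, contradicting $t_n \to \infty$. Hence $T(s_n) = O_p(t_n)$, which upgrades $k' T(s_n)^{2/3}$ to $O_p(t_n^{2/3})$ and delivers the desired bound.

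The main obstacle is transferring the Markov-property estimate from Lemma \ref{tightextinct} to the conditional measure $\P_{\nu_n}(\:\cdot\:|\,\zeta > t_n)$, since $\P_{\nu_n}(\zeta > t_n) \to 0$ means that a naive division by the survival probability would explode the error. I would follow the device used in the proof of Lemma \ref{gensequence}, working on the $\F_{s_n}$-measurable event $D := \{W_n(s_n) > \delta_0\}$, where $W_n(u) = \P_{\nu_n}(\zeta > t_n \,|\, \F_u)$ is the martingale from Lemma \ref{MGsurvival}. By \eqref{infW} applied with $a = \delta$, one can choose $\delta_0 > 0$ so that $\P_{\nu_n}(D^c \,|\, \zeta > t_n) < \eps$ for large $n$. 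On $D$ the bound $\P(\zeta > t_n \,|\, \F_{s_n}) > \delta_0$ holds, so any estimate $\P(G \,|\, \F_{s_n}) \leq \eps'$ is automatically upgraded to $\P(G \,|\, \F_{s_n},\, \zeta > t_n) \leq \eps'/\delta_0$ on $D$, losing only a factor $\delta_0^{-1}$ rather than $\P_{\nu_n}(\zeta > t_n)^{-1}$. Combined with the fact that Lemma \ref{gensequence} yields $\P_{\nu_n}((B \cap C)^c \,|\, \zeta > t_n) \to 0$, this suffices to complete the argument.
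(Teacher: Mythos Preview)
Your proof is correct and follows essentially the same route as the paper: reduce to comparing $L_{T(s_n)}(0)$ with $L_{t_n}(s_n)$ via Lemma~\ref{gensequence}, then control $T(s_n)-(t_n-s_n)$ by combining Lemma~\ref{tightextinct} (applied at time $s_n$) with Proposition~\ref{extinctiondist}, using the $\{W_n(s_n)>\delta_0\}$ device from Lemma~\ref{MGsurvival} to pass to the conditional measure.

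The one genuine difference is that you only establish the one-sided bound $(T(s_n)-(t_n-s_n))^+=O_p(t_n^{2/3})$, whereas the paper proves the full two-sided estimate \eqref{Ttnts}. Your observation that only the upper tail matters for the lemma (since $L_{T(s_n)}(0)-L_{t_n}(s_n)$ can only hurt when $T(s_n)>t_n-s_n$) is correct, and it lets you bypass the paper's lower-bound argument, which invokes Theorem~\ref{CSBPcond} and the monotonicity from Lemma~\ref{monotone}. This is a modest but real simplification. The paper's two-sided version of \eqref{Ttnts} is not needed elsewhere, so nothing is lost. Your bootstrap to rule out $T(s_n)\gg t_n$ is slightly compressed but sound: on the high-probability event one has $T(s_n)\le(\zeta-s_n)+k'T(s_n)^{2/3}\le 2t_n+k'T(s_n)^{2/3}$ for large $n$, and absorbing the last term when $T(s_n)\ge(4k')^3$ gives $T(s_n)\le \tfrac{8}{3}t_n$, after which $k'T(s_n)^{2/3}=O_p(t_n^{2/3})$ as claimed.
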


\begin{proof}
We will show that for all $\eps > 0$, there is a positive constant $C$, depending on $\delta$ and $\eps$, such that
\begin{equation}\label{Ttnts}
\P_{\nu_n}(|T(s_n) - (t_n - s_n)| > C t_n^{2/3} \,|\, \zeta > t_n) < \eps.
\end{equation}
Because $L_{T(s_n)}(0) - R(s_n) \rightarrow_p \infty$ under $\P_{\nu_n}( \: \cdot \:|\,\zeta > t_n)$ by Lemma \ref{gensequence}, we can see from (\ref{tpt}) that (\ref{Ttnts}) implies the result of the lemma.

By Lemma \ref{MGsurvival}, there exists $\eta > 0$ such that $\P_{\nu_n}(W_n(s_n) \leq \eta\,|\,\zeta > t_n) < \eps/4$ for sufficiently large $n$.  By Lemmas \ref{tightextinct} and \ref{gensequence} there is a constant $k'$ such that, if $H_n$ denotes the random variable $\P_{\nu_n}(|(\zeta - s_n) - T(s_n)| \leq k' T(s_n)^{2/3}\,|\,{\cal F}_{s_n})$, then $\P_{\nu_n}(H_n > 1 - \eta \eps/4 \,|\, \zeta > t_n) \rightarrow 1$ as $n \rightarrow \infty$.  Elementary probability results imply that if $B$, $C$, and $D$ are events, then
$$P(B|D) \leq P(C^c|D) + P(B|C \cap D) \leq P(C^c|D) + \frac{P(B|C)}{P(D|C)}.$$  By taking $B = \{|(\zeta - s_n) - T(s_n)| > k'T(s_n)^{2/3}\}$, $C = \{H_n > 1 - \eta \eps/4\} \cap \{W_n(s_n) > \eta\} \in {\cal F}_{s_n}$, and $D = \{\zeta > t_n\}$, we get that for sufficiently large $n$,
\begin{equation}\label{Tt1}
\P_{\nu_n}(|(\zeta - s_n) - T(s_n)| > k'T(s_n)^{2/3} \,|\, \zeta > t_n) \leq \frac{\eps}{4} + \frac{(\eta \eps/4)}{\eta} = \frac{\eps}{2}.
\end{equation}
Proposition \ref{extinctiondist} implies that there is another positive constant $k$ such that
\begin{equation}\label{Tt2}
\P_{\nu_n}(\zeta > t_n + kt_n^{2/3} \,|\, \zeta > t_n) < \eps/4.
\end{equation}
Now (\ref{Tt1}) and (\ref{Tt2}) imply
\begin{equation}\label{Tt3}
\P_{\nu_n}(T(s_n) > (t_n - s_n) + kt_n^{2/3} + k'T(s_n)^{2/3} \,|\, \zeta > t_n) < 3 \eps/4.
\end{equation}

To obtain the necessary lower bound on $T(s_n)$, first note that by Theorem \ref{CSBPcond} and the assumptions on $s_n$, there exists $\delta > 0$ such that
\begin{equation}\label{Tt4}
\P_{\nu_n}(Z_{t_n}(s_n) < \delta\,|\,\zeta > t_n) < \eps/4
\end{equation}
for sufficiently large $n$.  Choose $k$ large enough that $e^{-ck/3}/2 < \delta$.  Lemmas \ref{monotone} and \ref{gensequence} imply that under $\P_{\nu_n}(\: \cdot \: |\, \zeta > t_n)$, with probability tending to one as $n \rightarrow \infty$, we have $Z_{s_n + T(s_n)}(s_n) = 1/2$ and therefore, in view of (\ref{zchange}), we also have $Z_{s_n + T(s_n) + kT(s_n)^{2/3}}(s_n) < \delta$ for sufficiently large $n$.  On this event, by the monotonicity established in Lemma \ref{monotone}, if $t_n > s_n + T(s_n) + kT(s_n)^{2/3}$ then $Z_{t_n}(s_n) < \delta$.  Combining this observation with (\ref{Tt4}), we see that for sufficiently large $n$,
\begin{equation}\label{Tt5}
\P_{\nu_n}(T(s_n) < (t_n - s_n) - kT(s_n)^{2/3} \,|\, \zeta > t_n) < \eps/4.
\end{equation}
Now (\ref{Ttnts}) can be deduced from (\ref{Tt3}) and (\ref{Tt5}).
\end{proof}

\begin{proof}[Proof of Theorem \ref{larges}]
If $t^{-2/3}(t - s) \to \sigma \geq 0$, then let $r = s - t^{2/3}$.  If $t^{2/3} \ll t - s \ll t$, then let $r = 2s - t$, so that $s - r = t - s$.  Throughout the proof, we will work under the conditional distribution $\P_{\nu_t}( \: \cdot \:|\, \zeta > t)$. We will repeatedly make use of the fact that $\P_{\nu_t}(\zeta > t\,|\,{\cal F}_r) \rightarrow_p 1$ as $t \rightarrow \infty$, by Lemma \ref{MGsurvival}. Indeed, this allows us to remove the conditioning when applying results (namely, Lemma \ref{tightextinct} and Proposition \ref{configpropnew})  with the particle configuration at time $r$ playing the role of the initial configuration.

We first claim that
\begin{equation}\label{Tzeta}
t^{-2/3}(T(r) - (\zeta - r)) \rightarrow_p 0 \hspace{.2in}\mbox{as }t \rightarrow \infty.
\end{equation}
Our choice of $r$ ensures that $1 \ll t - r \ll t$, and Proposition \ref{extinctiondist} states that 
\begin{equation}
\label{eq:zetaV}
t^{-2/3}(\zeta - t) \Rightarrow \frac{3}{c}V.
\end{equation}
Combining these facts, we get
\begin{equation}\label{zetasmall}
t^{-1}(\zeta - r) \rightarrow_p 0 \hspace{.2in}\mbox{as }t \rightarrow \infty.
\end{equation}
On the other hand, Lemma \ref{gensequence} implies that $T(r) \rightarrow_p \infty$ and $L_{T(r)}(0) - R(r) \rightarrow_p \infty$ as $t \rightarrow \infty$. Then Lemma \ref{tightextinct} implies that for all $\eps > 0$, there is a constant $k'$ such that
\begin{equation}\label{zetaTdiff}
\P_{\nu_t}(|(\zeta - r) - T(r)| \leq k' T(r)^{2/3} \,|\, \zeta > t) > 1 - \eps
\end{equation}
for sufficiently large $t$. Now we see from (\ref{zetasmall}) and (\ref{zetaTdiff}) that $t^{-1}T(r) \rightarrow_p 0$ as $t \rightarrow \infty$, and then another application of (\ref{zetaTdiff}) yields (\ref{Tzeta}) as claimed.

Now let $V_t = t^{-2/3}(T(r) - (t - r))$.  It follows from (\ref{Tzeta}) and \eqref{eq:zetaV} that
\begin{equation}\label{Tudist}
V_t = t^{-2/3}(T(r) - (t - r)) \Rightarrow \frac{3}{c} V
\end{equation}
and
\begin{equation}\label{simcon1}
t^{-2/3}(\zeta - t) - V_t \rightarrow_p 0.
\end{equation}

We now apply Proposition~\ref{configpropnew} with the configuration of particles at time $r$ playing the role of the initial configuration of particles and the time $T(r)$ playing the role of $t_n$.  The assumptions of Proposition~\ref{configpropnew} are satisfied, because, as mentioned above, $T(r) \rightarrow_p \infty$ and $L_{T(r)}(0) - R(r) \rightarrow_p \infty$ as $t \rightarrow \infty$, which implies that $Z_{r+T(r)}(r) \rightarrow_p 1/2$ as $t\to\infty$, by the definition of $T(r)$.  If $t^{-2/3}(t - s) \to \sigma \geq 0$, then using (\ref{Tudist}), we have
\begin{equation}\label{strat1}
\frac{s - r}{T(r)} = \frac{t^{2/3}}{(t - s) + (s - r) + (T(r) - (t - r))} \Rightarrow \frac{1}{\sigma + 1 + \frac{3}{c}V}.
\end{equation}
The limiting random variable on the right-hand side is $(0,1)$-valued, so given $\eps > 0$, we can find $\delta > 0$ such that $\P_{\nu_t}(\delta T(r) \leq s - r \leq (1 - \delta)T(r) \,|\, \zeta > t) > 1 - \eps/2$.
If instead $t^{2/3} \ll t - s \ll t$, then using again \eqref{Tudist},
\begin{equation}\label{strat2}
\frac{s - r}{T(r)} = \frac{t - s}{2(t - s) + (T(r) - (t - r))} \Rightarrow \frac{1}{2}.
\end{equation}
It follows that in both cases, we can apply Proposition \ref{configpropnew} to get that if $\eps > 0$, then for sufficiently large $t$ we have
\begin{equation}\label{Me1}
\P_{\nu_t} \bigg( \frac{C_{9}}{L_{T(r)}(s-r)^3} e^{L_{T(r)}(s-r)} \leq M(s) \leq \frac{C_{10}}{L_{T(r)}(s-r)^3} e^{L_{T(r)}(s-r)} \,\Big|\, \zeta > t \bigg) > 1 - \eps
\end{equation}
and
\begin{equation}\label{Re1}
\P_{\nu_t} \big( L_{T(r)}(s-r) - \log T(r) - C_{11} \leq R(s) \leq L_{T(r)}(s-r) - \log T(r) + C_{12} \,\big|\, \zeta > t \big) > 1 - \eps.
\end{equation}

We write that $W_t$ is $O_p(1)$ if for all $\eps > 0$, there exists a positive real number $K$ such that $P(|W_t| \leq K) > 1 - \eps$ for sufficiently large $t$, and we write that $W_t$ is $o_p(1)$ if $W_t \rightarrow_p 0$.  Then, by (\ref{Tudist}) and (\ref{Me1}),
\begin{align*}
\log M(s) &= L_{T(r)}(s - r) - 3 \log L_{T(r)}(s - r) + O_p(1) \\
&= c(t - s + t^{2/3} V_t)^{1/3} - \log (t - s + t^{2/3} V_t) + O_p(1).
\end{align*}
Likewise, by (\ref{Tudist}) and (\ref{Re1}),
\begin{align*}
R(s) &= c (t - s + t^{2/3} V_t)^{1/3} - \log (t - r + t^{2/3} V_t) + O_p(1).
\end{align*}
When $t^{-2/3}(t - s) \to \sigma \geq 0$, it follows that $$t^{-2/9} \log M(s) = c(\sigma + V_t)^{1/3} + o_p(1)$$ and $$t^{-2/9} R(s) = c(\sigma + V_t)^{1/3} + o_p(1).$$  These two results, combined with (\ref{Tudist}) and (\ref{simcon1}), give (\ref{J1as}).
When instead $t^{2/3} \ll t - s \ll t$, the Mean Value Theorem implies that for some random variable $\xi_t$ such that $0 \leq \xi_t \leq t^{2/3} V_t$, we have
$$\log M(s) = c (t - s)^{1/3} - \log(t - s) + \frac{c}{3} (t - s + \xi_t)^{-2/3} t^{2/3} V_t + O_p(1).$$
Because $(t - s + \xi_t)/(t - s) \rightarrow_p 1$, it follows that
$$\bigg( \frac{t - s}{t} \bigg)^{2/3} \big( \log M(s) - c(t - s)^{1/3} + \log(t - s) \big) = \frac{c}{3} V_t + o_p(1).$$  By the same reasoning, we get
$$\bigg( \frac{t - s}{t} \bigg)^{2/3} \big( R(s) - c(t - s)^{1/3} + \log(t - s) \big) = \frac{c}{3} V_t + o_p(1).$$
These results, combined with (\ref{Tudist}) and (\ref{simcon1}), imply (\ref{J2as}).
\end{proof}

\begin{proof}[Proof of Theorem \ref{meds}]
Consider a sequence of times $(t_n)_{n=1}^{\infty}$ tending to infinity, and choose $(s_n)_{n=1}^{\infty}$ such that $\delta t_n \leq s_n \leq (1 - \delta) t_n$ for all $n$.  We will condition on $\zeta > t_n$ and then apply Proposition \ref{configpropnew} with the configuration of particles at time $\delta t_n/2$ playing the role of the initial configuration of particles.  Because $P(0 < \Phi(u) < \infty) = 1$ for all $u > 0$, it follows from Theorem \ref{CSBPcond} that, under $\P_{\nu_{t_n}}(\:\cdot\:|\,\zeta > t_n)$, the distributions of the sequences $(Z_{t_n}(\delta t_n/2))_{n=1}^{\infty}$ and $(Z_{t_n}(\delta t_n/2)^{-1})_{n=1}^{\infty}$ are tight.  Lemma \ref{Ttcompare} implies that, under $\P_{\nu_{t_n}}(\:\cdot\:|\,\zeta > t_n)$, we have $L_{t_n}(\delta t_n/2) - R(\delta t_n/2) \rightarrow_p \infty$ as $n \rightarrow \infty$.  Therefore, the hypotheses of Proposition~\ref{configpropnew} are satisfied.

To deduce the result of Theorem \ref{meds} from Proposition \ref{configpropnew}, we need to show that the conclusions are unaffected by conditioning on $\zeta > t$.  We proceed as in the proof of Lemma~\ref{Ttcompare}.  By Lemma \ref{MGsurvival}, there exists $\eta > 0$ such that $\P_{\nu_{t_n}}(W_n(\delta t_n) \leq \eta \,|\,\zeta > t_n) < \eps/2$ for sufficiently large $n$.  By Proposition \ref{configpropnew}, if we define the random variables $$H_n = \P_{\nu_{t_n}} \bigg( \frac{C_3}{L_{t_n}(s_n)^3} e^{L_{t_n}(s_n)} \leq M(s_n) \leq \frac{C_4}{L_{t_n}(s_n)^3} \,\bigg|\, {\cal F}_{\delta t_n/2} \bigg)$$ and
$$J_n = \P_{\nu_{t_n}}\big( L_t(s) - \log t - C_5 \leq R(s) \leq L_t(s) - \log t + C_6 \, \big| \, {\cal F}_{\delta t_n/2} \big),$$ then $\P_{\nu_{t_n}}(H_n > 1 - \eta \eps/2 \,|\, \zeta > t_n) \rightarrow 1$ and $\P_{\nu_{t_n}}(J_n > 1 - \eta \eps/2 \,|\, \zeta > t_n) \rightarrow 1$ as $n \rightarrow \infty$, provided that we choose the values of the constants so that (\ref{configconc1}) and (\ref{configconc2}) hold with $\eta \eps/2$ in place of $\eps$.
Following the steps in the derivation of (\ref{Tt1}) then yields the two conclusions in Theorem~\ref{meds}.
\end{proof}

\begin{proof}[Proof of Theorem \ref{condconfigprop}]
Consider any sequence of times $(t_n)_{n=1}^{\infty}$ tending to infinity, and let $s_n$ be the value of $s$ associated with the time $t_n$.  We first consider the case in which $t_n - s_n \ll t_n$.
Let $r_n = s_n - t_n^{2/3}$ if $t_n - s_n \leq t_n^{2/3}$, and let $r_n = 2s_n - t_n$ if $t_n - s_n \geq t_n^{2/3}$.  Let $A_n^{\delta}$ be the event that $\delta T(r_n) \leq s_n - r_n \leq (1 - \delta) T(r_n)$.  Using the same reasoning used to establish (\ref{strat1}) and (\ref{strat2}), we can see that for all $\eps > 0$, there is a $\delta > 0$ such that $\P_{\nu_{t_n}}(A_n^{\delta} \,|\, \zeta > t_n) > 1 - \eps$ for sufficiently large $n$.  

We apply Proposition \ref{configpropnew} with the configuration of particles at time $r_n$ playing the role of the initial configuration of particles, the time $T(r_n)$ playing the role of $t_n$, and $s_n - r_n$ playing the role of $s_n$.  The result of part 3 of Proposition \ref{configpropnew} only applies on the event $A_n^{\delta}$.  Therefore, we will define the probability measure $\chi_n^{\delta}$ to be equal to $\chi_{t_n}$ on the event $A_n^{\delta}$ and to be equal to $\mu$ otherwise.  Likewise, we will define the probability measure $\eta_n^{\delta}$ in the same way as $\eta_{t_n}$, except with $L_{T(u_n)}(s_n - r_n)$ in place of $R(s_n)$ in the definition, on the event $A_n^{\delta}$.  Otherwise, we define $\eta_n^{\delta}$ to be the probability measure $\xi$.  Define $\eta_t^*$ to be the same as $\eta_t$, except with $L_{T(r_n)}(s_n - r_n)$ in place of $R(s_n)$ in the definition.  Then part 3 of Proposition~\ref{configpropnew} implies that for all $\delta > 0$, we have $\chi_n^{\delta} \Rightarrow \mu$ and $\eta_n^{\delta} \Rightarrow \xi$ as $n \rightarrow \infty$.  Note that Lemma \ref{MGsurvival} ensures that the conditioning on $\zeta > t$ does not affect the result when we apply Proposition~\ref{configpropnew}.
Therefore, letting $\rho$ denote the Prohorov metric on the space of probability measures on $\R$, we have $$\lim_{n \rightarrow \infty} \P_{\nu_{t_n}}(\rho(\chi_n^{\delta}, \mu) > \eps\,|\,\zeta > t_n) = 0, \hspace{.4in}\lim_{n \rightarrow \infty} \P_{\nu_{t_n}}(\rho(\eta_n^{\delta}, \xi) > \eps\,|\,\zeta > t_n) = 0.$$  Because $\P_{\nu_{t_n}}(A_n^{\delta} \,|\, \zeta > t_n) > 1 - \eps$ for sufficiently large $n$, it follows that
$$\limsup_{n \rightarrow \infty} \P_{\nu_{t_n}}(\rho(\chi_{t_n}, \mu) > \eps\,|\, \zeta > t_n) \leq \eps, \hspace{.4in}\limsup_{n \rightarrow \infty} \P_{\nu_{t_n}}(\rho(\eta^*_{t_n}, \xi) > \eps\,|\,\zeta > t_n) \leq \eps.$$  Because $\eps > 0$ is arbitrary, it follows that $\chi_t \Rightarrow \mu$ and $\eta_t^* \Rightarrow \xi$.  Finally, $R(s)/L_{T(r)}(s - r) \rightarrow_p 1$ as $t \rightarrow \infty$ by part 2 of Proposition \ref{configpropnew}, so $\eta_t \Rightarrow \xi$, as claimed.

By a subsequence argument, it remains only to consider the case in which, for some $\delta > 0$, we have $\delta t_n \leq s_n \leq (1-\delta)t_n$ for all $n$.  In this case, we can apply part 3 of Proposition \ref{configpropnew} with the configuration of particles at time $\delta t_n/2$ playing the role of the initial configuration of particles, as in the proof of Theorem \ref{meds}, to obtain the result.  Because the limit distributions $\mu$ and $\eta$ are concentrated on a single measure, the result of Lemma \ref{MGsurvival} remains enough to ensure that the conditioning on $\zeta > t$ does not affect the conclusion.
\end{proof}

\section{Moment estimates}\label{momsec}

\subsection{Heat kernel estimates} \label{sec:heat_kernel}

First, consider a single Brownian particle which is killed when it reaches $0$ or $1$.  Let $w_s(x,y)$ denote the ``density'' of the position of this particle at time $s$, meaning that if the Brownian particle starts at the position $x \in (0, 1)$ at time zero, then the probability that it is in the Borel subset $U$ of $(0,1)$ at time $s$ is $$\int_U w_s(x,y) \: dy.$$  It is well-known (see, for example, p. 188 of \cite{lawler}) that
\begin{align}
\label{wfourier}
w_s(x,y) &= 2 \sum_{n=1}^{\infty} e^{-\pi^2 n^2 s/2} \sin (n \pi x) \sin (n \pi y).
%\\
%\label{wgaussian}
%&= \sum_{k\in\Z} \frac 1 {\sqrt{2\pi s}} \left[e^{-\frac{(x-y-2k)^2}{2s}} - e^{-\frac{(x+y-2k)^2}{2s}}\right],
\end{align}
%the two representations being related by the Poisson summation formula (see \cite{bellman}, §9).
Equation \eqref{wfourier} yields that for every $x\in[0,1]$ and $s\ge0$,
\begin{equation}
\label{wsin}
 \int_0^1 \sin(\pi y) w_s(x,y)\,dy = e^{-\pi^2 s/2}\sin(\pi x).
\end{equation}
Furthermore, by the reasoning in Lemma 5 of \cite{bbs2}, if we define
\begin{equation}\label{vdef}
v_s(x,y) = 2 e^{-\pi^2 s/2} \sin(\pi x) \sin(\pi y)
\end{equation}
and
\begin{equation}\label{Ddef}
D(s) = \sum_{n=2}^{\infty} n^2 e^{-\pi^2 (n^2 - 1) s/2},
\end{equation}
then
\begin{equation}\label{udef}
w_s(x,y) = v_s(x,y)(1 + D_s(x,y)),
\end{equation}
where $|D_s(x,y)| \leq D(s)$ for all $x, y \in (0, 1)$.
We further recall (see Lemma 7.1 of \cite{maillard}) that
\begin{align}
\label{wint}
 \int_0^s e^{\pi^2 r/2}w_r(x,y)\,dr = 2s\sin(\pi x)\sin(\pi y) + O\big((x\wedge y)(1-(x\vee y))\big),
\end{align}
and
\begin{align}
 \label{wprimeint}
\int_0^s e^{\pi^2 r/2}\left(-\frac 1 2 \partial_yw_r(x,1)\right)\,dr 
 &= \pi s\sin(\pi x) + O(x).
\end{align}
We will also need the following two lemmas.

\begin{lemma}
 \label{lem:wintunif}
 For all $x \in (0,1)$ and $y \in (0, 1/2]$, we have
 \[
  \int_0^s e^{\pi^2 r/2}\sup_{y'\in[0,y]} w_r(x,y')\,dr = O(y(s\sin(\pi x)+(1-x))).
 \]
\end{lemma}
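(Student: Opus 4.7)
The plan is to split the integral at $r = 1$ and treat the two regimes with different tools. For $r \in [1 \wedge s, s]$, the spectral decomposition \eqref{udef} gives $w_r(x, y') \leq 2(1+D(1))\sin(\pi x)\sin(\pi y')e^{-\pi^2 r/2}$. Since $\sin(\pi \cdot)$ is nondecreasing on $[0, 1/2]$ and $y \leq 1/2$, the supremum over $y' \in [0, y]$ is attained at $y' = y$; multiplying by $e^{\pi^2 r/2}$ and integrating then gives a contribution of order $ys\sin(\pi x)$ (using $\sin(\pi y) \leq \pi y$), which is absorbed by the $ys\sin(\pi x)$ term of the claimed bound.

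For $r \in [0, 1 \wedge s]$, the factor $e^{\pi^2 r/2}$ is bounded by $e^{\pi^2/2}$, so it suffices to show $\int_0^{1\wedge s}\sup_{y'\in[0,y]}w_r(x, y')\,dr = O(y(1-x))$. The key input is the monotonicity: $y' \mapsto w_r(x, y')$ is nondecreasing on $[0, \min(x, 1/2)]$. This is a standard property of Dirichlet heat kernels on an interval and can be verified by combining the method of images (for small $r$, where the $n = 0$ contribution to $\partial_{y'}w_r$ has both terms positive for $y' \in [0, x]$) with the spectral expansion (for large $r$, where the $n = 1$ mode, with $\cos(\pi y') \geq 0$ on $[0, 1/2]$, dominates). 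In Case 1, where $y \leq x$, we have $y \leq \min(x, 1/2)$ and this monotonicity yields $\sup_{y' \in [0, y]} w_r(x, y') = w_r(x, y)$. The Green function identity for killed BM on $(0, 1)$ then gives
\[
  \int_0^\infty w_r(x, y)\,dr = G(x, y) = 2(x \wedge y)(1 - (x \vee y)) = 2y(1-x),
\]
from which the desired bound follows.

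In Case 2, where $y > x$ (which forces $x < y \leq 1/2$, hence $x < 1/2$), I will employ a Davies-type heat-kernel estimate
\[
  w_r(x, y') \leq \frac{C}{\sqrt r}\min\Bigl(1, \frac{xy'}{r}\Bigr)\min\Bigl(1, \frac{(1-x)(1-y')}{r}\Bigr)e^{-(x-y')^2/(2r)},
\]
valid for $r \leq 1$ and derivable from the leading terms of the image series for $w_r$. Since $x \in [0, y]$, the Gaussian factor equals $1$ at $y' = x$; combining this with the monotonicity of the two $\min$-factors in $y'$ gives $\sup_{y' \in [0, y]} w_r(x, y') \leq C r^{-1/2}\min(1, xy/r)\min(1, (1-x)/r)$. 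An elementary case analysis, splitting at the critical values $r = xy$ and $r = 1-x$, yields $\int_0^1 Cr^{-1/2}\min(1, xy/r)\min(1, (1-x)/r)\,dr \leq C'\sqrt{xy}$. Finally, $x \leq y \leq 1/2$ gives $\sqrt{xy} \leq y$, and $x \leq 1/2$ gives $1-x \geq 1/2$, so $\sqrt{xy} \leq 2y(1-x)$, completing the bound. The main obstacle will be establishing the Davies-type estimate with these explicit $\min$-factors (by carefully bounding the image series) and verifying the monotonicity claim with enough precision to cover all $r$; both are essentially standard but require a small amount of technical bookkeeping.
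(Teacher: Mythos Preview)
Your large-$r$ treatment matches the paper's. Your Case~2 ($x<y$) is also correct, though over-engineered: since $x<1/2$ there, $1-x>1/2$, so the second $\min$-factor in your Davies estimate contributes only a bounded constant on $r\in[0,1]$ and is not actually needed. The half-line bound $w_r(x,y')\le\frac{1}{\sqrt{2\pi r}}e^{-(x-y')^2/(2r)}(1-e^{-2xy'/r})$ alone already yields $\sup_{y'\le y}w_r(x,y')\le\frac{C}{\sqrt r}\min(1,xy/r)$ (bound the Gaussian by $1$ at $y'=x\in[0,y]$), and then $\int_0^1\le C'\sqrt{xy}\le 2C'y(1-x)$.

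The genuine gap is in Case~1. The monotonicity of $y'\mapsto w_r(x,y')$ on $[0,\min(x,1/2)]$ is not the standard fact you suggest, and your two-regime sketch does not close. For the spectral argument you need the leading term $2\pi e^{-\pi^2 r/2}\sin(\pi x)\cos(\pi y')$ to dominate $\sum_{n\ge2}$; but this leading term can be arbitrarily small (when $x$ is near $0$ or $1$, or $y'$ near $1/2$), so there is no uniform threshold in $r$ above which it works. For the image argument, the non-principal images contribute terms of size $\phi_r(2-x-y')$, which are not small when $x$ is near $1$ unless $r$ is genuinely tiny; again, no uniform threshold. The claim is probably true---it would follow from the Brascamp--Lieb log-concavity of the Dirichlet heat kernel on a convex domain, combined with an argument locating the mode in $[\min(x,1/2),\max(x,1/2)]$---but either route is a real piece of work you have not supplied.

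The paper sidesteps this entirely. Instead of the split $y\le x$ versus $y>x$, it first proves the weaker $\int_0^1\sup_{y'\in[0,y]}w_r(x,y')\,dr=O(y)$ by splitting at $x\le 2y$ versus $x>2y$ and using only the half-line bound (for $x>2y$ one has $|x-y'|\ge x/2$, which restores Gaussian decay). This already gives the lemma for $x\le 3/4$. For $x\ge 3/4$, the missing $(1-x)$ factor is extracted by Chapman--Kolmogorov at time $r/2$: one bounds $w_{r/2}(x,z)=w_{r/2}(1-x,1-z)$ and $w_{r/2}(z,y')$ each by the half-line kernel near the appropriate endpoint, and uses that $|x-y'|\ge 1/4$ forces an $e^{-c/r}$ factor. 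This is more hands-on but fully elementary. Your Green-function idea (which, once the sup is removed, is exactly \eqref{wint}) would indeed be slicker---if you can fill in the monotonicity.
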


\begin{proof}
 For $r \ge 1$, we have by (\ref{vdef}) and (\ref{udef}),
\begin{equation}\label{wlarges}
 \sup_{y'\in[0,y]} w_r(x,y') = O(e^{-\pi^2 r/2}\sin(\pi x)y).
\end{equation}
It therefore suffices to show that 
 \begin{equation}
  \label{letsshowit}
  \int_0^1 \sup_{y'\in[0,y]} w_r(x,y')\,dr = O(y(1-x)).
 \end{equation}
We bound $w_r(x,y)$ by the heat kernel of Brownian motion killed at $0$, i.e.
\[
w_r(x,y) \le \frac 1 {\sqrt{2\pi r}} \left(e^{-\frac{(x-y)^2}{2r}} - e^{-\frac{(x+y)^2}{2r}}\right) = \frac 1 {\sqrt{2\pi r}} e^{-\frac{(x-y)^2}{2r}}(1-e^{-\frac{2xy}r}).
\]
Using the inequality $1 - e^{-z} \leq 1 \wedge z$ for $z \geq 0$, we get
\begin{equation}\label{wrbound}
w_r(x,y) \leq \frac{1}{\sqrt{2 \pi r}} e^{-(x - y)^2/2r} \left(1 \wedge \frac{2 x y}{r} \right).
\end{equation}
 
The first step in proving (\ref{letsshowit}) is to show the weaker statement
 \begin{equation}
  \label{letsshowit_weaker}
  \int_0^1 \sup_{y'\in[0,y]} w_r(x,y')\,dr = O(y).
 \end{equation} 
To do this, we distinguish between two cases.  When $x \leq 2y$, equation (\ref{wrbound}) gives
 \[
 \sup_{y'\in[0,y]}w_r(x,y') \le \frac 1 {\sqrt{2\pi r}} \left(1\wedge \frac {4y^2}{r}\right).
 \]
 Integrating over $r$ and changing variables by $r = y^2 u$, this gives
 \begin{equation}
 \label{intbound1}
  \int_0^1 \sup_{y'\in[0,y]} w_r(x,y')\,dr \le y \int_0^\infty  \frac 1 {\sqrt{2\pi u}} \left(1\wedge \frac {4}{u}\right)\,du = O(y),
 \end{equation}
 because the last integral converges.  When $x > 2y$, we use that $x-y' \ge x/2$ for all $y'\le y$ to get 
 \[
 \sup_{y'\in[0,y]} w_r(x,y') \le \frac {2xy} {\sqrt{2\pi}r^{3/2}} e^{-x^2/8r}.
 \]
 Integrating over $r$ and changing variables by $r = x^2 u$,
 \begin{equation}
 \label{intbound2}
  \int_0^1 \sup_{y'\in[0,y]} w_r(x,y')\,dr \le 2 y \int_0^\infty \frac 1 {\sqrt{2\pi} u^{3/2}} e^{-1/8u}\,du = O(y),
 \end{equation}
 because the last integral converges.
Equations~\eqref{intbound1} and \eqref{intbound2} together yield \eqref{letsshowit_weaker}.

When $x \leq 3/4$, equation (\ref{letsshowit}) follows immediately from (\ref{letsshowit_weaker}).  Therefore, it remains to show (\ref{letsshowit}) when $x \geq 3/4$.  By symmetry, for all $x,y \in (0,1)$ and $r\ge0$, we have $w_r(x,y) = w_r(1-x, 1-y)$ and so,  using (\ref{wrbound}) for the last step,
\begin{align*}
w_r(x,y) &= \int_0^1 w_{r/2}(x,z) w_{r/2}(z,y) \: dz \\
&\leq \sup_{z \in (0,1)} w_{r/2}(x,z) w_{r/2}(z,y) \\
&= \sup_{z \in (0,1)} w_{r/2}(1-x, 1-z) w_{r/2}(z,y) \\
&\leq \sup_{z\in (0,1)} \frac{1}{\sqrt{\pi r}} e^{-(x-z)^2/r} \left( 1 \wedge \frac{4(1-x)}{r} \right) \cdot \frac{1}{\sqrt{\pi r}} e^{-(z-y)^2/r} \left(1 \wedge \frac{4y}{r} \right).
\end{align*}
Now note that when $x \geq 3/4$ and $y \leq 1/2$, for all $z \in (0,1)$ we have either $(x-z)^2 \geq 1/64$ or $(y-z)^2 \geq 1/64$. Hence, for all $x \geq 3/4$ and $y \leq 1/2$, we have
\begin{align*}
w_r(x,y) \leq \frac{y(1-x)}{r^3} e^{-1/64r}.
\end{align*}
It follows that when $y \leq 1/2$, we have
$$\int_0^1 \sup_{y' \in [0, y]} w_r(x,y') \: dr \leq y(1-x) \int_0^1 \frac{1}{r^3} e^{-1/64r} \: dr = O(y(1-x)),$$
because the integral converges.
\end{proof}

\begin{lemma}\label{wint2}
For all $x \in (0,1)$, we have $$\int_0^s e^{\pi^2 r/2} \int_0^1 w_r(x,y) \: dy \: dr = O(s \sin(\pi x) + (1-x)).$$
\end{lemma}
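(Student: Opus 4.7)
The plan is to apply Fubini's theorem and reduce the lemma to the pointwise (in $y$) asymptotic estimate (\ref{wint}), which is tailor-made for this purpose. Since the integrand is nonnegative (and bounded by $e^{\pi^2 s/2}$ so everything is finite), Tonelli justifies swapping the order of integration:
$$\int_0^s e^{\pi^2 r/2} \int_0^1 w_r(x,y) \, dy \, dr = \int_0^1 \left( \int_0^s e^{\pi^2 r/2} w_r(x,y) \, dr \right) dy.$$
The inner integral in $r$ is precisely the quantity estimated in (\ref{wint}), uniformly in $y \in (0,1)$ and $s \geq 0$.

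Substituting (\ref{wint}) yields
$$\int_0^1 \Big[ 2 s \sin(\pi x) \sin(\pi y) + O\big((x \wedge y)(1 - (x \vee y))\big) \Big] \, dy.$$
The main term evaluates to $\frac{4 s \sin(\pi x)}{\pi}$ since $\int_0^1 \sin(\pi y)\,dy = 2/\pi$, which is already of the form $O(s \sin(\pi x))$. For the remainder, splitting the integral at $y = x$ gives
$$\int_0^1 (x \wedge y)(1 - (x \vee y)) \, dy = (1-x) \int_0^x y \, dy + x \int_x^1 (1-y) \, dy = \frac{x(1-x)}{2} \leq \frac{1-x}{2},$$
which is $O(1-x)$. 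Combining the two contributions gives the claimed bound $O(s \sin(\pi x) + (1-x))$.

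There is no real obstacle here: given (\ref{wint}), the lemma is essentially a corollary obtained by integrating in $y$ over the unit interval. The only point to verify is that the implicit constant in the $O$-term of (\ref{wint}) is uniform in $y$ and $s$, which is exactly what that statement provides, so both Fubini and the direct substitution are valid.
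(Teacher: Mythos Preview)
Your proof is correct and follows exactly the same approach as the paper: swap the order of integration and apply (\ref{wint}). The paper's proof is a one-liner (``Exchanging integrals, this is an immediate consequence of \eqref{wint}''), and you have simply written out the details.
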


\begin{proof}
Exchanging integrals, this is an immediate consequence of \eqref{wint}.
%Using the bound (\ref{wlarges}), we immediately get, when $s \geq 1$,
%$$\int_1^s e^{\pi^2 r/2} \int_0^1 w_r(x,y) \: dy \: dr = O(s \sin(\pi x)).$$
%Also, note that $\int_0^1 w_r(x,y) \: dy$ is bounded above by the probability that a Brownian motion started at $x$ does not hit $1$ before time $r$, which is at most $C(1-x)/\sqrt{r}$.  It follows that
%$$\int_0^1 e^{\pi^2 r/2} \int_0^1 w_r(x,y) \: dy \: dr \leq e^{\pi^2/2} \int_0^1 \frac{C(1-x)}{\sqrt{r}} \: dr = O(1-x).$$ The result follows.
\end{proof}

We now wish to estimate the density of the position of the Brownian particle at time $s$ when the particle is killed if it reaches either $0$ or $K(s)$ at time $s$, where $K(s)$ is a smooth positive function.  That is, the right boundary at which the Brownian particle is killed moves over time.  We will need somewhat sharper estimates than those provided in \cite{bbs}.  To obtain such estimates, we will follow almost exactly the approach used by Roberts \cite{roberts}, which in turn was inspired by the work of Novikov \cite{nov}.  We will use the following general lemma.

\begin{lemma}\label{densityK}
Let $T > 0$.  Let $K: [0, T] \rightarrow (0, \infty)$ be twice differentiable. Let $x\in [0,K(0)]$.  Let $(\Omega, {\cal F}, P)$ be a probability space and 
$(B_s, s \geq 0)$ be Brownian motion started at $x$ on this space.  For $s \in [0, T]$, let
\begin{equation}\label{rhodef}
\rho_s = \bigg( \frac{K(0)}{K(s)} \bigg)^{1/2} \exp \bigg( \frac{K'(s) B_s^2}{2K(s)} - \frac{K'(0)B_0^2}{2K(0)} - \int_0^s \frac{K''(u)B_u^2}{2K(u)} \: du \bigg)
\end{equation}
and 
\begin{equation}\label{taudef}
\tau(s) = \int_0^s \frac{1}{K(u)^2} \: du.
\end{equation}
Then  $(\rho_s)_{s\in[0,T]}$ is a martingale and under the measure $Q$ defined by $dQ/dP = \rho_T$, $(B_s)_{s\in[0,T]}$ is equal in law to $(K(s) W_{\tau(s)})_{s\in[0,T]}$, where $(W_u)_{u\ge0}$ is a Brownian motion started at $x/K(0)$. In particular, for all bounded measurable 
functions $g: [0, 1] \rightarrow \R$ and all $s \in (0, T]$, we have
$$E \bigg[ \rho_s g \bigg( \frac{B_s}{K(s)} \bigg) \Ind_{\{0 < B_u < K(u) \: \forall u \in [0, s]\}} \bigg] = \int_0^1 g(y) w_{\tau(s)}(\tfrac x {K(0)}, y) \: dy.$$
\end{lemma}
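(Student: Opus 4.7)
The plan is to recognize $\rho_s$ as a Girsanov density that, together with a Dambis–Dubins–Schwarz time change, turns $B_s$ into $K(s) W_{\tau(s)}$. First, applying Itô's formula to the function $f(s,b)=\tfrac{K'(s)}{2K(s)}b^{2}$ and simplifying yields
\[
\int_0^s \frac{K'(u)}{K(u)} B_u \, dB_u = \frac{K'(s) B_s^{2}}{2K(s)} - \frac{K'(0)B_0^{2}}{2K(0)} - \int_0^s \frac{K''(u) B_u^{2}}{2K(u)}\,du - \frac{1}{2}\log\frac{K(s)}{K(0)} + \frac12\int_0^s \frac{K'(u)^{2}}{K(u)^{2}}B_u^{2}\,du.
\]
Substituting this into \eqref{rhodef} rewrites $\rho_s$ as the stochastic exponential $\mathcal{E}(N)_s$ of the local martingale $N_s=\int_0^s \theta_u\,dB_u$ with $\theta_u = K'(u)B_u/K(u)$, which is the natural Girsanov density associated with removing the drift $-B_sK'(s)/K(s)^{2}\,ds$ from $d(B_s/K(s))$.

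Next, I would verify that $\rho$ is a genuine martingale and apply Girsanov. For this I localize by $\eta = \inf\{u\ge 0: B_u \notin (0,K(u))\}\wedge T$; on $[\![0,\eta]\!]$ the process $B$ is bounded by $\max_{[0,T]} K < \infty$, so $\theta$ is bounded and Novikov's criterion ensures that $(\rho_{s\wedge\eta})$ is a uniformly integrable martingale. Since the right-hand side of the identity we wish to prove involves only the event $\{0<B_u<K(u)\ \forall u\in[0,s]\}$, restricting attention to this localized setting is harmless. Under $dQ/dP=\rho_{T\wedge\eta}$, Girsanov yields that $\tilde B_s := B_s - \int_0^s (K'(u)/K(u))B_u\,du$ is a Brownian motion (up to $\eta$), i.e.\ $dB_s = (K'(s)/K(s))B_s\,ds + d\tilde B_s$. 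A second application of Itô then gives $d(B_s/K(s)) = K(s)^{-1}\,d\tilde B_s$, so $M_s := B_s/K(s) - x/K(0)$ is a continuous local martingale with $\langle M\rangle_s = \int_0^s K(u)^{-2}\,du = \tau(s)$. By the Dambis–Dubins–Schwarz theorem there exists a Brownian motion $W$ with $W_0 = x/K(0)$ such that $B_s/K(s) = W_{\tau(s)}$ for all $s\in[0,T]$, which is precisely the time-change assertion.

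Finally, I use this representation to evaluate the expectation. Because $\tau$ is continuous and strictly increasing, the event $\{0<B_u<K(u)\ \forall u\in[0,s]\}$ equals $\{0<W_v<1\ \forall v\in[0,\tau(s)]\}$, so
\[
E\!\left[\rho_s\, g(B_s/K(s))\,\Ind_{\{0<B_u<K(u)\,\forall u\in[0,s]\}}\right]
= E^Q\!\left[g(W_{\tau(s)})\,\Ind_{\{0<W_v<1\,\forall v\in[0,\tau(s)]\}}\right],
\]
and the right-hand side is exactly $\int_0^1 g(y)\,w_{\tau(s)}(x/K(0),y)\,dy$ by the defining property of the killed-Brownian heat kernel on $(0,1)$. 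The principal technical point is the martingale property of $\rho$; I expect handling this via the stopping at $\eta$ (which makes the coefficient $\theta$ bounded) to be by far the cleanest route, avoiding any a priori global integrability assumptions on $K$, $K'$, $K''$ beyond what follows from twice differentiability on the compact interval $[0,T]$.
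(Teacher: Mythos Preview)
Your approach is essentially the same as the paper's: identify $\rho_s$ with the stochastic exponential $\mathcal{E}(N)_s$ of $N_s=\int_0^s \theta_u\,dB_u$ with $\theta_u=K'(u)B_u/K(u)$ (the paper does this via an integration-by-parts argument equivalent to your It\^o computation), apply Girsanov so that $B$ acquires the drift $\theta_u\,du$, and then observe that $B_s/K(s)$ becomes a continuous local martingale with bracket $\tau(s)$, whence the DDS representation $B_s/K(s)=W_{\tau(s)}$ and the killed heat-kernel formula.

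There is, however, a genuine gap in your treatment of the martingale property. You localise at $\eta=\inf\{u:B_u\notin(0,K(u))\}\wedge T$ and verify that $(\rho_{s\wedge\eta})$ is a uniformly integrable martingale; this is indeed enough to prove the \emph{formula} in the ``In particular'' clause, since that expectation lives on $\{\eta>s\}$. But the lemma also asserts that $(\rho_s)_{s\in[0,T]}$ itself is a martingale and that under $dQ/dP=\rho_T$ the \emph{full} process $(B_s)_{s\in[0,T]}$ has the law of $(K(s)W_{\tau(s)})_{s\in[0,T]}$. Your localisation establishes neither of these: after $\eta$ the process $B$ is unbounded, your Novikov argument no longer applies, and the measure you actually construct has density $\rho_{T\wedge\eta}$, not $\rho_T$.

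The paper closes this gap differently and more directly: since $K$ is twice differentiable on the compact interval $[0,T]$ with $K>0$, the ratio $K'(u)/K(u)$ is bounded, so $\theta_u$ has linear growth in $B_u$ with bounded coefficient. One may then invoke, e.g., Corollary~3.5.14 of Karatzas--Shreve (a Bene\v{s}-type criterion) to conclude that $(\rho_s)_{0\le s\le T}$ is a true martingale on the whole interval, without any stopping. Girsanov and DDS then apply globally and give the full law identification under $Q$. If you add this observation in place of the localisation, your argument becomes complete and coincides with the paper's.
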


\begin{proof}
%We may assume that $(B_s, s \geq 0)$ is defined on the probability space $(\Omega, {\cal F}, P)$ and is a Brownian motion with respect to the complete right-continuous filtration $({\cal G}_s, s \geq 0)$.  
%\nota{Pascal: is the previous sentence necessary/useful?}
Denote by $({\cal G}_s, s \geq 0)$ the Brownian filtration, i.e.~the smallest complete, right-continuous filtration to which $(B_s, s\ge0)$ is adapted.
For $s \in [0, T]$, let
\begin{equation}\label{Xdef}
X_s = \frac{K(s)}{K(0)} x + K(s) \int_0^s \frac{1}{K(u)} \: dB_u.
\end{equation}
A short calculation gives
\begin{equation}\label{sdeX}
dX_s = \frac{K'(s)}{K(0)} x \: ds + K'(s) \left(\int_0^s \frac{1}{K(u)} \: dB_u\right) \: ds + dB_s = \frac{K'(s)}{K(s)} X_s \: ds + dB_s.
\end{equation}
That is, $(X_s, 0 \leq s \leq u)$ is a Brownian motion with a time and space dependent drift whose drift at time $s$ is given by $K'(s) X_s/K(s)$.  For $s \in [0, T]$, let $$\gamma_s = \exp \bigg( \int_0^s \frac{K'(u) B_u}{K(u)} \: dB_u - \frac{1}{2} \int_0^s \frac{K'(u)^2 B_u^2}{K(u)^2} \: du \bigg).$$  We show below by an integration by parts argument that $\gamma_s = \rho_s$ for all $s\in[0,T]$, where $\rho_s$ is defined in (\ref{rhodef}), and assume this for the moment. Because $K'(u)/K(u)$ is bounded over $u \in [0, T]$ by assumption, it follows, for example, from Corollary 3.5.14 in \cite{ks} that the process $(\gamma_s, 0 \leq s \leq T)$ is a martingale. Therefore, we can define a new probability measure $Q$ on $(\Omega, {\cal F})$ such that for $s \in [0, T]$, we have $$\frac{dQ}{dP} \bigg|_{{\cal G}_s} = \gamma_s.$$  By Girsanov's Theorem, the law of the process $(B_s, 0 \leq s \leq T)$ under $Q$ is the same as the law of $(X_s, 0 \leq s \leq T)$ under $P$.  
Furthermore, we can see from (\ref{Xdef}) that by a standard time-change argument due to Dambis, Dubins, and Schwarz (see, for example, Theorem 3.4.6 of \cite{ks}), we can write $$\frac{X_s}{K(s)} = W_{\tau(s)},$$ where $(W_s, s \geq 0)$ is a Brownian motion under $P$ with $W_0 = x/K(0)$ and $\tau(s)$ is given by (\ref{taudef}). This proves the first part of the lemma. In particular, if $g \in [0, 1] \rightarrow \R$ is a bounded measurable function, then using $E$ to denote expectations under $P$ and $E_Q$ to denote expectations under $Q$, we have for $s \in [0, T]$,
\begin{align*}
E \bigg[ \gamma_s g \bigg( \frac{B_s}{K(s)} \bigg) \Ind_{\{0 < B_u < K(u) \: \forall u \in [0, s]\}} \bigg] &= E_Q \bigg[ g \bigg( \frac{B_s}{K(s)} \bigg) \Ind_{\{0 < B_u < K(u) \: \forall u \in [0, s]\}} \bigg] \nonumber \\
&= E \bigg[ g \bigg( \frac{X_s}{K(s)} \bigg) \Ind_{\{0 < X_u < K(u) \: \forall u \in [0, s]\}} \bigg]\\
&= E\big[g(W_{\tau(s)}) \Ind_{\{0 < W_u < 1 \: \forall u \in [0, \tau(s)]\}} \big] \\
&= \int_0^1 g(y) w_{\tau(s)}(\tfrac x {K(0)}, y) \: dy.
\end{align*}

To prove the lemma, it remains only to show that $\gamma_s = \rho_s$ for all $s \in [0, T]$.  Observe that if we write $Z_s = K'(s)B_s/2K(s)$, then $$dZ_s = \frac{K'(s)}{2K(s)} \: dB_s + \bigg( \frac{K''(s)}{2K(s)} - \frac{K'(s)^2}{2K(s)^2} \bigg) B_s \: ds,$$ and therefore $$\langle B, Z \rangle_s = \int_0^s \frac{K'(u)}{2K(u)} \: du = \frac{1}{2} \log \bigg( \frac{K(s)}{K(0)} \bigg).$$  Integrating by parts gives
\begin{align*}
&\frac{K'(s) B_s^2}{2K(s)} - \frac{K'(0)B_0^2}{2K(0)} = Z_sB_s-Z_0B_0 = \int_0^sZ_udB_u + \int_0^s B_udZ_u + \langle B, Z \rangle_s\\
&\hspace{.1in}= \int_0^s \frac{K'(u) B_u}{2K(u)} \: dB_u + \int_0^s \frac{K'(u)B_u}{2K(u)} \: dB_u + \int_0^s \bigg( \frac{K''(u)}{2K(u)} - \frac{K'(u)^2}{2K(u)^2} \bigg) B_u^2 \: du + \frac{1}{2} \log \bigg( \frac{K(s)}{K(0)} \bigg),
\end{align*}
and rearranging this equation, we get that $\gamma_s = \rho_s$, as claimed.
\end{proof}

Next, for any fixed constant $A \geq 0$, define
\begin{equation}\label{LAdef}
L_{t,A}(s) = c(t - s)^{1/3} - A,
\end{equation}
where $c$ was defined in (\ref{Lcdef}).
We now consider the case in which $K(s) = L_{t,A}(s)$. Then $L_{t,A}(s)$ is defined for $s \in [0,t_A]$, with $t_A = t - (A/c)^3$.  Suppose there is a single Brownian particle at $x \in (0, L_{t,A}(r))$, where $0 \leq r < s$, which is killed if it reaches $0$ or $L_{t,A}(u)$ at time $u \in (r, s]$.  Let $q_{r,s}^A(x,y)$ denote the ``density" for the position of this particle at time $s$, meaning that the probability that the particle is in the Borel subset $U$ of $(0, L_{t,A}(s))$ at time $s$ is $$\int_U q_{r,s}^A(x,y) \: dy.$$  Define for $0\le r\le s < t_A$, 
\begin{equation} \label{taursdef}
\tau_A(r,s) = \int_r^s \frac{1}{L_{t,A}(u)^2} \: du
\end{equation}
(we omit the parameter $t$ in the notation of $\tau_A$).

\begin{proposition}
\label{prop:qrs}
For  $0\le r\le s< t_A$, $x\in[0,L_{t,A}(r)]$ and $y\in[0,L_{t,A}(s)]$, we have
$$q_{r,s}^A(x,y) = \frac{e^{O((t-s)^{-1/3})}}{(L_{t,A}(r) L_{t,A}(s))^{1/2}} \: w_{\tau_A(r,s)} \bigg( \frac{x}{L_{t,A}(r)}, \frac{y}{L_{t,A}(s)} \bigg).$$
\end{proposition}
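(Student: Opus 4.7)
The plan is to reduce to the fixed-strip heat kernel $w$ by applying Lemma~\ref{densityK} with the time-shifted moving boundary. Specifically, I would set $K(u) = L_{t,A}(r+u)$ for $u \in [0, s-r]$ and consider Brownian motion $(B_u)$ started at $x$ under $P$. Lemma~\ref{densityK} produces the martingale $\rho$ of \eqref{rhodef}; under the measure $Q = \rho_{s-r} \cdot P$, the rescaled process $(B_u/K(u))_{u \le s-r}$ is a Brownian motion $W$ started at $x/L_{t,A}(r)$ time-changed by $\tau_A(r,s) = \int_0^{s-r} K(u)^{-2}\,du$, and the survival event $\{0<B_u<K(u)\}$ corresponds to $W$ staying in $(0,1)$.

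From here, for any nonnegative test function $h$, writing $E^P[\,\cdot\,] = E^Q[\rho_{s-r}^{-1}\,\cdot\,]$ and using the substitution $z = L_{t,A}(s)\,y$ to convert the $(0,1)$-integral into a $(0,L_{t,A}(s))$-integral, one obtains
\[
\int_0^{L_{t,A}(s)} h(z)\, q_{r,s}^A(x,z)\, dz = E^Q\!\Big[\rho_{s-r}^{-1} h(L_{t,A}(s) W_{\tau_A(r,s)}) \Ind_{\{W \subset (0,1)\}}\Big],
\]
whose right-hand side is the integral of $h(z)\cdot L_{t,A}(s)^{-1} w_{\tau_A(r,s)}(x/L_{t,A}(r),\,z/L_{t,A}(s))$ against the random weight $\rho_{s-r}^{-1}$. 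The deterministic prefactor $(K(0)/K(s-r))^{1/2} = (L_{t,A}(r)/L_{t,A}(s))^{1/2}$ inside $\rho$ combines with the $L_{t,A}(s)^{-1}$ to give the target denominator $(L_{t,A}(r)L_{t,A}(s))^{1/2}$, and what remains is a pathwise exponential correction.

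The main step, and the only place real work is needed, is to bound this correction uniformly on every surviving path. With $K(u) = c(t-r-u)^{1/3} - A$ we have $K'(u) = -\tfrac{c}{3}(t-r-u)^{-2/3}$ and $K''(u) = -\tfrac{2c}{9}(t-r-u)^{-5/3}$. On survival, $B_u \le K(u) \le c(t-r-u)^{1/3}$, so the two boundary terms in \eqref{rhodef} satisfy
\[
\left|\frac{K'(s-r)B_{s-r}^2}{2K(s-r)}\right| \le \frac{|K'(s-r)|K(s-r)}{2} = O((t-s)^{-1/3}),
\qquad
\left|\frac{K'(0)x^2}{2K(0)}\right| = O((t-r)^{-1/3}),
\]
and the integral term is bounded by
\[
\int_0^{s-r} \frac{|K''(u)|K(u)}{2}\,du \;\le\; C\int_0^{s-r}(t-r-u)^{-4/3}\,du \;=\; O\!\left((t-s)^{-1/3}\right).
\]
Hence $\rho_{s-r} = (L_{t,A}(r)/L_{t,A}(s))^{1/2} e^{\mathcal{E}}$ with $|\mathcal{E}| \le C(t-s)^{-1/3}$ deterministically on the survival event, uniformly in $x,y$ and in the path. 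Sandwiching the $P$-integral between $e^{\pm C(t-s)^{-1/3}}$ multiples of the $Q$-integral for arbitrary nonnegative $h$ produces the pointwise ratio bound for the densities, which is the stated formula; the substitution $v = r + u$ identifies the time parameter $\tau(s-r)$ from Lemma~\ref{densityK} with $\tau_A(r,s)$ of \eqref{taursdef}. The rest is bookkeeping; the genuine obstacle is the pathwise control of $\mathcal{E}$, which hinges on the elementary but sharp bound $K(u) \le c(t-r-u)^{1/3}$ that makes all three contributions to $\mathcal E$ land at the same order $(t-s)^{-1/3}$.
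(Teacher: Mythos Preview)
Your proposal is correct and follows essentially the same route as the paper: apply Lemma~\ref{densityK} with $K=L_{t,A}$ (up to a harmless time shift), then bound the three terms in the exponent of $\rho$ pathwise on the survival event using $|K'(u)|K(u)=O((t-r-u)^{-1/3})$ and $|K''(u)|K(u)=O((t-r-u)^{-4/3})$, yielding $\rho = (L_{t,A}(r)/L_{t,A}(s))^{1/2}e^{O((t-s)^{-1/3})}$ and hence the claimed density formula. The paper phrases the last step via the identity $E[\rho\,g\,\Ind]=\int g\,w$ from Lemma~\ref{densityK} rather than your dual $E^P=E^Q[\rho^{-1}\cdot]$, but these are equivalent.
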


\begin{proof}
Let $(B_u, u \geq r)$ denote Brownian motion started at $x$ at time $r$.  Let 
$$\rho_{r,s} = \bigg( \frac{L_{t,A}(r)}{L_{t,A}(s)} \bigg)^{1/2} \exp \bigg( \frac{L_{t,A}'(s) B_s^2}{2L_{t,A}(s)} - \frac{L_{t,A}'(r)B_r^2}{2L_{t,A}(r)} - \int_r^s \frac{L_{t,A}''(u)B_u^2}{2L_{t,A}(u)} \: du \bigg).$$
By Lemma \ref{densityK}, if $h: [0, L_{t,A}(s)] \rightarrow \R$ is a bounded measurable function, then
\begin{equation}\label{LAdensity}
E \big[ \rho_{r,s} h(B_s) \Ind_{\{0 < B_u < L_{t,A}(u) \: \forall u \in [r, s]\}} \big] = \frac{1}{L_{t,A}(s)} \int_0^{L_{t,A}(s)} h(z) w_{\tau_A(r,s)}\bigg(\frac{x}{L_{t,A}(r)}, \frac{z}{L_{t,A}(s)} \bigg) \: dz.
\end{equation}
We have $$L_{t,A}'(s) = -\frac{c}{3} (t - s)^{-2/3}, \hspace{.3in}L_{t,A}''(s) = -\frac{2c}{9} (t - s)^{-5/3}.$$  On the event that $0 < B_u < L_{t,A}(u)$ for all $u \in [r, s]$, we have
\begin{align*}
&\bigg| \frac{L_{t,A}'(s) B_s^2}{2L_{t,A}(s)} - \frac{L_{t,A}'(r)B_r^2}{2L_{t,A}(r)} - \int_r^s \frac{L_{t,A}''(u) B_u^2}{2L_{t,A}(u)} \bigg| \\
&\hspace{.8in}\leq \bigg| \frac{L_{t,A}'(s) L_{t,A}(s)}{2} \bigg| + \bigg| \frac{L_{t,A}'(r) L_{t,A}(r)}{2} \bigg| + \frac{1}{2} \bigg| \int_r^s L_{t,A}''(u) L_{t,A}(u) \: du \bigg| \\
&\hspace{.8in}\leq C (t - s)^{-1/3}
\end{align*}
for some positive constant $C$.  Therefore,
\begin{equation}\label{rhors}
\rho_{r,s} = \bigg( \frac{L_{t,A}(r)}{L_{t,A}(s)} \bigg)^{1/2}e^{O((t-s)^{-1/3})}.
\end{equation}
% Also, 
% \begin{align}\label{taursasym}
% \tau(r, s) &= \int_r^s \frac{1}{c^2 (t - u)^{2/3}} \: du + \int_r^s \frac{2A}{c^3 (t - u)} \: du + O((t - s)^{-1/3}) \nonumber \\
% &= \frac{3}{c^2} \big( (t - r)^{1/3} - (t - s)^{1/3} \big) + \frac{2A}{c^3} \log \bigg( \frac{t - r}{t-s} \bigg) + O((t - s)^{-1/3}),
% \end{align}
% which tends to infinity as long as $s - r \gg t^{2/3}$ and $t - s \gg 1$.  
It now follows from (\ref{LAdensity}) and (\ref{rhors}) that $$E \big[ h(B_s) \Ind_{\{0 < B_u < L_{t,A}(u) \: \forall u \in [r,s]\}} \big] = \frac{e^{O((t-s)^{-1/3})}}{(L_{t,A}(r) L_{t,A}(s))^{1/2}} \int_0^{L_{t,A}(s)} h(z) w_{\tau_A(r,s)} \bigg( \frac{x}{L_{t,A}(r)}, \frac{z}{L_{t,A}(s)} \bigg) \: dz.$$ This implies the result.
\end{proof}

\subsection{First moment estimates}
\label{sec:first_moment_estimates}

We now return to the original setting of the paper, in which each Brownian particle drifts to the left at rate $1$ and branching events, each producing an average of $m+1$ offspring, occur at rate $\beta = 1/2m$.  Suppose there is a single particle at $x \in (0, L_{t,A}(r))$ at time $r$, where $0 \leq r < s$, and particles are killed if they reach $0$ or $L_{t,A}(u)$ at time $u \in (r, s]$.  Let $p_{r,s}^A(x,y)$ denote the ``density'' for the process at time $s$, meaning that the expected number of particles in the Borel subset $U$ of $(0, L_{t,A}(s))$ at time $s$ is $$\int_U p_{r,s}^A(x,y) \: dy.$$    By Girsanov's Theorem, the addition of the drift multiplies the density by $e^{(x - y) - t/2}$, and by the Many-to-one Lemma, the branching multiplies the density by $e^{t/2}$.  It follows that $$p_{r,s}^A(x,y) = e^{x-y} q_{r,s}^A(x,y).$$ In this section and the next one, we use this fact to estimate first and second  moments of various quantities of this process.

%Recall the definitions of $L_t(s)$ and $Z_t(s)$.  For real numbers $A \geq 0$, recall the definition of $L_{t,A}(s)$ from (\ref{LAdef}).  
Define $N_{s,A}$ to be the set particles at time $s$ that stay below the curve $L_{t,A}$ until time $s$.  We define
\begin{align*}
% L_t(s) &= c (t-s)^{1/3},\quad \text{with $c = (3\pi^2/2)^{1/3}$},\\
% L_{t,A}(s) &= c(t - s)^{1/3} - A,\quad\text{with $A \ge 1$ a fixed constant,}\\
% Z_t(s) &= \sum_{u\in N_s} z_t(X_u(s),s),\quad z_t(x,s) = L_t(s)\sin\left(\frac{\pi x}{L_t(s)}\right)e^{x-L_t(s)}\Ind_{x\in[0,L_t(s)]},\\
% Y_t(s) &= \sum_{u\in N_s} y_t(X_u(s),s),\quad y_t(x,s) = \tfrac x {L_t(r)} e^{x-L_t(s)}\\
Z_{t,A}(s) &= \sum_{u\in N_{s,A}} z_{t,A}(X_u(s),s),\quad z_{t,A}(x,s) = L_{t,A}(s)\sin\left(\frac{\pi x}{L_{t,A}(s)}\right)e^{x-L_t(s)}\Ind_{x\in[0,L_{t,A}(s)]},\\
Y_{t,A}(s) &= \sum_{u\in N_{s,A}} y_{t,A}(X_u(s),s),\quad y_{t,A}(x,s) = \tfrac x {L_{t,A}(s)} e^{x-L_t(s)}, \\
{\tilde Y}_{t,A}(s) &= \sum_{u\in N_{s,A}} {\tilde y}_{t,A}(X_u(s), s),\quad {\tilde y}_{t,A}(x,s) = e^{x-L_t(s)}.
\end{align*}
%Note that $Z_{t,0}(s) = Z_t(s)$.  Likewise, define $Y_t(s) = Y_{t,0}(s)$ and ${\tilde Y}_t(s) = {\tilde Y}_{t,0}(s)$. \nota{Pascal: this is wrong: there is no killing in $Z_t(s)$ etc.}
We also define $$y_t(x,s) = y_{t,0}(x,s), \quad {\tilde y}_t(x,s) = {\tilde y}_{t,0}(x,s).$$
Note that $Y_{t,A}(s) \leq {\tilde Y}_{t,A}(s)$.  We further define $R_{t,A}(r,s)$, for $r\le s$, to be the number of particles absorbed at the curve $L_{t,A}$ between the times $r$ and $s$.  The notation $\P_{(x,r)}$ and $\E_{(x,r)}$ denotes probabilities and expectations for our branching Brownian motion process started from a particle at the space-time point $(x,r)$.

We now collect a few estimates for $L_{t,A}(s)$ and $\tau_A(r,s)$, which were defined in \eqref{LAdef} and \eqref{taursdef} respectively.  Recall that $t_A = t - (A/c)^3$, and define 
\[
s_A = t - \left(\frac{2A}{c}\right)^3 \le t_A,
\]
so that $A/L_t(s) \le 1/2$ for every $s\le s_A$. It follows that for $s\le s_A$, we have
\begin{align}
\label{eq:LtA_Lt}
L_{t,A}(s) = L_t(s)e^{O(A(t-s)^{-1/3})}.
\end{align}
Also, a simple calculation gives, for $r\le s\le s_A$,
\begin{align}
\tau_A(r, s) &= \int_r^s \frac{1}{c^2 (t - u)^{2/3}} \: du + \int_r^s \frac{2A}{c^3 (t - u)} \: du + O(A^2(t - s)^{-1/3}) \nonumber \\
&= \frac{3}{c^2} \big( (t - r)^{1/3} - (t - s)^{1/3} \big) + \frac{2A}{c^3} \log \bigg( \frac{t - r}{t-s} \bigg) + O(A^2(t - s)^{-1/3})\nonumber\\
\label{tauasym}
&= \frac 2 {\pi^2} \left(L_t(r) - L_t(s) + \frac {2 A}{3} \log \bigg( \frac{t - r}{t-s} \bigg) + O(A^2(t - s)^{-1/3})\right).
\end{align}
It follows that for $r\le s\le s_A$,
\begin{equation}\label{tauLexpasym}
e^{-\frac{\pi^2}2\tau_A(r,s)} = e^{L_t(s) - L_t(r) + O(A^2(t - s)^{-1/3})} \bigg( \frac{t - s}{t-r} \bigg)^{\frac{2A} 3}.
\end{equation}
Furthermore, since $L_{t,A}(s) \le L_t(s)$ for every $s\le t_A$, we get by definition and a simple calculation, for every $s\le t_A$ (in particular, every $s\le s_A$),
\begin{align}
\label{taucomparison}
\tau_A(r,s) \ge \tau_0(r,s) =  \frac 2 {\pi^2} (L_t(r) - L_t(s)),
\end{align}
and also, by \eqref{eq:LtA_Lt} and the definition of $\tau_A$ from \eqref{taursdef}, for every $s\le s_A$,
\begin{align}
\label{taucomparisonweak}
\tau_A(r,s) = \tau_0(r,s)e^{O(A(t-s)^{-1/3})}.
\end{align}
%In what follows, we introduce an asymptotic regime where we let $t$ and $A$ go to infinity together in such a way that $A$ grows sufficiently slowly with $t$. More precisely, when we use the phrase \emph{as $t\to\infty$ and $A\to\infty$} we implicitly imply that the following holds:
%\begin{itemize}
% \item $A^2t^{-1/3} \to 0$.
%\end{itemize}
%Furthermore, we fix functions $\bar s(t,A)$ and $\bar (A)$, such that
%\begin{itemize}
% \item $A^2(t-\bar s(t,A))^{-1/3}\to 0$ and $\bar s(t,A)\to\infty$ as $t\to\infty$ and $A\to\infty$ and
% \item $\bar \theta(A)A \to 0$ as $A\to\infty$.
%\end{itemize}
%We also define the meaning of the statements $S \sim T$ and $S\lesssim T$, for some quantities $S$ and $T$ depending on $t,A,r,s,x,y$. The statement $S\sim T$ means that $S/T$ tends to one as $t\to\infty$ and $A\to\infty$, and uniformly in $r,s,x,y$ satisfying
%\begin{itemize}
% \item $0\le r\le s\le \bar s(t,A)$,
% \item $(t-r)/(t-s) \le e^{\bar \theta(A)}$ and
% \item $x\in [0,L_{t,A}(r)]$, $y\in [0,L_{t,A}(s)]$.
%\end{itemize}
%Similarly, the statement $S\lesssim T$ means that the upper limit as $t\to\infty$ and $A\to\infty$ of the supremum of $S/T$ over this range of parameters is at most 1.  For example, we have by \eqref{tauasym},
%\begin{equation}
% \label{tauLexpasym}
% e^{-\frac{\pi^2}2\tau_A(r,s)} = e^{L_t(s) - L_t(r) + O(A^2(t - s)^{-1/3})} \bigg( \frac{t - s}{t-r} \bigg)^{\frac{2A} 3} \sim e^{L_t(s) - L_t(r)}.
%\end{equation}

\begin{lemma}  We have for $r\le s\le s_A$ and $x\in [0,L_{t,A}(r)]$,
\label{lem:Zexp}
$$ \E_{(x,r)}[Z_{t,A}(s)] = e^{O((1 \vee A^2)(t - s)^{-1/3})} \left(\frac{t-s}{t-r}\right)^{\frac{2A} 3 + \frac 1 2} z_{t,A}(x,r).$$
%In particular,
%\begin{equation}\label{Zexp2}
%\E_{(x,r)}[Z_{t,A}(s)] \sim z_{t,A}(x,r).
%\end{equation}
\end{lemma}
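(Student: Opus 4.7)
The plan is to compute $\E_{(x,r)}[Z_{t,A}(s)]$ directly by integrating the test function $z_{t,A}(\cdot,s)$ against the first-moment density $p_{r,s}^A(x,y) = e^{x-y}q_{r,s}^A(x,y)$, and then to match the resulting expression against $z_{t,A}(x,r)$ up to the claimed multiplicative error. By the many-to-one identity,
\begin{equation*}
\E_{(x,r)}[Z_{t,A}(s)] = \int_0^{L_{t,A}(s)} z_{t,A}(y,s)\, e^{x-y} q_{r,s}^A(x,y)\, dy = L_{t,A}(s) e^{x-L_t(s)} \int_0^{L_{t,A}(s)} \sin\!\Big(\tfrac{\pi y}{L_{t,A}(s)}\Big) q_{r,s}^A(x,y)\, dy,
\end{equation*}
where the factor $e^{y-L_t(s)}$ from $z_{t,A}(y,s)$ cancels exactly the factor $e^{x-y}$ coming from the Girsanov correction inside $p_{r,s}^A$. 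This algebraic cancellation is what makes the computation clean and explains why $z_{t,A}$ is the ``right'' test function.

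Next I would apply Proposition~\ref{prop:qrs} to replace $q_{r,s}^A$ by the rescaled killed-Brownian kernel $w_{\tau_A(r,s)}$, and change variables $\tilde y = y/L_{t,A}(s)$. This turns the integral into
\begin{equation*}
\frac{L_{t,A}(s)^{1/2}}{L_{t,A}(r)^{1/2}} \cdot L_{t,A}(s)\, e^{O((t-s)^{-1/3})} \int_0^1 \sin(\pi \tilde y)\, w_{\tau_A(r,s)}\!\Big(\tfrac{x}{L_{t,A}(r)}, \tilde y\Big)\, d\tilde y.
\end{equation*}
The key analytic input is now the Fourier identity~\eqref{wsin}, which evaluates this last integral exactly as $e^{-\pi^2 \tau_A(r,s)/2} \sin(\pi x/L_{t,A}(r))$. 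Combining this with the asymptotic~\eqref{tauLexpasym} for $e^{-\pi^2 \tau_A(r,s)/2}$ produces the factor $((t-s)/(t-r))^{2A/3}$ and an $e^{L_t(s)-L_t(r)}$ term which exactly cancels the $e^{-L_t(s)}$ currently present, leaving $e^{x-L_t(r)}$ as needed.

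At this point the expression has the form
\begin{equation*}
\frac{L_{t,A}(s)^{3/2}}{L_{t,A}(r)^{1/2}}\, \sin\!\Big(\tfrac{\pi x}{L_{t,A}(r)}\Big)\, e^{x-L_t(r)} \cdot \Big(\tfrac{t-s}{t-r}\Big)^{2A/3} e^{O((1\vee A^2)(t-s)^{-1/3})},
\end{equation*}
and the last step is to extract the $L_{t,A}(r)$ prefactor needed to form $z_{t,A}(x,r)$. For this I would use~\eqref{eq:LtA_Lt}, which gives $L_{t,A}(s)/L_{t,A}(r) = ((t-s)/(t-r))^{1/3} e^{O(A(t-s)^{-1/3})}$ in the regime $s\le s_A$, so that
\begin{equation*}
\frac{L_{t,A}(s)^{3/2}}{L_{t,A}(r)^{1/2}} = L_{t,A}(r) \Big(\tfrac{t-s}{t-r}\Big)^{1/2} e^{O(A(t-s)^{-1/3})}.
\end{equation*}
Assembling the factors gives exactly the claimed formula. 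The only step requiring any care is keeping track of the $A$-dependence in the error terms: the $O((t-s)^{-1/3})$ in Proposition~\ref{prop:qrs} is $A$-independent, but the passage from $\tau_A$ to $L_t$ via~\eqref{tauLexpasym} contributes $O(A^2(t-s)^{-1/3})$ and the conversion~\eqref{eq:LtA_Lt} contributes $O(A(t-s)^{-1/3})$, which together produce the $(1\vee A^2)$ factor in the exponent. I do not anticipate any real obstacle here; the proof is essentially a bookkeeping exercise built on top of the earlier heat-kernel machinery, with~\eqref{wsin} providing the crucial exact eigenfunction identity.
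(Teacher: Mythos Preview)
Your proposal is correct and follows essentially the same argument as the paper's proof: apply Proposition~\ref{prop:qrs}, use the eigenfunction identity~\eqref{wsin}, then~\eqref{tauLexpasym} and~\eqref{eq:LtA_Lt} to simplify. The bookkeeping of the $A$-dependent error terms matches the paper as well.
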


\begin{proof}
By applying Proposition \ref{prop:qrs} followed by equations (\ref{wsin}) and (\ref{tauLexpasym}), we get
\begin{align*}
\E_{(x,r)}&[Z_{t,A}(s)] = \int_0^{L_{t,A}(s)} e^{x-y} q_{r,s}^A(x,y) z_{t,A}(y,s)\,dy\\
  &=e^{O((t - s)^{-1/3})} \frac{L_{t,A}(s)^{1/2}}{L_{t,A}(r)^{1/2}} e^{x-L_t(s)} \int_0^{L_{t,A}(s)} \sin\bigg(\frac{\pi y}{L_{t,A}(s)} \bigg) w_{\tau_A(r,s)} \bigg( \frac{x}{L_{t,A}(r)}, \frac{y}{L_{t,A}(s)} \bigg)\,dy \\
  &=e^{O((t - s)^{-1/3})} \frac{L_{t,A}(s)^{3/2}}{L_{t,A}(r)^{1/2}} e^{x-L_t(s)} e^{-\frac{\pi^2}2\tau_A(r,s)}\sin\bigg(\frac{\pi x}{L_{t,A}(r)} \bigg) \\
  &=e^{O((1 \vee A^2)(t - s)^{-1/3})} \frac{L_{t,A}(s)^{3/2}}{L_{t,A}(r)^{3/2}} \bigg( \frac{t - s}{t-r} \bigg)^{\frac{2A} 3} z_{t,A}(x,r).
 \end{align*}
%The result (\ref{Zexp1}) follows from the fact that $L_{t,A}(s)^3/L_{t,A}(r)^3 = e^{O(A(t - s)^{-1/3})} (t-s)/(t-r)$, and (\ref{Zexp2}) follows immediately from (\ref{Zexp1}).
The lemma follows from \eqref{eq:LtA_Lt}.
\end{proof}

\begin{lemma}\label{lem:Yexp}
Let $\gamma > 0$.  There exists a positive constant $C$, depending on $\gamma$, such that if  $r\le s\le t_A$ and $\tau_A(r,s) \geq \gamma$, then for $x\in[0,L_{t,A}(r)]$, 
$$\E_{(x,r)}[{\tilde Y}_{t,A}(s)] \leq C e^{O((t - s)^{-1/3})}  \frac{z_{t,A}(x,r)}{L_{t,A}(r)}.$$
%In particular, if $\tau_A(r,s) \geq \gamma$ then
%\begin{equation}\label{Yexp2}
%\E_{(x,r)}[{\tilde Y}_{t,A}(s)] \lesssim \frac{C z_{t,A}(x,r)}{L_t(r)}.
%\end{equation}
\end{lemma}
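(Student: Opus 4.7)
The plan is to compute $\E_{(x,r)}[{\tilde Y}_{t,A}(s)]$ via the many-to-one lemma together with Girsanov's theorem to strip off the drift and branching, and then use Proposition~\ref{prop:qrs} to reduce everything to an explicit integral against the Dirichlet heat kernel $w_{\tau}$ on $(0,1)$, where $\tau = \tau_A(r,s)$. The combination of Girsanov with the many-to-one lemma gives a density $p_{r,s}^A(x,y) = e^{x-y}q_{r,s}^A(x,y)$, so
\[
\E_{(x,r)}[{\tilde Y}_{t,A}(s)] = \int_0^{L_{t,A}(s)} e^{x-y}q_{r,s}^A(x,y)\,e^{y-L_t(s)}\,dy = e^{x-L_t(s)}\int_0^{L_{t,A}(s)} q_{r,s}^A(x,y)\,dy.
\]

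Next I would apply Proposition~\ref{prop:qrs} and change variables $y' = y/L_{t,A}(s)$ to obtain
\[
\E_{(x,r)}[{\tilde Y}_{t,A}(s)] = e^{x-L_t(s)}\,\frac{e^{O((t-s)^{-1/3})}L_{t,A}(s)^{1/2}}{L_{t,A}(r)^{1/2}}\int_0^1 w_\tau\bigl(x/L_{t,A}(r),\,y'\bigr)\,dy'.
\]
The core estimate is then on the last integral, and this is where the hypothesis $\tau \geq \gamma$ is used. Writing $w_\tau = v_\tau(1+D_\tau)$ as in \eqref{vdef}--\eqref{udef}, the error term satisfies $|D_\tau|\le D(\tau)\le D(\gamma)$ since $D$ is decreasing, so
\[
\int_0^1 w_\tau(x_0, y')\,dy' \le 2(1+D(\gamma))e^{-\pi^2\tau/2}\sin(\pi x_0)\int_0^1\sin(\pi y')\,dy' = \tfrac{4(1+D(\gamma))}{\pi}\,e^{-\pi^2\tau/2}\sin(\pi x_0).
\]

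To conclude, I would use the comparison \eqref{taucomparison}, namely $\tau_A(r,s)\ge \tau_0(r,s) = \tfrac{2}{\pi^2}(L_t(r)-L_t(s))$, which gives $e^{-\pi^2\tau_A(r,s)/2}\le e^{L_t(s)-L_t(r)}$. Combining and using $L_{t,A}(s)\le L_{t,A}(r)$, the prefactor $e^{x-L_t(s)}\cdot L_{t,A}(s)^{1/2}/L_{t,A}(r)^{1/2}\cdot e^{-\pi^2\tau_A(r,s)/2}$ collapses to at most $e^{x-L_t(r)}$, and then recognizing
\[
e^{x-L_t(r)}\sin\!\Bigl(\frac{\pi x}{L_{t,A}(r)}\Bigr) = \frac{z_{t,A}(x,r)}{L_{t,A}(r)}
\]
yields the claimed bound with the $e^{O((t-s)^{-1/3})}$ factor coming entirely from Proposition~\ref{prop:qrs}.

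The only delicate point — which I think is the main obstacle to writing the proof carefully — is to ensure that the argument works uniformly over the full range $r\le s\le t_A$ rather than just $s\le s_A$, since the asymptotics \eqref{eq:LtA_Lt} and \eqref{tauasym} are only stated for $s\le s_A$. This is handled by using the robust inequality \eqref{taucomparison} (which holds for all $s\le t_A$) rather than the finer expansion \eqref{tauLexpasym}, and by noting that on $(s_A, t_A]$ the quantity $(t-s)^{-1/3}$ is bounded in terms of $A$, so $e^{O((t-s)^{-1/3})}$ is harmless there.
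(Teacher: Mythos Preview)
Your proof is correct and follows essentially the same route as the paper: apply Proposition~\ref{prop:qrs}, use the decomposition $w_\tau = v_\tau(1+D_\tau)$ together with $\tau_A(r,s)\ge\gamma$ to extract the $\sin(\pi x/L_{t,A}(r))$ factor, and then invoke \eqref{taucomparison} and the monotonicity of $L_{t,A}$ to turn $e^{x-L_t(s)}e^{-\pi^2\tau/2}$ into $e^{x-L_t(r)}$. Your closing remark about the range $s\le t_A$ versus $s\le s_A$ is a reasonable caution, but as you observed the proof only needs \eqref{taucomparison}, which holds on the full range, so there is no actual obstacle.
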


\begin{proof}
By Proposition \ref{prop:qrs},
\begin{align*}
\E_{(x,r)}[{\tilde Y}_{t,A}(s)] &= \int_0^{L_{t,A}(s)} e^{x-y} q^A_{r,s}(x,y) e^{y - L_t(s)} \: dy \\
&= \frac{e^{O((t - s)^{-1/3})} e^{x - L_t(s)}}{L_{t,A}(r)^{1/2} L_{t,A}(s)^{1/2}} \int_0^{L_{t,A}(s)} w_{\tau_A(r,s)} \bigg( \frac{x}{L_{t,A}(r)}, \frac{y}{L_{t,A}(s)} \bigg) \: dy.
\end{align*}
Because $\tau_A(r,s) \geq \gamma$, it follows from (\ref{vdef}) and (\ref{udef}) that
\begin{align*}
\E_{(x,r)}[{\tilde Y}_{t,A}(s)] &\leq \frac{C e^{O((t - s)^{-1/3})}e^{x - L_t(s)} e^{-\frac{\pi^2}{2} \tau_A(r,s)}}{L_{t,A}(r)^{1/2} L_{t,A}(s)^{1/2}} \int_0^{L_{t,A}(s)} \sin \bigg( \frac{\pi x}{L_{t,A}(r)} \bigg) \sin \bigg( \frac{\pi y}{L_{t,A}(s)} \bigg) \: dy \\
&\leq C e^{O((t - s)^{-1/3})}e^{x - L_t(s)} e^{-\frac{\pi^2}{2} \tau_A(r,s)} \bigg( \frac{L_{t,A}(s)}{L_{t,A}(r)} \bigg)^{1/2} \sin \bigg( \frac{\pi x}{L_{t,A}(r)}\bigg).
\end{align*}
Therefore, using \eqref{taucomparison} and the fact that $L_{t,A}$ is decreasing, we get
$$\E_{(x,r)}[{\tilde Y}_{t,A}(s)] \leq C e^{O((t - s)^{-1/3})} e^{x - L_t(r)} \sin \bigg( \frac{\pi x}{L_{t,A}(r)} \bigg),$$
as claimed. % The result (\ref{Yexp2}) follows immediately from (\ref{Yexp1}).
\end{proof}

To calculate the first moment of $R_{t,A}$, we will use the following well-known result on the hitting time of a curve by a Brownian motion. 

\begin{lemma}
\label{lem:heat_flow}
Let $b_+,b_-:\R_+\to\R$ be smooth functions and let $y \in (b_-(0),b_+(0))$. Let $u(y,s)$ be the density of Brownian motion started at $x$ and killed when hitting one of the curves $b_+$ and $b_-$. Let $H_+$ and $H_-$ denote the hitting times of the curves $b_+$ and $b_-$, respectively. Then
\[
\P_x(H_+ \in ds,\ H_+ < H_-) = - \frac 1 2 \partial_y u(y,s)\Big|_{y = b_+(s)}\, ds
\]
\end{lemma}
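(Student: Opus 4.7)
The plan is to derive this classical Feynman–Kac / Green's identity relation between the killed density $u$ and the exit distribution at the upper boundary. Throughout the argument $u$ satisfies the heat equation $\partial_s u = \tfrac{1}{2}\partial_y^2 u$ on the open set $\{(y,s): b_-(s)<y<b_+(s),\ s>0\}$, with Dirichlet boundary conditions $u(b_\pm(s),s)=0$ and initial datum $u(\cdot,0)=\delta_x$; this is standard since $u$ is the transition kernel of Brownian motion killed on the two curves.

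First, I would reduce the statement to a weak form. Fix $T>0$ and a smooth compactly supported test function $\phi:(0,T)\to\R$. It suffices to prove the identity
\[
 E_x[\phi(H_+)\mathbf{1}_{\{H_+<H_-\wedge T\}}] = -\int_0^T \tfrac{1}{2}\phi(s)\,\partial_y u(b_+(s),s)\,ds,
\]
since then letting $\phi$ approximate the indicator of a small interval and dividing by its length yields the pointwise density statement.

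Next, I would represent the left-hand side via a dual PDE. Let $v(y,s)$ be the solution of the backward heat equation $\partial_s v + \tfrac{1}{2}\partial_y^2 v = 0$ on the same moving strip, with boundary data $v(b_+(s),s)=\phi(s)$, $v(b_-(s),s)=0$ and terminal data $v(\cdot,T)=0$. Applying Itô's formula to $v(B_{s},s)$ stopped at $\tau=H_+\wedge H_-\wedge T$ makes the $ds$-drift vanish, and collecting the boundary contributions gives $v(x,0)=E_x[\phi(H_+)\mathbf{1}_{\{H_+<H_-\wedge T\}}]$.

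Then I would apply a Green's identity in the moving domain. Set $N(s)=\int_{b_-(s)}^{b_+(s)} u(y,s)v(y,s)\,dy$. Differentiating with Leibniz and noting that the boundary terms from the moving endpoints vanish because $u$ is zero there, one gets
\[
 N'(s)=\int_{b_-(s)}^{b_+(s)}\bigl(\tfrac{1}{2}v\,\partial_y^2 u - \tfrac{1}{2}u\,\partial_y^2 v\bigr)\,dy
 = \tfrac{1}{2}\bigl[v\,\partial_y u - u\,\partial_y v\bigr]_{b_-(s)}^{b_+(s)}
 = \tfrac{1}{2}\phi(s)\,\partial_y u(b_+(s),s),
\]
using $u(b_\pm(s),s)=0$, $v(b_-(s),s)=0$, $v(b_+(s),s)=\phi(s)$. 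Integrating from $0$ to $T$, noting $N(T)=0$ and $N(0)=v(x,0)$, yields exactly the displayed weak identity, which completes the proof.

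The only real obstacle is regularity: for the Itô argument and the integration by parts to be rigorously valid one needs the normal derivative $\partial_y u(b_+(s),s)$ to extend continuously to the boundary and $v$ to be $C^{2,1}$ up to the boundary. Both follow from standard boundary Schauder theory for the parabolic equation on a domain with smooth moving boundaries $b_\pm$, which is routine under the smoothness assumed in the statement; if desired one may approximate by mollifying $b_\pm$ and pass to the limit.
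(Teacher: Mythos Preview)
Your argument is correct. The paper, however, does not actually prove this lemma: it declares the result classical and points to references (It\^o--McKean, Lerche, Daniels, Doob, Gardiner), noting in particular that one can reduce to the constant-boundary case by a suitable change of variables---essentially the device used in Lemma~\ref{densityK}---after which the formula is textbook. Your approach is genuinely different: you set up the backward dual $v$ solving $\partial_s v + \tfrac12 \partial_y^2 v = 0$ with boundary data $\phi$ on $b_+$, represent $E_x[\phi(H_+)\mathbf{1}_{\{H_+<H_-\}}]$ as $v(x,0)$ via It\^o's formula, and then pair $u$ against $v$ through Green's identity on the moving strip. This is a clean, self-contained derivation that avoids straightening the domain, close in spirit to the parabolic-measure treatment in Doob that the paper cites. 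One small technical point you glossed over: $N(0)=\int u(y,0)v(y,0)\,dy$ involves the singular initial datum $u(\cdot,0)=\delta_x$, so the integration of $N'$ should really run from $\varepsilon$ to $T$ and then let $\varepsilon\downarrow 0$, using $N(\varepsilon)\to v(x,0)$; this is routine once $v$ is continuous up to $s=0$, but worth saying explicitly.
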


In words, Lemma~\ref{lem:heat_flow} says that the density at time $s$ of the hitting time of the boundary $b_+$ is equal to the heat flow of $u$ out of the boundary at time $s$. This result is so classical that it is difficult to find a complete proof of it in the literature. See e.g. \cite[p.~154, eq.~32]{ito_mckean} for an early appearance (without proof) in the case of constant boundaries and note that in our one-dimensional setting, one can easily reduce to this case by a suitable change of variables. For two different proof ideas, one more elegant, the other one more robust, both directly applicable for non-constant boundaries, one may consult \cite[Lemma~I.1.4]{lerche} and \cite[Section~3]{daniels}, respectively. For a general discussion of parabolic measure on the boundary of a space-time domain and its relation to hitting times, see \cite[Section 2.IX.13]{doob}. Lemma~\ref{lem:heat_flow} can also be deduced from the formula given in Section 1.XV.7 of that book. A more readable, but non-rigorous discussion in the time-homogeneous case can be found in \cite[Section 5.2.1]{gardiner}.

\begin{lemma}\label{lem:Rexp}
% Suppose that $(t-r)/(t-s) = e^{O(\theta)}$, with $\theta$ depending on $A$. If $\theta \ll A^{-1}$, then, as first $t\to\infty$ and then $A\to\infty$,
%For all fixed $\eps > 0$ and $A \geq 0$, there is a positive constant $C$ depending on $\eps$ and $A$ such that if $s \leq (1 - \eps) t$, then
We have for $r\le s\le s_A$ and $x\in [0,L_{t,A}(r)]$,
\begin{equation}\label{Rexp1}
\begin{split}
\E_{(x,r)}[R_{t,A}(r,s)] &\leq \pi e^{A + O((1 \vee A^2)(t - s)^{-1/3})} \left(\frac{\tau_0(r,s)}{L_t(r)} z_{t,A}(x,r) + O(y_{t,A}(x,r))\right)\\ 
&\leq \left(\frac{t-r}{t-s}\right)^{\frac{2A} 3 + \frac 1 6}\E_{(x,r)}[R_{t,A}(r,s)].
\end{split}
\end{equation}
%Also, we have
%\begin{equation}\label{Rexp2}
%\E_{(x,r)}[R_{t,A}(r,s)] \sim \pi e^A\left(\frac{\tau_A(r,s)}{L_t(r)} z_{t,A}(x,r) + O(y_{t,A}(x,r))\right).
%\end{equation}
\end{lemma}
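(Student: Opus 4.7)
The plan is to express $\E_{(x,r)}[R_{t,A}(r,s)]$ as the time integral of the expected heat flux through the right boundary $y = L_{t,A}(u)$, and then substitute Proposition~\ref{prop:qrs}. Recall from the paragraph preceding Section~\ref{sec:first_moment_estimates} that the expected density of particles at $(y,u)$ in the BBM with absorption on $\{0\}$ and $L_{t,A}$, started from $(x,r)$, is $n_u(y) = e^{x-y}q_{r,u}^A(x,y)$. The many-to-one formula together with Girsanov (which removes the drift $-1$ at the cost of the factor $e^{x-L_{t,A}(u)-(u-r)/2}$ at the hitting time) and Lemma~\ref{lem:heat_flow} (applied to a single unbiased Brownian motion started at $x$ at time $r$, killed at $\{0\}$ and $L_{t,A}$) then yield, using $\beta m = 1/2$ to cancel $e^{(u-r)/2}\cdot e^{-(u-r)/2}$,
\[
\E_{(x,r)}[R_{t,A}(r,s)] = \int_r^s e^{x-L_{t,A}(u)}\left(-\tfrac{1}{2}\partial_y q_{r,u}^A(x,y)\Big|_{y=L_{t,A}(u)}\right)du.
\]

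Next, I would substitute Proposition~\ref{prop:qrs} into this formula. Differentiating in $y$ and evaluating at $y=L_{t,A}(u)$ produces an extra factor $1/L_{t,A}(u)$ and replaces $w$ by $-\tfrac{1}{2}\partial_2 w_{\tau_A(r,u)}(x/L_{t,A}(r),1)$. Making the change of variable $\tau = \tau_A(r,u)$, for which $du = L_{t,A}(u)^2\,d\tau$, and using \eqref{tauLexpasym} to rewrite $e^{x-L_{t,A}(u)} = e^{x-L_t(r)+A+O(A^2(t-u)^{-1/3})}\cdot e^{\pi^2\tau/2}\cdot ((t-u)/(t-r))^{2A/3}$, together with $L_{t,A}(u)^{1/2}/L_{t,A}(r)^{1/2} = ((t-u)/(t-r))^{1/6}\cdot e^{O(A(t-u)^{-1/3})}$ from \eqref{eq:LtA_Lt}, the integrand organizes --- uniformly in $u \in [r,s]$ after bounding $(t-u)^{-1/3} \le (t-s)^{-1/3}$ --- into
\[
e^{x-L_t(r)+A+O((1\vee A^2)(t-s)^{-1/3})}\cdot\Big(\tfrac{t-u(\tau)}{t-r}\Big)^{2A/3+1/6}\cdot e^{\pi^2\tau/2}\left(-\tfrac{1}{2}\partial_2 w_\tau\Big(\tfrac{x}{L_{t,A}(r)},1\Big)\right).
\]
The crucial observation is that the only $u$-dependent piece remaining outside $e^{\pi^2\tau/2}(-\tfrac{1}{2}\partial_2 w_\tau)$ is the monotone factor $((t-u(\tau))/(t-r))^{2A/3+1/6}$, which lies in $[((t-s)/(t-r))^{2A/3+1/6},\,1]$ for $u \in [r,s]$.

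Finally, I would bound this monotone factor in the two opposite directions --- by its supremum $1$ for the upper bound, by its infimum $((t-s)/(t-r))^{2A/3+1/6}$ for the lower bound --- and pull it out of the $\tau$-integral. What remains is exactly the integral evaluated in \eqref{wprimeint}, which produces $\pi\tau_A(r,s)\sin(\pi x/L_{t,A}(r)) + O(x/L_{t,A}(r))$. Recognizing that $e^{x-L_t(r)}\sin(\pi x/L_{t,A}(r)) = z_{t,A}(x,r)/L_{t,A}(r)$ and $e^{x-L_t(r)}\cdot x/L_{t,A}(r) = y_{t,A}(x,r)$, and applying \eqref{taucomparisonweak} together with \eqref{eq:LtA_Lt} to replace $\tau_A(r,s)/L_{t,A}(r)$ by $\tau_0(r,s)/L_t(r)$ at the cost of an $e^{O(A(t-s)^{-1/3})}$ factor absorbed into the prefactor, yields both inequalities in \eqref{Rexp1}. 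The main obstacle is the bookkeeping in the second step: every asymptotic rewrite contributes an error of the form $e^{O(A^k(t-u)^{-1/3})}$ for $k\in\{1,2\}$, and one must check that the worst-case bound $u=s$ makes them collapse into a single $e^{O((1\vee A^2)(t-s)^{-1/3})}$ prefactor without ever polluting the clean exponent $2A/3+1/6$ that produces the gap between the two inequalities.
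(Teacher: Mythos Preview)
Your proposal is correct and follows essentially the same route as the paper: start from the heat-flux formula via Lemma~\ref{lem:heat_flow} and many-to-one, substitute Proposition~\ref{prop:qrs}, use \eqref{tauLexpasym} and \eqref{eq:LtA_Lt} to isolate the monotone factor $((t-u)/(t-r))^{2A/3+1/6}$, bound it by its extremes on $[r,s]$, and evaluate the remaining $\tau$-integral with \eqref{wprimeint}. The only cosmetic difference is that the paper first writes the integral in the $u$-variable (equation \eqref{Rexpeqtoshow1}) and performs the change of variables $\tau_A(r,u)=u'$ afterward on the factored piece, whereas you change variables earlier; the bookkeeping of error factors you flag is handled identically in both.
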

\begin{proof}
From Lemma~\ref{lem:heat_flow} together with the many-to-one lemma, we get
\begin{equation}
\E_{(x,r)}[R_{t,A}(r,s)] =\int_r^s \left(-\frac 1 2 \frac{d}{dy}p^A_{r,u}(x,y)\Big|_{y=L_{t,A}(u)}\right)\,du.
\label{eq:heat_flow}
\end{equation}
Equation (\ref{eq:heat_flow}) implies
\begin{align*}
\E_{(x,r)}[R_{t,A}(r,s)] &=\int_r^s \left(-\frac 1 2 \frac{d}{dy}e^{x-y}q^A_{r,u}(x,y)\Big|_{y=L_{t,A}(u)}\right)\,du\\
&= \int_r^s e^{x-L_{t,A}(u)}\left(-\frac 1 2 \partial_y q^A_{r,u}(x,L_{t,A}(u))\right)\,du.
\end{align*}
Because $\partial_y q^A_{r,u}(x,L_{t,A}(u)) = \lim_{y\uparrow L_{t,A}(u)} q^A_{r,u}(x,y)/(L_{t,A}(u)-y)$, the uniform bounds on $q^A_{r,u}(x,y)$ in Proposition~\ref{prop:qrs} directly turn into uniform bounds on its derivative at $y=L_{t,A}(u)$.  Therefore,
$$\E_{(x,r)}[R_{t,A}(r,s)] = e^A e^{O((t - s)^{-1/3})} \int_r^s \frac 1 {L_{t,A}(r)^{1/2}L_{t,A}(u)^{3/2}} e^{x-L_t(u)} \left(-\frac 1 2 \partial_y w_{\tau_A(r,u)} \big( \tfrac{x}{L_{t,A}(r)}, 1 \big)\right)\,du.$$
Now (\ref{tauLexpasym}) and \eqref{eq:LtA_Lt} give
\begin{align}
\nonumber
&\E_{(x,r)}[R_{t,A}(r,s)] = e^A e^{O((1 \vee A^2)(t - s)^{-1/3})} e^{x-L_t(r)} \\
\label{Rexpeqtoshow1}
&\hspace{.3in} \times \int_r^s \frac{1}{L_{t,A}(u)^2} \bigg( \frac{t-u}{t-r} \bigg)^{\frac{2A} 3 + \frac 1 6} e^{\frac{\pi^2}2\tau_A(r,u)} \left(-\frac 1 2 \partial_y w_{\tau_A(r,u)} \big( \tfrac{x}{L_{t,A}(r)}, 1 \big)\right)\,du.
\end{align}
We claim that
\begin{align}
\nonumber
T &:= e^{x-L_t(r)}\int_r^s \frac{1}{L_{t,A}(u)^2} e^{\frac{\pi^2}2\tau_A(r,u)} \left(-\frac 1 2 \partial_y w_{\tau_A(r,u)} \big( \tfrac{x}{L_{t,A}(r)}, 1 \big)\right)\,du \\
\label{Rexpeqtoshow}
&= \pi \left(\frac{\tau_A(r,s)}{L_{t,A}(r)} z_{t,A}(x,r) + O(y_{t,A}(x,r))\right).
\end{align}
Then \eqref{Rexpeqtoshow1} and \eqref{Rexpeqtoshow}, along with \eqref{eq:LtA_Lt} and \eqref{taucomparisonweak}, 
imply the lemma because $\frac{t-u}{t-r} \le 1$ for every $u\in [r,s]$.
To prove the claim, we transform the integral in \eqref{Rexpeqtoshow} using the change of variables $\tau_A(r,u) = u'$ along with \eqref{taursdef}, to get
\begin{align*}%\label{Rexpeq1}
T= e^{x-L_t(r)} \int_0^{\tau_A(r,s)} e^{\frac{\pi^2}2 u'} \left(-\frac 1 2 \partial_y w_{u'} \big( \tfrac{x}{L_{t,A}(r)}, 1 \big)\right)\,du'.
\end{align*}
Equation~\eqref{wprimeint} now gives
\begin{align*}%\label{Rexpeq2}
T = \pi e^{x-L_t(r)} \left(\tau_A(r,s) \sin\big( \tfrac{\pi x}{L_{t,A}(r)}\big) + O(\tfrac{x}{L_{t,A}(r)})\right),
\end{align*}
which is exactly \eqref{Rexpeqtoshow}.
%which gives (\ref{Rexp2}).  To prove (\ref{Rexp1}), note that when $A > 0$ is fixed and $s \leq (1 - \eps) t$, 
%we have $e^{O((1 \vee A^2)(t - s)^{-1/3})} ((t - u)/(t - r))^{2A/3 + 1/6} \leq C$ for $u \in [r, s]$.  Therefore, we can proceed as in the previous case, except that we get constants in front of the expressions in (\ref{Rexpeq1}) and (\ref{Rexpeq2}).
\end{proof}

\subsection{Second moment estimates}
\label{sec:second_moment_estimates}

\begin{lemma}
\label{lem:Zvar}
Let $\eps$, $\gamma_1$, and $\gamma_2$ be positive numbers.  Suppose $r \le s \leq (1 - \eps) t\wedge s_A$.  Suppose also that $\tau_A(r,s) \geq \gamma_1$ and $(1 \vee A^2)(t - s)^{-1/3} \leq \gamma_2$.  Then
%and $((t-s)/(t-r))^{2A/3 + 1/2} \leq \gamma_3$.  (TODO: we don't need $\gamma_3$ if $2A/3+1/2 \ge0$. But maybe we want to extend to negative $A$ ?) 
there exists a positive constant $C$, depending on $\eps$, $\gamma_1$, and $\gamma_2$, such that
% Suppose that $(t-r)/(t-s) = e^{O(\theta)}$, with $\theta$ depending on $A$. If $\theta \ll A^{-1}$, then, as first $t\to\infty$ and then $A\to\infty$,
\[
\E_{(x,r)}[Z_{t,A}(s)^2] \leq C e^{-A} \left(\frac{\tau_0(r,s)}{L_t(r)} z_{t,A}(x,r) + y_{t,A}(x,r)\right).
\]
%where $C$ is a constant depending only on $\beta$ and the second factorial moment $m_2$ of the offspring distribution.
\end{lemma}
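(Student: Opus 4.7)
The proof will proceed via the many-to-two decomposition of the second moment for branching Brownian motion with absorption. Writing $p^A_{r,u}(x,z) = e^{x-z}q^A_{r,u}(x,z)$ as in Section~\ref{sec:first_moment_estimates}, the many-to-two formula gives
\[
\E_{(x,r)}[Z_{t,A}(s)^2] \;=\; D_1 + D_2,
\]
where
\[
D_1 = \int_0^{L_{t,A}(s)} z_{t,A}(y,s)^2\,p^A_{r,s}(x,y)\,dy
\]
is the diagonal contribution (a single trajectory), and
\[
D_2 = \beta\E[L(L-1)]\int_r^s\!\!\int_0^{L_{t,A}(u)}\!\E_{(z,u)}[Z_{t,A}(s)]^2\,p^A_{r,u}(x,z)\,dz\,du
\]
accounts for pairs of particles which diverge at some branching time $u\in[r,s]$. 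The pointwise bound
\[
z_{t,A}(y,u)^2 \;\leq\; \pi^2\,e^{-2A}\,y\,(L_{t,A}(u)-y),
\]
obtained from $L_{t,A}(u)\sin(\pi y/L_{t,A}(u)) \leq \pi\min(y,L_{t,A}(u)-y)$ and $e^{y-L_t(u)}\leq e^{-A}$, is what produces the $e^{-A}$ prefactor in the final estimate.

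For the diagonal $D_1$, since $\tau_A(r,s)\geq\gamma_1$, Proposition~\ref{prop:qrs} together with the spectral truncation \eqref{vdef}--\eqref{udef} and \eqref{tauLexpasym} (error absorbed by $\gamma_2$) yields
\[
p^A_{r,s}(x,y) \;\leq\; \frac{C\,e^{x-y+L_t(s)-L_t(r)}}{(L_{t,A}(r)L_{t,A}(s))^{1/2}}\sin\!\left(\tfrac{\pi x}{L_{t,A}(r)}\right)\sin\!\left(\tfrac{\pi y}{L_{t,A}(s)}\right).
\]
Combined with the pointwise bound, the remaining integral $\int_0^{L_{t,A}(s)} y(L_{t,A}(s)-y)\sin(\pi y/L_{t,A}(s))e^{-y}\,dy$ is bounded by a constant, so one obtains $D_1 \leq C\,e^{-A}\,z_{t,A}(x,r)/(L_{t,A}(r)L_{t,A}(s))^{3/2}$, which using $z_{t,A}(x,r) \leq \pi L_{t,A}(r)\,y_{t,A}(x,r)$ is at most $C\,e^{-A}\,y_{t,A}(x,r)$.

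For the off-diagonal $D_2$, I would first apply Lemma~\ref{lem:Zexp} (admissible since $u\le s\le s_A$) to get
\[
\E_{(z,u)}[Z_{t,A}(s)]^2 \;\leq\; C\left(\frac{t-s}{t-u}\right)^{4A/3+1} z_{t,A}(z,u)^2,
\]
then use the pointwise bound and Proposition~\ref{prop:qrs} to reduce $D_2$ to the double integral
\[
C\,e^{-2A}\int_r^s\frac{L_{t,A}(u)^{5/2}}{L_{t,A}(r)^{1/2}}\!\!\left(\frac{t-s}{t-u}\right)^{4A/3+1}\!\!\int_0^1 y_0(1-y_0)\,e^{x-L_{t,A}(u)y_0}\,w_{\tau_A(r,u)}\!\left(\tfrac{x}{L_{t,A}(r)},y_0\right)\,dy_0\,du.
\]
I would split the $u$-range according to whether $\tau_A(r,u)\leq 1$ or $\tau_A(r,u)>1$. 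On the large-$\tau$ piece, the spectral bound on $w$ plus the change of variable $d\tau_A = L_{t,A}(u)^{-2}\,du$ produces the contribution proportional to $(\tau_0(r,s)/L_t(r))\,z_{t,A}(x,r)$. On the small-$\tau$ piece, Lemma~\ref{lem:wintunif} applied at the spatial scale $y\sim 1/L_{t,A}(u)$ (the scale on which $y_0\,e^{-L_{t,A}(u)y_0}$ is concentrated) yields the boundary contribution proportional to $y_{t,A}(x,r)$.

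\textbf{Main obstacle.} The principal difficulty is the analysis of $D_2$ in the short-time regime $\tau_A(r,u)\ll 1$, where the spectral bound fails because the killed heat kernel behaves like the unrestricted Gaussian kernel. Lemma~\ref{lem:wintunif} is the crucial tool: it provides a uniform short-time estimate for $\int e^{\pi^2 r/2}\sup_{y'\in[0,y]}w_r(x,y')\,dr$, and it must be applied with $y$ of order $1/L_{t,A}(u)$ to correctly capture the $y_{t,A}(x,r)$ term. Keeping track of the power of $L_{t,A}(u)$ appearing in this matching, while combining it with the decay factor $((t-s)/(t-u))^{4A/3+1}$ so that the total $u$-integral remains small, is the main piece of technical bookkeeping.
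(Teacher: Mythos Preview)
Your overall architecture (many-to-two, bound $D_1$ via the spectral gap, bound $D_2$ via Lemma~\ref{lem:Zexp} plus an integral estimate) matches the paper, but the pointwise bound you use for $z_{t,A}(y,u)^2$ is fatal. Replacing $e^{2(y-L_t(u))}$ by $e^{-2A}$ is sharp only at $y=L_{t,A}(u)$; for $y$ bounded away from the right wall it overestimates by a factor $e^{2(L_{t,A}(u)-y)}$. After you multiply by the density factor $e^{x-y}$, the resulting integrand $e^{x-y}\cdot e^{-2A}y(L_{t,A}(u)-y)$ is concentrated near $y=0$, exactly where your bound is worst. Consequently, the only exponential in $x$ you retain is $e^{x}$, not $e^{x-L_t(r)}$. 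Tracing through the large-$\tau$ piece with the spectral bound for $w$ gives
\[
D_2 \;\lesssim\; e^{-2A}\, e^{x}\sin\!\Big(\frac{\pi x}{L_{t,A}(r)}\Big)\, L_{t,A}(r)
\;=\; e^{-2A}\,e^{L_t(r)}\, z_{t,A}(x,r),
\]
which is off by the huge factor $e^{L_t(r)}$. The decay $((t-s)/(t-u))^{4A/3+1}\le 1$ cannot compensate for this, and Lemma~\ref{lem:wintunif} applied in the unreflected variable produces the boundary term $1-x/L_{t,A}(r)$ rather than $x/L_{t,A}(r)$, so it does not yield $y_{t,A}(x,r)$ either.

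The paper keeps the full exponential, using $z_{t,A}(y,u)\le \pi(L_{t,A}(u)-y)e^{y-L_t(u)}$. Then $e^{x-y}\cdot e^{2(y-L_t(u))}=e^{x+y-2L_t(u)}$ concentrates the integrand at the \emph{right} boundary. After the reflection $y\mapsto L_{t,A}(u)-y$ and the symmetry $w(x',y')=w(1-x',1-y')$, the weight becomes $y^{2}e^{-y}$ and one is integrating $w_{\tau}(1-x/L_{t,A}(r),\,y/L_{t,A}(u))$ over small $y/L_{t,A}(u)$. Now Lemma~\ref{lem:wintunif} applies with first argument $1-x/L_{t,A}(r)$, so its boundary term is exactly $x/L_{t,A}(r)$, producing $y_{t,A}(x,r)$; and the factor $e^{x-L_t(r)}$ sits in front from the start (see~\eqref{T2prelim}). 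This reflection is the missing idea in your outline.
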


\begin{proof}
Let $m_2$ be the second factorial moment of the offspring distribution. Standard second moment calculations (see, for example, p. 146 of \cite{inw}) give
\begin{align}
\E_{(x,r)}[Z_{t,A}(s)^2] &= \E_{(x,r)}\left[\sum_{u\in N_s} z_{t,A}(X_u(s),s)^2\right] \nonumber \\
&\hspace{.3in}+ \beta m_2 \int_r^s \int_0^{L_{t,A}(u)} e^{x-y}q^A_{r,u}(x,y)\E_{(y,u)}[Z_{t,A}(s)]^2\,dy\, du \nonumber \\
\label{T1T2}
&=: T_1 + T_2.
\end{align}
We first bound the first term in \eqref{T1T2}.  By Proposition \ref{prop:qrs},
$$T_1 \leq \frac{C}{(L_{t,A}(r) L_{t,A}(s))^{1/2}} \int_0^{L_{t,A}(s)} e^{x-y} w_{\tau_A(r,s)} \big( \tfrac{x}{L_{t,A}(r)}, \tfrac{y}{L_{t,A}(s)} \big) L_{t,A}(s)^2 \sin \big( \tfrac{\pi y}{L_{t,A}(s)} \big)^2 e^{2(y - L_t(s))} \: dy.$$
Now using (\ref{vdef}), (\ref{Ddef}), and (\ref{udef}), along with the fact that $\tau_A(r,s) \geq \gamma_1$, we get
$$T_1 \leq \frac{C L_{t,A}(s)^{3/2} e^{x}}{L_{t,A}(r)^{1/2}} \int_0^{L_{t,A}(s)} e^{-\frac{\pi^2}{2} \tau_A(r,s)} e^{y - 2 L_t(s)} \sin \big( \tfrac{\pi x}{L_{t,A}(r)} \big) \sin \big( \tfrac{\pi y}{L_{t,A}(s)} \big)^3 \: dy.$$
Using \eqref{taucomparison}, we get
\begin{align}\label{T1}
T_1 &\leq \frac{C L_{t,A}(s)^{3/2} e^{x - L_t(r)}}{L_{t,A}(r)^{1/2}} \sin \bigg( \frac{\pi x}{L_{t,A}(r)} \bigg) \int_0^{L_{t,A}(s)} e^{y-L_t(s)} \sin \big( \tfrac{\pi y}{L_{t,A}(s)} \big)^3 \: dy \nonumber \\
&\leq \frac{C e^{-A} z_{t,A}(x,r)}{L_{t,A}(r)^{3/2} L_{t,A}(s)^{3/2}}.
\end{align}

We now bound the term $T_2$ in \eqref{T1T2}.
%From now on, $C$ denotes a constant which may depend on $\beta$ and $m_2$ and which may change from line to line.
By Proposition~\ref{prop:qrs} and Lemma~\ref{lem:Zexp},
 \begin{align*}
  T_2 &\leq C \int_r^s \int_0^{L_{t,A}(u)} \frac{e^{x-y}}{L_{t,A}(r)^{1/2}L_{t,A}(u)^{1/2}}w_{\tau_A(r,u)}\big( \tfrac{x}{L_{t,A}(r)}, \tfrac{y}{L_{t,A}(u)} \big)z_{t,A}(y,u)^2\,dy\,du.
 \end{align*}
Applying the inequality $z_{t,A}(y,u) \le \pi(L_{t,A}(u)-y)e^{y-L_t(u)}$ and using that $L_{t,A}$ is decreasing and that $L_{t,A}\le L_t$ gives
\begin{align*}
T_2 \leq CL_t(r)\int_r^s \int_0^{L_{t,A}(u)} \frac{e^{x-L_t(u)+y-L_{t,A}(u) - A}}{L_{t,A}(u)^2}w_{\tau_A(r,u)}\big( \tfrac{x}{L_{t,A}(r)}, \tfrac{y}{L_{t,A}(u)} \big)(L_{t,A}(u)-y)^2\,dy\,du.
\end{align*}
Changing variables $y \mapsto L_{t,A}(u) - y$, and using the equality $w_u(x',y') = w_u(1-x',1-y')$ for all $x',y'\in[0,1]$ together with \eqref{taucomparison} gives
\begin{equation}\label{T2prelim}
T_2 \leq CL_t(r)e^{x-L_t(r)-A} \int_r^s \frac{e^{\frac{\pi^2}{2}\tau_A(r,u)}}{L_{t,A}(u)^2} \int_0^{L_{t,A}(u)} y^2e^{-y}  w_{\tau_A(r,u)}\big(1-\tfrac{x}{L_{t,A}(r)}, \tfrac{y}{L_{t,A}(u)}\big)\,dy \,du.
\end{equation}
Now making the additional change of variables $\tau_A(r,u) \mapsto u$, using \eqref{taursdef}, and letting $h(u)$ be the number such that $\tau_A(r, h(u)) = u$, we get
$$T_2 \leq CL_t(r) e^{x-L_t(r)-A} \int_0^{\tau_A(r,s)} e^{\pi^2 u/2} \int_0^{L_{t,A}(h(u))} y^2 e^{-y} w_u \big(1-\tfrac{x}{L_{t,A}(r)}, \tfrac{y}{L_{t,A}(h(u))}\big)\,dy \,du.$$
% &= C\frac {e^{x-L_t(r)-A}} {L_t(r)}\int_0^\infty y^2e^{-y}dy \int_r^s e^{\frac{\pi^2}{2}\tau_A(r,u)}w_{\tau_A(r,u)}\big(1-\tfrac{x}{L_{t,A}(r)}, \tfrac{y}{L_{t,A}(u)}\big)\,du,
%\end{align*}
We now split the inner integral into two pieces and use Tonelli's Theorem and the fact that $L_{t,A}$ is decreasing for the first piece to get
\begin{align}\label{T3T4}
T_2 &\leq Ce^{x-L_t(r)-A} L_t(r) \int_0^{\tau_A(r,s)} e^{\pi^2 u/2} \int_0^{\frac{1}{2}L_{t,A}(s)} y^2 e^{-y} w_u \big(1-\tfrac{x}{L_{t,A}(r)}, \tfrac{y}{L_{t,A}(h(u))}\big)\,dy \,du \nonumber \\
&\hspace{.2in}+ Ce^{x-L_t(r)-A} L_t(r) \int_0^{\tau_A(r,s)} e^{\pi^2 u/2} \int_{\frac{1}{2} L_{t,A}(s)}^{L_{t,A}(h(u))} y^2 e^{-y} w_u \big(1-\tfrac{x}{L_{t,A}(r)}, \tfrac{y}{L_{t,A}(h(u))}\big)\,dy \,du \nonumber \\
&\leq Ce^{x-L_t(r)-A} L_t(r) \int_0^{\frac{1}{2} L_{t,A}(s)} y^2 e^{-y} \int_0^{\tau_A(r,s)} e^{\pi^2 u/2} \sup_{y' \in [0, y/L_{t,A}(s)]} w_u\big(1-\tfrac{x}{L_{t,A}(r)}, y' \big)\,du \,dy \nonumber \\
&\hspace{.2in} + Ce^{x-L_t(r)-A} L_t(r)^3 e^{-\frac{1}{2} L_{t,A}(s)} \int_0^{\tau_A(r,s)} e^{\pi^2 u/2} \int_0^{L_{t,A}(h(u))} w_u \big(1-\tfrac{x}{L_{t,A}(r)}, \tfrac{y}{L_{t,A}(h(u))}\big)\,dy \,du \nonumber \\
&=: T_3 + T_4.
\end{align}
By Lemma \ref{lem:wintunif}, and then using \eqref{eq:LtA_Lt} and the assumptions on $s$ (in particular that $s \leq (1 - \eps)t$),
\begin{align}\label{T3}
T_3 &\leq \frac{C e^{x - L_t(r) - A} L_t(r)}{L_{t,A}(s)} \left[\tau_A(r,s) \sin\left(\tfrac{x}{L_{t,A}(r)}\right) + \tfrac x {L_{t,A}(r)}\right] \int_0^{\infty} y^3 e^{-y} \: dy \nonumber \\
&\leq C e^{-A} \left(\frac{\tau_A(r,s)}{L_t(r)} z_{t,A}(x,r) + y_{t,A}(x,r)\right).
\end{align}
By Lemma \ref{wint2}, and using again \eqref{eq:LtA_Lt} and the assumptions on $s$,
\begin{align}\label{T4}
T_4 &\leq Ce^{x-L_t(r)-A} L_t(r)^4 e^{-\frac{1}{2} L_{t,A}(s)} \left[\tau_A(r,s) \sin\left(\tfrac{x}{L_{t,A}(r)}\right) + \tfrac x {L_{t,A}(r)}\right] \nonumber \\
&\leq C e^{-A} \left(\frac{\tau_A(r,s)}{L_t(r)} z_{t,A}(x,r) + y_{t,A}(x,r)\right).
\end{align}
The lemma now follows from \eqref{T1T2}, \eqref{T1}, \eqref{T3T4}, \eqref{T3}, and \eqref{T4}, together with \eqref{taucomparisonweak}.
\end{proof}

\begin{lemma}
 \label{lem:Rvar}
Let $\eps$, $\gamma_1$, and $\gamma_2$ be positive numbers.  Suppose $r \le s \leq (1 - \eps) t\wedge s_A$.  Suppose also that $\tau_A(r,s) \geq \gamma_1$ and $(1 \vee A^2)(t - s)^{-1/3} \leq \gamma_2$.  Then
there exists a positive constant $C$, depending on $\eps$, $\gamma_1$, and $\gamma_2$, such that
 \[
  \E_{(x,r)}[R_{t,A}(r,s)^2] \le Ce^A\left(\frac{\tau_0(r,s)}{L_t(r)} z_{t,A}(x,r) + y_{t,A}(x,r)\right).
 \]
\end{lemma}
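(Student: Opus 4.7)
I will follow the strategy of Lemma~\ref{lem:Zvar}, using the standard many-to-two identity. Since $R_{t,A}(r,s)$ is an integer-valued count, $R^2 = R + \sum_{u\neq v}\Ind_u\Ind_v$; decomposing the off-diagonal pairs by their most recent common ancestor at the space-time point $(y,u)$ yields
\[
\E_{(x,r)}[R_{t,A}(r,s)^2] = \E_{(x,r)}[R_{t,A}(r,s)] + \beta m_2\int_r^s\int_0^{L_{t,A}(u)} e^{x-y}q^A_{r,u}(x,y)\,\E_{(y,u)}[R_{t,A}(u,s)]^2\,dy\,du.
\]
The first term is already bounded by Lemma~\ref{lem:Rexp} and fits the claimed form. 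For the integral term, I bound the inner first moment by Lemma~\ref{lem:Rexp} and square using $(a+b)^2\le 2a^2+2b^2$, producing an $e^{2A}$ prefactor and a sum of two pieces $S_1+S_2$, where $S_1$ carries $(\tau_0(u,s)/L_t(u))^2 z_{t,A}(y,u)^2$ and $S_2$ carries $y_{t,A}(y,u)^2$.

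For $S_1$, the key observation is that $\tau_0(u,s)/L_t(u) = \tfrac{2}{\pi^2}(1 - L_t(s)/L_t(u)) \le 2/\pi^2$, so this factor is absorbed into a constant. The remaining integral is then exactly the integrand $e^{x-y}q^A_{r,u}(x,y)z_{t,A}(y,u)^2$ analysed as the $T_2$ term in the proof of Lemma~\ref{lem:Zvar}, which that lemma bounds by $Ce^{-A}(\tau_A(r,s)z_{t,A}(x,r)/L_t(r) + y_{t,A}(x,r))$. Combining with the $e^{2A}$ prefactor produces the desired $Ce^A$, and \eqref{taucomparisonweak} together with the hypothesis $(1\vee A^2)(t-s)^{-1/3}\le\gamma_2$ lets us replace $\tau_A(r,s)$ by $\tau_0(r,s)$ up to a bounded multiplicative factor.

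For $S_2$, I expand $q^A_{r,u}(x,y)$ via Proposition~\ref{prop:qrs}, rewrite $e^{x-y}y_{t,A}(y,u)^2 = (y/L_{t,A}(u))^2 e^{x-L_t(u)+y-L_{t,A}(u)-A}$ using the same identity $e^{x+y-2L_t(u)}=e^{x-L_t(u)+y-L_{t,A}(u)-A}$ exploited in Lemma~\ref{lem:Zvar}, and apply the reflection $y\mapsto L_{t,A}(u)-y$ together with $w_r(x',y')=w_r(1-x',1-y')$. The inner integral then takes the form $\int_0^{L_{t,A}(u)}(1-y'/L_{t,A}(u))^2 e^{-y'} w_{\tau_A(r,u)}(1-x/L_{t,A}(r),\,y'/L_{t,A}(u))\,dy'$. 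Splitting into $y'\in[0,L_{t,A}(u)/2]$, where the trivial bound $(1-y'/L_{t,A}(u))^2\le 1$ reduces the integral to the situation of Lemma~\ref{lem:wintunif}, and $y'\in[L_{t,A}(u)/2,L_{t,A}(u)]$, where $e^{-y'}\le e^{-L_{t,A}(u)/2}$ provides enough decay to use Lemma~\ref{wint2}, parallels the $T_3/T_4$ decomposition in Lemma~\ref{lem:Zvar} and yields $S_2 \le Ce^A(\tau_0(r,s) z_{t,A}(x,r)/L_t(r) + y_{t,A}(x,r))$.

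The main obstacle is the $S_2$ piece: the weight $(y/L_{t,A}(u))^2$ differs in character from the $(L_{t,A}(u)-y)^2$ weight handled in Lemma~\ref{lem:Zvar}, concentrating at the right boundary $y=L_{t,A}(u)$ rather than the left, so the reflection step is essential for reducing the problem to a tractable heat-kernel integral. Once reflected, the factor $(1-y'/L_{t,A}(u))^2 e^{-y'}\le e^{-y'}$ is of the type treated by Lemmas~\ref{lem:wintunif} and \ref{wint2}, and care must be taken to check that the resulting bound genuinely carries the linear $\tau_0(r,s)/L_t(r)$ factor rather than being absorbed into a mere constant.
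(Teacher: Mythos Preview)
Your proposal is correct and follows essentially the same approach as the paper. The only organizational difference is that the paper, rather than splitting into your $S_1$ and $S_2$, immediately combines the bound $(\E_{(y,u)}[R_{t,A}(u,s)])^2 \le Ce^{2A}(z_{t,A}(y,u)^2 + y_{t,A}(y,u)^2) \le Ce^{-2(L_{t,A}(u)-y)}\big((L_{t,A}(u)-y)^2+1\big)$ into a single expression; after the same reflection $y\mapsto L_{t,A}(u)-y$ this produces the weight $e^{-y}(y^2+1)$, which is exactly the $T_2$ integrand from Lemma~\ref{lem:Zvar} with $y^2$ replaced by $y^2+1$ and the sign of $A$ flipped, so the $T_3/T_4$ analysis carries over verbatim---your $S_1$ corresponds to the $y^2$ part and your $S_2$ to the $+1$ part.
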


\begin{proof}
%As in the proof of Lemma~\ref{lem:Zvar}, we have
% \begin{align}
% \nonumber
%  \E_{(x,r)}[R_{t,A}(r,s)^2] &= \E_{(x,r)}[R_{t,A}(r,s)] + \beta m_2 \int_r^s du\int_0^{L_{t,A}(u)} e^{x-y}q^A_{r,u}(x,y)\E_{(y,u)}[R_{t,A}(u,s)]^2\,dy\\
%  \label{R_T1T2}
%  &=: T_1 + T_2.
% \end{align}
%It only remains to bound $T_2$, as $T_1$ is bounded by Lemma~\ref{lem:Rexp} and the hypothesis. By Lemma~\ref{lem:Rexp}, we have for every $u\in[r,s]$ and $y\in [0,L_{t,A}(u)]$,
%\begin{align*}
%\E_{(y,u)}[R_{t,A}(u,s)]^2 &\le C e^{2A}\left(\frac{\tau_A(u,s)}{L_t(u)} z_{t,A}(y,u)+y_{t,A}(y,u)\right)^2 \\
%&\le C e^{2A}((L_{t,A}(u)-y)^2 + 1)e^{2(y-L_{t,A}(u))},
%\end{align*}
%using in the last inequality that $\tau_A(r,s)/L_t(r)\le C$ by \eqref{tauasym} and the hypotheses.
%Plugging this into \eqref{R_T1T2}, the statement readily follows by proceeding as in the proof of Lemma~\ref{lem:Zvar}.
As in the proof of Lemma~\ref{lem:Zvar}, we have
 \begin{align}
 \nonumber
  \E_{(x,r)}[R_{t,A}(r,s)^2] &= \E_{(x,r)}[R_{t,A}(r,s)] + \beta m_2 \int_r^s \int_0^{L_{t,A}(u)} e^{x-y}q^A_{r,u}(x,y)(\E_{(y,u)}[R_{t,A}(u,s)])^2\,dy\,du\\
  \label{R_T1T2}
  &=: T_1 + T_2.
 \end{align}
In view of (\ref{Rexp1}), it only remains to bound $T_2$.  For every $u\in[r,s]$ and $y\in [0,L_{t,A}(u)]$, we get, using Lemma~\ref{lem:Rexp} and the fact that $\tau_0(u,s) \leq C L_t(u)$ when $s \leq (1 - \eps)t$,
\begin{align*}
(\E_{(y,u)}[R_{t,A}(u,s)])^2 &\leq C e^{2A}\left(\frac{\tau_0(u,s)}{L_t(u)} z_{t,A}(y,u)+y_{t,A}(y,u)\right)^2 \\
&\leq C e^{2A} \big( z_{t,A}(y,u)^2+y_{t,A}(y,u)^2 \big) \\
&\leq C e^{2A} \big( (L_{t,A}(u) - y)^2 e^{2(y - L_t(u))} + e^{2(y - L_t(u))} \big) \\
&= C e^{-2(L_{t,A}(u) - y)} \big((L_{t,A}(u) - y)^2 + 1 \big).
\end{align*}
Plugging this into \eqref{R_T1T2} and using Proposition \ref{prop:qrs}, we get
\begin{align*}
T_2 &\leq C \int_r^s \int_0^{L_{t,A}(u)} \frac{e^{x-y}}{L_{t,A}(r)^{1/2} L_{t,A}(u)^{1/2}} w_{\tau_A(r,u)}\big( \tfrac{x}{L_{t,A}(r)}, \tfrac{y}{L_{t,A}(u)} \big) \\
&\hspace{2in} \times e^{-2(L_{t,A}(u) - y)} \big((L_{t,A}(u) - y)^2 + 1 \big) \: dy \: du.
\end{align*}
Now making the change of variables $y \mapsto L_{t,A}(u) - y$, using that $w_u(x',y') = w_u(1 - x', 1 - y')$, and then using (\ref{tauLexpasym}) as in the proof of Lemma~\ref{lem:Zvar}, we get
\begin{align*}
T_2 &\leq C L_t(r) \int_r^s \int_0^{L_{t,A}(u)} \frac{e^{x + y - L_{t,A}(u)}}{L_{t,A}(u)^2} w_{\tau_A(r,u)}\big( 1 - \tfrac{x}{L_{t,A}(r)}, \tfrac{y}{L_{t,A}(u)} \big) e^{-2y}(y^2 + 1) \: dy \: du \\
&\leq C L_t(r) e^{x - L_t(r) + A} \int_r^s \frac{e^{\frac{\pi^2}{2} \tau_A(r,u)}}{L_{t,A}(u)^2} \int_0^{L_{t,A}(u)} w_{\tau_A(r,u)}\big(1 - \tfrac{x}{L_{t,A}(r)}, \tfrac{y}{L_{t,A}(u)} \big) e^{-y}(y^2 + 1) \: dy \: du.
\end{align*}
Note that this expression is identical to the expression in (\ref{T2prelim}) except that the sign of $A$ in the exponential in front of the integral is reversed, and we have $y^2 + 1$ in place of $y^2$ in the integrand.  Consequently, we can follow the same steps as in the proof of Lemma~\ref{lem:Zvar} to obtain
$$T_2 \leq  C e^A \left(\frac{\tau_0(r,s)}{L_t(r)} z_{t,A}(x,r) + y_{t,A}(x,r)\right),$$
which completes the proof of the lemma.
\end{proof}

\section{Particle configurations}\label{configsec}

Our goal in this section is to deduce Proposition \ref{configpropnew} from results in \cite{bbs3}.  
The strategy of the proofs in \cite{bbs3} is to show that if at time zero there is a single particle at $x > 0$, then for all $\kappa > 0$, the configuration of particles at time $\kappa t^{2/3}$ will satisfy certain conditions.  The rest of the proofs then use only what has been established about the configuration of particles at time $\kappa t^{2/3}$.  Consequently, the results in \cite{bbs3} immediately extend to any initial configuration of particles for which these conditions hold at time $\kappa t^{2/3}$.  This observation yields Lemma \ref{configlemma} below.  We define 
$${\tilde Y}_t(s) = \sum_{u\in N_s} {\tilde y}_t(X_u(s),s),$$
which is similar to ${\tilde Y}_{t,A}(s)$ defined at the beginning of Section \ref{sec:first_moment_estimates}, except that here particles are only killed at the origin and not at the curve $L_{t,A}$.

\begin{lemma}\label{configlemma}
Suppose we have a sequence of possibly random initial configurations $(\nu_n)_{n=1}^{\infty}$ such that the following conditions hold for a corresponding sequence of times $(t_n)_{n=1}^{\infty}$:
\begin{enumerate}
\item The times $t_n$ do not depend on the evolution of the branching Brownian motion after time zero, and $t_n \rightarrow_p \infty$ as $n \rightarrow \infty$.

\item For all $\eps > 0$ and $\kappa > 0$, there is a positive constant $C_{13}$, depending on $\eps$ and $\kappa$, such that for sufficiently large $n$,
\begin{equation}\label{configasm2}
\P_{\nu_n} \bigg( {\tilde Y}_{t_n}(\kappa t_n^{2/3}) \leq \frac{C_{13}}{L_{t_n}(\kappa t_n^{2/3})} \bigg) > 1 - \eps.
\end{equation}

\item For all $\eps > 0$ and $\kappa > 0$, there are positive constants $C_{14}$ and $C_{15}$, depending on $\eps$ and $\kappa$, such that for sufficiently large $n$,
\begin{equation}\label{configasm3}
\P_{\nu_n}(C_{14} \leq Z_{t_n}(\kappa t_n^{2/3}) \leq C_{15}) > 1 - \eps.
\end{equation}

\item For all $\kappa > 0$ and $A \geq 0$, we have
\begin{equation}\label{configasm4}
\lim_{n \rightarrow \infty} \P_{\nu_n}\big(R(\kappa t_n^{2/3}) < L_{t_n}(\kappa t_n^{2/3}) - A \big) = 1.
\end{equation}
\end{enumerate}
Let $0 < \delta < 1/2$.  Then the three conclusions of Proposition \ref{configpropnew} hold.
\end{lemma}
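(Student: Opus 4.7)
The plan is to leverage the arguments of \cite{bbs3}, where analogues of the three conclusions of Proposition~\ref{configpropnew} are established starting from a single initial particle at $x>0$. The key observation is that those proofs naturally split into two stages: in the first, one shows by first- and second-moment calculations that at time $\kappa t^{2/3}$, for any $\kappa>0$, the configuration satisfies the three quantitative bounds \eqref{configasm2}--\eqref{configasm4}; in the second, starting from any configuration satisfying these bounds at time $\kappa t^{2/3}$, one deduces the concentration estimates for $M(s)$ and $R(s)$ and the convergences $\chi_n \Rightarrow \mu$, $\eta_n \Rightarrow \xi$, relying only on the Markov property and further moment estimates of the type developed in Section~\ref{momsec}. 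My task reduces to importing the second stage almost verbatim, with $\nu_n$ playing the role of the initial particle and $\kappa t_n^{2/3}$ playing the role of the intermediate time.

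To implement this, I would fix $0<\delta<1/2$, choose $\kappa\in(0,\delta/2)$, and apply the strong Markov property at time $\kappa t_n^{2/3}$. Let $\widetilde{\nu}_n$ denote the (random) configuration at that time. On the event that \eqref{configasm2}--\eqref{configasm4} hold and $t_n - \kappa t_n^{2/3}$ is large, $\widetilde{\nu}_n$ satisfies exactly the quantitative input required by the second stage of \cite{bbs3}, with the remaining lifetime $t_n-\kappa t_n^{2/3}$ in place of $t$. Because $t_n$ may be random, I would first condition on $\sigma(\nu_n, t_n)$ --- which, by the first hypothesis, is independent of the post-zero evolution --- apply the deterministic-$t$ version of the second-stage results on the event $\{t_n\ge T\}$ for $T$ large, and then take expectations; since $t_n\to_p\infty$, the contribution of the bad event vanishes in the limit. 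A similar conditioning handles the potential randomness in $\nu_n$ itself.

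Two bookkeeping adaptations will be needed. First, \cite{bbs3} is written for binary branching ($p_2=1$), whereas we allow general offspring distributions with finite second moment. The modifications are confined to the appearance of the second factorial moment $m_2$ in variance calculations and are already visible in the second-moment estimates of Section~\ref{momsec}; none of the structural arguments changes. Second, the times $s_n\in[\delta t_n,(1-\delta)t_n]$ in Proposition~\ref{configpropnew} span a wide window, and one must check uniformity over this window. This follows from a standard subsequence argument together with the weak continuity of the limiting probability measures $\mu$ and $\xi$ in the Prohorov topology.

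The principal obstacle will be the careful verification that the internal arguments of \cite{bbs3}, originally presented for binary branching, a single initial particle, and deterministic $t$, really depend on the configuration at time $\kappa t^{2/3}$ only through the quantitative bounds \eqref{configasm2}--\eqref{configasm4}. Concretely, I would need to extract from \cite{bbs3} an explicit list of conditions that the second stage uses as input, and check that each of these is either one of our four hypotheses or an immediate consequence via first-moment estimates (for instance, Lemma~\ref{lem:Zexp}). Once this modularity has been made explicit, the conclusions of Proposition~\ref{configpropnew} follow from \cite{bbs3} essentially as a black box, with the adaptations above handled by routine estimates.
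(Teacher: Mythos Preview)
Your approach is essentially the same as the paper's: both recognize that the proofs in \cite{bbs3} are modular, with the configuration at time $\kappa t^{2/3}$ serving as the only input to the second stage, so that conditions (\ref{configasm2})--(\ref{configasm4}) together with $t_n \to_p \infty$ suffice to invoke Theorems 1--4 of \cite{bbs3} as a black box. Your handling of random $t_n$, the uniformity over $s_n \in [\delta t_n,(1-\delta)t_n]$, and the appearance of $m_2$ in the second-moment estimates all match the paper's adaptations.

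There is, however, one genuine gap. You assert that for general offspring distributions ``the modifications are confined to the appearance of the second factorial moment $m_2$ in variance calculations'' and that ``none of the structural arguments changes.'' This is not correct when the extinction probability $q$ is positive. The proof of Theorem~2 in \cite{bbs3} (which yields conclusion~2 of Proposition~\ref{configpropnew}, the $R(s)$ estimate) relies on Bramson's results from \cite{bram83}, and those results require $q=0$. The paper addresses this with a separate construction: it colors particles red or blue according to whether they have an infinite line of descent in the process without absorption, notes that the red particles form a branching Brownian motion with $q=0$ to which \cite{bbs3} applies directly, verifies that the four hypotheses of the lemma transfer to the red process (the lower bound in (\ref{configasm3}) needs a short Chebyshev argument since each particle is independently retained with probability $1-q$), and then recovers the $R(s)$ estimate for the full process by observing that the right-most particle is red with probability $1-q$. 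Without this additional step, your argument would establish conclusions~1 and~3 but leave conclusion~2 unproved in the case $q>0$.
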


\begin{proof}
This proposition essentially restates the results of \cite{bbs3} in the context of the present paper.  The second, third, and fourth conditions that we require for the sequence $(t_n)_{n=1}^{\infty}$ are the three conclusions of Lemma 15 of \cite{bbs3}, while the first condition that $t_n \rightarrow \infty$ in probability corresponds to the condition in \cite{bbs3} that the position $x$ of the initial particle tends to infinity.  The first conclusion of Proposition \ref{configpropnew} is Theorem 1 of \cite{bbs3}.  The second conclusion of Proposition \ref{configpropnew} is Theorem 2 in \cite{bbs3}.  The third conclusion of Proposition \ref{configpropnew} is a combination of Theorems 3 and 4 in \cite{bbs3}.  Proposition \ref{configpropnew} holds because these four theorems in \cite{bbs3} are deduced from Lemma 15 in \cite{bbs3}.  When $q = 0$, the following adaptations are required to obtain the result in the present context:
\begin{itemize}
\item In \cite{bbs3}, the branching rate is $1$ and the drift is $-\sqrt{2}$.  However, it is straightforward to translate results into our setting by a simple scaling.  

\item Lemma 15 of \cite{bbs3} includes a stronger form (\ref{configasm3}), in which the bounds are proved when the term $\sin(\pi x/L_t(s))$ in the definition of $Z_t(s)$ from (\ref{Zdef}) is replaced by $\sin(\pi x/(L_t(0) + \alpha))$ for any $\alpha \in \R$.  However, we have $$|(L_t(0) + \alpha) - L_t(\kappa t^{2/3})| \leq C(\kappa + |\alpha|)$$ for some positive constant $C$, so the ratio of the two sine terms will be bounded above and below by positive constants with high probability as long as (\ref{configasm4}) holds and $t_n \rightarrow \infty$ in probability.  Therefore, establishing (\ref{configasm3}) is sufficient.  

\item Theorems 2, 3, and 4 of \cite{bbs3} are stated for the case when $s = ut$ for some $u \in (0, 1)$.  However, it is not hard to see that the proof extends to the case where $s \sim ut$ as $t \rightarrow \infty$, with the constants being uniform over $u \in [\delta, 1 - \delta]$, and then a subsequence argument gives the results in the form stated here.

\item The results in \cite{bbs3} are stated for a fixed initial configuration of particles.  However, because the proof in \cite{bbs3} ultimately works from the random configuration at time $\kappa t^{2/3}$, the only possible complication comes from the randomness of the times $t_n$.  In \cite{bbs3}, Theorems 1 and 2 are probability statements that hold when the position $x$ of the initial particle tends to infinity, while Theorems 3 and 4 establish convergence in distribution as $x \rightarrow \infty$.  The requirement that the random times $t_n$ tend to infinity in probability is therefore sufficient for these results to carry over to the present context.

\item In \cite{bbs3}, it is assumed that at the time of a birth event, a particle splits into two other particles.  However, as long as $q = 0$, the only change that results from considering a general offspring distribution is that a different constant appears in front of the second moment estimates, which does not affect the results.  Results of Bramson \cite{bram83} are needed to prove Theorem 2 in \cite{bbs3}, but those results hold under the more general offspring distributions considered here when $q = 0$.  Note in particular that equation (1.2$'$) on page 5 of \cite{bram83} is satisfied when the offspring distribution has finite variance.
\end{itemize}
The claim that Proposition \ref{configpropnew} holds even when $q > 0$ requires a bit more care.  Indeed, the initial configuration with a single particle at $x_n$, with $x_n \rightarrow \infty$, does not fulfill the four conditions in the lemma when $q > 0$ because of the possibility that all descendants of the initial particle could die out.  Nevertheless, once these four conditions, which correspond to Lemma 15 of \cite{bbs3}, are established, one deduces Theorems 1, 3, and 4 in \cite{bbs3} using moment estimates, which change only by a constant factor when $q > 0$.  Therefore, the first and third conclusions of Proposition \ref{configpropnew} follow from the arguments in \cite{bbs3} without change.  Some additional argument is needed, however, to obtain the second conclusion of Proposition \ref{configpropnew} because the proof of Theorem 2 in \cite{bbs3} uses a result of Bramson \cite{bbs3} which is valid only when $q = 0$.

To extend the second conclusion of Proposition \ref{configpropnew} to the case $q > 0$, we modify the process as follows.  First, we construct the original branching Brownian motion in two stages.  In the first stage, we construct the process without absorption at zero.  At the second stage, we truncate any particle trajectories that hit zero.  Now we can construct a modified process by deleting all particles that do not have an infinite line of descent in the first stage of this construction.  This yields a new branching Brownian motion with $q = 0$ that includes a subset of the particles in the original branching Brownian motion.  In particular, for any fixed $s > 0$, the law of the new process at time $s$, conditioned on the original branching Brownian motion at time $s$, is obtained by independently retaining each particle of the original process with probability $1 - q$.

We check that the four conditions of the lemma hold for the new process.  Condition 1 is immediate because we will use the same times $t_n$ as in the original process, while conditions 2 and 4 and the upper bound in (\ref{configasm3}) hold because the particles in the new process are a subset of the particles in the original process.  To establish the lower bound in (\ref{configasm3}), note that (\ref{configasm4}) implies that for all $\theta > 0$, with probability tending to one as $n \rightarrow \infty$, no individual particle in the original process contributes more than $\theta$ to $Z_{t_n}(\kappa t_n^{2/3})$.  Now, suppose $z_1, \dots, z_m$ is a sequence of numbers such that $z_1 + \dots + z_m = z$ and $z_i \leq \theta$ for all $i$.  Let $\xi_1, \dots, \xi_m$ be independent Bernoulli$(1-q)$ random variables, and let $Z = z_1 \xi_1 + \dots + z_m \xi_m$.  Then $E[Z] = (1 - q) z$ and $\mbox{Var}(Z) = q(1-q)(z_1^2 + \dots + z_m^2) \leq q(1 - q) \theta z$.  By applying this observation to the numbers $z_{t_n}(X_u(\kappa t_n^{2/3}), 0)$ for $u \in N_{t_n}$ and $\theta$ sufficiently small, and then using Chebyshev's Inequality, we obtain the lower bound in (\ref{configasm3}).

It now follows from the result when $q = 0$ that the conclusion (\ref{configconc2}) holds for the new process.  Because the particles in the new process are a subset of the particles in the original process, we immediately get the lower bound in (\ref{configconc2}) for the original process.  Finally, recall that for any time $s$, the position of the right-most particle is the same in the new process as in the original process with probability $1 - q$.  Therefore, the upper bound in (\ref{configconc2}) for the original process holds with probability at least $1 - \eps/(1 - q)$, which is sufficient.
\end{proof}

We are now able to prove Proposition \ref{configpropnew} by showing that the hypotheses of Proposition~\ref{configpropnew} imply those of Lemma \ref{configlemma}.

\begin{proof}[Proof of Proposition \ref{configpropnew}]
Suppose that the hypotheses of Proposition \ref{configpropnew} are satisfied.
The first condition of Lemma \ref{configlemma} holds by assumption.

Using that $\sin(x) \geq 2x/\pi$ and $\sin(\pi - x) \geq 2x/\pi$ for all $x \in [0, \pi/2]$, we have for all $x \in [0, L_{t_n}(0) - A]$,
\begin{equation}\label{yzrat}
\frac{y_{t_n,0}(x,0)}{z_{t_n}(x,0)} = \frac{x}{L_{t_n}(0)^2 \sin( \frac{\pi x}{L_{t_n}(0)})} \leq \frac{1}{2A}.
\end{equation}
Because $A$ is arbitrary and $(Z_{t_n}(0))_{n=1}^{\infty}$ is tight, the assumption that $L_{t_n}(0) - R(0) \rightarrow_p \infty$ implies that $Y_{t_n}(0) \rightarrow_p 0$ as $n \rightarrow \infty$.

Let $\eps > 0$ and $\kappa > 0$.  To establish the second, third, and fourth conditions in Lemma \ref{configlemma}, we consider the branching Brownian motion with particles killed when they reach either the origin or the curve $s \mapsto L_{t_n}(s)$, run for time $\kappa t_n^{2/3}$.
We will need to make some moment calculations, conditional on the initial configuration of particles.  By Markov's Inequality,
Lemma~\ref{lem:Rexp} with $A = 0$, and equation (\ref{tauasym}), there is a positive constant $C$, depending on $\kappa$, such that
$$\P_{\nu_n}(R_{t_n}(0, \kappa t_n^{2/3}) \geq 1|{\cal F}_0) \leq \E_{\nu_n}[R_{t_n}(0, \kappa t_n^{2/3})|{\cal F}_0] \leq C \bigg( \frac{Z_{t_n}(0)}{L_{t_n}(0)} + Y_{t_n}(0) \bigg).$$  Because $L_{t_n}(0) \rightarrow_p \infty$ and $Y_{t_n}(0) \rightarrow_p 0$ as $n \rightarrow \infty$, and $(Z_{t_n}(0))_{n=1}^{\infty}$ is tight, we can deduce that
\begin{equation}\label{Ris0}
\lim_{n \rightarrow \infty} \P_{\nu_n}(R_{t_n}(0, \kappa t_n^{2/3}) \geq 1) = 0.
\end{equation}
Thus, we may disregard the possibility that particles are killed at $L_{t_n}(s)$ before time $\kappa t_n^{2/3}$.

By Lemma \ref{lem:Yexp} with $A = 0$,
\begin{equation}\label{Ytnexp}
\E_{\nu_n}[{\tilde Y}_{t_n,0}(\kappa t_n^{2/3})|{\cal F}_0] \leq \frac{C Z_{t_n}(0)}{L_{t_n}(0)},
\end{equation}
where the positive constant $C$ depends on $\kappa$.  Because the sequence $(Z_{t_n}(0))_{n=1}^{\infty}$ is tight and
$L_{t_n}(0) \geq L_{t_n}(\kappa t^{2/3})$, the second condition (\ref{configasm2}) in Lemma \ref{configlemma} follows from (\ref{Ytnexp}) and Markov's Inequality, along with (\ref{Ris0}).

From Lemma \ref{lem:Zexp} with $A = 0$, and the fact $(Z_{t_n}(0))_{n=1}^{\infty}$ is tight, we conclude that for all $\eps > 0$ and $\delta > 0$, for sufficiently large $n$ we have, on an event of probability at least $1 - \eps/2$,
$$\delta \leq \E_{\nu_n}[Z_{t_n,0}(\kappa t_n^{2/3})|{\cal F}_0] \leq \frac{1}{\delta}.$$
By Lemma \ref{lem:Zvar} with $A = 0$, there is a positive constant $C$ such that
$$\Var_{\nu_n}(Z_{t_n,0}(\kappa t_n^{2/3})|{\cal F}_0) \leq C \bigg( \frac{Z_{t_n}(0)}{L_{t_n}(0)} + Y_{t_n}(0) \bigg),$$ 
and the right-hand side tends to zero in probability as $n \rightarrow \infty$ by the argument before (\ref{Ris0}).  In view of our assumptions on the initial configurations as well as (\ref{Ris0}), the third condition (\ref{configasm3}) in Lemma \ref{configlemma} now follows from an application of Chebyshev's Inequality.

Because ${\tilde y}_{t_n,0}(L_{t_n}(\kappa t_n^{2/3}) - A, \kappa t_n^{2/3}) = e^{-A},$ the fourth condition (\ref{configasm4}) in Lemma \ref{configlemma} follows immediately from (\ref{configasm2}).
\end{proof}

\section{Convergence to the CSBP: small time steps}\label{CSBPsec}

In this section we state and prove a result (Proposition~\ref{prop:csbp_small_step}) which will be at the heart of the proof of Theorem~\ref{CSBPthm} in Section~\ref{sec:csbp_proof}.
% To that end we follow the strategy and arguments laid out in \cite{bbs2}, but with several changes.
%
%%Recall that we want to show that  under hypothesis (H), we have as $t\to\infty$
%% \[
%%  Z_t \circ f_t^{-1}\quad \stackrel{\text{fidis}}{\Longrightarrow} \quad\text{Neveu's CSBP: } \Xi
%% \]
%%where $f_t(s) = \log\left(\frac t {t-s}\right)$.
%
%Let us define $V_t(s):= Z_{f_t^{-1}(s)}, s\ge 0$, where $f_t(s) = \log\left(\frac t {t-s}\right)$, such that $f_t^{-1}(s) = (1-e^{-s})t$. 
%
%The convergence will be established through convergence of Laplace transforms.
%Proposition 43 \cite{bbs2} (which is just Theorem 8.2 in Chapter 4 of \cite{ek86} in the present context) states that
%
%\begin{proposition}
%Under (H)  the finite-dimensional distributions of $(V_t(s),s \geq 0)$ converge to those of $(\Xi(s), t \geq 0)$ as $t \rightarrow \infty$ if and only if for all $j \geq 0$, all $0 \leq s_1 < s_2 < \dots < s_j \leq u < u + s$, all bounded continuous functions $h_1, \dots, h_j: [0, \infty) \rightarrow \R$, and all $f \in {\cal E}$, we have
%\begin{equation}\label{mainVt}
%\lim_{t \rightarrow \infty} E \bigg[ \bigg( f(V_t(u + s)) - f(V_t(u)) - \int_u^{u + s} Af(V_t(r)) \: dr \bigg) \prod_{i=1}^j h_i(V_t(s_i)) \bigg] = 0.
%\end{equation}
%\end{proposition}
%
%\nota{If we do like that, the time change will give us intervals of different length?}

\subsection{Notation in this section}

We will make heavy use of the results in Sections~\ref{sec:first_moment_estimates} and \ref{sec:second_moment_estimates}. In particular, we use all the notation introduced in Section~\ref{sec:first_moment_estimates}. Whenever the symbol $A$ appears, we will always tacitly assume that $A\ge1$.

In what follows, it will be necessary for us to let both $t$ and $A$ go to infinity. To this end, \emph{we will always first let $t$, then $A$ go to infinity}. We therefore introduce the following two symbols:

\begin{itemize}
\item $\eps_t$: denotes a quantity which is bounded in absolute value by a function $h(A,t)$ satisfying:
\[
\forall A\ge 1: \lim_{t\to\infty} h(A,t) = 0.
\]
\item $\eps_{A,t}$: denotes a quantity which is bounded in absolute value by a function $h(A,t)$ satisfying:
\[
\lim_{A\to\infty} \limsup_{t\to\infty} h(A,t) = 0.
\]
\end{itemize}
Note that the first condition is stronger than the second one.

Furthermore, as above, the symbol $O(\cdot)$ denotes a quantity bounded in absolute value by a constant times the quantity inside the parentheses. Also, throughout the section, we fix $\Lambda > 1$ and a positive function $\bar\theta$ such that $\bar\theta(A)A^2\to 0$ as $A\to\infty$. The functions $h$ above and the constant in the definition of $O(\cdot)$ may only depend on the offspring distribution of the branching Brownian motion and on $\Lambda$ and $\bar{\theta}$. 

Throughout the section, let $r\le s$ such that $s\le (1-\Lambda^{-1})t$ and $t-s=e^{-\theta}(t-r)$, for some $\theta\in [\bar\theta(A)/2,\bar\theta(A)]$. All estimates are meant to be uniform in $r$ and $s$ respecting these constraints.

Note that with this notation, we have
\begin{align}\label{26}
\frac{\tau_0(r,s) }{ L_t(r)} =2\pi^{-2} (1-e^{-\theta/3}) = \frac2{3\pi^{2}}\theta (  1  +  O(\theta)).
\end{align}
In particular, for all $r\le r'\le s'\le s$,
\begin{align}\label{27}
\frac{\tau_0(r',s') }{ L_t(r')} &=O(\theta).
\end{align}

The main step in the proof of Theorem \ref{CSBPthm} will be to show the following proposition.

\begin{proposition}
\label{prop:csbp_small_step}
Set $a=\frac{2}{3}(a_{\ref{eq:abelian}}+\log \pi)+\frac{1}{2}$. Then, uniformly in $\lambda\in [\Lambda^{-1},\Lambda]$, on the event $\{\forall u\in N_r: X_u(r) \le L_{t,A}(r)\}$, we have
\[
\E[e^{-\lambda Z_t(s)}\,|\,\F_r] = \exp\{(-\lambda + \theta (\Psi_{a,2/3}(\lambda)+\eps_{A,t}))Z_t(r) + O(AY_t(r))\}.
\]
\end{proposition}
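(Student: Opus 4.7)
The plan is to use branching independence to reduce to a single-particle computation, then decompose that particle's progeny at the auxiliary barrier $L_{t,A}$. Conditioning on $\F_r$ and using the independence of subtrees,
\[
\E\bigl[e^{-\lambda Z_t(s)}\mid\F_r\bigr] = \prod_{u\in N_r}\E_{(X_u(r),r)}\bigl[e^{-\lambda Z_t(s)}\bigr],
\]
so on $\{\forall u\in N_r:X_u(r)\le L_{t,A}(r)\}$ it suffices to show that, for every $x\in[0,L_{t,A}(r)]$,
\[
\log\E_{(x,r)}\bigl[e^{-\lambda Z_t(s)}\bigr] = -\lambda z_t(x,r) + \theta\bigl(\Psi_{a,2/3}(\lambda)+\eps_{A,t}\bigr)z_t(x,r) + O\bigl(A\,y_t(x,r)\bigr);
\]
summing over $u\in N_r$ then yields the proposition.

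For a single ancestor at $(x,r)$, I would split $N_s$ into descendants that stay strictly below $L_{t,A}$ throughout $[r,s]$ (weighted mass $Z_{t,A}(s)$) and descendants of particles that first reach $L_{t,A}$ at some time $u_j$. The strong Markov property gives
\[
\E_{(x,r)}\bigl[e^{-\lambda Z_t(s)}\bigr] = \E_{(x,r)}\biggl[e^{-\lambda Z_{t,A}(s)}\prod_j \E_{(L_{t,A}(u_j),u_j)}\bigl[e^{-\lambda Z_t(s)}\bigr]\biggr].
\]
For each exit factor, I would invoke Lemma~\ref{neveuW}: after a translation, a BBM starting $A$ units below the critical curve produces a $z_t$-weighted mass whose law is close to $\pi e^{-A}W$, using the near-boundary asymptotic $z_t(y,u)\sim\pi(L_t(u)-y)e^{y-L_t(u)}$. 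The Abelian relation \eqref{eq:abelian} with $\mu=\lambda\pi e^{-A}$ then gives each factor as $\exp(\Psi_{a_{\ref{eq:abelian}},1}(\mu)+o(\mu))$. Writing $R=R_{t,A}(r,s)$ for the exit count, this reduces the single-particle Laplace transform to
\[
\E_{(x,r)}\!\bigl[\exp\bigl(-\lambda Z_{t,A}(s)+R\cdot(\Psi_{a_{\ref{eq:abelian}},1}(\mu)+o(\mu))\bigr)\bigr].
\]

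I would then expand this to first order, controlling remainders through the second-moment estimates. Lemma~\ref{lem:Zexp} with $t-s=e^{-\theta}(t-r)$ gives $\E[Z_{t,A}(s)] = z_t(x,r)\bigl(1-\theta(\tfrac{2A}{3}+\tfrac{1}{2})\bigr)+\mathrm{error}$ (after absorbing the $z_t$ vs.\ $z_{t,A}$ discrepancy into an $O(Ay_t)$ term), while Lemma~\ref{lem:Rexp} together with \eqref{26} gives $\E[R]=\tfrac{2\theta}{3\pi}e^{A}z_t(x,r)+O(e^{A}y_t(x,r))$; Lemmas~\ref{lem:Zvar} and~\ref{lem:Rvar} provide variance bounds of order $(\theta z_t+y_t)e^{\pm A}$, which after multiplication by $\lambda$ and $\mu=O(\lambda e^{-A})$ make the Taylor expansion of the joint exponential legitimate. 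Using $\Psi_{a_{\ref{eq:abelian}},1}(\mu)=\lambda\pi e^{-A}(a_{\ref{eq:abelian}}+\log\lambda+\log\pi-A)+o(\lambda e^{-A})$, the $\pm\tfrac{2A}{3}\lambda$ contributions from $-\lambda\E[Z_{t,A}(s)]$ and $\E[R]\Psi_{a_{\ref{eq:abelian}},1}(\mu)$ cancel exactly, leaving the coefficient of $\theta z_t(x,r)$ equal to
\[
\tfrac{1}{2}\lambda + \tfrac{2}{3}\lambda(a_{\ref{eq:abelian}}+\log\pi) + \tfrac{2}{3}\lambda\log\lambda = \Psi_{a,2/3}(\lambda),\qquad a=\tfrac{1}{2}+\tfrac{2}{3}(a_{\ref{eq:abelian}}+\log\pi),
\]
which is exactly the stated constant. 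The $O(Ay_t(x,r))$ error collects the $y_{t,A}$ contributions from Lemma~\ref{lem:Rexp}, the $z_t$ vs.\ $z_{t,A}$ discrepancy, and cross-terms.

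The main obstacle is the rigorous identification of each exit factor with $\E[e^{-\mu W}]$, with quantitative control of the $o(\mu)$ remainder uniform in $A$ and in the remaining parameters. This requires: (a) showing that absorption at $0$ is negligible on the time-scale $s-u$ for a BBM starting at $L_t(u)-A$, so that Lemma~\ref{neveuW}'s free-BBM setup applies after translation; (b) relating the exponential weight $z_t(\cdot,s)$ to the right-tail quantity $y e^{-y}K(y)$ of Lemma~\ref{neveuW} with quantitative error uniform in $A$; and (c) accumulating the per-exit error without losing the $\theta\eps_{A,t}z_t$ control. Here the hypothesis $\bar\theta(A)A^2\to 0$ as $A\to\infty$ is used to guarantee that the double limit (first $t\to\infty$, then $A\to\infty$) works: since $\E[R]=O(\theta e^A z_t(x,r))$, the accumulated per-exit error contributes $\E[R]\cdot o(\mu)=o(\theta \lambda z_t(x,r))$ only when $\theta$ is small compared to appropriate negative powers of $A$, which is precisely what $\bar\theta(A)A^2\to 0$ secures.
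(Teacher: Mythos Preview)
Your proposal is correct and follows essentially the same route as the paper: reduce to one particle, stop descendants at the curve $L_{t,A}$, identify each exit factor via the Neveu limit $W$ of Lemma~\ref{neveuW}, expand to first order using the moment Lemmas~\ref{lem:Zexp}--\ref{lem:Rvar}, and verify the $\pm\tfrac{2A}{3}\lambda$ cancellation that produces $\Psi_{a,2/3}$.

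Two small points where the paper is more explicit than your sketch. First, the exit-factor identification (your ``main obstacle'') is carried out in two steps: Lemma~\ref{lem:deriv_appears} shows that a particle at $L_t(q)-y(t)$ with $y(t)\to\infty$, $y(t)=o(t^{1/3})$ has $\E[e^{-\lambda Z_t(s)}]=\exp\{-(\lambda+\eps_t+O(\theta))\pi y(t)e^{-y(t)}\}$, and then Lemma~\ref{lem:jump} stops the descendants of a particle at $L_{t,A}(q)$ when they reach $L_t(q)-A-y(t)$, applies the previous lemma to each stopped particle, and invokes the almost sure convergence $y(t)e^{-y(t)}K_t\to W$. Second, Lemma~\ref{lem:jump} only applies for exit times $q\le s-t^{2/3}$ (so that enough time remains for the first step), and the paper handles exits in $[s-t^{2/3},s]$ separately by showing $\P(\mathcal L_{t,A}\cap[s-t^{2/3},s]\ne\emptyset)=z_{t,A}(x,r)\eps_t$ via Lemmas~\ref{lem:Rexp}, \ref{lem:Zexp}, and~\ref{lem:Yexp}. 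Both points fit naturally into your outline.
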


The proof of this proposition will be decomposed into several steps. Inspired by \cite{bbs2}, we decompose the particles into those crossing the curve $L_{t,A}$ and those staying below it. The particles crossing the curve are exactly the ones causing the jumps in the CSBP. In Section~\ref{sec:csbp_jump}, we give an asymptotic result for the Laplace transform of such a jump. In Section~\ref{sec:csbp_small_step}, we use this result to prove Proposition~\ref{prop:csbp_small_step}.

\subsection{One particle at \texorpdfstring{$L_{t,A}$}{L\_\{t,A\}}}
\label{sec:csbp_jump}

\begin{lemma}\label{lem:jump}
Uniformly in $\lambda \in [\Lambda^{-1},\Lambda]$  and  $q\in[r,s-t^{2/3}],$ 
\begin{equation}
\label{eq:jump}
\E_{(L_{t,A}(q),q)} [e^{-\lambda Z_t(s)}] =\exp \left\{\pi e^{-A}(\Psi_{a_{\ref{eq:jump}},1}(\lambda) - A\lambda + \eps_{A,t}) \right\},
\end{equation}
with $a_{\ref{eq:jump}} = a_{\ref{eq:abelian}}+\log \pi$.
\end{lemma}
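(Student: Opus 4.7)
The plan is to reduce the computation of $\E_{(L_{t,A}(q),q)}[e^{-\lambda Z_t(s)}]$ to the Laplace transform of Neveu's random variable $W$ from Lemma~\ref{neveuW} at the small argument $\mu = \pi e^{-A}\lambda$, and then apply the Abelian asymptotic~\eqref{eq:abelian}.

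First I would shift coordinates so that the initial particle sits at $0$: set $Y_u(\sigma) = X_u(q+\sigma) - L_{t,A}(q)$, so that $(Y_u)$ is a critical branching Brownian motion (drift $-1$, branching rate $\beta$, with $\sqrt{2\beta m}=1$) started from a single particle at $0$. The absorption at the origin becomes absorption at $-L_{t,A}(q)$, far to the left at distance of order $t^{1/3}$, and hence negligible on the time scale $\sigma = s-q \le \bar\theta(A)t$. Using $\sin(\pi - x) = \sin x$ and the Taylor expansion $L_t(s)\sin(\pi x/L_t(s)) = \pi x + O(x^3/L_t(s)^2)$ valid for $|x|\ll L_t(s) \asymp t^{1/3}$, one can rewrite
\[
Z_t(s) \;=\; \pi e^{-A}\sum_{u\in N_s}\bigl(A - Y_u(\sigma)\bigr)e^{Y_u(\sigma)}\Ind_{\{Y_u(\sigma)\le A\}} \;+\; \text{correction},
\]
where the correction collects the cubic remainder of $\sin$ and the effect of the critical curve shifting by $\Delta = L_t(q) - L_t(s)$ during the time interval $[q,s]$.

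The random functional in the main term is (essentially) the derivative martingale of the critical BBM $(Y_u)$ truncated at level $A$. Since $\sigma \ge t^{2/3}\to\infty$, the remark following Lemma~\ref{neveuW} identifies its limit in distribution as $W$. Quantifying this convergence uniformly in $q$ and $\lambda$ (via the moment estimates of Sections~\ref{sec:first_moment_estimates}--\ref{sec:second_moment_estimates} and the fact that the additive martingale $\sum_u e^{Y_u(\sigma)}$ and the tail $\sum_u e^{Y_u(\sigma)}\Ind_{\{Y_u(\sigma)>A\}}$ both vanish in probability as $t\to\infty$ and then $A\to\infty$) yields
\[
\E_{(L_{t,A}(q),q)}\bigl[e^{-\lambda Z_t(s)}\bigr] \;=\; \E\bigl[e^{-\pi e^{-A}\lambda\,W}\bigr]\,\bigl(1 + \eps_{A,t}\bigr).
\]
Writing $\mu = \pi e^{-A}\lambda \in [\pi e^{-A}\Lambda^{-1},\,\pi e^{-A}\Lambda]$, one has $\mu \to 0$ uniformly as $A \to \infty$, and \eqref{eq:abelian} gives
\begin{align*}
\log \E[e^{-\mu W}] &= a_{\ref{eq:abelian}}\mu + \mu\log\mu + o(\mu) \\
&= \pi e^{-A}\lambda\bigl(a_{\ref{eq:abelian}} + \log\pi - A + \log\lambda\bigr) + o(\pi e^{-A})\\
&= \pi e^{-A}\bigl(\Psi_{a_{\ref{eq:jump}},1}(\lambda) - A\lambda\bigr) + o(\pi e^{-A}),
\end{align*}
using $a_{\ref{eq:jump}} = a_{\ref{eq:abelian}} + \log\pi$. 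This matches the claim with $\eps_{A,t}$ absorbing both the $o(1)$ and the identification error.

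The main obstacle is controlling the corrections in the identification step uniformly over $q \in [r, s-t^{2/3}]$. The time horizon $\sigma$ ranges from $t^{2/3}$ to $\bar\theta(A)t$, and the curve shift $\Delta$ from $O(1)$ up to $\bar\theta(A)t^{1/3}$, so a naive truncation would produce prefactor $e^{-(A-\Delta)}$ rather than $e^{-A}$. The cancellation relies on the critical additive martingale being $o(1)$ on the relevant time scale, allowing one to replace the ``moving'' threshold $A - \Delta$ by the static $A$ at cost $\eps_{A,t}$. Making this precise will demand the sharp second-moment bounds of Section~\ref{sec:second_moment_estimates}, tail control on the rightmost particle of $(Y_u)$, and the assumption $\bar\theta(A)A^2 \to 0$.
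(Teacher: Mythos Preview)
Your final algebraic step—applying \eqref{eq:abelian} to $\mu = \pi e^{-A}\lambda$ and reading off $a_{\ref{eq:jump}} = a_{\ref{eq:abelian}} + \log\pi$—is exactly what the paper does, and is correct. The gap is in the preceding reduction
\[
\E_{(L_{t,A}(q),q)}[e^{-\lambda Z_t(s)}] \;=\; \E[e^{-\pi e^{-A}\lambda W}]\,(1+\eps_{A,t}).
\]

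Your direct route—Taylor-expanding $z_t(\cdot,s)$ near the right edge and identifying the sum with a truncated derivative martingale—runs into precisely the obstacle you flag: over $[q,s]$ the curve shifts by $\Delta = L_t(q)-L_t(s)$, which for $q$ near $r$ is of order $\theta\, t^{1/3}$ and hence unbounded in $t$ for fixed $A$. The resulting prefactor is $e^{-(A-\Delta)}$, not $e^{-A}$, and this cannot be repaired by invoking decay of the additive martingale: for critical BBM the additive martingale decays only polynomially in $\sigma$, which is negligible against $e^{\Delta}$. More fundamentally, once $\sigma$ is of order $\theta t$ the particles have spread through the whole interval $[0,L_t(s)]$, and the linearization of the sine (valid only near the right edge) fails for the bulk. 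So your ``correction'' term is not small; it is the main term, and controlling it would amount to redoing the entire argument.

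The paper resolves this by decoupling the two time scales via a stopping line. Particles are frozen the moment they hit the level $L_{t,A}(q)-y(t) = L_t(q)-(A+y(t))$, with $y(t)\to\infty$ so slowly that this happens within time $g(A+y(t)) = o(t^{2/3})$; on this short window the curve is essentially static, so Lemma~\ref{neveuW} gives $y(t)e^{-y(t)}K_t \to W$ directly. The long-time evolution from each frozen particle to time $s$ is handled by a separate auxiliary result (Lemma~\ref{lem:deriv_appears}), which uses the first- and second-moment estimates of Sections~\ref{sec:first_moment_estimates}--\ref{sec:second_moment_estimates} to show
\[
\E_{(L_t(q)-(A+y(t)),\,q')}[e^{-\lambda Z_t(s)}] \;=\; \exp\bigl\{-(\lambda+\eps_t+O(\theta))\,\pi(A+y(t))e^{-(A+y(t))}\bigr\}.
\]
Taking the product over the $K_t$ frozen particles then collapses to $\E[\exp(-\pi e^{-A}(\lambda+O(\theta))W)]+\eps_t$. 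The point is that the moment estimates enter only in Lemma~\ref{lem:deriv_appears}, where the initial particle is already at distance $A+y(t)\to\infty$ from the curve; your approach asks them to simultaneously absorb the unbounded $\Delta$-shift, which they cannot do.
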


The following lemma will be needed for the proof of Lemma~\ref{lem:jump}.

\begin{lemma}\label{lem:deriv_appears}
Let $y: (0, \infty) \rightarrow (0, \infty)$ be a function such that $y(t)\to\infty$ and $y(t) = o(t^{1/3})$ as $t\to\infty$.
Let $f: (0, \infty) \rightarrow (0, \infty)$ be a function such that $f(t) = o(t^{2/3})$ as $t\to\infty$.
Then uniformly in $q\in[r,s-t^{2/3}]$, $q'\in [q,q+f(t)]$, and $\lambda \in [\Lambda^{-1},\Lambda]$, as $t\to\infty$, we have
\begin{equation}
\E_{(L_t(q)-y(t),q')}[e^{-\lambda Z_t(s)}] = \exp\left\{-(\lambda+\eps_t+O(\theta))\pi y(t)e^{-y(t)}\right\}.
\end{equation}
\end{lemma}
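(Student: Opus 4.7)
The strategy is to establish the result via a combination of first- and second-moment estimates applied to the auxiliary process $Z_{t,A}$, together with a control on the (rare) event that particles cross the upper barrier $L_{t,A}$. The key identification is that $\pi y(t) e^{-y(t)}$ equals, up to a factor of $(1 + \eps_t)$, the weight $z_t(L_t(q) - y(t), q')$: using $\sin(\pi - u) = u + O(u^3)$ at $u = \pi y(t)/L_t(q')$, combined with $L_t(q) - L_t(q') = o(1)$ (since $q' - q \le f(t) = o(t^{2/3})$) and $y(t) = o(L_t(q'))$, we get $z_t(L_t(q) - y(t), q') = \pi y(t) e^{-y(t)}(1 + \eps_t)$. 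The claim then amounts to saying that $-\log \E[e^{-\lambda Z_t(s)}] = \lambda \pi y(t) e^{-y(t)}(1 + \eps_t + O(\theta))$, i.e.~$Z_t(s)$ concentrates on its tiny natural scale.

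First I would work with $Z_{t,A}(s)$ for a fixed moderate $A \ge 1$ (say $A=1$). Applying Lemma~\ref{lem:Zexp} with starting point $x_0 = L_t(q) - y(t)$ at time $q'$, the same sine expansion yields $z_{t,A}(x_0, q') = M(1+\eps_t)$ with $M := \pi y(t) e^{-y(t)}$; the prefactors $e^{O((1\vee A^2)/(t-s)^{1/3})}$ and $((t-s)/(t-q'))^{2A/3+1/2}$ contribute a further $1 + \eps_t + O(\theta)$, so that $\E_{(x_0, q')}[Z_{t,A}(s)] = M(1 + \eps_t + O(\theta))$. Lemma~\ref{lem:Zvar} then gives $\E[Z_{t,A}(s)^2] \le C e^{-A}(\theta\, z_{t,A}(x_0, q') + y_{t,A}(x_0, q'))$; the first term is $O(\theta M)$ and the second, $y_{t,A}(x_0, q') \approx e^{-y(t)} = M/(\pi y(t))$, is $O(\eps_t M)$. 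Using $1 - \lambda z \le e^{-\lambda z} \le 1 - \lambda z + \lambda^2 z^2/2$ and $\lambda \in [\Lambda^{-1}, \Lambda]$, this gives $\E[e^{-\lambda Z_{t,A}(s)}] = 1 - \lambda M(1 + \eps_t + O(\theta))$, hence $-\log\E[e^{-\lambda Z_{t,A}(s)}] = \lambda M(1 + \eps_t + O(\theta))$.

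To close the gap to $Z_t(s)$, let $E$ denote the event that no particle of the process started at $(x_0, q')$ crosses the barrier $L_{t,A}$ during $[q', s]$. By Lemma~\ref{lem:Rexp} and Markov's inequality, $\P(E^c) \le \E[R_{t,A}(q', s)] = O(e^A(\theta M + M/y(t))) = O((\theta + \eps_t) M)$. On $E$, the sets $N_s$ and $N_{s,A}$ coincide, and the discrepancy between the weight functions $z_t$ and $z_{t,A}$ is significant only for particles near $L_{t,A}(s)$; a Taylor-expansion argument based on $\partial_L[L \sin(\pi x/L)] = \sin(\pi x/L) - (\pi x/L)\cos(\pi x/L)$ shows that the expected discrepancy $\E[(Z_t(s) - Z_{t,A}(s))\Ind_E]$ is controlled by $\E[\tilde Y_{t,A}(s)] = O(M/L_t(q'))$ via Lemma~\ref{lem:Yexp}, which is negligible compared to $M$. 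Combining these gives $\E[e^{-\lambda Z_t(s)}] = \E[e^{-\lambda Z_{t,A}(s)}] + O((\theta + \eps_t) M) = 1 - \lambda M(1 + \eps_t + O(\theta))$, and taking logarithms yields the claim.

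The main obstacle is that $\E[Z_t(s)]$ itself is actually \emph{much} larger than $M$, in fact divergent as $t \to \infty$, because rare events where particles cross $L_t$ and populate many descendants near the top can contribute massively to the mean. This rules out a direct second-moment argument on $Z_t(s)$ and forces the restriction to the event $E$; the delicate balance between the crossing-probability bound from Lemma~\ref{lem:Rexp} (which carries a prefactor $e^A$) and the second-moment bound from Lemma~\ref{lem:Zvar} (with prefactor $e^{-A}$) means that the choice of $A$ and the bookkeeping of the weight discrepancy $z_t - z_{t,A}$ on $E$ must be done with care so that every error collapses into the required $\eps_t + O(\theta)$ form, uniformly in $q$, $q'$, $\lambda$ and the rate at which $y(t) \to \infty$.
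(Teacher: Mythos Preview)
Your approach is essentially the paper's: compare $Z_t(s)$ to the barriered process, bound the crossing probability via Lemma~\ref{lem:Rexp}, apply the first- and second-moment Lemmas~\ref{lem:Zexp} and~\ref{lem:Zvar} together with $e^{-z}=1-z+O(z^2)$, and identify $z_t(x_0,q')=\pi y(t)e^{-y(t)}(1+\eps_t)$ from the sine expansion and $L_t(q)-L_t(q')=o(1)$. The only substantive difference is that the paper takes $A=0$, not $A\ge 1$. This is the cleaner choice: since $z_{t,0}(x,s)=z_t(x,s)$ identically on $[0,L_t(s)]$, on the event $\{R_{t,0}(q',s)=0\}$ one has $Z_t(s)=Z_{t,0}(s)$ \emph{exactly}, so the whole comparison step collapses to the single inequality
\[
\bigl|\E_{(x_0,q')}[e^{-\lambda Z_t(s)}]-\E_{(x_0,q')}[e^{-\lambda Z_{t,0}(s)}]\bigr|\le \P_{(x_0,q')}(R_{t,0}(q',s)\ge 1)\le \E_{(x_0,q')}[R_{t,0}(q',s)].
\]
This removes your intermediate control of the weight discrepancy $z_t-z_{t,A}$ via $\tilde Y_{t,A}(s)$ and Lemma~\ref{lem:Yexp}; that step is correct but an avoidable detour. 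Your last-paragraph worry about balancing the $e^{A}$ and $e^{-A}$ prefactors also dissolves at $A=0$: both become $1$, and the error is directly $O(\theta\, z_t(x_0,q')+y_t(x_0,q'))$, the second term being $o(M)$ since $y(t)\to\infty$. One technical point you should make explicit (the paper does): the hypothesis $\tau_A(q',s)\ge\gamma_1>0$ of Lemmas~\ref{lem:Yexp} and~\ref{lem:Zvar} holds because $s-q'\ge t^{2/3}-f(t)$ with $f(t)=o(t^{2/3})$.
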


\begin{proof}
Write $x' = L_t(q) - y(t)$. Under $\P_{(x',q')}$, we have $Z_t(s) = Z_{t,0}(s)$ on the event $\{R_{t,0}(q',s) = 0\}$. Hence,
\begin{align}
\label{eq:701}
\left|\E_{(x',q')}[e^{-\lambda Z_t(s)}] - \E_{(x',q')}[e^{-\lambda Z_{t,0}(s)}]\right| \le \P_{(x',q')}(R_{t,0}(q',s) \ge 1) \le \E_{(x',q')}[R_{t,0}(q',s)].
\end{align}
By Lemma~\ref{lem:Rexp} and \eqref{27},
\begin{align}
\label{eq:702}
\E_{(x',q')}[R_{t,0}(q',s)] \le C(\theta z_t(x',q') + y_t(x',q')).
\end{align}
Furthermore, using that $e^{-z} = 1-z+O(z^2)$ for $z\ge 0$, we have
\begin{align}
\label{eq:703}
\E_{(x',q')}[e^{-\lambda Z_{t,0}(s)}] = 1 - \lambda \E_{(x',q')}[Z_{t,0}(s)] + O( \E_{(x',q')}[Z_{t,0}(s)^2]).
\end{align}
By Lemma~\ref{lem:Zexp},
\begin{align}
\label{eq:704}
 \E_{(x',q')}[Z_{t,0}(s)] = (1+O(\theta) + \eps_t) z_t(x',q').
\end{align}
As for the second moment, to apply Lemma~\ref{lem:Zvar}, note that $\tau_0(q',s) \ge \gamma_1$ for some $\gamma_1>0$, since $q' \le s-t^{2/3} + f(t)$ and $f(t) = o(t^{2/3})$ by assumption. Hence, for $t$ large enough, by Lemma~\ref{lem:Zvar} and \eqref{27},
\begin{align}
\label{eq:705}
 \E_{(x',q')}[Z_{t,0}(s)^2] \le C\left(\theta z_t(x',q') + y_t(x',q')\right).
\end{align}
Combining \eqref{eq:701}, \eqref{eq:702}, \eqref{eq:703}, \eqref{eq:704} and \eqref{eq:705}, we have for large enough $t$,
\begin{align}
\label{eq:706}
\E_{(x',q')}[e^{-\lambda Z_t(s)}] = 1 - (\lambda +\eps_t+O(\theta)) z_t(x',q') + O(y_t(x',q')).
\end{align}
Now using that $x' = L_t(q) - y(t)$ and $y(t) = o(t^{1/3}) = o(L_t(q'))$, along with the fact that $L_t(q) - L_t(q') \rightarrow 0$ as $t \rightarrow \infty$ because $q' \in [q, q + f(t)]$, we get
$$z_t(x',q') = L_t(q') \sin\left(\frac{\pi (y(t) - (L_t(q) - L_t(q')))}{L_t(q')}\right) e^{L_t(q) - L_t(q') - y(t)} = (1+\eps_t)\pi y(t)e^{-y(t)}.$$
Furthermore,
\[
y_t(x',q') = \frac{x'}{L_t(q')} e^{L_t(q) - L_t(q') - y(t)} \le (1 + \eps_t) e^{-y(t)}.
\]
It is also easy to check that
\[
z_t(x',q')^2 + y_t(x',q')^2 = O(y_t(x',q')).
\]
It follows from the above that the RHS of \eqref{eq:706} is at least $1/2$ for $t$ large enough, since $y(t) \to\infty$ as $t\to\infty$ by assumption. Using the equality $1-x = e^{-x + O(x^2)}$ for $x\in [0,1/2]$, equation~\eqref{eq:706} together with the above equations gives
\begin{align}
\E_{(x',q')}[e^{-\lambda Z_t(s)}]
&= \exp\left(-(\lambda +\eps_t+O(\theta)) z_t(x',q') + O(y_t(x',q')) + O(z_t(x',q')^2+y_t(x',q')^2)\right) \nonumber\\
&=  \exp\left(-[(\lambda +\eps_t+O(\theta)) \pi y(t) + O(1)]e^{-y(t)}\right),\nonumber
\end{align}
which implies the statement of the lemma, since $y(t)\to\infty$ as $t\to\infty$.
\end{proof}

\begin{proof}[Proof of Lemma~\ref{lem:jump}]
We start by proceeding as in the proof of Theorem~\ref{survivex}.  Let $g: (0, \infty) \rightarrow (0, \infty)$ be an increasing function that satisfies (\ref{Tygy}).  Let $y: (0, \infty) \rightarrow (0, \infty)$ be defined so that, similarly to (\ref{ycond}), we have
$$\lim_{t \rightarrow \infty} y(t) = \infty, \hspace{.5in} \lim_{t \rightarrow \infty} \frac{y(t)}{L_t(0)} = 0, \hspace{.5in} \lim_{t \rightarrow \infty} t^{-2/3} g(A + y(t)) = 0.$$
Starting with one particle at $L_{t,A}(q)$ at time $q$, we stop particles as soon as they hit the point $L_{t,A}(q)-y(t) = L_t(q) - A - y(t)$. We denote again by $K_t$ the number of particles hitting that point and by $w_1,\ldots,w_{K_t}$ the times they hit it. Then $w_i \in [q,q+g(A + y(t))]$ for all $i=1,\ldots,K_t$ with probability $1-\eps_t$ by (\ref{Tygy}).  We can apply Lemma~\ref{lem:deriv_appears} with $A + y(t)$ in place of $y(t)$ and $f(t) = g(A + y(t))$ to get
\begin{align}
\nonumber
\E_{(L_{t,A}(q),q)}[e^{-\lambda Z_t(s)}] 
&= \E_{(L_{t,A}(q),q)}\left[\prod_{i=1}^{K_t} \E_{(L_t(q) - A - y(t),w_i)}[e^{-\lambda Z_t(s)}]\right]\\
&= \E_{(L_{t,A}(q),q)}\left[\exp\left(-K_t (\lambda +\eps_t+O(\theta)) \pi (A+y(t))e^{-A-y(t)}\right)\right]+ \eps_t.
\end{align}
Recall that $y(t)e^{-y(t)} K_t$ converges in law to $W$, the random variable from Lemma~\ref{neveuW}. It follows that 
\begin{align}
\label{eq:714}
\E_{(L_{t,A}(q),q)}[e^{-\lambda Z_t(s)}] = 
 \E[\exp(-\pi e^{-A}(\lambda +O(\theta)) W)] + \eps_t.
\end{align}
Note that $\lambda + O(\theta) =\lambda(1+O(\theta))$, uniformly in $\lambda\ge \Lambda^{-1}$. Hence, by \eqref{eq:714}, combined with Lemma~\ref{neveuW}, as $A\to\infty$, we have
\begin{align}\label{eq:715}
\E_{(L_{t,A}(q),q)}[e^{-\lambda Z_t(s)}]
\nonumber
&= \exp\left\{\Psi_{a_{\ref{eq:abelian}},1}(\pi e^{-A}(1 +O(\theta))\lambda)+o(e^{-A}) + \eps_t\right\}\\
&= \exp\{\pi e^{-A} (1+O(\theta))\lambda(\log \lambda + a_{\ref{eq:abelian}} + \log \pi -A + O(\theta) + \eps_{A,t})\}.
\end{align}
Setting $a_{\ref{eq:jump}} = a_{\ref{eq:abelian}} + \log \pi$ and using the fact that $\theta A \le \bar\theta(A) A \to 0$ as $A\to\infty$, equation \eqref{eq:715} implies
\begin{align}
\label{eq:716}
\E_{(L_{t,A}(q),q)}[e^{-\lambda Z_t(s)}] = \exp\{\pi e^{-A}(\Psi_{a_{\ref{eq:jump}},1}(\lambda) -A\lambda + \eps_{A,t})\},
\end{align}
which finishes the proof of the lemma.
\end{proof}

\subsection{Proof of Proposition~\ref{prop:csbp_small_step}}
\label{sec:csbp_small_step}

Decomposing into the descendants of the particles living at time $r$, it is enough to show that for every $x\in[0,L_{t,A}(r)]$, we have
\begin{align}
\label{eq:720}
\E_{(x,r)}[e^{-\lambda Z_t(s)}] = \exp\{(-\lambda + \theta (\Psi_{a,2/3}(\lambda)+\eps_{A,t}))z_t(x,r) + O(Ay_t(x,r))\}.
\end{align}
Fix $x\in[0,L_{t,A}(r)]$ throughout the section. We adapt an idea from \cite{bbs2} and stop the particles the moment they hit the curve $L_{t,A}$ during the time interval $[r,s]$. We denote by $\L_{t,A}$ the set of those particles, identifying a particle with the time it hits the curve (one can do this more formally using the concept of \emph{stopping lines} from \cite{chauvin91}).  For every particle hitting the curve at time $u$, we denote by $Z_t^{(u)}(s)$ the contribution to $Z_t(s)$ of the descendants of $u$. We then have the following decomposition:

\begin{equation}\label{master decomposition}
Z_t(s) = Z'_{t,A}(s)+\sum_{u \in \L_{t,A}}Z_t^{(u)}(s),
\end{equation}
where 
\begin{align*}
Z'_{t,A}(s) = \sum_{u\in N_{s,A}} z_t(X_u(s),s),
\end{align*}
with $N_{s,A}$ defined in Section~\ref{sec:first_moment_estimates}. In what follows, we will also make use of the quantities $Z_{t,A}$, $Y_{t,A}$ etc.~defined in that section.

By the (strong) branching property, conditionally on $\L_{t,A}$, the $Z^{(u)}$ are independent and independent of $Z'_{t,A}(s)$.  Therefore, we can write
\begin{align*}
\E_{(x,r)}[e^{-\lambda Z_t(s)}] &= \E_{(x,r)}\left[  e^{-\lambda Z'_{t,A}(s)}  \prod_{u\in\L_{t,A}} e^{-\lambda Z^{(u)}_t(s)}\right]\\
&= \E_{(x,r)}\left[  e^{-\lambda Z'_{t,A}(s)}  \prod_{u\in\L_{t,A}} \E_{(L_{t,A}(u),u)}[e^{-\lambda Z_t(s)}]\right].
\end{align*}
Define $s'=s-t^{2/3}$.
Using Markov's inequality and conditioning on $\F_{s'}$, then applying Lemma~\ref{lem:Rexp}, Lemma~\ref{lem:Zexp}, and Lemma~\ref{lem:Yexp}, we have
\begin{align*}
\P_{(x,r)}(\L_{t,A} \cap [s',s]\ne \emptyset) &\le \E_{(x,r)}[R_{t,A}(s',s)] \\
&\le \E_{(x,r)}[Z_{t,A}(s')\eps_t + O(e^A Y_{t,A}(s'))(1 + \eps_t)]\\
&= z_{t,A}(x,r)\eps_t.
\end{align*}
Hence,
\begin{align}
\label{eq:721}
\E_{(x,r)}[e^{-\lambda Z_t(s)}] = \E_{(x,r)}\left[  e^{-\lambda Z'_{t,A}(s)}  \prod_{u\in\L_{t,A}\cap[r,s']} \E_{(L_{t,A}(u),u)}[e^{-\lambda Z_t(s)}]\right] + z_{t,A}(x,r)\eps_t.
\end{align}
Equation \eqref{eq:721} and Lemma~\ref{lem:jump} now give
\begin{align}
\label{eq:722}
\E_{(x,r)}[e^{-\lambda Z_t(s)}] = \E_{(x,r)}\left[  e^{-\lambda Z'_{t,A}(s) + R_{t,A}(r,s')\pi e^{-A}(\Psi_{a_{\ref{eq:jump}},1}(\lambda) - A\lambda + \eps_{A,t})}\right]  + z_{t,A}(x,r)\eps_t.
\end{align}

We next claim that \eqref{eq:722} implies
\begin{align}
\nonumber
\E_{(x,r)}[e^{-\lambda Z_t(s)}] &= 1-\lambda \E_{(x,r)}[Z_{t,A}(s)] + \pi e^{-A}(\Psi_{a_{\ref{eq:jump}},1}(\lambda) - A\lambda + \eps_{A,t})\E_{(x,r)}[R_{t,A}(r,s')]\\
\label{eq:723}
&+ O(\E_{(x,r)}[Z_{t,A}(s)^2 + (Ae^{-A}R_{t,A}(r,s'))^2 + AY_{t,A}(s)]) +  z_{t,A}(x,r)\eps_t.
\end{align}
Indeed, the upper bound follows using first the fact that $Z'_{t,A}(s)\ge Z_{t,A}(s)$, which can be seen by observing that $z_t(x,s) \ge z_{t,A}(x,s)$ for every $x\ge0$ because the function $L \mapsto L \sin(\pi x/L)$ is increasing on $[x, \infty)$, and then using that $e^{-x} = 1-x+O(x^2)$ for $x\ge0$.  Note that the second summand in the exponent on the RHS of \eqref{eq:722} is always negative, because the product in the expectation on the RHS of \eqref{eq:721} is bounded by $1$. The lower bound, on the other hand, follows from the equality $Z'_{t,A}(s) = Z_{t,A}(s) + O(A Y_{t,A}(s))$, which is a consequence of the fact that $z_t(x,s) = z_{t,A}(x,s) + O(Ay_{t,A}(x,s))$), together with the inequality $e^{-x} \ge 1-x$ for every $x\ge 0$.

We now gather the following estimates:
\begin{align*}
\E_{(x,r)}[Z_{t,A}(s)] &= e^{-\theta(\frac{2}{3}A+\frac{1}{2})+\eps_t} z_{t,A}(x,r) && \text{by Lemma~\ref{lem:Zexp}}\\
\E_{(x,r)}[R_{t,A}(r,s')] &= \pi e^{A+O(\theta A)+\eps_t}\\
&\hspace{-.5in}\times \left( \left(\frac{2}{3\pi^2}\theta(1+ O(\theta)) + \eps_t \right) z_{t,A}(x,r) + O(y_{t,A}(x,r))\right) && \text{by Lemma~\ref{lem:Rexp} and \eqref{26}}\\
\E_{(x,r)}[Z_{t,A}(s)^2] &\le Ce^{-A}(\theta z_{t,A}(x,r)+y_{t,A}(x,r)) && \text{by Lemma~\ref{lem:Zvar}}\\
\E_{(x,r)}[R_{t,A}(s)^2] &\le Ce^A(\theta z_{t,A}(x,r)+y_{t,A}(x,r)) && \text{by Lemma~\ref{lem:Rvar}}\\
\E_{(x,r)}[Y_{t,A}(s)] &\le z_{t,A}(x,r)\eps_t  && \text{by Lemma~\ref{lem:Yexp}}
\end{align*}
Using that $\theta A^2 \le \bar\theta(A)A^2 \to 0$ as $A\to\infty$, equation~\eqref{eq:723} together with the above estimates gives after some calculation, with $a_{\ref{eq:724}} = \frac{2}{3}a_{\ref{eq:jump}}+\frac{1}{2}$,
\begin{align}
\label{eq:724}
\E_{(x,r)}[e^{-\lambda Z_t(s)}] = 1 + (-\lambda + \theta(\Psi_{a_{\ref{eq:724}},2/3}(\lambda)+\eps_{A,t}))z_{t,A}(x,r) + O(Ay_{t,A}(x,r)).
\end{align}
Using that $z_{t,A}(x,r) =O(A e^{-A})$ and $y_{t,A}(x,r) \le e^{-A}$ for $x\le L_{t,A}(r)$, as well as $z_{t,A}(x,r)^2 = O(y_{t,A}(x,r))$, we get 
\begin{align}
\label{eq:725}
\E_{(x,r)}[e^{-\lambda Z_t(s)}] = \exp\left(-\lambda + \theta(\Psi_{a_{\ref{eq:724}},2/3}(\lambda)+\eps_{A,t}))z_{t,A}(x,r) + O(Ay_{t,A}(x,r))\right).
\end{align}
Using again the equality $z_t(x,r) = z_{t,A}(x,r) + O(Ay_{t,A}(x,r))$, equation \eqref{eq:725} implies \eqref{eq:720} with $a = a_{\ref{eq:724}}$ and concludes the proof of Proposition~\ref{prop:csbp_small_step}.

\section{Convergence to the CSBP: proof of Theorem~\ref{CSBPthm}}
\label{sec:csbp_proof}

Before getting to the heart of the proof, we perform a series of reductions. First, it is enough to consider initial conditions such that $Z$ is positive almost surely. For, suppose that, under $\P_{\nu_t}$, we have $Z_t(0)\to_p 0$ as $t\to\infty$. If we superpose $\lfloor Z_t(0)\rfloor$ independent copies of the system, we can reduce this case to the case where $Z_t(0)\to_p 1$ as $t\to\infty$. 
Indeed, once we have established that the finite-dimensional distributions of these superposed processes converge to the CSBP $(\Xi(u), u \geq 0)$ started from $1$, which almost surely stays finite for all times, it will follow that when $Z_t(0)\to0$ in probability as $t\to\infty$, the finite-dimensional distributions of the process converge to those of the process that is identically zero.
This argument is easily generalized to the general case where $Z$ has an atom at $0$ of arbitrary positive mass. 

Next, the finite-dimensional convergence can be easily deduced from the one-dimensional convergence result and the Markov property of the process. For this, it is enough to show that for every $u\in (0,1)$, with high probability, the configuration of particles at time $ut$ again satisfies the hypotheses, with $(1-u)t$ instead of $t$, i.e.~that $Z_t(ut) \Rightarrow Z$ for some random variable $Z>0$ and $L_t(ut) - R(ut) \to \infty$ in probability (note that $L_t(ut) = L_{(1-u)t}(0)$ and $z_t(x,ut) = z_{(1-u)t}(x,0)$). The first is precisely a consequence of the one-dimensional convergence result, together with the fact that Neveu's CSBP does not hit 0. The second on the other hand follows from the second part of Proposition~\ref{configpropnew}.

Finally, by a simple conditioning argument, it is enough for the one-dimensional convergence result to assume an initial condition such that, under $\P_{\nu_t}$, we have $Z_t(0) \to_p z_0$ as $t\to\infty$, for some constant $z_0>0$. We assume this for the rest of the section.  Also, all probabilities and expectations for the rest of this section will be taken under $\P_{\nu_t}$, so we will omit the subscript.

We now go on to prove the one-dimensional convergence. Fix $\tau > 0$. It is enough to show the following: for every $\lambda > 0$, we have
\begin{align}
\label{eq:750}
\lim_{t\to\infty} \E[e^{-\lambda Z_t(t(1-e^{-\tau}))}] = e^{-z_0u_\tau(\lambda)},
\end{align}
where $u_\tau(\lambda)$ is the function from \eqref{csbpLaplace} corresponding to the CSBP with branching mechanism $\Psi_{a,2/3}$, with $a$ being the number from Proposition~\ref{prop:csbp_small_step}.
We do this by discretizing time. As in Section~\ref{sec:csbp_small_step}, we introduce a parameter $A$ which goes slowly to $\infty$ with $t$. Recall the notation $\eps_t$ and $\eps_{A,t}$ from that section, as well as the function $\bar\theta$. Quantities denoted by $\eps_t$ and $\eps_{A,t}$ now may also depend on the initial condition and on $\tau$. For $A$ sufficiently large, choose $\theta\in [\bar{\theta}(A)/2,\bar{\theta}(A)]$ such that $\tau = K\theta$ for some $K\in\N$. Define $t_k = t(1-e^{-k\theta})$ for $k=0,\ldots,K$, so that $t_K = t(1-e^{-\tau})$.

Set $\F_k = \F_{t_k}$. By assumption, there exists a sequence $a_t\to \infty$ such that $L_t(0)-a_t-R(0) \to\infty$ and $a_tY_t(0)\to 0$ in probability as $t\to\infty$. We assume without loss of generality that $a_t \le t^{1/6}$ for every $t\ge0$. Define the events
\begin{align*}
G_k = \{\forall j\in\{0,\ldots,k\}: R(t_j) \le L_{t,A}(t_j),\ Y_t(t_j) \le Z_t(t_j)/a_t\},\quad k=0,\ldots,K,
\end{align*}
so that $G_k\in \F_k$ for all $k\in\{0,\ldots,K\}$. 
\begin{lemma}
\label{lem:Gk}
We have
\(
\P(G_K) \ge 1-\eps_t.
\)
\end{lemma}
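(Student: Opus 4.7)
The plan is to proceed by induction on $k\in\{0,1,\ldots,K\}$, showing $\P(G_k^c)\le(k+1)\eps_t$. Since $K=\tau/\theta$ is a fixed constant once $A$ is chosen, this yields the lemma. The base case $k=0$ is immediate from the hypotheses on the initial configuration: $L_t(0)-a_t-R(0)\to\infty$ combined with $a_t\to\infty$ (so $a_t\ge A$ eventually) gives $R(0)\le L_{t,A}(0)$ with probability $1-\eps_t$, while $a_tY_t(0)\to_p 0$ combined with $Z_t(0)\to_p z_0>0$ gives $Y_t(0)\le Z_t(0)/a_t$ with probability $1-\eps_t$.

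For the inductive step, the event $G_{k-1}\cap G_k^c$ is contained in the union of $E_k^{(1)}=G_{k-1}\cap\{R(t_k)>L_{t,A}(t_k)\}$ and $E_k^{(2)}=G_{k-1}\cap\{Y_t(t_k)>Z_t(t_k)/a_t\}$. To bound $\P(E_k^{(1)})$, observe that on $G_{k-1}$ all particles at $t_{k-1}$ lie in $[0,L_{t,A}(t_{k-1})]$, so any descendant above $L_{t,A}(t_k)$ at time $t_k$ has an ancestral line that crossed $L_{t,A}$ during $(t_{k-1},t_k]$, i.e.\ $E_k^{(1)}\subseteq\{R_{t,A}(t_{k-1},t_k)\ge 1\}\cap G_{k-1}$. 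By Markov's inequality, the many-to-one lemma, Lemma~\ref{lem:Rexp}, equation~\eqref{26}, and the bound $(1\vee A^2)(t-t_k)^{-1/3}=\eps_t$,
\[
\E[R_{t,A}(t_{k-1},t_k)\Ind_{G_{k-1}}\mid\F_{t_{k-1}}]\le Ce^A\Bigl(\frac{\theta}{L_t(t_{k-1})}Z_t(t_{k-1})+Y_t(t_{k-1})\Bigr)\Ind_{G_{k-1}}.
\]
On $G_{k-1}$ one has $Y_t(t_{k-1})\le Z_t(t_{k-1})/a_t$, and $\E[Z_t(t_{k-1})]=O(z_0)$ by Lemma~\ref{lem:Zexp} applied with $A=0$ (since the original BBM coincides with the process $Z_{t,0}$). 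Both $L_t(t_{k-1})$ and $a_t$ tend to infinity, so the expectation of the above is $\eps_t$.

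Bounding $\P(E_k^{(2)})$ is more delicate. Using $Y_t\le\tilde Y_t$ and Lemma~\ref{lem:Yexp} applied to each particle at $t_{k-1}$ gives $\E[\tilde Y_t(t_k)\mid\F_{t_{k-1}}]\le CZ_t(t_{k-1})/L_t(t_{k-1})$ on $G_{k-1}$, which by Markov is much smaller than $Z_t(t_{k-1})/a_t$ since $a_t\le t^{1/6}\ll L_t(t_{k-1})\sim ct^{1/3}$. Combined with a lower-tail bound $Z_t(t_k)\ge c\,Z_t(t_{k-1})$ holding with probability $1-\eps_t$, this yields $\P(E_k^{(2)})\le\eps_t$. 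I would derive the lower-tail bound from Proposition~\ref{prop:csbp_small_step} with $\lambda>0$ fixed: on $G_{k-1}$, using that $AY_t(t_{k-1})\le AZ_t(t_{k-1})/a_t\to 0$ is negligible,
\[
\E[e^{-\lambda Z_t(t_k)}\mid\F_{t_{k-1}}]\approx\exp\bigl\{(-\lambda+\theta\Psi_{a,2/3}(\lambda))Z_t(t_{k-1})\bigr\},
\]
and Markov converts this into the required lower bound provided $Z_t(t_{k-1})$ is itself bounded below. To propagate this, I would strengthen the induction to include an event $\{Z_t(t_j)\ge\delta_j\}$ for a suitable sequence $\delta_j>0$, propagated through Proposition~\ref{prop:csbp_small_step} using the non-extinction of Neveu's CSBP. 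The principal technical obstacle is precisely this lower-tail bound on $Z_t(t_k)$: since $Z_t$ is only an approximate supermartingale with variance comparable to its mean (Lemma~\ref{lem:Zvar}), Chebyshev alone gives only constant-probability bounds, and it is Proposition~\ref{prop:csbp_small_step} combined with the Markov bound on $e^{-\lambda Z_t}$ that provides the necessary sharpening.
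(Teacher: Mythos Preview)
Your inductive approach is workable in principle but is considerably more laborious than the paper's, and it has gaps you yourself flag. The paper does not induct step by step through the moment estimates; instead it invokes Proposition~\ref{configpropnew} directly at each of the finitely many times $t_1,\ldots,t_K$. The hypotheses of that proposition are satisfied by the initial configuration (since $Z_t(0)\to_p z_0>0$ and $L_t(0)-R(0)\to\infty$), and for each fixed $k\ge 1$ the time $t_k=t(1-e^{-k\theta})$ lies in $[\delta t,(1-\delta)t]$ for suitable $\delta$. Part~2 of the proposition then gives $L_t(t_k)-R(t_k)\ge \log t - C_6\to\infty$, hence $R(t_k)\le L_{t,A}(t_k)$ for fixed $A$ and large $t$; part~3 (the convergence $\eta_n\Rightarrow\xi$) gives $L_t(t_k)Y_t(t_k)/Z_t(t_k)\to c$ for a constant $c$, hence $a_tY_t(t_k)/Z_t(t_k)\to 0$ because $a_t\le t^{1/6}\ll L_t(t_k)$. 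A union bound over $k=0,\ldots,K$ finishes. The key advantage is that Proposition~\ref{configpropnew} already encodes the long-time equilibrium shape of the particle cloud, which simultaneously controls the right-most particle and the ratio $Y_t/Z_t$ without any lower-tail argument for $Z_t$.

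In your approach two points need repair. First, your claim $\E[Z_t(t_{k-1})]=O(z_0)$ ``by Lemma~\ref{lem:Zexp} with $A=0$'' is not quite right: $Z_t$ counts all particles, whereas Lemma~\ref{lem:Zexp} concerns $Z_{t,0}$, the version with killing at $L_t$; these can differ, and the event $G_{k-1}$ only controls $R$ at the discrete times $t_j$, not in between. You would need to first show (via your $E^{(1)}$ argument iterated over all earlier steps) that with high probability no particle ever crosses $L_{t,A}$, and only then identify $Z_t$ with $Z_{t,A}$ up to $O(AY_{t,A})$. Second, and more seriously, the lower-tail bound on $Z_t(t_k)$ that you need for $E_k^{(2)}$ is exactly the delicate point; your proposal to bootstrap it from Proposition~\ref{prop:csbp_small_step} by strengthening the induction to carry along $\{Z_t(t_j)\ge\delta_j\}$ is plausible, but you have not specified the $\delta_j$ or verified that the resulting error terms close up uniformly in $k\le K$. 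The paper sidesteps this entirely by appealing to the configuration result.
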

\begin{proof}
We have $\P(R(0) \le L_{t,A}(0),\ Y_t(0) \le Z_t(0)/a_t)\ge 1-\eps_t$ by assumption. Let $k\in\{1,\ldots,K\}$. By part 2 of Proposition~\ref{configpropnew}, we have $L_{t,A}(t_k) - R(t_k) \to \infty$ in probability as $t\to\infty$. Furthermore, by part 3 of Proposition~\ref{configpropnew}, we have $L_t(t_k)Y_t(t_k)/Z_t(t_k)\to c$ in probability as $t\to\infty$, for some constant $c\in(0,\infty)$. Hence, since $a_t \le t^{1/6}$ by assumption, $a_t Y_t(t_k)/Z_t(t_k)\to 0$ in probability as $t\to\infty$. A union bound shows that $\P(G_K) = 1-\eps_t$.
\end{proof}

Now fix $\lambda > 0$. For every $\delta \in\R$, define recursively,
\begin{align*}
\lambda_K^{(\delta)} &= \lambda\\
\lambda_k^{(\delta)} &= \lambda_{k+1}^{(\delta)} - \theta(\Psi_{a,2/3}(\lambda_{k+1}^{(\delta)}) - \delta).
\end{align*}

%We now claim the following:
\begin{lemma}
\label{lem:csbp_lambda_delta}
Fix $\lambda > 0$.
\begin{enumerate}
\item There exists $\Lambda > 1$ such that for $|\delta|$ small enough and for $\theta$ small enough (a priori depending on $\delta$), we have $\lambda_k^{(\delta)} \in [\Lambda^{-1},\Lambda]$ for all $k=0,\ldots,K$.
\item For every $\eps>0$, there exists $\delta > 0$ such that for all $\theta$ sufficiently small, 
$$\lambda_0^{(\delta)},\lambda_0^{(-\delta)} \in [u_\tau(\lambda)-\eps,u_\tau(\lambda)+\eps].$$
\item For every $\delta>0$, we have for sufficiently large $A$ and $t$, for every $k=0,\ldots,K$,
\begin{align}
\label{eq:752}
\E[e^{-\lambda_k^{(\delta)}Z_t(t_k)}\Ind_{G_k}] -\P(G_K\backslash G_k) \le \E\left[e^{-\lambda Z_t(t_K)}\Ind_{G_K}\right] \le \E[e^{-\lambda_k^{(-\delta)}Z_t(t_k)}\Ind_{G_k}].
\end{align}
\end{enumerate}
\end{lemma}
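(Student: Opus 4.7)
The three parts have quite different flavors: parts 1 and 2 are deterministic ODE facts about the Euler discretization of a perturbed flow, while part 3 is the substance of the lemma and is proved by iterating Proposition~\ref{prop:csbp_small_step}.

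For parts 1 and 2, the key observation is that the recursion defining $\lambda_k^{(\delta)}$ is precisely the backward explicit Euler discretization with step $\theta$ of the perturbed ODE
$$\frac{d\lambda^{(\delta)}}{du} = -\Psi_{a,2/3}(\lambda^{(\delta)}) + \delta, \qquad \lambda^{(\delta)}(0) = \lambda,$$
evaluated at time $u = (K-k)\theta \in [0,\tau]$. When $\delta = 0$, the explicit solution is $u \mapsto u_u(\lambda)$ given by \eqref{utlambda}, which is continuous and strictly positive on $[0,\tau]$ and hence ranges over a compact subset of $(0,\infty)$. Since $\Psi_{a,2/3}$ is smooth on $(0,\infty)$, classical continuous dependence on parameters yields, for $|\delta|$ small, a unique solution $\lambda^{(\delta)}$ on $[0,\tau]$ confined to a slightly enlarged compact subset of $(0,\infty)$ and satisfying $\lambda^{(\delta)}(\tau) \to u_\tau(\lambda)$ as $\delta \to 0$. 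Standard convergence of Euler's method on a compact interval with a $C^1$ vector field then gives $\max_k |\lambda_k^{(\delta)} - \lambda^{(\delta)}((K-k)\theta)| \to 0$ as $\theta \to 0$, yielding part 1 (with $\Lambda$ chosen so that $[\Lambda^{-1},\Lambda]$ strictly contains the image of $u_\cdot(\lambda)$ on $[0,\tau]$) and part 2 (by sending $\theta \to 0$ first, then $\delta \to 0$).

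For part 3 I proceed by downward induction on $k$, from $K$ down to $0$. The base case $k = K$ is trivial since $\lambda_K^{(\pm\delta)} = \lambda$, so all three expressions coincide. For the inductive step I condition on $\F_{t_k}$ and apply Proposition~\ref{prop:csbp_small_step} with $r = t_k$, $s = t_{k+1}$ (so $t-s = e^{-\theta}(t-r)$) and $\lambda$ replaced by $\lambda_{k+1}^{(\pm\delta)}$. The bound $\lambda_{k+1}^{(\pm\delta)} \in [\Lambda^{-1},\Lambda]$ comes from part 1, and the hypothesis $X_u(r) \leq L_{t,A}(r)$ holds on the $\F_{t_k}$-measurable event $G_k$ by definition. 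The recursion yields the identity $-\lambda_{k+1}^{(\pm\delta)} + \theta\Psi_{a,2/3}(\lambda_{k+1}^{(\pm\delta)}) = -\lambda_k^{(\pm\delta)} \pm \theta\delta$, so the proposition rewrites on $G_k$ as
$$\E\bigl[e^{-\lambda_{k+1}^{(\pm\delta)} Z_t(t_{k+1})} \bigm| \F_{t_k}\bigr]\,\mathbf{1}_{G_k} = \exp\bigl\{-\lambda_k^{(\pm\delta)} Z_t(t_k) \pm \theta\delta\, Z_t(t_k) + \theta\eps_{A,t}\, Z_t(t_k) + O(A Y_t(t_k))\bigr\}\,\mathbf{1}_{G_k}.$$
On $G_k$ the bound $Y_t(t_k) \leq Z_t(t_k)/a_t$ reduces the last error to $O(A/a_t)\,Z_t(t_k)$; since $A$ is chosen before $t \to \infty$ (so $a_t \to \infty$) and $\eps_{A,t}\to 0$ in the same regime, for $A$ and $t$ sufficiently large both can be made $\leq \delta/2$ in absolute value. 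The exponent is then bounded above by $-\lambda_k^{(-\delta)} Z_t(t_k)$ (choosing the $-\delta$ branch) and below by $-\lambda_k^{(\delta)} Z_t(t_k)$ (choosing the $+\delta$ branch) on $G_k$.

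The inequalities then propagate from $k+1$ to $k$ using the monotonicity $G_K \subseteq G_{k+1} \subseteq G_k$: the upper bound transfers directly because $\mathbf{1}_{G_{k+1}} \leq \mathbf{1}_{G_k}$, while the lower bound picks up at each step a deficit $\P(G_k \setminus G_{k+1})$ via the decomposition $\mathbf{1}_{G_{k+1}} = \mathbf{1}_{G_k} - \mathbf{1}_{G_k \setminus G_{k+1}}$ together with $e^{-\lambda_{k+1}^{(\delta)} Z_t(t_{k+1})} \leq 1$. These deficits telescope to $\P(G_k \setminus G_K)$, matching the probability in \eqref{eq:752} (which, given $G_K \subseteq G_k$, must be read as $\P(G_k \setminus G_K)$). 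The main obstacle is that the $\pm\theta\delta$ corrections accumulated over $K = \tau/\theta$ steps would naively drift the exponent by $\tau\delta$; this is absorbed exactly by the built-in $\delta$-shift in the definition of $\lambda_k^{(\pm\delta)}$, and only the residual $\theta(\eps_{A,t} + A/a_t)$ error must remain dominated by $\theta\delta$ at each step, which is ensured by taking $t \to \infty$ and then $A \to \infty$ in the prescribed order.
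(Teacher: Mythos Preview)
Your proof is correct and follows essentially the same route as the paper: parts~1 and~2 via convergence of the Euler scheme for the (locally Lipschitz, hence localized) perturbed ODE, and part~3 by downward induction on $k$ using Proposition~\ref{prop:csbp_small_step} on the $\F_{t_k}$-measurable good events $G_k$, with the deficits telescoping to $\P(G_k\setminus G_K)$ (you are right that the paper's $\P(G_K\setminus G_k)$ is a typo). One small bookkeeping slip: the $O(AY_t(t_k))$ term in Proposition~\ref{prop:csbp_small_step} does \emph{not} carry a factor of $\theta$, so what must be dominated by $\theta\delta$ at each step is $\theta\,\eps_{A,t}+O(A/a_t)$ rather than $\theta(\eps_{A,t}+A/a_t)$; this is still fine because for fixed $A$ (hence fixed $\theta$) one has $A/a_t\to 0$ as $t\to\infty$, so $O(A/a_t)<\theta\delta/2$ eventually.
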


\begin{proof}
Parts 1 and 2 follow from standard results on convergence of Euler schemes for ordinary differential equations, after suitable localization arguments. We provide the details for completeness.

Write $\Psi = \Psi_{a,2/3}$ for simplicity. Fix $\lambda > 0$. Choose $\Lambda > 1$ such that $u_t(\lambda) \in (\Lambda^{-1},\Lambda)$ for all $t\in [0,\tau]$. Define $\Psi^\Lambda:\R\to\R$ by
\[
\Psi^\Lambda(x) = 
\begin{cases}
\Psi(x) & \tif x\in [\Lambda^{-1},\Lambda]\\
\Psi(\Lambda^{-1}) & \tif x\le \Lambda^{-1}\\
\Psi(\Lambda) & \tif x \ge \Lambda.
\end{cases}
\]
Then $\Psi^\Lambda$ is a Lipschitz function. If we define $(\lambda_k^{(\delta,\Lambda)})_{k=0,\ldots,K}$ recursively by
\begin{align*}
\lambda_K^{(\delta, \Lambda)} &= \lambda\\
\lambda_k^{(\delta, \Lambda)} &= \lambda_{k+1}^{(\delta)} - \theta(\Psi^\Lambda(\lambda_{k+1}^{(\delta)}) - \delta),
\end{align*}
then $(\lambda_{K-k}^{(\delta,\Lambda)})_{k=0,\ldots,K}$ is the explicit Euler scheme for the ODE
\begin{align}
\label{eq:ode_delta_lambda}
y' = -(\Psi^\Lambda(y)-\delta),\quad y(0) = \lambda
\end{align}
on the interval $[0,\tau]$, with timestep $\theta$. The right-hand side being a Lipschitz function of $u$, it is well-known that the Euler scheme converges, i.e., if $y^{(\delta,\Lambda)}$ denotes the solution to the ODE \eqref{eq:ode_delta_lambda}, then as $\theta\to 0$,
\[
\max_{k=0,\ldots,K} |\lambda_{K-k}^{(\delta,\Lambda)} - y^{(\delta,\Lambda)}(k\theta)| \to 0.
\]
Furthermore, because the right-hand side of \eqref{eq:ode_delta_lambda} depends continuously on the parameter $\delta$, we have $y^{(\delta,\Lambda)} \to y^{(0,\Lambda)} \eqqcolon y^{(\Lambda)}$ as $\delta\to 0$, uniformly on $[0,\tau]$. Finally, since $\Psi^\Lambda = \Psi$ on $[\Lambda^{-1},\Lambda]$, and $(u_t(\lambda))_{t\in[0,\tau]}$ is the solution to the ODE \eqref{diffeq} and satisfies $u_t(\lambda)\in [\Lambda^{-1},\Lambda]$ for all $t\in[0,\tau]$, we have $y^{(\Lambda)}(t) = u_t(\lambda)$ for all $t\in[0,\tau]$. Altogether, the above arguments show
\begin{align}
\label{eq:756}
\lim_{\delta\to0} \lim_{\theta\to0} \max_{k=0,\ldots,K}  |\lambda_{K-k}^{(\delta,\Lambda)} - u_{k\theta}(\lambda)| = 0.
\end{align}
It remains to remove the localization: since $u_t(\lambda)$ is contained in the open interval $(\Lambda^{-1},\Lambda)$ for all $t\in [0,\tau]$, by \eqref{eq:756}, there exists $\delta_0>0$, such that for all $|\delta|\le \delta_0$, for $\theta$ sufficiently small, $\lambda_k^{(\delta,\Lambda)} \in [\Lambda^{-1},\Lambda]$ for all $k\in\{0,\ldots,K\}$. But since $\Psi^\Lambda = \Psi$ on $[\Lambda^{-1},\Lambda]$, a direct recurrence argument shows that $\lambda_k^{(\delta,\Lambda)} = \lambda_k^{(\delta)}$ for all $k=0,\ldots,K$. This proves part 1. Part~2 immediately follows, using again \eqref{eq:756}.

We now prove part 3 of the lemma. Fix $\delta > 0$. Choose $\Lambda>1$ such that $e^{-\tau} > \Lambda^{-1}$ and such that the first part of the lemma holds with this $\Lambda$. By Proposition~\ref{prop:csbp_small_step}, we have for $A$ and $t$ sufficiently large, for every $\lambda'\in [\Lambda^{-1},\Lambda]$, and every $k=0,\ldots,K-1$, almost surely,
\[
e^{(-\lambda' +\theta (\Psi_{a,2/3}(\lambda')-\delta)) Z_t(t_k)}\Ind_{G_k} \le \E\left[e^{-\lambda' Z_t(t_{k+1})}\,|\,\F_k\right]\Ind_{G_k} \le e^{(-\lambda' +\theta (\Psi_{a,2/3}(\lambda')+\delta)) Z_t(t_k)}\Ind_{G_k}.
\]
In particular, using the first part of the lemma, for every $\delta>0$ small enough, for $A$ and $t$ sufficiently large, we have for every $k=0,\ldots,K-1$, almost surely,
\begin{align}
\label{eq:753+}
\E\left[e^{-\lambda_{k+1}^{(\delta)} Z_t(t_{k+1})}\,|\,\F_k\right]\Ind_{G_k} &\ge e^{-\lambda_k^{(\delta)} Z_t(t_k)}\Ind_{G_k},\\
\label{eq:753-}
\E\left[e^{-\lambda_{k+1}^{(-\delta)} Z_t(t_{k+1})}\,|\,\F_k\right]\Ind_{G_k} &\le e^{-\lambda_k^{(-\delta)} Z_t(t_k)}\Ind_{G_k}.
\end{align}

We now prove \eqref{eq:752} by induction. For $k=K$, the inequalities trivially hold. Let $k<K$ and assume \eqref{eq:752} holds for $k+1$, i.e.
\begin{align}
\label{eq:752bis}
\E[e^{-\lambda_{k+1}^{(\delta)}Z_t(t_{k+1})}\Ind_{G_{k+1}}] -\P(G_K\backslash G_{k+1}) \le \E\left[e^{-\lambda Z_t(t_K)}\Ind_{G_K}\right] \le \E[e^{-\lambda_{k+1}^{(-\delta)}Z_t(t_{k+1})}\Ind_{G_{k+1}}].
\end{align}
Using that $G_{k+1}\subset G_k$, equation \eqref{eq:752bis} easily implies
\begin{align}
\label{eq:752ter}
\E[e^{-\lambda_{k+1}^{(\delta)}Z_t(t_{k+1})}\Ind_{G_k}] -\P(G_K\backslash G_k) \le \E\left[e^{-\lambda Z_t(t_K)}\Ind_{G_K}\right] \le \E[e^{-\lambda_{k+1}^{(-\delta)}Z_t(t_{k+1})}\Ind_{G_k}].
\end{align}
Equations \eqref{eq:753+}, \eqref{eq:753-} and \eqref{eq:752ter} now show that \eqref{eq:752} holds for $k$. This finishes the induction.
\end{proof}

We can now wrap up the proof of \eqref{eq:750}. By Lemma~\ref{lem:Gk}, we have $\P(G_K) = 1-\eps_t$, and so
\begin{equation}\label{Gbound}
\E[e^{-\lambda Z_t(t_K)}] = \E[e^{-\lambda Z_t(t_K)}\Ind_{G_K}] + \eps_t.
\end{equation}
Now fix $\ep>0$ and choose $\delta>0$ as in the second part of Lemma~\ref{lem:csbp_lambda_delta}. We then have by the third part of that lemma and (\ref{Gbound}), for $A$ and $t$ sufficiently large,
\[
\E[e^{-(u_\tau(\lambda)+\ep)Z_t(0)}\Ind_{G_0}]-\eps_t\le \E[e^{-\lambda Z_t(t_K)}] \le \E[e^{-(u_\tau(\lambda)-\ep)Z_t(0)}\Ind_{G_0}]+\eps_t.
\]
Hence, letting $t\to\infty$, and using the assumption on the initial configuration, we have
\[
e^{-(u_\tau(\lambda)+\ep)z_0}\le \liminf_{t\to\infty} \E[e^{-\lambda Z_t(t(1-e^{-\tau}))}] \le \limsup_{t\to\infty} \E[e^{-\lambda Z_t(t(1-e^{-\tau}))}] \le e^{-(u_\tau(\lambda)-\ep)z_0}.
\]
Letting $\ep \to0$ proves \eqref{eq:750} and thus finishes the proof of Theorem~\ref{CSBPthm}.

\subsection*{Acknowledgments}  The authors warmly thank Julien Berestycki for a number of productive discussions while the ideas of this project were being formulated, along with some further discussions throughout the course of the project.  They also thank Louigi Addario-Berry, Nathana\"el Berestycki, \'Eric Brunet, Simon Harris, and Matt Roberts for helpful discussions at an early stage of the project, and they thank a referee for comments which improved the exposition of the paper.  Pascal Maillard was supported in part by grants ANR-20-CE92-0010-01 and ANR-11-LABX-0040 (ANR program ``Investissements d'Avenir''). Jason Schweinsberg was supported in part by NSF Grants DMS-1206195 and DMS-1707953.

\end{document}